\DeclareMathOperator{\tr}{tr}
\DeclareMathOperator{\vol}{vol}
\DeclareMathOperator{\phg}{phg}
\DeclareMathOperator{\Diff}{Diff}
\DeclareMathOperator{\Ric}{Ric}
\DeclareMathOperator{\End}{End}
\DeclareMathOperator{\Sym}{Sym}
\DeclareMathOperator{\Id}{Id}
\DeclareMathOperator{\Ker}{Ker}
\DeclareMathOperator{\pr}{pr}
\newtheorem{thm}{Theorem}[section]
\newtheorem{prop}[thm]{Proposition}
\newtheorem{cor}[thm]{Corollaire}
\newtheorem{remq}[thm]{Remarque}
\newtheorem{lemme}[thm]{Lemme}
\newtheorem{exemp}[thm]{Exemple}
\newtheorem{defi}[thm]{D\'efinition}
\newtheorem{defis}[thm]{D\'efinitions}
\newtheorem{them}[thm]{Th\'eor\`eme}
\newtheorem{hyp}[thm]{Hypoth\`{e}se}
\newcommand{\fact}[1]{#1\mathpunct{}!}
\renewcommand{\Re}{\operatorname{Re}}
\renewcommand{\Im}{\operatorname{Im}}
\begin{document}
\title[]
{Polyhomog\'en\'eit\'e des m\'etriques compatibles avec une structure de Lie \`a l'infini le long du flot de Ricci}
\author{Mahdi Ammar}
\begin{abstract}   
Le long du flot de Ricci, on \'etudie la polyhomog\'en\'eit\'e des m\'etriques pour des vari\'et\'es riemanniennes non\texttt{-}compactes ayant \texttt{<<} une structure de Lie fibr\'ee \`a l'infini  \texttt{>>}, c'est\texttt{-}\`a\texttt{-}dire une classe de structures de Lie \`a l'infini qui induit dans un sens pr\'ecis des structures de fibr\'es sur les bords d'une certaine compactification par une vari\'et\'e \`a coins. Lorsque cette compactification est une vari\'et\'e \`a bord, cette classe de m\'etriques contient notamment les b\texttt{-}m\'etriques de Melrose, les m\'etriques \`a bord fibr\'e de Mazzeo\texttt{-}Melrose et les m\'etriques edge de Mazzeo. On montre alors que la polyhomog\'en\'eit\'e \`a l'infini des m\'etriques compatibles avec une structure de Lie fibr\'ee \`a l'infini est pr\'eserv\'ee localement par le flot de Ricci\texttt{-}DeTurck. Si la m\'etrique initiale est asymptotiquement Einstein, on obtient la polyhomog\'en\'eit\'e des m\'etriques tant que le flot existe. De plus, si la m\'etrique initiale est \texttt{<<} lisse jusqu'au bord \texttt{>>}, alors il en sera de m\^eme pour les solutions du flot de Ricci normalis\'e et du flot de Ricci\texttt{-}DeTurck. 

\end{abstract}
\maketitle

\tableofcontents

\section*{Introduction} 
L'objet principal de cet article est l'\'etude du comportement \`a l'infini des solutions du flot de Ricci sur des vari\'et\'es non compactes. Le flot de Ricci est en quelque sorte une version non\texttt{-}lin\'eaire de l'\'equation de la chaleur, qui, au lieu d'uniformiser la temp\'erature, tend plut\^ot \`a uniformiser la courbure. Il a \'et\'e introduit en 1982 par Hamilton dans son article~\cite{hamilton1982three} et s'est r\'ev\'el\'e \^etre un outil tr\`{e}s utile dans la compr\'ehension de la topologie des vari\'et\'es. En particulier, il a jou\'e un r\^ole central dans la preuve de Grigory Perelman de la conjecture de g\'eom\'etrisation de Thurston, dont la conjecture de Poincar\'e~\cite{morgan2007ricci} est un cas particulier. Pour des vari\'et\'es compactes, Hamilton a prouv\'e l'existence locale et l'unicit\'e du flot de Ricci. Par suite, DeTurck~\cite{deturck1983deforming} a introduit une astuce \'el\'egante afin de donner une preuve simplifi\'ee de l'existence locale.

Cependant, le comportement du flot pour les vari\'et\'es non compactes est plus d\'elicat et d\'epend de la g\'eom\'etrie \`a l'infini. Plusieurs recherches ont port\'e sur des extensions naturelles du flot de Ricci sur des vari\'et\'es compl\`{e}tes non compactes. L'existence d'une solution au flot a \'et\'e \'etablie par Shi~\cite{shi1989deforming} pour des vari\'et\'es riemanniennes compl\`{e}tes ayant une courbure born\'ee. De plus, la solution existe tant que la courbure reste born\'ee. Par cons\'equent, pour estimer le temps maximal d'existence du flot de Ricci, il suffit de contr\^oler la courbure. Plus tard, Chen et Zhu~\cite{chen2006uniqueness} ont repris le travail de Shi et ont montr\'e l'unicit\'e de la solution. Sous des hypoth\`{e}ses techniques supl\'ementaires, Albert Chau~\cite{chau2004convergence} a prouv\'e l'existence et la convergence d'une solution globale pour certaines vari\'et\'es k\"ahl\'eriennes non compactes. R\'ecemment, il y a eu une intense activit\'e pour comprendre dans quelle mesure le flot de Ricci pr\'eserve d'autres conditions g\'eom\'etriques sur des vari\'et\'es compl\`{e}tes non compactes. Dans les r\'esultats de beaucoup de recherches, les m\'etriques consid\'er\'ees sont non seulement approxim\'ees par le mod\`{e}le g\'eom\'etrique \`a l'infini, mais elles admettent aussi une expansion asymptotique lisse, ou plus g\'en\'eralement une expansion polyhomog\`{e}ne, \`a savoir une expansion constitu\'ee de termes de la forme $\rho^{\alpha} (\log \rho)^{k}$, o\`u $\rho$ est la distance par rapport \`a un point fix\'e, $\alpha$ est un nombre r\'eel (pas n\'ecessairement entier) et $k\in \mathbb{N}_{0}.$ Une question naturelle est donc la suivante :

Quel type de g\'eom\'etrie \`a l'infini sur des vari\'et\'es compl\`{e}tes non compactes sera pr\'eserv\'e par le flot de Ricci ? De plus, lorsque la m\'etrique initiale a une expansion polyhomog\`{e}ne \`a l'infini, en est\texttt{-}il de m\^eme pour les m\'etriques ult\'erieures le long du flot ?

Lott et Zhang~\cite{lott2011ricci} ont \'etudi\'e le flot de K\"ahler\texttt{-}Ricci pour des m\'etriques k\"ahl\'eriennes \`a pointes fibr\'ees (ou de type Poincar\'e) sur des vari\'et\'es quasi\texttt{-}projectives et ont montr\'e que de telles m\'etriques, le long du flot de Ricci, pr\'eservent certains comportements asymptotiques. Ces r\'esultats peuvent \^etre compris comme \'etant le terme d'ordre 0 du d\'eveloppement asymptotique des m\'etriques \`a l'infini. Ult\'erieurement, ces d\'eveloppements asymptotiques ont \'et\'e d\'evelopp\'es dans l'article~\cite{rochon2012asymptotics} de Rochon et Zhang en montrant que la solution admet une expansion asymptotique compl\`{e}te tant que le flot existe. Un autre r\'esultat de Lott et Zhang~\cite{lott2016ricci} consid\`{e}re trois types diff\'erents de comportements asymptotiques spatiaux (cylindrique, bomb\'e et conique) pr\'eserv\'es par le flot de K\"ahler\texttt{-}Ricci. Albin, Aldana et Rochon~\cite{albin2013ricci} ont travaill\'e sur des surfaces munies de m\'etriques compl\`{e}tes approxim\'ees par des pointes (cusps en anglais) ou des entonnoirs \`a l'infini et ont montr\'e que le flot de Ricci converge vers une m\'etrique avec une courbure constante et que le d\'eterminant du laplacien augmente tant que le flot existe. Ils ont notamment eu besoin de montrer que l'expansion asymptotique \`a l'infini de ces m\'etriques est pr\'eserv\'ee le long du flot. Isenberg, Mazzeo et Sesum~\cite{isenberg2013ricci} ont \'etudi\'e le comportement du flot sur des surfaces \`a bouts asymptotiquement euclidiens de caract\'eristique d'Euler n\'egative. Pour les m\'etriques asymptotiquement hyperboliques, la persistance le long du flot de Ricci d'un d\'eveloppement asymptotique lisse a \'et\'e obtenue par Bahuaud~\cite{bahuaud2011ricci}. Rochon~\cite{rochon2015polyhomogeneite} a obtenu un r\'esultat similaire pour les m\'etriques asymptotiquement hyperboliques complexes. Dans la plupart de ces travaux, une compactification a \'et\'e introduite pour obtenir une vari\'et\'e \`a bord compacte, de sorte que le comportement du flot \`a l'infini puisse \^etre d\'ecrit en termes du bord. Dans ce contexte, nous allons \'etendre ce probl\`{e}me de r\'egularit\'e pour des types de g\'eom\'etrie plus g\'en\'eraux. Nous nous int\'eressons aux vari\'et\'es non compactes admettant une compactification par une vari\'et\'e compacte non seulement \`a bord mais aussi plus g\'en\'eralement \`a coins. 
Nous supposerons que cette compactification induit une structure de Lie \`a l'infini compatible avec les m\'etriques au sens de~\cite{ammann2004geometry}. Pour pouvoir \'etudier le comportement asymptotique des m\'etriques \`a l'infini, nous devrons imposer certaines conditions sur les structures de Lie \`a l'infini consid\'er\'ees. Plus pr\'ecis\'ement, nous introduirons les structures de Lie \texttt{<<} fibr\'ees \texttt{>>} \`a l'infini, une certaine classe de structures de Lie \`a l'infini induisant sur chaque face de la vari\'et\'e \`a coins un fibr\'e naturel. 
Notre th\'eor\`{e}me peut s'\'enoncer comme suit  (voir les Th\'eor\`{e}me~\ref{theore} et Th\'eor\`{e}me~\ref{theorii}) :

 \begin{them}
Soit $(M, g_{0})$ une vari\'et\'e riemannienne compl\`{e}te de dimension $n$ compatible avec une structure de Lie fibr\'ee \`a l'infini $(M,\mathcal{V}_{SF})$ dont le rayon d'injectivit\'e est strictement positif. Si la m\'etrique $g_{0}$ admet une expansion polyhomog\`{e}ne \`a l'infini compatible avec $(M,\mathcal{V}_{SF}),$ alors la solution du flot de Ricci\texttt{-}DeTurck pr\'eserve l'expansion polyhomog\`{e}ne \`a l'infini dans un court laps de temps. De plus, si la m\'etrique $g_{0}$ est asymptotiquement Einstein, alors la solution du flot de Ricci\texttt{-}DeTurck pr\'eserve l'expansion polyhomog\`{e}ne \`a l'infini tant que le flot existe. 
\end{them}
Dans la premi\`{e}re section, nous commen\c{c}ons par introduire la notion de vari\'et\'es \`a coins au sens de Melrose~\cite{melrose1996differential}). 
Ensuite, en se r\'ef\'erant \`a l'article de~\cite{ammann2004geometry}, 
nous pr\'esentons la notion d'alg\`{e}bre de Lie structurale et la notion \'equivalente d'un alg\'ebro\"ide de Lie bordant, ce qui permet de d\'efinir une structure de Lie \`a l'infini. Elle d\'etermine des m\'etriques compl\`{e}tes dont les d\'eriv\'ees contravariantes du tenseur de courbure sont born\'ees. Nous introduisons alors la notion de structures de Lie fibr\'ees \`a l'infini, la classe de structures de Lie \`a l'infini pour laquelle notre r\'esultat s'applique. Bien que cette classe ne couvre pas toutes les structures de Lie \`a l'infini, par exemple celles associ\'ees \`a des m\'etriques feuillet\'ees au bord au sens de~\cite{rochon2012pseudodifferential}, il n'en demeure pas moins qu'un tr\`{e}s large \'eventail de m\'etriques compl\`{e}tes sont compatibles avec une structure de Lie fibr\'ee \`a l'infini. Voici quelques exemples \`a titre indicatif : les b\texttt{-}m\'etriques~\cite{melrose1993atiyah}, les m\'etriques de diffusion~\cite{melrose1995geometric} (incluant les m\'etriques asymptotiquement localement euclidiennes~\cite{joyce2001asymptotically}), les m\'etriques edges~\cite{rafe1991elliptic} (incluant les 0\texttt{-}m\'etriques~\cite{mazzeo1987meromorphic}), les $\Phi$\texttt{-}m\'etriques de Mazzeo\texttt{-}Melrose~\cite{mazzeo1998pseudodifferential} (incluant les m\'etriques asymptotiquement localement plates~\cite{gibbons1979positive}) ainsi que leur g\'en\'eralisation \`a des m\'etriques quasi\texttt{-}asymptotiquement coniques (QAC) ou quasi\texttt{-}fibr\'ees au bord (QFB) de~\cite{conlon2019quasi}.

\`A la deuxi\`{e}me section, nous traitons de la polyhomog\'en\'eit\'e globale des solutions d'\'equations paraboliques lin\'eaires d\'etermin\'ees par une structure de Lie fibr\'ee \`a l'infini. Plus pr\'ecis\'ement, nous montrons que la solution admet un d\'eveloppement asymptotique polyhomog\`{e}ne complet \`a l'infini pourvu que ce soit le cas initialement.

La troisi\`{e}me section concerne les \'equations paraboliques quasi\texttt{-}lin\'eaires d\'etermin\'ees par une structure de Lie fibr\'ee \`a l'infini. Nous \'etablissons l'existence et l'unicit\'e des solutions de telles \'equations en s'inspirant de l'article de Bahuaud~\cite{bahuaud2011ricci} d'une part. D'autre part, nous d\'eterminons les crit\`{e}res qui nous permettent d'assurer que la polyhomog\'en\'eit\'e sera pr\'eserv\'ee pour un court laps de temps.

\`A la derni\`{e}re section, nous pr\'esentons notre r\'esultat principal en se basant sur les deux sections pr\'ec\'edentes. En premier lieu, nous prouvons la polyhomog\'en\'eit\'e locale des m\'etriques compatibles avec une structure de Lie fibr\'ee \`a l'infini le long du flot de Ricci\texttt{-}DeTurck. Remarquons ici que ce r\'esultat s'applique aux m\'etriques asymptotiquement coniques ou cylindriques pour lesquelles le terme d'ordre $0$ de l'expansion a \'et\'e d\'ej\`a \'etabli dans~\cite{lott2016ricci} lorsque la m\'etrique est K\"ahler.
Ensuite, nous \'etablissons la polyhomog\'en\'eit\'e globale pour des m\'etriques asymptotiquement Einstein le long du flot. 
Enfin, lorsqu'initialement la m\'etrique admet un d\'eveloppement lisse, nous montrons qu'il en sera de m\^eme pour les m\'etriques le long du flot. Dans ce cas particulier, nos r\'esultats sont valides non seulement pour le flot de Ricci\texttt{-}DeTurck, mais aussi pour le flot de Ricci.

\textbf{Remerciement.}
Cet article pr\'esente les r\'esultats de ma th\`{e}se de doctorat. Je tiens vivement \`a remercier le Professeur Fr\'ed\'eric Rochon mon directeur de th\`{e}se qui a supervis\'e ce travail avec beaucoup de vision et de rigueur. Il a dirig\'e ma th\`{e}se avec patience et il a d\'edi\'e beaucoup de temps \`a mon travail en \'etant toujours tr\`{e}s disponible. Je remercie ensuite tr\`{e}s sinc\`{e}rement le Professeur Eric Bahuaud pour avoir lu attentivement une version pr\'eliminaire de ce travail et avoir fait plusieurs remarques et suggestions importantes. Finalement, je tiens vivement \`a remercier le rapporteur de cet article pour le temps consacr\'e  \`a  la lecture, et pour les suggestions et les remarques judicieuses qu'il m'a indiqu\'ees. 
\section{Vari\'et\'e \`a coins et structures de Lie \`a l'infini} 

Les vari\'et\'es riemanniennes non compactes avec une structure de Lie \`a l'infini ont \'et\'e introduites par Ammann, Lauter et Nistor dans leur article~\cite{ammann2004geometry}. Toute structure de Lie \`a l'infini d\'etermine une classe de m\'etriques compl\`{e}tes qui sont de g\'eom\'etrie born\'ee lorsque le rayon d'injectivit\'e est strictement positif.
Cette structure englobe une large classe de vari\'et\'es compl\`{e}tes non compactes comme on le verra plus\texttt{-}bas.
En particulier, nous allons d\'efinir une nouvelle notion : celle de \texttt{<<} structure de Lie fibr\'ee \`a l'infini \texttt{>>}. Cette derni\`{e}re forme une classe de structures Lie \`a l'infini qui induisent dans un sens pr\'ecis des structures de fibr\'es au bord. 

L'analyse globale sur les vari\'et\'es \`a coins a \'et\'e d\'evelop\'ee par Melrose dans plusieurs ouvrages, notamment~\cite{melrose1993atiyah} et~\cite{melrose1996differential}. Nous donnerons un aper\c{c}u g\'en\'eral du concept de vari\'et\'es \`a coins afin de d\'evelopper les quelques notions de la g\'eom\'etrie des vari\'et\'es avec une structure de Lie \`a l'infini qui nous seront n\'ecessaires. Pour plus d\'etails, on r\'ef\`{e}re aussi le lecteur \`a~\cite{grieser2017scales}.\\
Pour $k \in \{0,...,n\}$, on note par $\mathbb{R}^{n}_{k}$ l'espace donn\'e par $$\mathbb{R}^{n}_{k} =([0,+\infty[)^{k} \times \mathbb{R}^{n-k}.$$ L'ensemble des ouverts de $\mathbb{R}^{n}_{k}$ est $\{\Omega' \,\cap \, \mathbb{R}^{n}_{k} \mid \Omega' \:\textrm{ouvert\:de}\: \mathbb{R}^{n} \}.$ 
Soit $\Omega$ un ouvert de $\mathbb{R}^{n}_{k}$. Une fonction $f : \Omega \rightarrow \mathbb{C}$ est dite lisse s'il existe un ouvert $\Omega'$ de $\mathbb{R}^{n}$ avec $\Omega=\Omega' \,\cap \, \mathbb{R}^{n}_{k}$ et une fonction lisse $F : \Omega' \rightarrow \mathbb{C}$ tel que $F_{\scriptscriptstyle{\vert\Omega }}=f$. Soient deux ouverts  $\Omega_{i}$ de $ \mathbb{R}^{n}_{k_{i}}$, o\`u $i \in \{1,2\}$. On dit que $f: \Omega_{1} \rightarrow\Omega_{2}$ est un diff\'eomorphisme si elle est un hom\'eomorphisme qui admet un inverse $g: \Omega_{2}\rightarrow\Omega_{1} $ tel que chaque composante de coordonn\'ees de $f$ ou $g$ soit une application lisse.

Maintenant, soit $M$ un espace topologique, s\'epar\'e et paracompact.
\begin{defi}
Une carte \`a coins de $M$ est un couple $(U,\varphi)$ constitu\'e d'un ouvert $U$ de $M$ et d'un hom\'eomorphisme sur un ouvert de $\mathbb{R}^{n}_{k}$, $\varphi : U \rightarrow  \mathbb{R}^{n}_{k}$. Elle est dite centr\'ee en $m\in U$ si $\varphi (m)=0.$\\
Une $C^\infty$ structure \`a coins est d\'etermin\'ee par la donn\'ee d'une famille $\mathfrak{A}=\{(U_{i}, \varphi_{i}), i\in I\}$ de cartes \`a coins de $M$ ayant les propri\'et\'es suivantes :
\begin{enumerate}
\item $(U_{i})_{i\in I}$ est un recouvrement ouvert de $M$. \label{lar}
\item Si $i\neq j$, alors $U_{i}$ et $U_{j}$ sont compatibles, c'est\texttt{-}\`a\texttt{-}dire que si $U_{i} \cap U_{j} \neq \emptyset $, alors $ \varphi_{j}  \circ  \varphi_{i}^{-1} $ est un diff\'eomorphisme lisse de $ \varphi_{i}(U_{i} \cap U_{j})$ sur $ \varphi_{j}(U_{i} \cap U_{j})$. \label{lari}
\item Si $\mathfrak{B} \supset \mathfrak{A} $ est une famille de cartes \`a coins ayant les propri\'et\'es~\ref{lar} et~\ref{lari} alors $\mathfrak{B} = \mathfrak{A} $. 
L'ensemble  $\mathfrak{A}$ est l'atlas maximal de $M$.  
\end{enumerate}
\end{defi}
Si $M$ a une $C^\infty$ structure \`a coins, alors on note par
$$C^\infty(M)= \{f : M \rightarrow \mathbb{R} \mid f \circ \varphi^{-1} \textrm{est\:lisse\:sur}\:\varphi(U) \:\textrm{ pour\:chaque\:carte}\: (U, \varphi) \in  \mathfrak{A} \}.$$
\begin{defi}
Une pr\'e\texttt{-}vari\'et\'e \`a coins est un couple $(M,\mathfrak{F}),$ o\`u $M$ est un espace topologique, s\'epar\'e et paracompact et $\mathfrak{F}=C^\infty(M)$ pour une certaine $C^\infty$ structure \`a coins. 
\end{defi}
\begin{defis}
Soient $M$ une pr\'e\texttt{-}vari\'et\'e \`a coins et $$\partial_{l} M = \{m\in M \: \textrm{tel\:qu'il\:existe\:une\:carte}\:  \varphi : U \rightarrow  \mathbb{R}^{n}_{l} \: \textrm{centr\'ee\:en} \:m \},$$ pour $l \in \{0,...,k\}$ appel\'ee la codimension, ou la profondeur de $m$. L'espace $\partial_{l} M$ consiste en l'ensemble des points de profondeur $l$ qui se d\'ecompose en une r\'eunion de composantes connexes appel\'ees les faces ouvertes. La fermeture d'une face ouverte est une face ferm\'ee. On appelle une hypersurface bordante de $M$ une face ferm\'ee de codimension $1$. L'ensemble des hypersurfaces bordantes est not\'e par $\mathcal{M}_{1}(M)$. La profondeur de $M$ est la profondeur maximale qu'un point $m$ de $M$ puisse avoir. Autrement dit, la profondeur de $M$ est la codimension maximale qu'une face ouverte de $M$ puisse avoir.
\end{defis}
\begin{remq}
L'ensemble $\partial_{l} M$ est ind\'ependant du choix des cartes \`a coins. Le bord de $M$, not\'e $\partial M$, peut s'\'ecrire comme l'union $\bigcup _{l=1}^k\partial_{l} M$. Ainsi l'int\'erieur de $M$ est d\'efini par $\overset{\circ}{M}:=M \setminus \partial M$.
\end{remq}
\begin{defi}
On dit qu'un sous\texttt{-}ensemble $S \subset M$ est une sous\texttt{-}vari\'et\'e, si $\forall s \in S$, $\exists G \in Gl_{n}(\mathbb{R})$ et $\exists \varphi : U \rightarrow \mathbb{R}^{n}_{k}$ une carte \`a coins centr\'ee en $s$ tels que $$ \varphi _{\scriptscriptstyle{\vert S\cap U }} : S\cap U \rightarrow (G \cdot \mathbb{R}^{n-n'}_{k'} \times \{0\}^{n'}) \cap \varphi(U) $$ est un hom\'eomorphisme, o\`u $k' \in \{0,...,k\}$ et $n' \in \{0,...,n\}$ avec  $k'\leq n-n'$. \\
Lorsque $k=k'=0$ et $G$ est l'identit\'e, cela co\"incide avec la d\'efinition d'une sous\texttt{-}vari\'et\'e d'une vari\'et\'e sans bord.
\end{defi}
\begin{exemp} 
La diagonale $\Delta = \{(x,x) \mid x \in [0,+\infty) \} $ est une sous\texttt{-}vari\'et\'e de $\mathbb{R}^{2}_{2}$ car \\$\Delta=$
$\begin{pmatrix} 1 & -1\\ 1 & 1 \end{pmatrix} $ $ \cdot \;\{(x,0) \mid x \in [0,+\infty)\}$. 
\end{exemp}
\begin{defi}
Une fonction de d\'efinition d'une hypersurface bordante est une fonction lisse positive $\rho \in C^\infty(M)$ qui s'annule sur $H$ et seulement sur $H$, et dont la diff\'erentielle soit non\texttt{-}nulle partout sur $H$.
\end{defi}
\begin{defi}
Une vari\'et\'e \`a coins $M$ est une pr\'e\texttt{-}vari\'et\'e \`a coins telle que toutes les hypersurfaces bordantes sont des sous\texttt{-}vari\'et\'es, ou de mani\`{e}re \'equivalente (Lemme 1.8.1~\cite{melrose1996differential}), telle que chaque hypersurface bordante $H$ admette une fonction de d\'efinition.
\end{defi}
\begin{remq} 
Si $F$ est une face ferm\'ee connexe de $M$ de codimension $l$, alors $F$ est une composante connexe de l'intersection des hypersurfaces bordantes contenant $F$. Autrement dit, si $\rho_{1},\rho_{2}, ...,\rho_{l}$ sont des fonctions de d\'efinition des hypersurfaces bordantes contenant $F$, alors $F$ est une composante connexe de $$\{ m\in M \mid \rho_{1}(m)=\rho_{2}(m)= ...=\rho_{l}(m)=0\}.$$
\end{remq}
\begin{exemp}
Le carr\'e $[0,1]\times [0,1]$ est une vari\'et\'e \`a coins. Par contre, la goutte $$T=\{(x,y) \in \mathbb{R}^{2} \mid x\geq 0, \: y^{2}\leq x^{2}-x^{4} \},$$ voir la figure ci\texttt{-}dessous, est une pr\'e\texttt{-}vari\'et\'e \`a coins qui n'est pas une vari\'et\'e \`a coins car son hypersurface bordante n'est pas une sous\texttt{-}vari\'et\'e.
 \begin{center}
\includegraphics[scale=0.3]{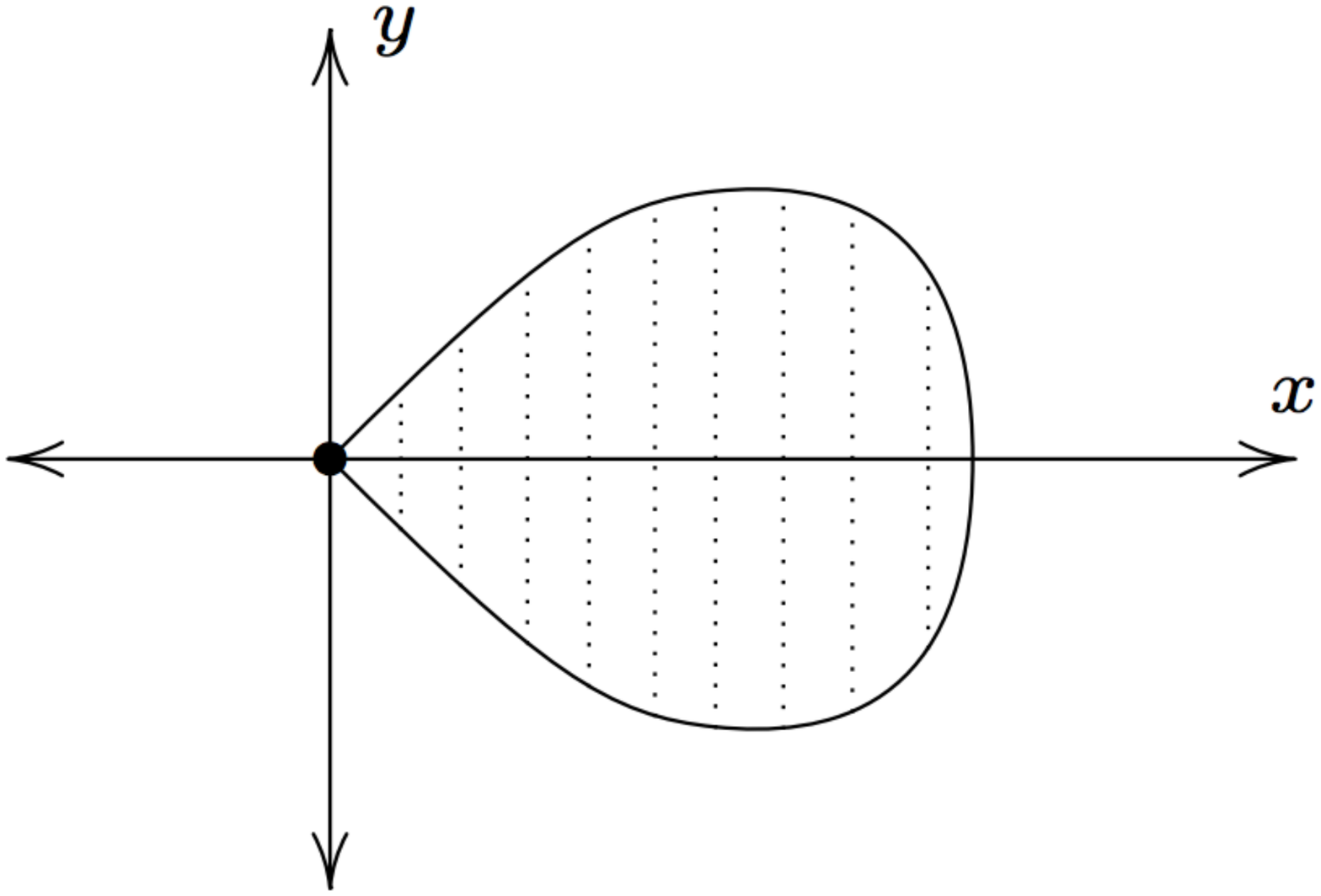} 
\end{center}
\end{exemp}
\begin{defi}
Un fibr\'e est un quadruplet $ (E,S, Z,\phi ),$ o\`u $E$, $S$, $Z$ sont des vari\'et\'es \`a coins et $\phi : E \rightarrow S$ est une application lisse telle que pour tout $s \in S$, il existe un voisinage ouvert $U \subset S$ de $s$, appel\'e ouvert trivialisant, tel qu'on a un diff\'eomorphisme $\Psi : \phi^{-1}(U) \rightarrow U \times Z$ induisant le diagramme commutatif  : 
\begin{displaymath}
\xymatrix@+1pc{  \phi^{-1}(U)
 \ar[r]^{\Psi} \ar[rd]_{\phi}& U \times Z \ar[d]^{ \pr_{1}}\\
   & U.
  }
\end{displaymath} 
o\`u $\pr_{1}$ est la projection sur le premier facteur.
On dira que $(E, S, Z, \phi)$ est d'espace total $E$, de base $S$, de fibre $Z$ et de projection $\phi$. L'application $\Psi$ est appel\'ee trivialisation. Enfin $E_{s} := \phi^{-1}(s)$ est la fibre au\texttt{-}dessus de $s$.
\end{defi}
\begin{defi}
Si $Z$ est un espace vectoriel r\'eel (ou complexe) et si pour toute paire de trivialisations $(U_{i},\Psi_{i})$, $i\in\{1,2\}$ et $s \in U_{1}\cap U_{2}$, l'application ${\Psi_{1}}_{\scriptscriptstyle{\vert E_{s}}} \circ \big({\Psi_{2}}_{\scriptscriptstyle{\vert E_{s}}}\big)^{-1} : Z \rightarrow Z$ est lin\'eaire, alors $(E, S, Z, \phi)$ est un fibr\'e vectoriel r\'eel (ou complexe).	
\end{defi}
\begin{exemp}
Soit $M$ une vari\'et\'e \`a coins. Pour $m\in M,$ on consid\`{e}re l'id\'eal $$I_{m}=\{f\in C^\infty(M) \mid f(m)=0\}.$$ Alors on pose 
$T^{*}_{m}M= I_{m} / I^{2}_{m},$ o\`u $$I^{2}_{m}=\{f\in C^\infty(M) \mid \exists k \in \mathbb{N},\; g_{1}, h_{1},...,  g_{k},h_{k} \in I_{m} \;\textrm{tels\;que}\;  f= \sum \limits_{{i=1}}^k g_{i}h_{i}\}.$$ On peut v\'erifier que le fibr\'e cotangent, donn\'e par $T^{*}M= \bigcup _{m \in M}T^{*}_{m}M,$ et son dual $TM$ le fibr\'e tangent sont deux vari\'et\'es \`a coins constituant l'espace total de deux fibr\'es au sens des vari\'et\'es \`a coins. 
\end{exemp}
Soit $M$ une vari\'et\'e \`a coins de dimension n. 
\begin{defi}\label{larin}
Une alg\`{e}bre de Lie structurale des champs de vecteurs lisses sur $M$ est un sous\texttt{-}espace $\mathcal{V} \subset \mathfrak{X}(M) $ de l'espace vectoriel r\'eel des champs de vecteurs sur $M$ satisfaisant aux propri\'et\'es suivantes :
\begin{enumerate}
\item $\mathcal{V}$ est ferm\'e sous le crochet de Lie des champs de vecteurs;\label{larinn}
\item chaque $V\in \mathcal{V}$ est tangent \`a toutes les faces de $M$ ;\label{larinnn}
\item $\mathcal{V}$ est un $C^\infty(M)$ module localement libre, c'est\texttt{-}\`a\texttt{-}dire que 
        $\forall m \in M$, il existe un voisinage ouvert $U_{m} \subset M$ de $m,$ $ k\in \mathbb{N}$ et une famille de champs de vecteurs $\{X_{1},..., X_{k}\} \subset \mathcal{V}$ de sorte que $\forall Y \in  \mathcal{V}$, il existe une famille de fonctions lisses $\{f_{1},..., f_{k}\} \subset C^\infty(M)$, uniquement d\'etermin\'ee sur $U_{m}$ par $Y$, telle que $\displaystyle Y= \sum \limits_{{i=1}}^k f_{i}X_{i} \; sur \; U_{m}.$ \label{larinnnn}
\end{enumerate}
\end{defi}
\begin{exemp}~\cite{melrose1996differential} \label{bchamp}
Soient $M$ une vari\'et\'e \`a coins et 
\begin{align*}
\mathcal{V}_{b}&=\{V\in \mathfrak{X}(M)\: \textrm{\:tel\:que\:V\:est\:tangent\:\`a\:toutes\:les\:faces\:de\:M}\}\\
                         &=\{ V\in \mathfrak{X}(M) \mid V\rho_{H}=a_{H}\rho_{H},\; a_{H} \in C^\infty(M), \; \forall H \in \mathcal{M}_{1}(M) \},
\end{align*}
o\`u $\rho_{H}$ est une fonction de d\'efinition de l'hypersurface bordante $H$.
Un champ de vecteurs $V\in \mathcal{V}_{b}$ est appel\'e un b\texttt{-}champ de vecteurs.
Par un calcul standard, on a que $\forall \: V_{1}, V_{2} \in  \mathcal{V}_{b}$, $[V_{1}, V_{2}]\rho_{H}\in\rho_{H}C^\infty(M)$. Alors $\mathcal{V}_{b}$ est ferm\'e sous le crochet de Lie des champs de vecteurs. Soit $m$ un \'el\'ement d'une face ferm\'ee $F$ de $M$ de profondeur $l$. Sur $(\rho_{1},...,\rho_{l},y_{1},...,y_{n-l})$ un syst\`{e}me de coordonn\'ees locales centr\'e en $m$, chaque b\texttt{-}champ de vecteurs $V\in \mathcal{V}_{b}$ est de la forme:
$$V=\sum \limits_{{i=1}}^l a_{i}(\rho,y)\rho_{i}\frac{\partial} {\partial \rho_{i}} + \sum \limits_{{i=1}}^{n-l} b_{i}(\rho,y)\frac{\partial }{\partial y_{i}}$$ 
o\`u les coefficients $a_{i}$ et $b_{i}$ sont des fonctions lisses, $y=(y_{1},...,y_{n-l}) \in \mathbb{R}^{n-l}$, et $\rho=(\rho_{1},...,\rho_{l})$ est une famille de fonctions de d\'efinition des hypersurfaces bordantes contenant $F$. Cela montre que l'alg\`{e}bre de Lie des b\texttt{-}champs de vecteurs est engendr\'ee par $\rho_{i} \frac{\partial }{\partial \rho_{i}}$ et $\frac{\partial }{\partial y_{i}}$ pr\`{e}s de $m\in F$, tandis que pr\`{e}s de n'importe quel $m \in \overset{\circ}{M}$, elle est engendr\'ee par le rep\`{e}re habituel $\{\frac{\partial }{\partial x_{1}},...,\frac{\partial }{\partial x_{n}}\}$. Par cons\'equent, $\mathcal {V}_{b}$ est une alg\`{e}bre de Lie structurale des champs de vecteurs puisqu'elle est un $C^\infty$module localement libre de rang $n= \dim M$. D'autre part, on peut associer \`a $\mathcal {V}_{b}$ un fibr\'e vectoriel ${}^{\mathcal{V}_{b}}TM= \bigcup _{m \in M} (\mathcal {V}_{b}/ I_{m} \mathcal {V}_{b})  $, not\'e ${}^bTM$. En effet, il reste \`a v\'erifier que ${}^bTM$ est $C^\infty$, autrement dit, que la matrice de transition entre les changement de cartes est $C^\infty$. Soit $\Phi$ un changement de coordonn\'ees. Si $x=(\rho_{1},...,\rho_{l},y_{l+1},...,y_{n})$ est un syst\`{e}me de coordonn\'ees locales centr\'e en $m \in F$, alors $\Phi(x)$ est un nouveau syst\`{e}me de coordonn\'ees locales de $m$ donn\'e par 
$$\rho'_{i}=\Phi_{i}(x)=\rho_{i}\alpha_{i}(x),\alpha_{i}(x)>0, \forall i \in \{1,...,l\}.$$
Il est clair que les coefficients de la matrice de transition sont $C^\infty$ car :  \\
Si $ i \in \{1,...,l\},$ on calcule que   
$$\rho_{i}\frac{\partial} {\partial \rho_{i}}=(1+ \frac{\rho_{i}}{\alpha_{i}}\frac{\partial \alpha_{i}}{\partial \rho_{i}})\rho'_{i} \frac{\partial}{\partial \rho'_{i}}+ \sum \limits_{\substack{j\leq l \\ i \neq j}}
^{} \frac{\rho_{i}}{\alpha_{j}} \frac{\partial \alpha_{j}}{\partial \rho_{i}} \rho'_{j} \frac{\partial}{\partial \rho'_{j}} +  \sum \limits_{{j=l+1}}^{n} \rho_{i}\frac{\partial\Phi_{j}}{\partial \rho_{i}}\frac{\partial}{\partial\Phi_{j}}.$$
Si $i \in \{l+1,...,n\},$ on a plut\^ot que
$$\frac{\partial}{\partial y_{i}}= \sum \limits_{{j=1}}^{l} \frac{1}{\alpha_{j}} \frac{\partial \alpha_{j}}{\partial y_{i}} \rho'_{j} \frac{\partial}{\partial \rho'_{j}} +  \sum \limits_{{j=l+1}}^{n} \frac{\partial\Phi_{j}}{\partial y_{i}}\frac{\partial}{\partial\Phi_{j}}.$$
D'o\`u ${}^bTM$ est bien d\'efini. De plus, on remarque qu'il existe un morphisme canonique de fibr\'es vectoriels $$\varrho_{\mathcal{V}_{b}}:  {}^bTM \rightarrow TM,$$ qui envoie la classe d'un champ de vecteurs $V\in\mathcal {V}_{b}$ \`a un vecteur tangent $V(m) \in T_{m}M$. Sur un syst\`{e}me de coordonn\'ees locales de $m \in F$, on voit que le champ de vecteurs $\rho_{i} \frac{\partial }{\partial \rho_{i}}$ n'est pas nul dans ${}^bT_{m}M$ car $\rho_{i} \frac{\partial }{\partial \rho_{i}} \notin  I_{m} \mathcal {V}_{b}$. Par contre $\rho_{i} \frac{\partial }{\partial \rho_{i}}$ est nul dans $T_{m}M=\mathfrak{X}(M)/ I_{m} \mathfrak{X}(M)$. On d\'eduit que $\varrho_{\mathcal{V}_{b}}$ est un isomorphisme sur $\overset{\circ}{M}$ et de rang $n-l$ sur l'int\'erieur d'une face de codimension $l,$ $$\rho_{i} \frac{\partial }{\partial \rho_{i}} \mapsto 0 \;\; \;\; \frac{\partial }{\partial y_{j}} \mapsto \frac{\partial }{\partial y_{j}},\; j>l, \:i \leq l.$$ 
\end{exemp}
\begin{remq}
En utilisant la propri\'et\'e~\ref{larinnn} de la D\'efinition~\ref{larin}, on peut voir que chaque alg\`{e}bre de Lie structurale $\mathcal{V}$ est une sous\texttt{-}alg\`{e}bre de $\mathcal{V}_{b}$.
\end{remq}
\begin{remq}
On remarque que la propri\'et\'e~\ref{larinnnn} de la D\'efinition~\ref{larin}  \'equivaut \`a dire que $\mathcal{V}$ est un $C^\infty$module projectif. Ainsi, d'apr\`es le th\'eor\`{e}me Serre\texttt{-}Swan~\cite{karoubi2008k}, il existe un fibr\'e vectoriel $${}^{\mathcal{V}}TM :=   \bigcup _{m \in M} (\mathcal {V}/ I_{m} \mathcal {V}) \rightarrow M$$ tel que $\mathcal{V}\simeq\Gamma( {}^{\mathcal{V}}TM )$. Puisque $\mathcal{V} \subset \mathfrak{X}(M)$, il existe un morphisme canonique de fibr\'es vectoriels $$\varrho_{\mathcal{V}}: {}^{\mathcal{V}}TM \rightarrow TM,$$ appel\'e l'ancre ou encore l'application d'ancrage. 
\end{remq}
\begin{exemp}~\cite{rafe1991elliptic} \label{rasi}
Soient $M$ une vari\'et\'e \`a bord $\partial M$, qui est l'espace total d'un fibr\'e de vari\'et\'es lisses $\pi : \partial M \rightarrow B$, et $$\mathcal {V}_{e}=\{V \in \mathfrak{X}(M) \; tel \; que \; V \; est \; tangent \; \textrm{\`a}\; toutes \;les \;fibres\;de\;\pi \}.$$
Soient $V_{1}, V_{2} \in \mathcal {V}_{e}.$ Puisque $[V_{1},V_{2}]_{\scriptscriptstyle{\vert F}}=[{V_{1}}_{\scriptscriptstyle{\vert F}},{V_{2}}_{\scriptscriptstyle{\vert F}}]$ pour $F$ une fibre de $\pi$, le commutateur est bien tangent aux fibres de $\pi$. Soit $(\rho,y,z)$ un syst\`{e}me de coordonn\'ees locales pr\`{e}s du bord, o\`u $\rho$ est une fonction de d\'efinition du bord, $y$ est une famille de variables sur la base $B$ de $\pi$ et $z$ est une famille de variables sur les fibres de $\pi.$ Alors dans ce syst\`{e}me de coordonn\'ees,
$\mathcal {V}_{e}$ est engendr\'e par $\rho\frac{\partial }{\partial \rho}$, $\rho \frac{\partial }{\partial y}$ et $\frac{\partial }{\partial z}$. Il s'agit donc d'un $C^\infty$module projectif. On d\'eduit que $\mathcal{V}_{e}$ est une alg\`{e}bre de Lie structurale et qu'on peut lui associer un fibr\'e vectoriel not\'e  ${}^{e}TM$. En particulier, lorsque $B:=\partial M$ et $\pi=\Id,$ on retrouve l'alg\`{e}bre de Lie des 0\texttt{-}champs de vecteurs de Mazzeo\texttt{-}Melrose~\cite{mazzeo1987meromorphic} sur une vari\'et\'e \`a bord, 
$$\mathcal {V}_{0}=\{V \in \mathfrak{X}(M) \mid \; V_{\scriptscriptstyle{\vert \partial M}}=0 \}.$$ 
Dans ce cas, on d\'enote ${}^{e}T_{}M$ par ${}^{0}T_{}M.$
\end{exemp}
Nous pr\'esentons maintenant quelques d\'efinitions et r\'esultats standards concernant
les alg\'ebro\"ides de Lie, dont il sera fait usage dans la suite. Des r\'ef\'erences sur cette notion sont~\cite{mackenzie1987lie} et~\cite{crainic2003integrability}. \\
Soit $M$ une vari\'et\'e \`a coins.
 \begin{defi}
Un alg\'ebro\"ide de Lie sur $M$ est la donn\'ee d'un fibr\'e vectoriel $A \rightarrow M$, d'un morphisme $\varrho: A \rightarrow TM,$ appel\'e ancre, et d'une structure d'alg\`{e}bre de Lie sur son module de sections globales $\Gamma(A)$ dont le crochet satisfait \`a la r\`{e}gle de Leibniz,
$$[u,f v] = f [u, v] + (\varrho_{\Gamma}(u).f ) v, \; \forall u,v \in \Gamma(A), \;f \in C^{\infty}(M),$$ o\`u $\varrho_{\Gamma} : \Gamma(A) \rightarrow \Gamma(TM)$ est l'application des modules des sections induites par l'ancre $\varrho$.
 \end{defi}
 \begin{remq}
 Par l'antisym\'etrie du crochet, la r\`{e}gle de Leibniz \`a gauche est aussi satisfaite :
 $$[fu, v] = f [u, v] - (\varrho_{\Gamma}(v).f ) u, \: \forall u,v \in \Gamma(A),\;f \in C^{\infty}(M).$$
 \end{remq}
 \begin{prop}
L'application $\varrho_{\Gamma}$ est un morphisme d'alg\`{e}bres de Lie. Autrement dit,\\ $\varrho_{\Gamma}([u,v])= [\varrho_{\Gamma}(u), \varrho_{\Gamma}(v)],$ $\forall u,v \in \Gamma(A),$ o\`u \`a gauche on a le crochet d'alg\'ebro\"ide de Lie et \`a droite le crochet de Lie des champs de vecteurs.
 \end{prop}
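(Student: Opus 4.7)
The plan is to show that the obstruction
\[
D(u,v) := \varrho_{\Gamma}([u,v]) - [\varrho_{\Gamma}(u), \varrho_{\Gamma}(v)] \in \Gamma(TM)
\]
vanishes identically by combining the Jacobi identity on $\Gamma(A)$ with the Leibniz rule (in its left and right versions). The key idea is to test $D(u,v)$ against a third section multiplied by an arbitrary function and read off its action as a derivation.

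More precisely, I would fix $u,v,w \in \Gamma(A)$ and $f \in C^\infty(M)$, and compute $[u,[v,fw]]$ in two different ways. On the one hand, applying the Leibniz rule twice gives
\[
[u,[v,fw]] = f[u,[v,w]] + (\varrho_{\Gamma}(u).f)[v,w] + (\varrho_{\Gamma}(v).f)[u,w] + \bigl(\varrho_{\Gamma}(u).\varrho_{\Gamma}(v).f\bigr)w.
\]
Performing the analogous expansion for $[v,[fw,u]]$ and $[fw,[u,v]]$, then summing the three and invoking the Jacobi identity on the left-hand side, the purely $\Gamma(A)$-valued terms cancel by Jacobi applied to $(u,v,w)$ and the function-valued coefficients of $[v,w]$ and $[u,w]$ also cancel thanks to the left-Leibniz rule noted in the preceding remark. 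What remains is an identity of the form
\[
\Bigl(\varrho_{\Gamma}(u).\varrho_{\Gamma}(v).f - \varrho_{\Gamma}(v).\varrho_{\Gamma}(u).f - \varrho_{\Gamma}([u,v]).f\Bigr)\, w = 0,
\]
i.e. $D(u,v).f \cdot w = 0$ as a section of $TM$ tensored with $A$.

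Since $w \in \Gamma(A)$ is arbitrary, picking $w$ nonvanishing at a point yields $D(u,v).f = 0$ for every $f \in C^\infty(M)$; and a vector field on $M$ (a manifold with corners) which annihilates every smooth function must be zero, as one sees by restricting to the interior $\overset{\circ}{M}$ and using continuity up to the boundary. This gives $D(u,v) = 0$, which is the claim.

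I do not expect a genuine obstacle here: the argument is a bookkeeping computation, entirely parallel to the classical proof for Lie algebroids over manifolds without boundary. The only point requiring a brief justification in the context of varieties with corners is the final step, namely that a derivation on $C^\infty(M)$ vanishes as soon as it acts trivially on all smooth functions; this follows from the fact that $\overset{\circ}{M}$ is dense in $M$ and the anchor sends sections of $A$ to honest smooth vector fields tangent to the faces.
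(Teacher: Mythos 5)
Your proof is correct and is essentially the same as the paper's: both test the difference of the two derivations against a section of the form $fw$, expand the iterated brackets with the Leibniz rule, and cancel everything with the Jacobi identity, arriving at $\bigl(\varrho_{\Gamma}(u).\varrho_{\Gamma}(v).f - \varrho_{\Gamma}(v).\varrho_{\Gamma}(u).f - \varrho_{\Gamma}([u,v]).f\bigr)w = 0$. You merely organize the computation as a cyclic sum rather than massaging one side directly into the other, and you are slightly more explicit than the paper about the final step (choosing $w$ nonvanishing at a point and noting that a vector field killing all of $C^{\infty}(M)$ vanishes), which is a welcome precision but not a different argument.
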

 \begin{proof}
Par le r\`{e}gle de Leibniz, on a que $$(\varrho_{\Gamma}(u).f ) v= [u, fv]-f[u, v] \: \forall u,v \in \Gamma(A),\;f \in C^{\infty}(M).$$ Par suite, on obtient que $\forall w \in \Gamma(A),$ 
\begin{align*}
([\varrho_{\Gamma}(u),\varrho_{\Gamma}(v)].f)w & =\big(\varrho_{\Gamma}(u).(\varrho_{\Gamma}(v).f)\big)w-\big(\varrho_{\Gamma}(v).(\varrho_{\Gamma}(u).f)\big)w\\
                                                                             & =[u,(\varrho_{\Gamma}(v).f )w]-(\varrho_{\Gamma}(v).f )[u,w]\\
                                                                             &\;\;\;\;\;\;\;\;\;\;\;\;\;\;\;\;\;\;\;\;\;\;\;\;\;\;\;\;\;\;-[v,(\varrho_{\Gamma}(u).f )w]+(\varrho_{\Gamma}(u).f )[v,w]\\
                                                                             &= [u,[v,fw]] -[u,f[v,w]]-[v,f[u,w]]+f[v,[u,w]]\\
                                                                                 &\;\;\;-[v,[u,fw]] +[v,f[u,w]]+[u,f[v,w]]-f[u,[v,w]]\\
                                                                             &= [u,[v,fw]] - [v,[u,fw]]+ f\big([v,[u,w]]-[u,[v,w]]\big).                                                                              
\end{align*}
En utilisant l'identit\'e de Jacobi et l'antisym\'etrie du crochet, on d\'eduit donc que
\begin{align*}
([\varrho_{\Gamma}(u),\varrho_{\Gamma}(v)].f)w & =- [fw,[u,v]] + f(-[w,[v,u]])\\
                                            &= [[u,v],fw] - f([[u,v],w]) \\
                                            &=(\varrho_{\Gamma}([u,v]).f)w,
\end{align*}
d'o\`u le r\'esultat.
 \end{proof}
 \begin{exemp}
Tout fibr\'e $A \rightarrow M$ en alg\`{e}bres de Lie est un alg\'ebro\"ide de Lie dont l'ancre est nulle.
 \end{exemp}
 \begin{defi}
 Un alg\'ebro\"ide de Lie sur $M$ est dit bordant si tous les champs de vecteurs de $\varrho_{\Gamma}(\Gamma(A))$ sont tangents \`a toutes les faces de $M$. 
 \end{defi}
Le concept d'une alg\`{e}bre de Lie structurale sera alors \'equivalent au concept d'alg\'ebro\"ide de Lie bordant, si $\varrho_{\Gamma}$ est injective et $\varrho_{\Gamma}(\Gamma(A)) \subset \mathcal {V}_{b}.$
 \begin{defi}
 Soit $ (A \rightarrow M, \varrho_{A}, [\cdot,\cdot]_{A})$ et $ (B \rightarrow M, \varrho_{B}, [\cdot,\cdot]_{B})$ deux alg\'ebro\"ides de Lie au\texttt{-}dessus de la m\^eme base. Un morphisme d'alg\'ebro\"ides de Lie au\texttt{-}dessus de la m\^eme base est la donn\'ee d'un morphisme $\Phi: A \rightarrow B$ entre les fibr\'es vectoriels $A \rightarrow M$ et $B \rightarrow M$ qui commute aux ancres et aux crochets au sens o\`u
$${(\varrho_{A})}_{\Gamma}={(\varrho_{B})}_{\Gamma} \circ \Phi, \;\; \Phi([u,v]_{A})= [\Phi(u), \Phi(v)]_{B}, \;\forall u,v \in \Gamma(A).$$
 \end{defi}
 \begin{defi} \label{larz}
Soit $ (A \rightarrow M, \varrho_{A}, [.,.])$ un alg\'ebro\"ide de Lie. On dit qu'un sous\texttt{-}fibr\'e vectoriel $B \rightarrow N$ de $A_{\scriptscriptstyle{\vert N }} $, o\`u $N$ est une sous\texttt{-}vari\'et\'e de $M$, est un sous\texttt{-}alg\'ebro\"ide de Lie de $A$ si : 
\begin{equation}\label{item}
\forall p \in N, u \in B_{p}, \; on\;a\;que\;\varrho_{A}(u) \in T_{p}N;
\end{equation}
\begin{equation}\label{itemmm}
\forall u,v \in \Gamma(A)\; telles\;que\; u_{\scriptscriptstyle{\vert N }},v_{\scriptscriptstyle{\vert N }} \in  \Gamma(B),\; on\;a\;que\;[u,v]_{\scriptscriptstyle{\vert N }} \in \Gamma(B).
\end{equation}
\end{defi}
\begin{prop}
Le sous\texttt{-}alg\'ebro\"ide de Lie $B \rightarrow N$ est ancr\'e par la restriction de $\varrho_{A}$ \`a $B$ et est \'equip\'e par le crochet de $A$ restreint \`a $\Gamma(B)$. 
Muni de cette ancre et de ce crochet, $B \rightarrow N$ est bien un alg\'ebro\"ide de Lie, not\'e aussi par $B$.
\end{prop}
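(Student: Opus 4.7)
Le plan est de v\'erifier successivement les trois points qui constituent la d\'efinition d'un alg\'ebro\"ide de Lie pour le triplet $(B\to N, \varrho_A|_B, [\cdot,\cdot]_A|_{\Gamma(B)})$ : que l'ancre restreinte prenne ses valeurs dans $TN$, que le crochet soit bien d\'efini sur $\Gamma(B)$, et que les axiomes habituels (bilin\'earit\'e, antisym\'etrie, identit\'e de Jacobi, r\`egle de Leibniz, compatibilit\'e anchor/crochet) en r\'esultent. D'embl\'ee, la propri\'et\'e~\ref{item} de la D\'efinition~\ref{larz} assure directement que $\varrho_A$ envoie la fibre $B_p$ dans $T_pN$ pour tout $p\in N$, ce qui d\'efinit un morphisme de fibr\'es vectoriels $\varrho_B : B \to TN$.

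Pour d\'efinir le crochet, on consid\`ere $s,t \in \Gamma(B)$ et on choisit, au moins localement sur $N$, des extensions $\widetilde{s}, \widetilde{t} \in \Gamma(A)$ v\'erifiant $\widetilde{s}|_N = s$ et $\widetilde{t}|_N = t$ (de telles extensions existent par trivialit\'e locale de $A$). On pose alors $[s,t]_B := [\widetilde{s},\widetilde{t}]_A|_N$, et la propri\'et\'e~\ref{itemmm} garantit que ce r\'esultat appartient \`a $\Gamma(B)$.

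Le c\oe ur de la preuve, et l'obstacle principal, consiste \`a v\'erifier que cette construction est ind\'ependante du choix des extensions. Si $\widetilde{s}'$ est une autre extension de $s$, alors $w := \widetilde{s}' - \widetilde{s}$ est une section de $A$ qui s'annule sur $N$ ; en la d\'ecomposant localement dans un rep\`ere $(e_i)$ de $A$ sous la forme $w = \sum_i f_i e_i$ avec $f_i \in C^\infty(M)$, la condition $w|_N = 0$ \'equivaut \`a $f_i|_N = 0$. La r\`egle de Leibniz \`a gauche donne alors
$$[w,\widetilde{t}]_A = \sum_i f_i [e_i,\widetilde{t}]_A - \sum_i \bigl(\varrho_A(\widetilde{t}).f_i\bigr)\, e_i.$$
Le premier terme s'annule sur $N$ puisque $f_i|_N=0$. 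Pour le second, l'observation cruciale est que $\varrho_A(\widetilde{t})(p) \in T_pN$ pour tout $p\in N$ gr\^ace \`a la propri\'et\'e~\ref{item} appliqu\'ee \`a $t \in \Gamma(B)$ ; la d\'eriv\'ee $\varrho_A(\widetilde{t}).f_i$ ne fait donc intervenir en $p$ que les d\'eriv\'ees de $f_i$ le long de $N$, qui sont nulles puisque $f_i$ s'annule identiquement sur $N$. Ainsi $[w,\widetilde{t}]_A|_N = 0$, et un argument sym\'etrique sur l'autre entr\'ee donne l'ind\'ependance recherch\'ee.

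Une fois le crochet $[\cdot,\cdot]_B$ bien d\'efini, la bilin\'earit\'e, l'antisym\'etrie, l'identit\'e de Jacobi et la r\`egle de Leibniz pour $(B\to N, \varrho_B, [\cdot,\cdot]_B)$ se d\'eduisent imm\'ediatement des propri\'et\'es correspondantes de $(A\to M, \varrho_A, [\cdot,\cdot]_A)$ en restreignant \`a $N$ les identit\'es satisfaites par les extensions, tandis que la compatibilit\'e $\varrho_B([s,t]_B) = [\varrho_B(s),\varrho_B(t)]$ d\'ecoule du fait que $\varrho_\Gamma$ est un morphisme d'alg\`ebres de Lie pour $A$, combin\'e avec le fait que la restriction \`a $N$ d'un champ tangent \`a $N$ reste tangent \`a $N$. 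Le caract\`ere local de la construction permet enfin, via une partition de l'unit\'e, de recoller ces crochets locaux en une structure globale sur $\Gamma(B)$, ce qui ach\`eve la v\'erification.
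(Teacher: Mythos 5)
Votre preuve est correcte et suit essentiellement la m\^eme d\'emarche que celle de l'article : on \'etablit que le crochet d\'efini par extensions est ind\'ependant du choix de celles-ci en \'ecrivant la diff\'erence de deux extensions comme combinaison \`a coefficients s'annulant sur $N$, puis en invoquant la r\`egle de Leibniz et le fait (propri\'et\'e~\eqref{item}) que l'ancre d'une section de $B$ est tangente \`a $N$, de sorte que la d\'eriv\'ee de ces coefficients le long de l'ancre s'annule sur $N$. Votre d\'ecomposition dans un rep\`ere local $w=\sum_i f_i e_i$ est m\^eme l\'eg\`erement plus soigneuse que l'\'ecriture en un seul produit $f_i\tilde{w}_i$ de l'article, et la v\'erification finale des axiomes restants par restriction co\"incide avec la v\'erification de la r\`egle de Leibniz donn\'ee dans le texte.
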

\begin{proof}
D'abord, on v\'erifie que le crochet $[.,.]_{\scriptscriptstyle{\vert N }}$ est bien d\'efini. En effet, soient $u_{i} \in \Gamma(B)$ et $\tilde{u}_{i},\tilde{v}_{i} \in \Gamma(A)$ deux prolongements pour $u_{i},$ pour chaque $i \in \{1,2\}$. Autrement dit, $\displaystyle \tilde{v}_{i_{\scriptscriptstyle{\vert N }}}= \tilde{u}_{i_{\scriptscriptstyle{\vert N }}}= u_{i}.$ On a donc que 
$ \tilde{u}_{i}-\tilde{v}_{i} = f_{i} \tilde{w}_{i},$
o\`u $\tilde{w}_{i} \in \Gamma(A)$ et $f_{i}\in C^\infty(M)$ est tel que $f_{i_{\scriptscriptstyle{\vert N}}}=0.$ De plus, on voit que $$[\tilde{u}_{1},\tilde{u}_{2}]-[\tilde{v}_{1},\tilde{v}_{2}]= [\tilde{u}_{1}-\tilde{v}_{1},\tilde{u}_{2}]+[\tilde{v}_{1},\tilde{u}_{2}-\tilde{v}_{2}].$$
Or, $$\displaystyle [\tilde{u}_{1}-\tilde{v}_{1},\tilde{u}_{2}]= f_{1}[ \tilde{w}_{1},\tilde{u}_{2}]-\big( {(\varrho_{A})}_{\Gamma}(\tilde{u}_{2}). f_{1}\big) \tilde{w}_{1}.$$ Par la condition~\eqref{item}, on obtient que $\displaystyle {{\big((\varrho_{A})}_{\Gamma}(\tilde{u}_{2}). f_{1}\big)}_{\scriptscriptstyle{\vert N }}=0.$ Par suite, on a que $[\tilde{u}_{1}-\tilde{v}_{1},\tilde{u}_{2}]_{\scriptscriptstyle{\vert N}}=0,$ puisque $f_{1_{\scriptscriptstyle{\vert N }}}=0.$
 De m\^eme, on d\'eduit que $[\tilde{v}_{1},\tilde{u}_{2}-\tilde{v}_{2}]_{\scriptscriptstyle{\vert N}}=0.$ et donc que $[\tilde{u}_{1},\tilde{u}_{2}]_{\scriptscriptstyle{\vert N}}=[\tilde{v}_{1},\tilde{v}_{2}]_{\scriptscriptstyle{\vert N}}.$

Il reste finalement \`a montrer que l'application d'ancrage $\varrho_{B}:={\varrho_{A}}_{\scriptscriptstyle{\vert B}}$ satisfait \`a la r\`{e}gle de Leibniz. Soient $u_{i}\in \Gamma(B),$ et $\tilde{u}_{i} \in \Gamma(A)$ un prolongement de $u_{i},$ pour chaque $i\in\{1,2\}.$ Soit $f \in C^\infty(N)$ et $\tilde{f} \in C^\infty(M)$ un prolongement. On a alors que 
\begin{align*}
[fu_{1}, u_{2}] &=[\tilde{f} \tilde{u}_{1}, \tilde{u}_{2}]_{\scriptscriptstyle{\vert N }} \\
    &={\tilde{f}}_{\scriptscriptstyle{\vert N }} [\tilde{u}_{1}, \tilde{u}_{2}]_{\scriptscriptstyle{\vert N }} - \big({(\varrho_{A})}_{\Gamma}(\tilde{u}_{2}). \tilde{f} )\tilde{u}_{1} \big)_{\scriptscriptstyle{\vert N }} \\
    &=f[u_{1}, u_{2}] - \big({{(\varrho_{B})}_{\Gamma}} (u_{2}). f \big)u_{1}.
\end{align*}
\end{proof}
\begin{exemp}\label{lola}
Soient $M$ une vari\'et\'e \`a coins et $(A \rightarrow M, \varrho, [.,.])$ un alg\'ebro\"ide de Lie bordant. Ainsi, pour chaque face ferm\'ee $F$ de $M$, la restriction de $A$ \`a $F$ est un alg\'ebro\"ide de Lie bordant. En effet, $A_{\scriptscriptstyle{\vert F}}$ est un sous\texttt{-}fibr\'e vectoriel et alors la propri\'et\'e~\eqref{itemmm} est trivialement satisfaite. Puisque l'alg\'ebro\"ide de Lie $A$ est bordant, la propri\'et\'e~\eqref{item} est aussi satisfaite. 
\end{exemp}
\begin{defi}
Une structure de Lie \`a l'infini sur une vari\'et\'e $\overset{\circ}{M}$ est un couple $(M,\mathcal{V})$, o\`u $M$ est une vari\'et\'e \`a coins compacte dont son int\'erieur est $\overset{\circ}{M}$ et $\mathcal{V}$ est une alg\`{e}bre de Lie structurale des champs de vecteurs sur $M$ telle que son ancre $\varrho_{\mathcal{V}}$ est un isomorphisme sur $\overset{\circ}{M}$ (donc ${}^{\mathcal{V}}TM _{\scriptscriptstyle{\vert \overset{\circ}{M}}} \simeq T\overset{\circ}{M}$). 
\end{defi}
Nous allons mettre en \'evidence la d\'efinition ci\texttt{-}dessus par des exemples illustratifs.
\begin{exemp} \label{bcchamp}
Un exemple fondamental est l'alg\`{e}bre structurale $\mathcal{V}_{b},$ vue dans l'Exemple~\ref{bchamp}, qui est bien une structure de Lie \`a l'infini.  
\end{exemp}
\begin{exemp}~\cite{melrose1995geometric} \label{scs}
Soit $M$ une vari\'et\'e \`a bord compacte et $\rho$ une fonction de d\'efinition du bord.
L'espace vectoriel des champs de vecteurs $\mathcal {V}_{sc}= \rho\mathcal{V}_{b}$ est appel\'e l'alg\`{e}bre de Lie de diffusion. Un champ de vecteurs de $\mathcal {V}_{sc}$ est appel\'e un champs de vecteurs de diffusion ou encore un sc\texttt{-}champ de vecteurs. Le couple $(M,\mathcal{V}_{sc})$ est alors une structure de Lie \`a l'infini. En effet, on a que
$\forall \: V_{1}, V_{2} \in  \mathcal{V}_{sc}$,
 $$[V_{1}, V_{2}]=[\rho V'_{1},\rho V'_{2}]=\rho^{2}[V'_{1}, V'_{2}]+\rho(V'_{1}\rho)V'_{2}- \rho(V'_{2}\rho)V'_{1},$$ o\`u  $V'_{1}, V'_{2} \in  \mathcal{V}_{b}.$
Soit $m$ un \'el\'ement de $\partial M$. Dans un syst\`{e}me de coordonn\'ees locales $(\rho,y)$ centr\'e en $m$, chaque sc\texttt{-}champ de vecteurs $V \in \mathcal{V}_{sc}$ est donn\'ee localement par
$$V= a(\rho,y)\rho^{2} \frac {\partial } {\partial \rho}+ \sum \limits_{{i=1}}^{n-1}  a_{i}(\rho,y) \rho \frac {\partial } {\partial y_{i}},$$ o\`u les coefficients $a$ et $a_{i}$ sont des fonctions lisses et $y=(y_{1},...,y_{n-1}) \in \mathbb{R}^{n-1}$.\\
L'alg\`{e}bre de Lie $\mathcal {V}_{sc}$ est engendr\'ee par $\rho^{2}\frac{\partial }{\partial \rho}$ et $\rho \frac{\partial }{\partial y}$ pr\`{e}s du bord.  
Comme $\mathcal {V}_{sc}$ est un $C^\infty$module projectif, on peut lui associer un fibr\'e vectoriel not\'e ${}^{sc}TM$. 
\end{exemp}
\begin{exemp}~\cite{mazzeo1998pseudodifferential} \label{mazzza}
Soit $M$ une vari\'et\'e \`a bord $\partial M$, qui est l'espace total d'un fibr\'e de vari\'et\'es lisses $\Phi : \partial M \rightarrow S$ de fibre typique $Z$. Soit $\rho\in C^{\infty}(M)$ une fonction de d\'efinition du bord. On d\'efinit alors une alg\`{e}bre de Lie de champs de vecteurs par $$\mathcal {V}_{\Phi} := \{V \in \mathfrak{X}(M) \; tel \; que \; V\rho \in \rho^{2}C^{\infty}(M)  \; \textrm{et}\; \Phi_{*}(V_{\scriptscriptstyle{\vert \partial M }}) =0 \}.$$
Soit $(\rho,y,z)$ un syst\`{e}me de coordonn\'ees locales pr\`{e}s du bord, o\`u $y$ est une famille de variables sur la base $S$ de $\Phi$ et $z$ est une famille de variables sur les fibres de $\Phi.$ Alors dans ce syst\`{e}me de coordonn\'ees,
$\mathcal {V}_{\Phi}$ est engendr\'e par $\rho^{2} \frac{\partial }{\partial \rho}$, $\rho\frac{\partial }{\partial y}$ et $\frac{\partial }{\partial z}$. Il s'agit donc d'un $C^\infty$module projectif. On d\'eduit que $\mathcal{V}_{\Phi}$ est une structure de Lie \`a l'infini et qu'on peut lui associer un fibr\'e vectoriel not\'e  ${}^{\Phi}TM$. 
\end{exemp}
\begin{exemp} \label{feuill}
Une g\'en\'eralisation de l'exemple pr\'ec\'edent est la structure Lie \`a l'infini des champs de vecteurs \`a pointes feuillet\'ees au sens de~\cite{rochon2012pseudodifferential}. En effet, soient $M$ une vari\'et\'e compacte \`a bord $\partial M$ qui est muni d'un feuilletage lisse $\mathcal{F}$ et $\rho$ une fonction de d\'efinition du bord. On d\'efinit l'espace vectoriel des champs de vecteurs \`a pointes feuillet\'es (ou tout simplement $\mathcal{F}$\texttt{-}champs de vecteurs) par $$\mathcal {V}_{\mathcal{F}} := \{V \in \mathfrak{X}(M) \; tel \; que \; V\rho \in \rho^{2}C^{\infty}(M)  \; \textrm{et}\; V_{\scriptscriptstyle{\vert \partial M }} \in \Gamma (T\mathcal{F})\}.$$ Alors, on peut voir comme auparavant que $\mathcal {V}_{\mathcal{F}}$ est une structure de Lie \`a l'infini.
\end{exemp}
\begin{exemp} \label{jaw}
Soit $M$ une vari\'et\'e \`a coins compacte ayant $k$ hypersurfaces bordantes $H_{1},...,H_{k}$. Pour chaque $i\in \{1,...k\},$ on suppose que $H_{i}$ admet une structure de fibr\'e, $\pi_{i}: H_{i} \rightarrow S_{i}$, o\`u la base $S_{i}$ et les fibres sont des vari\'et\'es \`a coins compactes. On d\'enote la famille de fibr\'es $(\pi_{i})_{i\in \{1,...k\}}$ par $\pi$. On dit que $(M, \pi)$ est une vari\'et\'e \`a coins fibr\'es s'il existe un ordre partiel sur les $k$ hypersurfaces bordantes tel que :
\begin{enumerate}
\item pour chaque $I \subset \{1,...,k\}$ avec $\bigcap _{ i \in I} H_{i} \neq 0$, la famille $(H_{i})_{i\in I}$ est totalement ordonn\'ee;
\item si $H_{i}<H_{j}$ alors $H_{i} \cap H_{j} \neq 0,$ $\pi_{i_{\scriptscriptstyle{ \vert H_{i} \cap H_{j}}}} :  H_{i} \cap H_{j} \rightarrow S_{i} $ est une submersion surjective, \\ $S_{ji}:=\pi_{j}(H_{i} \cap H_{j} )$ est une hypersurface bordante de $S_{j}$ et il existe une submersion surjective\\ $\pi_{ji}: S_{ji} \rightarrow S_{i}$ satisfaisant $\pi_{ji} \circ \pi_{j}= \pi_{i}$ sur $H_{i} \cap H_{j};$
\item les hypersurfaces bordantes de $S_{i}$ sont donn\'ees par $S_{ij}$ pour $H_{j}<H_{i}.$
\end{enumerate} 
Pour plus de d\'etails, on se r\'ef\`{e}re le lecteur aux~\cite{albin2011resolution} et~\cite{albin2012signature}.
\`a partir de cette d\'efinition, il d\'ecoule directement que chaque base $S_{j}$ est naturellement une vari\'et\'e \`a coins fibr\'es avec la structure fibr\'ee de ses hypersurfaces $(S_{ji})_{i\in \{1,...k\}}$ induite par les fibr\'es $(\pi_{ji}: S_{ji} \rightarrow S_{i})$ avec $H_{i}<H_{j}.$ 
De m\^eme, chaque fibre de $\pi_{i}: H_{i} \rightarrow S_{i}$ est aussi une vari\'et\'e \`a coins fibr\'es. Maintenant, pour $\rho_{i}$ un choix de fonction d\'efinissant $H_{i}$ compatible avec $\pi,$ \`a savoir que pour chaque $j$ tel que $H_{i}<H_{j}$ la restriction $\rho_{i}$ sur $H_{j}$ est constante le long des fibres de $\pi_{j} : H_{j} \rightarrow S_{j}$,
on d\'efinit (voir~\cite{conlon2019quasi})
\begin{multline*}
\mathcal{V}_{QFB}=\{ V\in \mathcal{V}_{b}  \textrm{\:tel\:que\:} \forall i\in \{1,...k\},\;V_{\scriptscriptstyle{ \vert H_{i} }}\:\textrm{est\:tangent\:\`a\:toutes\:les\:fibres\:de\:} \pi_{i}\\
 \textrm{et}\: V \upsilon_{i} \in \upsilon_{i}^{2} C^\infty(M)\: \textrm{o\`u}\: \upsilon_{i}=\prod_{H_{i}<H_{j}} \rho_{j}\}.
\end{multline*}
Un champ de vecteurs $V\in \mathcal{V}_{QFB}$ est appel\'e un champ de vecteurs quasi fibr\'e au bord ou simplement un QFB\texttt{-}champ de vecteurs. 
On peut voir ais\'ement que $\mathcal{V}_{QFB}$ est ferm\'e sous le crochet de Lie des champs de vecteurs. Soit $m$ un \'el\'ement d'une face ferm\'ee $F$ de $M$ de profondeur $l$. Apr\`{e}s un nouvel \'etiquetage, si n\'ecessaire, on peut supposer que $F=H_{1}\cap...\cap H_{l}$ de sorte que $H_{1}<H_{2}<...<H_{l}.$ On prend alors un petit voisinage ouvert de $m$ tel que $\pi_{i}$ est trivial pour chaque $i\in\{1,...,l\}$. On consid\`{e}re le $k_{i}$\texttt{-}tuplet des fonctions $y_{i}=(y_{i}^{1},...,y_{i}^{k_{i}})$ et $z=(z_{1},...,z_{q})$ tel que 
$(\rho_{1},y_{1},...,\rho_{l},y_{l},z)$ d\'efinit le syst\`{e}me de coordonn\'ees locales centr\'e de $m$ qui v\'erifie que sur chaque $H_{i},$ $(\rho_{1},y_{1},...,\rho_{i-1},y_{i-1},y_{i})$ induit des coordonn\'ees sur la base $S_{i}$ avec $\pi_{i}$ correspondant \`a l'application
$$(\rho_{1},y_{1},...,\widehat{\rho_{i}},y_{i},...,\rho_{l},y_{l},z) \mapsto(\rho_{1},y_{1},...,\rho_{i-1},y_{i-1},y_{i}),$$
o\`u le symbole $\:\widehat{}\:$ surmontant une lettre indique qu'il faut l'omettre. 
Et par suite, on peut v\'erifier que dans ce syst\`{e}me de coordonn\'ees locales, $\mathcal {V}_{QFB}$ est engendr\'ee par 
$$\upsilon_{1}\rho_{1}\frac{\partial }{\partial \rho_{1}}, \upsilon_{1} \frac{\partial }{\partial y_{1}^{n_{1}}}, \upsilon_{2}\rho_{2}\frac{\partial }{\partial \rho_{2}}- \upsilon_{1}\frac{\partial }{\partial \rho_{1}}, \upsilon_{2} \frac{\partial }{\partial y_{2}^{n_{2}}},
...,\upsilon_{l}\rho_{l}\frac{\partial }{\partial \rho_{l}}- \upsilon_{l-1}\frac{\partial }{\partial \rho_{l-1}},\upsilon_{l} \frac{\partial }{\partial y_{l}^{n_{l}}},\frac{\partial }{\partial z_{1}},...,\frac{\partial }{\partial z_{q}}$$
pour $n_{i}\in\{1,...,k_{i}\}$ et $\upsilon_{i}= \prod_{p=i}^{l} \rho_{p}$. Il s'agit donc d'un $C^\infty$module projectif et on peut lui associer un fibr\'e vectoriel not\'e ${}^{QFB}TM$. On d\'eduit que $(M, \mathcal {V}_{QFB})$ est une structure de Lie \`a l'infini. 
\end{exemp}
\begin{exemp}\label{eppst}
L'alg\`{e}bre de Lie $(M, \mathcal {V}_{S})$ apparue dans~\cite{debord2015pseudodifferential} est une structure de Lie \`a l'infini car elle est en tout point similaire \`a celle de la structure $(M, \mathcal {V}_{QFB}),$ sauf qu'on exige que $V\rho_{i} \in \rho_{i}^{2}C^\infty(M)$ pour chaque $i$ au lieu de demander que $V \upsilon_{i} \in \upsilon_{i}^{2} C^\infty(M)$.
L'exemple d'alg\`{e}bre de Lie des $\phi$\texttt{-}champs de vecteurs, introduite dans~\cite{mazzeo1998pseudodifferential}, d\'efinit un cas particulier de $(M, \mathcal {V}_{QFB})$ en prenant $M$ une vari\'et\'e \`a bord $\partial M$ qui est l'espace total d'un fibr\'e $\pi:=\phi : \partial M \rightarrow S.$ Elle correspond \`a l'Exemple~\ref{scs} si $S=\partial M$ et $\pi=\Id$.
\end{exemp}
\begin{exemp}\label{eplpst}
Soit $(M, \pi)$ une vari\'et\'e \`a coins fibr\'es. Si $S_{i}= H_{i}$ et $\pi_{i}=\Id$ pour chaque hypersurface bordante maximale $H_{i},$ alors un QFB\texttt{-}champ de vecteurs est dit un QAC\texttt{-}champs de vecteurs ou encore un champs de vecteurs quasi asymptotiquement conique. L'ensemble des QAC\texttt{-}champs de vecteurs est d\'enot\'e par $\mathcal{V}_{QAC}.$ D'o\`u $(M, \mathcal {V}_{QAC})$ est une structure de Lie \`a l'infini.
\end{exemp}
\begin{exemp} \label{epst}
Soient $M$ une vari\'et\'e compacte avec bord $\partial M$ et $\rho$ une fonction de d\'efinition du bord. Soit une 1\texttt{-}forme lisse $\Theta \in C^\infty(M; T^{*}M)$ telle que $\iota^{*} \Theta $ ne s'annule nulle part sur $\partial M$, o\`u $\iota : \partial M \rightarrow M$ est l'inclusion canonique. Soit 
$$\mathcal {V}_{\Theta}:=\{V \in \mathcal {V}_{b} \mid V_{\scriptscriptstyle{\vert \partial M}}=0 \; et \;\Theta(V) \in  \rho^{2} C^\infty(M) \}.$$ 
Alors, $\mathcal {V}_{\Theta}$ est une structure de Lie \`a l'infini, appel\'e $\Theta$\texttt{-}structure. 
Pour la description d'une base locale pr\`{e}s du bord, et pour plus d\'etails, on se r\'ef\`ere \`a l'article~\cite{epstein1991resolvent}.
 \end{exemp}
 \begin{defi}
Une structure de Lie \'evanescente \`a l'infini est une structure de Lie \`a l'infini $\mathcal{V}$ telle que pour tout $V \in  \mathcal{V},$ on a que $V_{\scriptscriptstyle{\vert \partial M }}=0.$
 \end{defi}
 \begin{exemp}
On consid\`{e}re, par exemple $M$ une vari\'et\'e \`a coins compacte de dimension $n$ et   
\begin{align*}
\mathcal{V}_{0}&:=\{ V \in \mathcal{V}_{b}\; \textrm{tel\;que\;pour chaque} \;H \in  \mathcal{M}_{1}(M), \; V(m)_{\scriptscriptstyle{\vert H }}=0 \;  \forall m\in H \}\\
                           &=\rho_{1}...\rho_{k} \mathfrak{X}(M),
\end{align*}
o\`u $\rho_{i}$ est une fonction de d\'efinition associ\'ee \`a l'hypersurface bordante $H_{i}$ avec $ \partial M = \bigcup _{i=1}^k H_{i}$. Il est clair que chaque champ de vecteurs de $\mathcal{V}_{0}$ s'annule sur toutes les hypersurfaces bordantes de $M$. Ce sont les\\ 0\texttt{-}champs de vecteurs de Mazzeo\texttt{-}Melrose~\cite{mazzeo1987meromorphic}.
Pour prouver que $(M,\mathcal{V}_{0})$ est effectivement une structure de Lie \`a l'infini, il faut v\'erifier que $\mathcal{V}_{0}$ est un $C^\infty$module projectif ferm\'e sous le crochet de Lie des champs de vecteurs. En effet, soit $m$ un \'el\'ement de $F$ une face ferm\'ee de $M$ de profondeur $l$. Sur $(\rho_{1},...,\rho_{l},y_{1},....,y_{n-l})$ un syst\`{e}me de coordonn\'ees locales en $m$, un champ de vecteurs $V\in \mathcal{V}_{0}$ est de la forme:
$$V=\sum \limits_{{i=1}}^l a_{i}\rho_{1}...\rho_{k}\frac{\partial} {\partial \rho_{i}} + \sum \limits_{{i=1}}^{n-l} b_{i}\rho_{1}...\rho_{k}\frac{\partial }{\partial y_{i}},$$ 
o\`u les coefficients $a_{i}$ et $b_{i}$ sont des fonctions lisses. 
Par un calcul simple, on a que $[V,V'] \in \mathcal{V}_{0}$, pour tout $V, V' \in \mathcal{V}_{0}.$ De plus, sur un voisinage ouvert de $m$, l'alg\`{e}bre de Lie $\mathcal{V}_{0}$ est engendr\'ee par $\rho_{1}...\rho_{k} \frac{\partial }{\partial \rho_{i}}$, $i \in \{1,...,l \}$ et $ \rho_{1}...\rho_{k} \frac{\partial }{\partial y_{i}}$, $i \in \{l,...,n-l \}$. Ainsi, $\mathcal {V}_{0}$ est $C^\infty$module projectif.
\end{exemp}
\begin{exemp}
Il y a plusieurs exemples de vari\'et\'es \`a bord avec une structure de Lie \'evanescente \`a l'infini.
On prend par exemples, $\mathcal{V}_{0}$ ci\texttt{-}haut, $\mathcal{V}_{sc}$ (Exemple~\ref{scs}) et $\mathcal{V}_{de}$ apparue dans~\cite{lauter2001fredholm}. Un autre exemple important est $\mathcal{V}_{\mathbb{K}\theta}$ (voir l'article~\cite{epstein1991resolvent}), le cas o\`u $\mathbb{K}=\mathbb{C}$ est exactement l'Exemple~\ref{epst}. 
\end{exemp} 
Maintenant, nous allons restreindre notre attention \`a des structures de Lie qui induisent dans un sens pr\'ecis des structures de fibr\'es au bord. Cette restriction sera tr\`{e}s utile par la suite. De plus, elle a le m\'erite d'\^etre tr\`{e}s flexible, puisqu'elle inclut tous les exemples de structures de Lie d\'ecrits pr\'ec\'edemment.
 \begin{defi}
Soient $M$ et $S$ deux vari\'et\'es \`a coins compactes. Un fibr\'e d'alg\'ebro\"ides de Lie associ\'e \`a un fibr\'e $\phi : M \rightarrow S$ est un alg\'ebro\"ide de Lie $A$ tel que son ancre $\varrho: A \rightarrow TM$ a son image dans le tangent vertical $T(M/S) \subset T(M)$. 
\end{defi}
\begin{exemp}
Soient $S, Z$ deux vari\'et\'es \`a coins compactes et $\pr: S\times Z \rightarrow Z$ la projection canonique. Soit $\pi : A \rightarrow Z$ un alg\'ebro\"ide de Lie sur $Z$ avec $\varrho: A \rightarrow TZ$ l'ancre associ\'ee. Le fibr\'e d'alg\'ebro\"ides de Lie trivial sur $S\times Z$ induit par $A$ est l'alg\'ebro\"ide de Lie $S\times A$  donn\'e par la projection
$\pi'=\Id \times \pi : S \times A \rightarrow  S\times Z$ 
avec l'ancre associ\'ee $\pr^{*}\varrho  : S \times A \rightarrow   T (S \times Z)$ d\'efinie par $$\pr^{*}\varrho (s,a)=(0,\varrho(a)) \in T_{s}S\times T_{\pi(a)}Z = T_{\pi'(s,a)}(S\times Z).$$
\end{exemp}
 \begin{defi} \label{lpalp}
Un fibr\'e de structures de Lie \`a l'infini associ\'e \`a un fibr\'e $\phi : M \rightarrow S$ est un fibr\'e d'alg\'ebro\"ides de Lie qui induit par restriction une structure de Lie \`a l'infini sur chaque fibre telle que pour chaque $s\in S,$ il existe un voisinage ouvert $U$ et une trivialisation $\Psi_{U} : \phi^{-1}(U) \rightarrow U \times Z$ qui induit un isomorphisme de fibr\'es d'alg\'ebro\"ides de Lie, o\`u $U\times Z$ est muni du fibr\'e d'alg\'ebro\"ides de Lie trivial induit par un certain alg\'ebro\"ide de Lie sur $Z.$ On d\'enotera un tel fibr\'e de structures de Lie \`a l'infini par $(M, S, \phi, \mathcal{V})$, o\`u $\mathcal{V} \subset C^{\infty}(M; T(M/S))$ est l'alg\`ebre de Lie des champs de vecteurs verticaux induite par l'application d'ancrage. 
 \end{defi} 
 \begin{defi}  \label{aaahana}
Soit $M$ une vari\'et\'e \`a coins compacte. Une structure de Lie fibr\'ee \`a l'infini $(M, \mathcal{V}_{SF})$ est une structure de Lie \`a l'infini telle que pour toute face ferm\'ee $F,$ la restriction $\mathcal{V}_{SF}$ sur $F$, not\'ee $\mathcal{V}_{F},$ induit un fibr\'e de structures de Lie \`a l'infini associ\'e \`a un certain fibr\'e $\phi_{F} : F \rightarrow S_{F}.$
\end{defi} 
\begin{prop} \label{asd}
Si $(M, \mathcal{V}_{SF})$ est une structure de Lie fibr\'ee \`a l'infini, alors :
\begin{enumerate}
\item Si $F_{1}$ et $F_{2}$ sont deux faces ferm\'ees de $M$ avec $F_{1} \subset F_{2},$ alors il existe une submersion surjective $\phi_{12}$ de $S_{F_{1}}$ dans $\phi_{F_{2}}(F_{1})$ telle que ${\phi_{F_{2}}}_{\scriptscriptstyle{\vert F_{1}}}   :=\phi_{12} \circ  \phi_{F_{1}};$ \label{chleka}
\item Pour toute face ferm\'ee $F$ de $M,$ il existe un morphisme surjectif d'alg\'ebro\"ides de Lie $$\displaystyle \pi_{F} : {}^{\mathcal {V}_{SF}} T_{F}M \twoheadrightarrow {}^{\mathcal {V}_{F}} TF,$$ \label{almiir}
o\`u ${}^{\mathcal {V}_{SF}} T_{F}M= {}^{\mathcal {V}_{SF}} TM_{\scriptscriptstyle{\vert F}}.$
\end{enumerate}
\end{prop}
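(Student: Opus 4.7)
La stratégie consiste à déduire les deux affirmations de la transitivité de la restriction aux faces et du caractère vertical de l'ancre imposé par la Définition~\ref{lpalp}. L'affirmation~(2) étant la plus directe, je la traiterai d'abord, avant d'aborder~(1).

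Pour~(2), on définit $\pi_F : {}^{\mathcal{V}_{SF}}T_F M \to {}^{\mathcal{V}_F}TF$ par $[V]_m \mapsto [V|_F]_m$, où $V \in \mathcal{V}_{SF}$ représente une classe dans $\mathcal{V}_{SF}/I_m\mathcal{V}_{SF}$ et $[V|_F]_m$ désigne sa classe dans $\mathcal{V}_F/I^F_m\mathcal{V}_F$, avec $I^F_m \subset C^\infty(F)$ l'idéal des fonctions s'annulant en $m \in F$. La restriction $V|_F$ appartient à $\mathcal{V}_F = \mathcal{V}_{SF}|_F$ puisque les champs de $\mathcal{V}_{SF}$ sont tangents à $F$ (voir l'Exemple~\ref{lola}). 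Pour vérifier que $\pi_F$ est bien définie, on écrit $V = \sum_i f_i W_i$ avec $f_i(m) = 0$ : alors $V|_F = \sum_i (f_i|_F)(W_i|_F)$ avec $f_i|_F \in I^F_m$. De plus, $\pi_F$ commute avec les ancres et les crochets, puisque par l'Exemple~\ref{lola} ces opérations sur $\mathcal{V}_F$ sont obtenues par restriction à $F$ de celles sur $\mathcal{V}_{SF}$. La surjectivité est immédiate : tout élément de $({}^{\mathcal{V}_F}TF)_m$ est représenté par un certain $U \in \mathcal{V}_F$, et par définition $U = V|_F$ pour un certain $V \in \mathcal{V}_{SF}$, d'où $\pi_F([V]_m) = [U]_m$.

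Pour~(1), l'observation clé est que $\mathcal{V}_{F_1} = \mathcal{V}_{F_2}|_{F_1}$ par transitivité de la restriction. Tout champ de vecteurs de $\mathcal{V}_{F_1}$ est donc la restriction d'un champ de $\mathcal{V}_{F_2}$, lequel est vertical pour $\phi_{F_2}$ par la Définition~\ref{lpalp}. Ainsi, la composée $\phi_{F_2} \circ \iota : F_1 \to S_{F_2}$, avec $\iota : F_1 \hookrightarrow F_2$, est constante le long du flot de tout champ de vecteurs de $\mathcal{V}_{F_1}$. Puisque sur l'intérieur de chaque fibre de $\phi_{F_1}$ l'ancre de $\mathcal{V}_{F_1}$ atteint tout l'espace tangent (la fibre portant une structure de Lie à l'infini), cette composée est constante sur les intérieurs, donc sur les fibres entières par continuité. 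Comme $\phi_{F_1}$ est une submersion surjective à fibres connexes, il existe une unique application continue $\phi_{12} : S_{F_1} \to S_{F_2}$ telle que $\phi_{F_2}|_{F_1} = \phi_{12} \circ \phi_{F_1}$ et d'image $\phi_{F_2}(F_1)$. Sa régularité découle des trivialisations locales $\phi_{F_1}^{-1}(U) \cong U \times Z$ de $\phi_{F_1}$ : dans de telles coordonnées, $\phi_{F_2}|_{F_1}$ ne dépend que du facteur $U$, et $\phi_{12}|_U$ se lit alors comme une application lisse.

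L'obstacle principal réside dans la propriété de submersion de $\phi_{12}$. Pour l'établir sur $\phi_{F_2}(F_1)$, il s'agit de choisir des trivialisations locales compatibles de $\phi_{F_1}$ et de $\phi_{F_2}$ près d'un point de $F_1$, en utilisant que $F_1$ est une face de $F_2$ et que la structure fibrée à l'infini sur $F_2$ se restreint de manière standard aux faces. Vérifier que la fibre modèle de $\phi_{F_2}$ près de $F_1$ se décompose de façon compatible avec $F_1$, puis calculer l'application induite sur les bases, est l'étape où tout le contenu géométrique de la Définition~\ref{aaahana} intervient.
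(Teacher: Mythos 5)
Votre d\'emonstration est correcte et suit pour l'essentiel la m\^eme d\'emarche que celle de l'article : pour~(1), vous \'etablissez comme lui que chaque fibre de $\phi_{F_{1}}$ est contenue dans une fibre de ${\phi_{F_{2}}}_{\vert F_{1}}$ (via la tangence des champs de $\mathcal{V}_{F_{1}}={\mathcal{V}_{F_{2}}}_{\vert F_{1}}$ et le fait que l'ancre engendre le tangent des fibres de $\phi_{F_{1}}$ sur leur int\'erieur), puis vous d\'efinissez $\phi_{12}$ par passage au quotient ; pour~(2), vous construisez le morphisme de restriction des sections, simplement explicit\'e fibre \`a fibre l\`a o\`u l'article invoque Serre--Swan. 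Le point que vous signalez comme inachev\'e (la v\'erification que $\phi_{12}$ est bien une submersion) est trait\'e au m\^eme niveau de d\'etail dans l'article, qui se contente lui aussi de d\'efinir l'application.
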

\begin{proof}
On a que, pour $F_{1} \subset F_{2},$ 
${\mathcal {V}_{F_{2}}}_{\scriptscriptstyle{\vert F_{1}}}={{\mathcal {V}_{SF}}_{\scriptscriptstyle{\vert F_{2}}}}_{\scriptscriptstyle{\vert F_{1}}}=\mathcal {V}_{ F_{1}},$ donc les \'el\'ements de $\mathcal {V}_{F_{1}}$ doivent aussi \^etre tangents au fibres de ${\phi_{F_{2}}}_{\scriptscriptstyle{\vert F_{1}}},$ de sorte que chaque fibre de $\phi_{F_{1}}$ doit \^etre incluse dans une fibre de ${\phi_{F_{2}}}_{\scriptscriptstyle{\vert F_{1}}}$. En effet, si tel n'\'etait pas le cas, il existerait $s_{1}\in S_{1}$, $p\in \phi^{-1}_{F_{1}}(s_{1})$ et $\xi \in {}^{{\mathcal{V}_{F}}_{1}}T_{p}F_{1} \subset {}^{{\mathcal{V}_{F}}_{2}}T_{p}F_{2}$ transverse \`a la fibre de $\phi_{F_{2}}: F_{2} \rightarrow S_{F_{2}}$ passant par $p,$ contredisant la d\'efinition de $\phi_{F_{2}}.$ Pour la propri\'et\'e~\ref{chleka} de la Proposition~\ref{asd},
on peut donc d\'efinir $\phi_{12}(s_{1})$ comme \'etant l'\'el\'ement $s_{2} \in {\phi_{F_{2}}}_{\scriptscriptstyle{\vert F_{1}}} \subset S_{2}$ tel que $\phi_{F_{1}}^{-1}(s_{1}) \subset \phi_{F_{2}}^{-1}(s_{2}).$
Pour la propri\'et\'e~\ref{almiir} de la Proposition~\ref{asd}, 
on sait que ${\mathcal {V}_{SF}}_{\scriptscriptstyle{\vert F}}=\mathcal {V}_{F}$ et ${\varrho_{\mathcal{V}_{SF}}}_{\scriptscriptstyle{\vert F}}:= \varrho_{\mathcal{V}_{F}}$ induisant le diagramme commutatif :  
\begin{displaymath}
\xymatrix@+1pc{ \Gamma({}^{\mathcal {V}_{SF}} TM) 
 \ar[r]^{{(\pi_{F})}_{\Gamma}} \ar[rd]_{{(\varrho_{\mathcal{V}_{SF}})}_{\Gamma}}& \Gamma({}^{\mathcal {V}_{F}} TF) \ar[d]^{ {(\varrho_{\mathcal{V}_{F}})}_{\Gamma}}\\
   & \Gamma(TF).
  }
\end{displaymath} 
o\`u ${(\pi_{F})}_{\Gamma}$ est le morphisme de restriction de structures de Lie \`a l'infini qui pr\'eserve le $C^{\infty}$module projectif. Par le th\'eor\`{e}me Serre\texttt{-}Swan~\cite{karoubi2008k}, ce morphisme induit un morphisme surjectif d'alg\'ebro\"ides de Lie \\$\pi_{F} : {}^{\mathcal {V}_{SF}} T_{F}M \twoheadrightarrow {}^{\mathcal {V}_{F}} TF.$ 
\end{proof}
\begin{remq}
Toute structure de Lie \'evanescente \`a l'infini 
est un cas particulier de structure de Lie fibr\'ee \`a l'infini en prenant $\phi_{F} : F \rightarrow F$ l'application d'identit\'e pour toute face $F$, puisque $ \mathcal {V}_{SF_{\scriptscriptstyle{\vert F}}} = 0$. 
\end{remq}
\begin{defi}  \label{trivill}
Un fibr\'e de structures de Lie fibr\'ees \`a l'infini par rapport \`a $\phi : M \rightarrow S$ est un fibr\'e de structures de Lie \`a l'infini induisant par restriction une structure de Lie fibr\'ee \`a l'infini sur chaque fibre de sorte que pour chaque face $F$ de $M$ telle que $\phi_{\scriptscriptstyle{\vert F}} : F \rightarrow S$ est surjectif :
\begin{enumerate}
\item on a un fibr\'e $\phi_{F} : F \rightarrow S_{F}$ et une submersion surjective $\alpha_{F} :  S_{F} \rightarrow S$ de sorte que $\alpha_{F} \circ \phi_{F} = \phi_{\scriptscriptstyle{\vert F}};$ \label{cndi}
\item pour tout $s\in S,$ le fibr\'e sur $(\phi_{\scriptscriptstyle{\vert F}})^{-1} (s)$ induit par la structure de Lie fibr\'ee \`a l'infini sur $\phi^{-1}(s),$ est pr\'ecis\'ement la restriction de $\phi_{F}$ \`a $(\phi_{\scriptscriptstyle{\vert F}})^{-1} (s).$ \label{qwq}
\end{enumerate}
\end{defi}
\begin{remq}
Une structure de Lie fibr\'ee \`a l'infini $(M, \mathcal {V}_{SF})$ aussi est un fibr\'e de structures de Lie fibr\'ees \`a l'infini $(M, S, \phi_{M}, \mathcal{V})$, associ\'e \`a la fibration $\phi_{M} : M \rightarrow S$ où $S$ est un point. En fait, un fibr\'e de structures de Lie associ\'e  \`a un fibr\'e $\phi_{M} : M \rightarrow S$ engendre une structure de Lie \`a l'infini $\mathcal{V}$ sur $M$ si et seulement si $S$ est un point, puisque $M$ doit \^etre \'egale  \`a la fibre. Et alors $\mathcal{V}= \mathcal{V}_{SF}$. 
\end{remq}
\begin{prop}\label{muni}
Si $(M, \mathcal {V}_{SF})$ est une structure de Lie fibr\'ee \`a l'infini alors $(F, \mathcal {V}_{F})$ est un fibr\'e de structures de Lie fibr\'ees \`a l'infini.
\end{prop}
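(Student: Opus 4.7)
Le plan est de v\'erifier directement, \`a partir de la D\'efinition~\ref{trivill}, que la donn\'ee $(F, S_F, \phi_F, \mathcal{V}_F)$ fournie par la D\'efinition~\ref{aaahana} est un fibr\'e de structures de Lie fibr\'ees \`a l'infini par rapport \`a la fibration $\phi_F : F \to S_F$. La partie \texttt{<<} fibr\'e de structures de Lie \`a l'infini \texttt{>>} est automatique puisqu'elle fait d\'ej\`a partie de la d\'efinition de structure de Lie fibr\'ee \`a l'infini, donc seules les conditions~\ref{cndi} et~\ref{qwq} de la D\'efinition~\ref{trivill}, ainsi que l'exigence que chaque fibre $Z_s := \phi_F^{-1}(s)$ porte une structure de Lie fibr\'ee \`a l'infini, restent \`a \'etablir.

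Pour la condition~\ref{cndi}, je remarque d'abord que toute face ferm\'ee $F'$ de $F$ est \'egalement une face ferm\'ee de $M$, donc est munie d'une fibration $\phi_{F'} : F' \to S_{F'}$ provenant de $(M, \mathcal{V}_{SF}).$ Si $\phi_F|_{F'}$ est surjective sur $S_F,$ la Proposition~\ref{asd}(\ref{chleka}) appliqu\'ee \`a $F_1 = F'$, $F_2 = F$ fournit directement une submersion surjective $\alpha_{F'} : S_{F'} \to S_F$ telle que $\alpha_{F'} \circ \phi_{F'} = \phi_F|_{F'},$ ce qui r\`egle la condition~\ref{cndi}.

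Pour la condition~\ref{qwq} et la structure fibr\'ee sur les fibres, je fixe $s \in S_F$ et j'utilise la trivialit\'e locale de $\phi_F$ pour identifier les faces ferm\'ees de $Z_s$ avec les ensembles $F' \cap Z_s,$ o\`u $F'$ parcourt les faces ferm\'ees de $F$ telles que $\phi_F|_{F'}$ est surjective. Grâce \`a la relation $\phi_F|_{F'} = \alpha_{F'} \circ \phi_{F'},$ on a
$$F' \cap Z_s = \phi_{F'}^{-1}(\alpha_{F'}^{-1}(s)),$$
ce qui fait que la restriction de $\phi_{F'}$ fournit naturellement une fibration $F' \cap Z_s \to \alpha_{F'}^{-1}(s).$ En restreignant alors le fibr\'e de structures de Lie \`a l'infini $(F', S_{F'}, \phi_{F'}, \mathcal{V}_{F'})$ au-dessus de la sous-vari\'et\'e \`a coins $\alpha_{F'}^{-1}(s) \subset S_{F'},$ on obtient un fibr\'e de structures de Lie \`a l'infini sur la face $F' \cap Z_s$ de $Z_s.$ Ceci montre \`a la fois que $(Z_s, \mathcal{V}_{Z_s})$ est une structure de Lie fibr\'ee \`a l'infini, et, par construction m\^eme, que la fibration qu'elle induit sur $(\phi_F|_{F'})^{-1}(s)$ co\"incide avec la restriction de $\phi_{F'},$ d'o\`u la condition~\ref{qwq}.

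L'obstacle principal est essentiellement comptable : s'assurer que toute face de $F$ est effectivement une face de $M$ (de sorte que la structure fibr\'ee sur $M$ donne r\'eellement un $\phi_{F'}$) et identifier correctement les faces de la fibre $Z_s.$ Le point le plus d\'elicat est de v\'erifier que $\alpha_{F'}^{-1}(s)$ est bien une sous-vari\'et\'e \`a coins, ce qui d\'ecoule du fait que $\alpha_{F'}$ est une submersion au sens des vari\'et\'es \`a coins, donc transverse \`a $\{s\}.$
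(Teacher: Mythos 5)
Votre d\'emonstration est correcte et suit essentiellement la m\^eme d\'emarche que celle de l'article : on applique la propri\'et\'e~\ref{chleka} de la Proposition~\ref{asd} au couple form\'e d'une face ferm\'ee de $F$ et de $F$ pour obtenir la factorisation ${\phi_{F}}_{\scriptscriptstyle{\vert F'}} = \alpha_{F'} \circ \phi_{F'}$, puis on restreint $\phi_{F'}$ au\texttt{-}dessus de $\alpha_{F'}^{-1}(s)$ pour munir chaque face de la fibre $F_{s}$ de la fibration voulue. Vous explicitez simplement un peu plus que l'article la v\'erification des conditions~\ref{cndi} et~\ref{qwq} de la D\'efinition~\ref{trivill}.
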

 \begin{proof}
Il suffit de v\'erifier que chaque fibre de $\phi_{F} : F \rightarrow S_{F},$ munie de la restriction $\mathcal {V}_{F},$ est une structure de Lie fibr\'ee \`a l'infini sur chaque fibre. 
Prenant $s\in S_{F},$ on note par $\mathcal {V}_{F_{s}}$ la structure de Lie \`a l'infini de fibre $F_{s}:=\phi^{-1}_{F}(s)$ induite par restriction.
Soit $\Omega$ une face ferm\'ee de $F$ et posons $\Omega_{s}:= \Omega \cap F_{s}.$ Alors soit $\Omega_{s}= F_{s},$ soit $\Omega_{s}$ est une face ferm\'ee de $F_{s}$ de codimension strictement positive. Dans ce dernier cas, la propri\'et\'e~\ref{chleka} de la Proposition~\ref{asd} appliqu\'ee \`a $F_{1}= \Omega$ et $F_{2}=F$ nous donne une submersion surjective $\alpha_{\Omega} : S_{\Omega} \rightarrow S_{F}$ telle que ${\phi_{F}}_{\scriptscriptstyle{\vert \Omega}} = \alpha_{\Omega} \circ \phi_{\Omega} $ comme dans~\ref{cndi} de la d\'efinition pr\'ec\'edente de sorte que ${\phi_{\Omega}}_{\scriptscriptstyle{\vert \Omega_{s}}}$ est le fibr\'e sur $\Omega_{s}$ qui conf\`{e}re \`a $F_{s}$ une structure de Lie fibr\'ee \`a l'infini. 
\end{proof}
\begin{exemp}
L'Exemple~\ref{mazzza} est une structure de Lie fibr\'ee \`a l'infini, ce qui n'est pas le cas pour la structure $\mathcal {V}_{\mathcal{F}}$ de l'Exemple~\ref{feuill} lorsque $\mathcal{F}$ est un feuilletage qui ne provient pas d'une structure de fibr\'e. 
\end{exemp}
\begin{exemp}
La structure de Lie \`a l'infini $\mathcal {V}_{b}$ est une structure de Lie fibr\'ee \`a l'infini telle que chaque fibr\'e $\phi_{F}: F \rightarrow \{0\}$ est une projection canonique de la base $F$ dans un point. De plus, on a une suite exacte 
\begin{equation} 
0\rightarrow {}^{b} NF \hookrightarrow {}^{b} T_{F}(M) \twoheadrightarrow {}^{b} TF \rightarrow 0,
\end{equation} 
o\`u  ${}^{b} T_{F}(M)={}^{b} T(M)_{\scriptscriptstyle{\vert F}}$ et ${}^{b} NF :=\Ker(\pi)$ est le noyau de la projection canonique $\pi :  {}^{b} T_{F}(M) \twoheadrightarrow {}^{b} TF$ par restriction. 
\end{exemp}
\begin{exemp}
Dans l'Exemple~\ref{jaw}, on suppose que $M$ est une vari\'et\'e compacte \`a coins fibr\'es de profondeur $l$ associ\'ee \`a une famille $(H_{i})_{i \in \{1,...,l\}}$ d'hypersurfaces bordantes.
On peut voir que chaque fibre de $\pi_{i} : H_{i} \rightarrow S_{i}$ est une vari\'et\'e \`a coins fibr\'es. Pr\'ecis\'ement, si $s\in S_{i}$ alors $H_{i,s}:=\pi^{-1}_{i}(s)$ est une vari\'et\'e compacte \`a coins fibr\'es, o\`u les hypersurfaces bordantes sont $H_{i,s} \cap H_{j}$ avec $H_{i}<H_{j}$ et les fibrations associ\'ees sont obtenues par restriction des $\pi_{j}.$
Par cons\'equent, la restriction d'un QFB\texttt{-}champ de vecteurs de $(\mathcal {V}_{QFB}, M)$ sur chaque fibre $H_{i,s}:=\pi^{-1}_{i}(s)$ nous donne aussi un QFB\texttt{-}champ de vecteurs sur la fibre.
Maintenant, soit $F:=H_{1}\cap...\cap H_{k}$ une face ferm\'ee de profondeur $k\in \{1,...,l\}$ de sorte que $H_{1}<H_{2}<...<H_{k},$ apr\`{e}s un r\'e\texttt{-}\'etiquetage, si n\'ecessaire. Il suffit de prendre $\phi_{F}:={\pi_{k}}_{\scriptscriptstyle{\vert F}}$ pour voir que la structure $\mathcal {V}_{QFB}$ est une structure de Lie fibr\'ee \`a l'infini, pourvu bien sûr que pour chaque fibr\'e la condition de trivialisation locale de D\'efinition~\ref{lpalp} soit satisfaite.
\end{exemp}
\begin{prop}
Le produit cart\'esien de deux structures de Lie fibr\'ees \`a l'infini est une structure de Lie fibr\'ee \`a l'infini. 
\end{prop}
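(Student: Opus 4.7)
Le plan est de construire explicitement la structure produit et de vérifier toutes les conditions requises par la Définition~\ref{aaahana}. Étant donné $(M_1, \mathcal{V}_1)$ et $(M_2, \mathcal{V}_2)$ deux structures de Lie fibrées à l'infini, je poserais $M := M_1 \times M_2$ (qui est bien une variété à coins compacte) et je définirais $\mathcal{V}$ comme le $C^{\infty}(M)$\texttt{-}module engendré par $\pr_1^{*}\mathcal{V}_1$ et $\pr_2^{*}\mathcal{V}_2$, où $\pr_i : M \to M_i$ désigne la projection canonique. Il devient alors naturel d'identifier ${}^{\mathcal{V}}TM \simeq \pr_1^{*}({}^{\mathcal{V}_1}TM_1) \oplus \pr_2^{*}({}^{\mathcal{V}_2}TM_2)$. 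La vérification que $(M,\mathcal{V})$ est une structure de Lie à l'infini constituerait la première étape et elle est directe : la fermeture sous le crochet provient de $[\pr_1^{*}V_1, \pr_2^{*}V_2] = 0$ combinée avec la fermeture de chaque $\mathcal{V}_i$ sous le sien et avec la règle de Leibniz; l'application d'ancrage est un isomorphisme au\texttt{-}dessus de $\mathring{M} = \mathring{M}_1 \times \mathring{M}_2$.

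Ensuite, je remarquerais que toute face fermée de $M$ s'écrit $F = F_1 \times F_2$ comme produit de faces fermées des facteurs et que, sous l'identification précédente, la restriction correspondante est $\mathcal{V}_F = \mathcal{V}_{F_1} \times \mathcal{V}_{F_2}$. Je poserais alors
$$\phi_F := \phi_{F_1} \times \phi_{F_2} : F_1 \times F_2 \to S_{F_1} \times S_{F_2} =: S_F,$$
produit des fibrations fournies par les structures fibrées de chaque facteur. Ceci donne bien un fibré de variétés à coins, dont l'espace total et la base sont compacts.

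L'étape la plus délicate consiste à vérifier que $\mathcal{V}_F$ induit effectivement un fibré de structures de Lie à l'infini associé à $\phi_F$ au sens de la Définition~\ref{lpalp}. L'image de l'ancre est contenue dans le tangent vertical $T(F/S_F)$ puisque c'est le cas dans chaque facteur. Pour la condition de trivialisation locale près d'un point $s = (s_1, s_2) \in S_F$, je prendrais des trivialisations $\Psi_i : \phi_{F_i}^{-1}(U_i) \to U_i \times Z_i$ fournies par les structures fibrées des $M_i$; leur produit $\Psi_1 \times \Psi_2$ donnerait alors la trivialisation requise sur $U_1 \times U_2$ avec fibre typique $Z_1 \times Z_2$, munie de l'algébroïde de Lie produit. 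Le principal obstacle consiste à s'assurer que chaque fibre $\phi_F^{-1}(s) = \phi_{F_1}^{-1}(s_1) \times \phi_{F_2}^{-1}(s_2)$ porte bien une structure de Lie à l'infini qui est le produit cartésien des deux structures de Lie à l'infini sur les facteurs : ceci se réduit précisément à reprendre les arguments de la première étape au niveau des fibres, et se boucle naturellement par un argument de récurrence sur la profondeur totale $\mathrm{prof}(M_1) + \mathrm{prof}(M_2)$.
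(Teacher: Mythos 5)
Votre d\'emonstration est correcte et suit exactement la m\^eme strat\'egie que celle de l'article : on munit $M_{1}\times M_{2}$ de la structure de Lie \`a l'infini produit et, pour chaque face ferm\'ee $F=F_{1}\times F_{2}$, on prend $\phi_{F}=\phi_{F_{1}}\times\phi_{F_{2}}$ (avec $S_{F_{i}}$ un point lorsque $F_{i}=M_{i}$). Vous ne faites qu'expliciter les v\'erifications que l'article laisse au lecteur, et votre \'ecriture $\pr_{1}^{*}({}^{\mathcal{V}_{1}}TM_{1})\oplus\pr_{2}^{*}({}^{\mathcal{V}_{2}}TM_{2})$ est d'ailleurs la lecture correcte du fibr\'e ${}^{\mathcal{V}}TM$ produit.
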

\begin{proof}
\'Etant donn\'ees $(M_{1}, \mathcal {V}_{1})$ et $(M_{2}, \mathcal {V}_{2})$ deux structures de Lie fibr\'ees \`a l'infini, leur produit cart\'esien est clairement une structure de Lie \`a l'infini $$\big(M=M_{1} \times M_{2},\mathcal {V} = \Gamma( \pr^{*}_{1}  {}^{\mathcal{V}_{1}}TM_{1})  \otimes_{C^{\infty}(M)} \Gamma(\pr^{*}_{2} {}^{\mathcal{V}_{2}}TM_{2} )\big),$$ o\`u $\pr_{i} : M_{1} \times M_{2} \rightarrow M_{i}$ est la projection canonique. Pour chaque face ferm\'ee $F:=F_{1} \times F_{2}$ de $M,$ il suffit alors de prendre$$\phi_{F}:=\phi_{F_{1}} \times \phi_{F_{2}} : F \rightarrow S_{F}:=S_{F_{1}}\times S_{F_{2}}$$ avec $S_{F_{i}}$ un point lorsque $F_{i}=M_{i}.$
\end{proof}
La Proposition~\ref{muni} peut \^etre g\'en\'eralis\'ee de la façon suivante aux fibr\'es de structures de Lie fibr\'ees \`a l'infini.
\begin{prop}\label{munim}
Soit $(M, S, \phi_{M}, \mathcal{V})$ un fibr\'e de structures de Lie fibr\'ees \`a l'infini associ\'e au fibr\'e $\phi_{M} : M \rightarrow S.$ Soit $F_{1}$ une face ferm\'ee de $M$. Soit $F_{2}$ la plus petite face ferm\'ee de $M$ contenant $F_{1}$ telle que $\phi_{M}(F_{2})=S$.  Soient $\phi_{F_{2}} : F_{2}\rightarrow S_{F_{2}}$ et $\alpha_{F_{2}}: S_ {F_{2}}\rightarrow S$ les applications données par la D\'efinition~\ref{trivill}. Soit $\mathcal {V}_{F_{1}}$ la restriction de $\mathcal {V}$ \`a $F_{1}$ et posons finalement 
$$\phi_{F_{1}} := {\phi_{F_{2}}}_{\scriptscriptstyle{\vert F_{1}}},\; S_{F_{1}}=\phi_{F_{2}}(F_{1}).$$
Alors $(F_{1}, S_{F_{1}}, \phi_{F_{1}}, \mathcal {V}_{F_{1}})$ est un fibr\'e de structures de Lie fibr\'ees \`a l'infini.
\end{prop}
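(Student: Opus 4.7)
Le plan consiste \`a v\'erifier les propri\'et\'es d\'efinissant un fibr\'e de structures de Lie fibr\'ees \`a l'infini (D\'efinitions~\ref{lpalp} et~\ref{trivill}) pour le quadruplet $(F_{1}, S_{F_{1}}, \phi_{F_{1}}, \mathcal{V}_{F_{1}}).$ Il y a quatre points distincts \`a \'etablir: (a) que $\phi_{F_{1}}$ est un fibr\'e de vari\'et\'es \`a coins sur $S_{F_{1}};$ (b) que $\mathcal{V}_{F_{1}}$ est un fibr\'e d'alg\'ebro\"ides de Lie d'ancre \`a valeurs dans le tangent vertical $T(F_{1}/S_{F_{1}}),$ avec les trivialisations locales ad\'equates; (c) que la restriction de $\mathcal{V}_{F_{1}}$ \`a chaque fibre de $\phi_{F_{1}}$ est une structure de Lie fibr\'ee \`a l'infini sur cette fibre; et (d) que les conditions~\ref{cndi} et~\ref{qwq} de la D\'efinition~\ref{trivill} sont satisfaites pour toute face ferm\'ee $G$ de $F_{1}$ avec $\phi_{F_{1}}(G) = S_{F_{1}}.$

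Les points (a) et (b) se traitent par un argument local. Au voisinage d'un point de $F_{2},$ on choisit une trivialisation $\Psi_{U} : \phi_{F_{2}}^{-1}(U) \xrightarrow{\sim} U \times Z$ du fibr\'e $\phi_{F_{2}}$ respectant la structure de coins: la face $F_{1} \cap \phi_{F_{2}}^{-1}(U)$ s'y lit alors comme un produit $(U \cap S_{F_{1}}) \times W,$ avec $W$ une face de $Z,$ et $\phi_{F_{1}}$ devient la projection sur le premier facteur, ce qui fournit la trivialisation voulue. Comme l'ancre de $\mathcal{V}_{F_{2}}$ prend ses valeurs dans $T(F_{2}/S_{F_{2}})$ et que $\mathcal{V}_{F_{2}}$ est tangent \`a toutes les faces, sa restriction \`a $F_{1}$ produit des champs verticaux pour $\phi_{F_{1}},$ \`a valeurs dans ${T(F_{2}/S_{F_{2}})}_{\scriptscriptstyle{\vert F_{1}}} \cap TF_{1} = T(F_{1}/S_{F_{1}});$ les trivialisations locales d'alg\'ebro\"ide de $\mathcal{V}_{F_{2}}$ se restreignent aussit\^ot \`a celles de $\mathcal{V}_{F_{1}}.$

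Le point (c) constitue le c\oe ur de la preuve et en est l'obstacle principal. Pour $s' \in S_{F_{1}},$ posons $s := \alpha_{F_{2}}(s') \in S.$ La fibre $\phi_{F_{1}}^{-1}(s')$ co\"incide avec $F_{1} \cap \phi_{F_{2}}^{-1}(s'),$ o\`u $\phi_{F_{2}}^{-1}(s')$ est une structure de Lie fibr\'ee \`a l'infini en tant que fibre du fibr\'e de structures de Lie fibr\'ees $(F_{2}, S_{F_{2}}, \phi_{F_{2}}, \mathcal{V}_{F_{2}}).$ En appliquant la Proposition~\ref{muni} \`a l'int\'erieur de cette structure, \`a la face ferm\'ee $F_{1} \cap \phi_{F_{2}}^{-1}(s'),$ on munit $\phi_{F_{1}}^{-1}(s')$ d'une structure de fibr\'e de structures de Lie fibr\'ees \`a l'infini. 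La partie d\'elicate consiste \`a v\'erifier que la base de ce fibr\'e induit se r\'eduit \`a un point, auquel cas $\phi_{F_{1}}^{-1}(s')$ est elle\texttt{-}m\^eme une structure de Lie fibr\'ee \`a l'infini. Cette trivialit\'e d\'ecoulera de la minimalit\'e de $F_{2}:$ si la base \'etait non triviale, un raisonnement via la Proposition~\ref{asd} appliqu\'ee fibre par fibre dans $\phi_{M}^{-1}(s)$ permettrait d'exhiber une face ferm\'ee propre de $F_{2}$ contenant $F_{1}$ et se surjectant toujours sur $S$ par $\phi_{M},$ contredisant la minimalit\'e de $F_{2}.$

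Enfin, pour le point (d), \'etant donn\'ee une face ferm\'ee $G$ de $F_{1}$ avec $\phi_{F_{1}}(G) = S_{F_{1}},$ je d\'esignerais par $\widetilde{G}$ la plus petite face ferm\'ee de $M$ contenant $G$ telle que $\phi_{M}(\widetilde{G}) = S.$ La D\'efinition~\ref{trivill} appliqu\'ee \`a $\widetilde{G}$ dans $(M, S, \phi_{M}, \mathcal{V})$ donne un fibr\'e $\phi_{\widetilde{G}} : \widetilde{G} \to S_{\widetilde{G}}$ et une submersion surjective $\alpha_{\widetilde{G}} : S_{\widetilde{G}} \to S$ avec $\alpha_{\widetilde{G}} \circ \phi_{\widetilde{G}} = {\phi_{M}}_{\scriptscriptstyle{\vert \widetilde{G}}}.$ Un analogue de la Proposition~\ref{asd} pour l'inclusion $G \subset \widetilde{G}$ (obtenu en raisonnant fibre \`a fibre dans chaque $\phi_{M}^{-1}(s)$ puis en invoquant la condition~\ref{qwq} pour $\widetilde{G}$) fournit alors un fibr\'e $\phi_{G} : G \to S_{G}$ et une submersion $\alpha_{G} : S_{G} \to S_{F_{1}}$ satisfaisant $\alpha_{G} \circ \phi_{G} = {\phi_{F_{1}}}_{\scriptscriptstyle{\vert G}},$ ce qui \'etablit la condition~\ref{cndi}. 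La condition~\ref{qwq} pour $(F_{1}, S_{F_{1}}, \phi_{F_{1}}, \mathcal{V}_{F_{1}})$ r\'esulte alors de la combinaison de la condition~\ref{qwq} pour $\widetilde{G}$ avec cette factorisation.
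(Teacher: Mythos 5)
Votre strat\'egie (v\'erification directe de la d\'efinition, Proposition~\ref{muni} appliqu\'ee aux fibres, minimalit\'e de $F_{2}$) mobilise les m\^emes ingr\'edients que la preuve de l'article, mais l'organisation diff\`ere : l'article proc\`ede par une r\'eduction en trois cas --- d'abord $F_{1}=\phi_{M}^{-1}(S_{1})$ pour $S_{1}$ une face ferm\'ee de $S$ (o\`u toute la structure s'obtient par restriction de celle de $(M,S,\phi_{M},\mathcal{V})$), puis $F_{1}=F_{2}$ (o\`u toute face $F_{0}$ de $F_{1}$ avec $\phi_{F_{1}}(F_{0})=S_{F_{1}}$ v\'erifie $\phi_{M}(F_{0})=S$, si bien que la D\'efinition~\ref{trivill} fournit directement $\phi_{F_{0}}$ et $\alpha_{F_{0}}$, la factorisation $\alpha_{F_{0}F_{1}}$ venant de la Proposition~\ref{asd} fibre par fibre), et enfin le cas g\'en\'eral, ramen\'e au premier cas en prenant $(F_{2},S_{F_{2}},\phi_{F_{2}},\mathcal{V}_{F_{2}})$ comme fibr\'e ambiant. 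Cette cascade n'est pas une simple affaire de pr\'esentation : elle contourne pr\'ecis\'ement les deux endroits o\`u votre argument reste fragile.

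Premi\`erement, au point (c), vous invoquez le fait que $\phi_{F_{2}}^{-1}(s')$ est une structure de Lie fibr\'ee \`a l'infini en tant que fibre du fibr\'e de structures de Lie fibr\'ees $(F_{2},S_{F_{2}},\phi_{F_{2}},\mathcal{V}_{F_{2}})$. Or la D\'efinition~\ref{trivill} ne fournit pour la face $F_{2}$ que les applications $\phi_{F_{2}}$ et $\alpha_{F_{2}}$ ; l'affirmation que $(F_{2},S_{F_{2}},\phi_{F_{2}},\mathcal{V}_{F_{2}})$ est lui-m\^eme un fibr\'e de structures de Lie fibr\'ees \`a l'infini est exactement le cas $F_{1}=F_{2}$ de la proposition, que vous utilisez donc sans le d\'emontrer. (Pour le seul fait dont vous avez besoin en (c), \`a savoir que les fibres de $\phi_{F_{2}}$ sont des structures de Lie fibr\'ees \`a l'infini, il suffit de combiner la condition~\ref{qwq} pour $F_{2}$ avec la Proposition~\ref{muni} appliqu\'ee dans chaque $\phi_{M}^{-1}(s)$, mais il faut l'expliciter.) Deuxi\`emement, et c'est le point le plus s\'erieux, au point (d) on a $\phi_{M}(G)=\alpha_{F_{2}}(S_{F_{1}})=\phi_{M}(F_{1})\neq S$ d\`es que $F_{1}\neq F_{2}$ : la D\'efinition~\ref{trivill} ne fournit donc aucun fibr\'e sur $G$, et votre d\'etour par $\widetilde{G}$ repose le probl\`eme initial (construire $\phi_{G}$ \`a partir de $\phi_{\widetilde{G}}$ pour une face incluse dans sa plus petite face se surjectant sur $S$) un cran plus bas, sans principe de terminaison ; la Proposition~\ref{asd}, qui pr\'esuppose d\'ej\`a donn\'es les fibr\'es sur les deux faces embo\^it\'ees, ne peut pas servir \`a les construire. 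Le rem\`ede est celui de l'article : la minimalit\'e de $F_{2}$ jointe \`a la trivialit\'e locale de $\phi_{F_{2}}$ montre que toute face de $F_{2}$ contenant $F_{1}$ et satur\'ee par les fibres de $\phi_{F_{2}}$ se surjecte sur $S$ donc co\"incide avec $F_{2}$, d'o\`u $F_{1}=\phi_{F_{2}}^{-1}(S_{F_{1}})$ ; les fibres de $\phi_{F_{1}}$ sont alors des fibres enti\`eres de $\phi_{F_{2}}$ (ce qui rend superflu votre d\'etour par la base du fibr\'e induit en (c)) et les faces $G$ de $F_{1}$ h\'eritent leurs fibr\'es par restriction de ceux des faces satur\'ees correspondantes de $F_{2}$.
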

 \begin{proof}
 On doit consid\'erer trois cas.
\begin{itemize}
 \item[Cas 1 :] $ F_{1}=\phi^{-1}_{M}(S_{1})$ pour $S_{1}$ une face ferm\'ee de $S$. Dans ce cas,  $\phi_{F_{1}}={\phi_{M}}_{\scriptscriptstyle{\vert F_{1}}}$ et $S_{F_{1}}=S_{1}.$ La structure de fibré de structures de Lie fibr\'ees \`a l'infini de $(F_{1}, S_{F_{1}}, \phi_{F_{1}}, \mathcal {V}_{F_{1}})$ est donc directement induite par restriction de celle de $(M, S, \phi_{M}, \mathcal{V})$.
  \item[Cas 2 :] $F_{1}=F_{2}$, c'est-\`a-dire que $\phi_{M}(F_{1})=S$. Dans ce cas, $(F_{1}, S_{F_{1}}, \phi_{F_{1}}, \mathcal {V}_{F_{1}})$ est clairement un fibr\'e de structure de Lie \`a l'infini. De plus, par la Proposition~\ref{muni} appliqu\'ee \`a chaque fibre de $\phi_{M} : M \rightarrow S$, on voit que les structures de Lie \`a l'infini des fibres de   $\phi_{F_{1}}$ sont bien fibr\'ees \`a l'infini. Il reste donc \`a v\'erifier les conditions~\ref{cndi} et~\ref{qwq} de la D\'efinition~\ref{trivill}. Soit donc $F_{0}$ une face ferm\'ee de $F_{1}$ telle que $\phi_{F_{1}}(F_{0})=S_{F_{1}}.$ Comme ${\phi_{M}}_{\scriptscriptstyle{\vert F_{1}}} = \alpha_{F_{1}} \circ \phi_{F_{1}} : F_{1} \rightarrow S$ est surjective, il en est de m\^eme  pour $$\alpha_{F_{1}}  : S_{F_{1}} \rightarrow S.$$ Ainsi, on aura que $$\phi_{M}(F_{0})=\alpha_{F_{1}} \circ \phi_{F_{1}} (F_{0})=\alpha_{F_{1}}(S_{F_{1}})= S.$$ Il existe donc des applications $\phi_{F_{0}} : F_{0}\rightarrow S_{F_{0}}$ et $\alpha_{F_{0}} : S_{F_{0}} \rightarrow S$ satisfaisant aux conditions~\ref{cndi} et~\ref{qwq} de la D\'efinition~\ref{trivill} pour le fibr\'e de structures de Lie fibr\'ees \`a l'infini $(M, S, \phi_{M}, \mathcal{V}).$ En appliquant la Proposition~\ref{asd} aux faces ferm\'ees $F_{0}\cap \phi^{-1}(s)$ et $F_{1}\cap \phi^{-1}(s)$ pour chaque $s\in S,$ on voit aussi qu'il existe une subersion surjective $\alpha_{F_{0}F_{1}} : S_{F_{0}} \rightarrow S_{F_{1}}$ telle que $$\alpha_{F_{0}F_{1}} \circ \phi_{F_{0}} = {\phi_{F_{1}}}_{\scriptscriptstyle{\vert F_{0}}};$$
de sorte que les applications $\phi_{F_{0}}$ et $\alpha_{F_{0}F_{1}}$ satisfont \`a la condition~\ref{cndi} de la D\'efintion~\ref{trivill} pour \\$(F_{1}, S_{F_{1}}, \phi_{F_{1}}, \mathcal {V}_{F_{1}}).$ D'autre part, comme $\phi_{F_{0}}$ satisfait \`a la condition~\ref{qwq} pour $(M, S, \phi_{M}, \mathcal{V}),$ on voit par restriction qu'elle est aussi automatiquement satisfaite pour $(F_{1}, S_{F_{1}}, \phi_{F_{1}}, \mathcal {V}_{F_{1}}).$ Cela montre que $(F_{1}, S_{F_{1}}, \phi_{F_{1}}, \mathcal {V}_{F_{1}})$ est bien un fibr\'e de structures de Lie fibr\'ees \`a l'infini.
   \item[Cas 3 :]
 En g\'en\'eral, si $F_{1}$ ne tombe ni dans le cas 1, ni dans le cas 2, alors par le cas 2, on sait au moins que $(F_{2}, S_{F_{2}}, \phi_{F_{2}}, \mathcal {V}_{F_{2}})$ est un fibr\'e de structures de Lie fibr\'ees \`a l'infini.  Ainsi, en prenant $(F_{2}, S_{F_{2}}, \phi_{F_{2}}, \mathcal{V}_{F_{2}})$ plut\^ot que $(M, S, \phi_{M}, \mathcal{V})$ comme fibr\'e de structures de Lie fibr\'ees \`a l'infini, on peut se ramener au cas 1 pour montrer que $(F_{1}, S_{F_{1}}, \phi_{F_{1}}, \mathcal {V}_{F_{1}})$ est un fibr\'e de structures de Lie fibr\'ees \`a l'infini. 
\end{itemize}
\end{proof}
Soient $(M,\mathcal{V})$ une structure de Lie \`a l'infini et $\varrho_{\mathcal{V}}: {}^{\mathcal{V}}TM \rightarrow TM$ l'ancre associ\'ee. 
\begin{defi}
Une m\'etrique de $\overset{\circ}{M}$ compatible avec une structure de Lie \`a l'infini $(M,\mathcal{V})$ est une m\'etrique riemannienne sur $\overset{\circ}{M}$ d\'efinie par 
$$g=(\varrho_{\mathcal{V}}^{-1})^{*}(h_{\scriptscriptstyle{\vert \overset{\circ}{M}}}),$$ pour une certaine m\'etrique euclidienne $h \in \Gamma (\Sym^{2}({}^{\mathcal{V}}T^{*}M))$.
On dit alors que $(\overset{\circ}{M},g)$ est une vari\'et\'e riemannienne avec une structure de Lie \`a l'infini.
\end{defi} 
\begin{exemp} (Vari\'et\'e \`a bout asymptotiquement cylindrique).
Soit $\overset{\circ}{M}$ une vari\'et\'e riemannienne compl\`{e}te non compacte de dimension $n$ munie d'une m\'etrique compl\`{e}te $g$ telle qu'il existe un compact $K \subset \overset{\circ}{M}$ et une vari\'et\'e compacte riemannienne $(\partial M, g_{\partial M}) $ de sorte que $\overset{\circ}{M} \setminus K$ peut \^etre param\'etris\'e par un voisinage tubulaire du bord $\partial M \times (0, +\infty ) \simeq \overset{\circ}{M} \setminus K$. On suppose que le cylindre $\partial M \times (0, +\infty )$ soit muni de la m\'etrique produit $g_{cyl}= dx^{2}+ g_{\partial M}$. Par un recollement \`a l'infini de $\overset{\circ}{M} \setminus K$ par $\partial M$, on obtient une compactification $M := \overset{\circ}{M} \cup \partial M$ qui est une vari\'et\'e \`a bord compacte. Et par suite, le changement de variable $\rho=e^{-x}$ donne une fonction de d\'efinition pour $\partial M$ telle que $\frac{\partial }{\partial x}= -\rho \frac{\partial }{\partial \rho} $. Ceci sugg\`{e}re de consid\'erer $ {}^bTM$. 
On dit alors que $(\overset{\circ}{M}, g)$ est une vari\'et\'e \`a bout asymptotiquement cylindrique si, pr\`{e}s d'un voisinage tubulaire du bord $\partial M \times [0, 1],$ il existe $ \gamma>0$ tel que
$$g-(\frac{d\rho^{2}}{\rho^{2}}+g_{\partial M}) \in \rho^{\gamma} C_{b}^{\infty}(M; \Sym^{2} ({}^bT^{*}M))= \rho^{\gamma} C_{b}^{\infty}(M) \otimes_{ C^{\infty}(M)} \Gamma(\Sym^{2} ({}^bT^{*}M)),$$ o\`u
$C_{b}^{\infty}(M)=\{f \in C^{\infty}(\overset{\circ}{M})\mid \forall k \in \mathbb{N}_{0}, \;\;\{V_{1},...,V_{k}\} \subset \mathcal{V}_{b}, \; \;\sup_{\overset{\circ}{M}}|V_{1}...V_{k}f|<\infty\}.$
Lorsque $\displaystyle g-(\frac{d\rho^{2}}{\rho^{2}}+g_{\partial M}) \in \rho \,C_{b}^{\infty}(M; \Sym^{2} ({}^bT^{*}M)),$ c'est un exemple de m\'etrique compatible avec la structure de Lie \`a l'infini $\mathcal{V}_{b}.$
\end{exemp}
\begin{exemp}(Vari\'et\'e \`a bout asymptotiquement conique).
Soit $g_{b}$ une b\texttt{-}m\'etrique qui est une m\'etrique compatible avec une structure de Lie \`a l'infini $(M,\mathcal{V}_{b})$, o\`u $M$ est une vari\'et\'e \`a bord compacte. On appelle m\'etrique de diffusion une m\'etrique compl\`{e}te de $\overset{\circ}{M}$ compatible avec une structure de Lie \`a l'infini $(M,\mathcal{V}_{sc}),$ donn\'ee par $\displaystyle g_ {sc}:=\frac{g_ {b}}{\rho ^{2}}.$  On la dit une m\'etrique de diffusion de type produit s'il existe un voisinage tubulaire du bord $c: \partial M \times [0, \varepsilon ) \rightarrow M$ tel que $$c^{*}g_{sc} = \frac{d \rho^{2}} {\rho ^{4}} + \frac {g_{\partial M}}{\rho ^{2}}, \;\; g_{\partial M} \in C^{\infty}\big(\partial M; \Sym^{2}(T\partial M)\big),$$ ou de mani\`{e}re \'equivalente,  $$c^{*}g_{sc} =dt^{2} +t^{2} g_{\partial M}, \; \;t \in (-log \varepsilon, +\infty ).$$
On dit alors que $(\overset{\circ}{M}, g)$ est une vari\'et\'e \`a bout asymptotiquement conique si \\
$g-g_{p}\in \rho \,C^{\infty}\big(M; Sym^{2} ({}^{sc}T^{*}M)\big),$ o\`u $g_{p}$ est une m\'etrique de diffusion de type produit. Dans ce cas, $g$ est compatible avec $\mathcal{V}_{sc}.$
Un exemple simple est le cas euclidien: la compactification radiale de $\mathbb{R}^{n}$ avec bord la sph\`{e}re $\mathbb{S}^{n-1}$. Cette compactification est donn\'ee par la projection st\'er\'eographique $SP$ d\'efinie par $$SP: \mathbb{R}^{n} \rightarrow \mathbb{S}_{+}^{n}:=\{x=(x_{0},...,x_{n})  \in \mathbb{R}^{n+1} \mid \left|x\right|=1\; et \; x_{0} \geq 0 \}, $$ $$ SP(x)=\big(\frac{1} { (1+\left|x\right|^{2})^{\frac{1}{2}}},\frac{x}{(1+\left|x\right|^{2})^{\frac{1}{2}}}\big).$$
Puisque $\mathbb{S}_{+}^{n}$ est une vari\'et\'e \`a bord compacte, $SP$ permet d'identifier $\mathbb{R}^{n}$ \`a $( \mathbb{S}_{+}^{n} )_{0}:= \mathbb{S}_{+}^{n}\setminus \partial \mathbb{S}_{+}^{n}$. La m\'etrique euclidienne est une m\'etrique de diffusion de $\mathbb{R}^{n}$ qui est donn\'ee par 
$$ \left|dx\right|^{2}=dr^{2} + r^{2} \left|dw\right|^{2} =  \frac{d \rho^{2}} {\rho ^{4}} + \frac {\left|dw\right|^{2}}{\rho ^{2}}, \; \left|x\right|=r=\frac{1}{\rho}, w=\frac{x}{\left|x\right|}, $$ 
o\`u $\left|dw\right|^{2}$ est la m\'etrique standard de la sph\`{e}re $\mathbb{S}^{n-1} = \partial \mathbb{S}_{+}^{n}$.
\end{exemp}
\begin{prop} \label{hassna}
Deux m\'etriques $g_{1}$ et $g_{2}$ de $\overset{\circ}{M}$ compatibles avec une structure de Lie \`a l'infini $(M,\mathcal{V})$ sont bi\texttt{-}Lipschitz \'equivalentes, c'est\texttt{-}\`a\texttt{-}dire qu'il existe une constante $C>0$ telle que 
$$ C^{-1}g_{2}(X,X) \leq  g_{1}(X,X)\leq C g_{2}(X,X) \; \; \forall X \in T\overset{\circ}{M}.$$
En particulier, $ C^{-1}d_{2} \leq  d_{1}\leq C d_{2}$, o\`u $d_{i}$ est la distance sur $\overset{\circ}{M}$ correspondante \`a $g_{i}$.
\end{prop}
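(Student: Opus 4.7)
Le plan est d'utiliser la compacit\'e de $M$ (en tant que vari\'et\'e \`a coins) pour comparer les deux m\'etriques euclidiennes qui induisent $g_1$ et $g_2$ sur le fibr\'e ${}^{\mathcal{V}}TM$, avant de transf\'erer l'in\'egalit\'e \`a $T\overset{\circ}{M}$ via l'isomorphisme $\varrho_{\mathcal{V}}$.

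D'abord, par d\'efinition, il existe deux m\'etriques euclidiennes $h_1, h_2 \in \Gamma(\Sym^2({}^{\mathcal{V}}T^{*}M))$ telles que $g_i = (\varrho_{\mathcal{V}}^{-1})^{*}(h_{i\scriptscriptstyle{\vert \overset{\circ}{M}}})$ pour $i\in\{1,2\}$. Je consid\`ererais ensuite le fibr\'e en sph\`eres unit\'es $SM := \{v \in {}^{\mathcal{V}}TM \mid h_2(v,v)=1\}$ au-dessus de $M$. Comme $M$ est une vari\'et\'e \`a coins compacte et que les fibres de $SM$ sont des sph\`eres, $SM$ est un espace topologique compact. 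La fonction $F : SM \rightarrow \mathbb{R}$ d\'efinie par $F(v) = h_1(v,v)$ est continue et strictement positive, donc elle atteint son minimum $c_1 > 0$ et son maximum $c_2 > 0$. Par homog\'en\'eit\'e de degr\'e $2$ des formes quadratiques $h_1$ et $h_2$, on obtient
\begin{equation*}
c_1\, h_2(v,v) \leq h_1(v,v) \leq c_2\, h_2(v,v), \quad \forall v \in {}^{\mathcal{V}}TM.
\end{equation*}

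Ensuite, pour tout $X \in T\overset{\circ}{M}$, je poserais $v = \varrho_{\mathcal{V}}^{-1}(X) \in {}^{\mathcal{V}}T\overset{\circ}{M}$, ce qui est bien d\'efini puisque $\varrho_{\mathcal{V}}$ est un isomorphisme sur $\overset{\circ}{M}$. Par d\'efinition de $g_i$, on a $g_i(X,X) = h_i(v,v)$, si bien que l'in\'egalit\'e pr\'ec\'edente devient
\begin{equation*}
c_1\, g_2(X,X) \leq g_1(X,X) \leq c_2\, g_2(X,X), \quad \forall X \in T\overset{\circ}{M}.
\end{equation*}
En prenant $C = \max(c_1^{-1}, c_2)$, on obtient l'\'equivalence bi-Lipschitz annonc\'ee.

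Pour l'in\'egalit\'e des distances, il suffit d'int\'egrer la longueur des chemins : pour une courbe $C^1$ par morceaux $\gamma : [0,1] \rightarrow \overset{\circ}{M}$, on a $\sqrt{g_1(\dot\gamma,\dot\gamma)} \leq \sqrt{C}\,\sqrt{g_2(\dot\gamma,\dot\gamma)}$, donc la longueur $\ell_{g_1}(\gamma) \leq \sqrt{C}\,\ell_{g_2}(\gamma)$, et en prenant l'infimum sur tous les chemins joignant deux points fix\'es on obtient $d_1 \leq \sqrt{C}\, d_2$, et sym\'etriquement $d_2 \leq \sqrt{C}\, d_1$. Quitte \`a remplacer $C$ par $\sqrt{C}$, on obtient $C^{-1} d_2 \leq d_1 \leq C d_2$. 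L'argument ne pr\'esente pas de r\'eelle difficult\'e : le point central est simplement de remarquer que $M$ est compacte comme vari\'et\'e \`a coins (bien que $\overset{\circ}{M}$ ne le soit pas), ce qui rend la comparaison des formes quadratiques uniforme \`a l'infini via le fibr\'e ${}^{\mathcal{V}}TM$.
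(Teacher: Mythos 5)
Votre preuve est correcte et suit essentiellement la m\^eme strat\'egie que celle de l'article : comparer les deux m\'etriques euclidiennes $h_{1}$ et $h_{2}$ sur le fibr\'e ${}^{\mathcal{V}}TM$ en exploitant la compacit\'e de $M$ (vous le faites via le fibr\'e en sph\`eres, l'article via les trivialisations locales et l'\'equivalence des normes sur $\mathbb{R}^{n}$), puis transf\'erer l'in\'egalit\'e \`a $T\overset{\circ}{M}$ par l'ancre. Votre version est simplement plus d\'etaill\'ee, y compris pour le passage aux distances par int\'egration des longueurs, que l'article laisse implicite.
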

\begin{proof}
Sur chaque fibre de ${}^{\mathcal{V}}TM$, on a une trivialisation locale. L'\'equivalence des normes de $\mathbb{R}^{n}$ nous assure l'in\'egalit\'e escompt\'ee localement. Comme $M$ est compacte, on obtient le r\'esultat voulu globalement sur $M$, et donc sur $\overset{\circ}{M}$ par restriction.
\end{proof}
On d\'enote par $\vol_{n}$ la forme volume de la m\'etrique riemannienne d'une vari\'et\'e riemannienne $\overset{\circ}{M}$ avec une structure de Lie \`a l'infini $(M,\mathcal{V})$.
\begin{prop} (Proposition 4.1 dans ~\cite{ammann2004geometry})
Soit $f\geq 0$ une fonction lisse sur $M$. Si $ \int_{\overset{\circ}{M}} f \vol_{n} <  +\infty$ alors $f$ s'annule sur chaque hypersurface bordante de $M$. En particulier, le volume de chaque vari\'et\'e riemannienne non compacte avec une structure de Lie \`a l'infini est infini. 
\end{prop}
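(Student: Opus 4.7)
Le plan est de proc\'eder par contradiction. Supposons qu'il existe un point $q \in \partial M$ tel que $f(q) > 0.$ Par la continuit\'e de $f$ sur $M,$ il existe un voisinage compact $\overline{U}$ de $q$ et une constante $c > 0$ tels que $f \geq c$ sur $\overline{U}.$ Il suffira alors d'\'etablir que $\vol_{n}(U \cap \overset{\circ}{M}) = +\infty,$ puisque dans ce cas
\begin{equation*}
\int_{\overset{\circ}{M}} f \vol_{n} \geq c \, \vol_{n}(U \cap \overset{\circ}{M}) = +\infty,
\end{equation*}
contredisant l'hypoth\`ese d'int\'egrabilit\'e. Pour la seconde assertion, comme $\overset{\circ}{M}$ est non compacte on a $\partial M \neq \emptyset,$ et il suffira d'appliquer la contrapos\'ee de la premi\`ere assertion \`a $f \equiv 1.$

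L'\'etape cruciale est l'analyse locale de $\vol_{n}$ pr\`es d'un point du bord. Comme chaque $V \in \mathcal{V}$ est tangent \`a toutes les faces de $M,$ on dispose de l'inclusion $\mathcal{V} \subset \mathcal{V}_{b}.$ Dans un syst\`eme de coordonn\'ees locales $(\rho_{1}, \ldots, \rho_{l}, y_{1}, \ldots, y_{n-l})$ centr\'e en $q$ de profondeur $l,$ on dispose, d'apr\`es l'Exemple~\ref{bchamp}, de la base locale de $\mathcal{V}_{b}$ donn\'ee par $e_{j} = \rho_{j} \partial_{\rho_{j}}$ pour $j \leq l$ et $e_{j} = \partial_{y_{j-l}}$ pour $j > l.$ Soit alors $\{X_{1}, \ldots, X_{n}\}$ une base locale de $\mathcal{V}$ pr\`es de $q,$ et \'ecrivons $X_{i} = \sum_{j} A_{ij} e_{j}$ pour une matrice $A = (A_{ij})$ \`a coefficients lisses. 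L'isomorphisme d'ancrage sur $\overset{\circ}{M}$ force $\det A \neq 0$ sur l'int\'erieur. En passant \`a la base coordonn\'ees $\{\partial_{\rho_{j}}, \partial_{y_{k}}\},$ on voit que le Jacobien se factorise sous la forme $\det M = \det A \cdot \rho_{1} \cdots \rho_{l},$ d'o\`u un calcul direct donne
\begin{equation*}
\vol_{g} = \frac{\sqrt{\det(h(X_{i},X_{j}))}}{|\det A| \, \rho_{1} \cdots \rho_{l}} \, d\rho_{1} \wedge \cdots \wedge d\rho_{l} \wedge dy_{1} \wedge \cdots \wedge dy_{n-l}.
\end{equation*}
Comme $h$ est lisse et d\'efinie positive sur ${}^{\mathcal{V}}TM$ en tout point de $M,$ le num\'erateur admet un minorant strictement positif sur $\overline{U}$ compact ; de m\^eme, $|\det A|$ \'etant continue y admet un majorant. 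On obtient donc, sur $U \cap \overset{\circ}{M},$ une estimation de la forme
\begin{equation*}
\vol_{g} \geq \frac{C}{\rho_{1} \cdots \rho_{l}} \, d\rho_{1} \cdots d\rho_{l} \, dy_{1} \cdots dy_{n-l}
\end{equation*}
pour une certaine constante $C > 0.$

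En choisissant $U \simeq [0, \varepsilon)^{l} \times (-\varepsilon, \varepsilon)^{n-l}$ dans ces coordonn\'ees, la divergence logarithmique des int\'egrales $\int_{0}^{\varepsilon} \rho_{i}^{-1} d\rho_{i}$ entra\^ine imm\'ediatement $\vol_{n}(U \cap \overset{\circ}{M}) = +\infty,$ ce qui conclut. L'obstacle principal me semble r\'esider dans l'\'etablissement soign\'e de la formule locale pour $\vol_{g}$ : une fois que l'inclusion $\mathcal{V} \subset \mathcal{V}_{b}$ est exploit\'ee pour factoriser le Jacobien entre les bases $\{X_{i}\}$ et $\{\partial_{x_{j}}\},$ l'apparition du facteur singulier $(\rho_{1} \cdots \rho_{l})^{-1}$ est automatique et la conclusion suit par un calcul d'int\'egrale \'el\'ementaire.
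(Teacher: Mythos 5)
Votre preuve est correcte et suit essentiellement la m\^eme approche que celle du texte : l'in\'egalit\'e cl\'e $\vol_{n}\geq C\,(\rho_{1}\cdots\rho_{l})^{-1}\vol_{n}'$, que le texte se contente d'affirmer, est ici d\'eriv\'ee explicitement \`a partir de l'inclusion $\mathcal{V}\subset\mathcal{V}_{b}$ et de la factorisation du jacobien, puis la divergence de $\int_{0}^{\varepsilon}\rho^{-1}d\rho$ conclut de la m\^eme fa\c{c}on. Seule remarque mineure : \'evitez de noter la matrice de passage par $M$, d\'ej\`a utilis\'e pour la vari\'et\'e.
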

\begin{proof}
Sans perte de g\'en\'eralit\'e, on suppose que $M$ est une vari\'et\'e \`a bord compacte. Soit $\vol_{n}'$ une forme volume sur $M$ associ\'ee \`a une autre m\'etrique sur $M$ qui est lisse jusqu'au bord. On a alors que $\vol_{n} \geq C \rho^{-1} \vol_{n}'$ avec $\rho$ une fonction de d\'efinition du bord et $C$ une constante positive.
D'o\`u, si $f$ est non nulle sur $\partial M$, alors $$\int_{\overset{\circ}{M}} f \vol_{n} \geq \int_{\overset{\circ}{M}}  C \rho^{-1} f \vol_{n}'> + \infty.$$
\end{proof}
\begin{defi}
Soit $E \rightarrow M$ un fibr\'e vectoriel. Une ${}^{\mathcal{V}}TM$\texttt{-}connexion sur $E$, not\'ee aussi $\nabla$, est une application 
$$\nabla :  \Gamma({}^{\mathcal{V}}TM) \times \Gamma(E)  \rightarrow  \Gamma(E) \:$$  $$\: (X,\mu) \mapsto \nabla_{X} \, \mu,$$ 
qui satisfait aux propri\'et\'es suivantes:
\begin{enumerate}
\item $\nabla_{fX+gY} \, \mu = f \,\nabla_{X} \, \mu +g \, \nabla_{Y} \, \mu,  \; \; \forall f,g \in  C^\infty(M), \;  \forall X, Y \in  \Gamma({}^{\mathcal{V}}TM),$
\item $\nabla_{X} \, (a\mu + a' \mu')= a\,\nabla_{X} \, \mu +b' \, \nabla_{X} \, \mu', \; \; \forall a,a' \in  \mathbb{R},$
\item $\nabla_{X} \, (f\mu)= \varrho_{\mathcal{V}}(X)(f) \mu +f \, \nabla_{X} \, \mu, \; \; \forall f\in  C^\infty(M).$
\end{enumerate}
\end{defi}
\begin{prop} (Lemme 4.2 dans ~\cite{ammann2004geometry}) \label{kozz}
Soit $(\overset{\circ}{M}, g)$ une vari\'et\'e riemannienne avec une structure de Lie \`a l'infini $(M,\mathcal{V})$. Alors
la connexion de Levi\texttt{-}Civita sur $T\overset{\circ}{M}$, $$\nabla :  \mathfrak{X}(\overset{\circ}{M}) \times  \mathfrak{X}(\overset{\circ}{M}) \rightarrow  \mathfrak{X}(\overset{\circ}{M}),$$ s'\'etend en une connexion affine, not\'ee aussi $$\nabla :  \Gamma({}^{\mathcal{V}}TM)   \times  \Gamma({}^{\mathcal{V}}TM) \rightarrow \Gamma({}^{\mathcal{V}}TM),$$ ayant les m\^emes propri\'et\'es de la connexion usuelle de Levi\texttt{-}Civita (sym\'etrique et compatible). Autrement dit, la connexion de Levi\texttt{-}Civita sur $T\overset{\circ}{M}$ s'\'etend en une $ {}^{\mathcal{V}}TM$\texttt{-}connexion de Levi\texttt{-}Civita sur $ {}^{\mathcal{V}}TM$. 
\end{prop}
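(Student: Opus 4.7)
La strat\'egie est d'utiliser la formule de Koszul directement appliqu\'ee \`a la m\'etrique fibr\'ee $h\in\Gamma(\Sym^{2}({}^{\mathcal{V}}T^{*}M))$ qui induit $g$ via $g=(\varrho_{\mathcal{V}}^{-1})^{*}(h_{\scriptscriptstyle{\vert \overset{\circ}{M}}})$. Par d\'efinition d'une structure de Lie \`a l'infini, $\Gamma({}^{\mathcal{V}}TM)\simeq\mathcal{V}$ est une sous\texttt{-}alg\`ebre de Lie de $\mathfrak{X}(M)$, donc le crochet $[X,Y]$ de deux sections $X,Y\in\Gamma({}^{\mathcal{V}}TM)$ est encore une section de ${}^{\mathcal{V}}TM$, et l'ancre $\varrho_{\mathcal{V}}$ permet \`a $X$ d'agir sur une fonction lisse via $X\cdot f := \varrho_{\mathcal{V}}(X)\cdot f$.

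Je d\'efinirais alors $\nabla_{X}Y\in \Gamma({}^{\mathcal{V}}TM)$ par la formule de Koszul
\begin{multline*}
2\,h(\nabla_{X}Y,Z)= X\!\cdot\!h(Y,Z)+Y\!\cdot\!h(X,Z)-Z\!\cdot\!h(X,Y)\\
+h([X,Y],Z)-h([X,Z],Y)-h([Y,Z],X),
\end{multline*}
ce qui a un sens pourvu que le membre de droite soit $C^{\infty}(M)$\texttt{-}lin\'eaire en $Z$, puisque $h$ est non\texttt{-}d\'eg\'en\'er\'ee fibre par fibre sur $M$ tout entier (y compris sur le bord). Cette lin\'earit\'e est la premi\`ere v\'erification cl\'e: les termes parasites du type $(\varrho_{\mathcal{V}}(X)\cdot f)h(Y,Z)$ se compensent gr\^ace \`a la r\`egle de Leibniz du crochet d'alg\'ebro\"ide d\'ej\`a \'etablie, exactement comme dans le cas classique.

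Les autres propri\'et\'es caract\'erisant une $ {}^{\mathcal{V}}TM$\texttt{-}connexion (tensorialit\'e en $X$, r\`egle de Leibniz en $Y$), ainsi que la sym\'etrie $\nabla_{X}Y-\nabla_{Y}X=[X,Y]$ et la compatibilit\'e $X\!\cdot\!h(Y,Z)=h(\nabla_{X}Y,Z)+h(Y,\nabla_{X}Z)$, se d\'eduisent de la formule par les manipulations alg\'ebriques standard du th\'eor\`eme fondamental de la g\'eom\'etrie riemannienne, les seules op\'erations en jeu \'etant l'action par l'ancre et le crochet d'alg\'ebro\"ide, qui satisfont aux m\^emes identit\'es formelles que dans le cas habituel. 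Le fait que $\nabla$ prolonge la connexion de Levi\texttt{-}Civita sur $\overset{\circ}{M}$ est alors imm\'ediat: sur l'int\'erieur, $\varrho_{\mathcal{V}}$ est un isomorphisme transportant $h$ sur $g$ et le crochet d'alg\'ebro\"ide sur le crochet de Lie usuel, de sorte que la formule ci\texttt{-}dessus co\"incide avec la formule de Koszul classique pour $g$.

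Le point le plus d\'elicat est conceptuel plut\^ot que calculatoire: il faut s'assurer que chaque ingr\'edient de la formule de Koszul (action d'une section de ${}^{\mathcal{V}}TM$ sur $C^{\infty}(M)$ par l'ancre, crochet dans $\mathcal{V}$, non\texttt{-}d\'eg\'en\'erescence de $h$ jusqu'au bord) est bien disponible au niveau du fibr\'e ${}^{\mathcal{V}}TM$ tout entier, et non seulement au niveau de l'int\'erieur. Une fois cette reconnaissance faite, la preuve du th\'eor\`eme fondamental de la g\'eom\'etrie riemannienne se transpose mot pour mot au cadre de la structure de Lie \`a l'infini $(M,\mathcal{V})$, et l'extension $\nabla$ ainsi construite est automatiquement unique par les m\^emes arguments.
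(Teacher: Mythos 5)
Votre preuve est correcte et suit essentiellement la m\^eme d\'emarche que celle de l'article, qui se contente d'invoquer la formule de Koszul appliqu\'ee aux sections de ${}^{\mathcal{V}}TM$. Vous explicitez simplement pourquoi chaque ingr\'edient (fermeture de $\mathcal{V}$ sous le crochet, action via l'ancre, non\texttt{-}d\'eg\'en\'erescence de $h$ jusqu'au bord) reste disponible sur $M$ tout entier, ce qui est pr\'ecis\'ement le contenu implicite de la preuve de l'article.
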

\begin{proof}
C'est une cons\'equence directe de la formule de Koszul :
$$2\langle\nabla_{X}Y,Z\rangle\:=\:\langle [X,Y],Z\rangle -\langle [Y,Z],X\rangle+\langle [Z,X],Y\rangle +X\langle Y,Z\rangle+Y\langle Z,X\rangle-Z\langle X,Y\rangle,$$$ \forall X, Y \in  \Gamma({}^{\mathcal{V}}TM).$
\end{proof}
\begin{remq}
Le m\^eme raisonnement implique que la connexion de Levi\texttt{-}Civita sur $T^{*}\overset{\circ}{M}$ (respectivement  $(\otimes^{k} \;T^{*}\overset{\circ}{M}) \otimes (\otimes^{l} \;T\overset{\circ}{M})$) s'\'etend en une $ {}^{\mathcal{V}}TM$\texttt{-}connexion sur $ {}^{\mathcal{V}}T^{*}M$ (respectivement $(\otimes^{k} \;  {}^{\mathcal{V}}T^{*}M) \otimes (\otimes^{l} \;  {}^{\mathcal{V}}TM) $). 
\end{remq}
\begin{cor} (Corollaire 4.3 dans~\cite{ammann2004geometry}) \label{nidhal}
Soient $(\overset{\circ}{M}, g)$ une vari\'et\'e riemannienne avec une structure de Lie \`a l'infini $(M,\mathcal{V})$ et $\nabla$ la $ {}^{\mathcal{V}}TM$\texttt{-}connexion de Levi\texttt{-}Civita. $\forall X, Y \in  \Gamma({}^{\mathcal{V}}TM)$, l'endomorphisme de courbure de Riemann $R(X,Y)$ s'\'etend en un endomorphisme sur $  {}^{\mathcal{V}}TM$. De plus, chaque d\'eriv\'ee contravariante, $$\nabla^{k}R \in  \Gamma\big((\otimes^{k}\; T^{*}\overset{\circ}{M} )\otimes (\wedge^{2}\;T^{*}\overset{\circ}{M})\otimes \End(T\overset{\circ}{M})\big),$$ est born\'ee sur $\overset{\circ}{M}$ et s'\'etend en une section de $$(\otimes^{k} \; {}^{\mathcal{V}}T^{*}M) \otimes (\wedge^{2} \; {}^{\mathcal{V}}T^{*}M)  \otimes \End( {}^{\mathcal{V}}TM).$$
\end{cor}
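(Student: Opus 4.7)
Mon plan est de d\'efinir d'abord l'endomorphisme de courbure de Riemann pour les champs de vecteurs dans $\Gamma({}^{\mathcal{V}}TM)$ en utilisant la connexion \'etendue. Gr\^ace \`a la Proposition~\ref{kozz}, la connexion de Levi\texttt{-}Civita s'\'etend en une $ {}^{\mathcal{V}}TM$\texttt{-}connexion $\nabla : \Gamma({}^{\mathcal{V}}TM) \times \Gamma({}^{\mathcal{V}}TM) \rightarrow \Gamma({}^{\mathcal{V}}TM)$, et comme $\mathcal{V}$ est ferm\'ee sous le crochet de Lie, on peut poser
$$R(X,Y)Z= \nabla_{X}\nabla_{Y}Z - \nabla_{Y}\nabla_{X}Z - \nabla_{[X,Y]}Z, \quad \forall X,Y,Z \in \Gamma({}^{\mathcal{V}}TM),$$
o\`u $[X,Y]$ est interpr\'et\'e via l'identification $\mathcal{V} \simeq \Gamma({}^{\mathcal{V}}TM)$.

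Il faudra ensuite v\'erifier la $C^\infty(M)$\texttt{-}lin\'earit\'e de $R$ en ses trois arguments afin d'obtenir un tenseur, c'est\texttt{-}\`a\texttt{-}dire une section globale de $(\wedge^{2}\; {}^{\mathcal{V}}T^{*}M) \otimes \End({}^{\mathcal{V}}TM)$. Les calculs sont standards en utilisant la r\`egle de Leibniz pour $\nabla$ et l'identit\'e $[fX,Y]= f[X,Y] - (\varrho_{\mathcal{V}}(Y).f)X$ venant de la r\`egle de Leibniz pour le crochet d'alg\'ebro\"ide de Lie ; la combinaison des termes fait dispara\^itre les d\'eriv\'ees de $f$ comme dans la preuve classique sur une vari\'et\'e sans bord.

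Pour les d\'eriv\'ees contravariantes $\nabla^{k}R$, j'utiliserai la remarque suivant la Proposition~\ref{kozz}, selon laquelle la connexion de Levi\texttt{-}Civita s'\'etend naturellement \`a tout fibr\'e tensoriel construit \`a partir de ${}^{\mathcal{V}}TM$ et de ${}^{\mathcal{V}}T^{*}M$. Par induction sur $k$, on obtient donc que $\nabla^{k}R$ est bien une section lisse de
$$(\otimes^{k}\; {}^{\mathcal{V}}T^{*}M) \otimes (\wedge^{2}\; {}^{\mathcal{V}}T^{*}M) \otimes \End({}^{\mathcal{V}}TM).$$
Le fait que, sur $\overset{\circ}{M}$, cette section \'etendue co\"incide avec la d\'eriv\'ee covariante usuelle d\'ecoule du fait que $\varrho_{\mathcal{V}}$ est un isomorphisme sur $\overset{\circ}{M}$, qui \'etablit l'identification entre les deux formules.

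La borne sur $\overset{\circ}{M}$ est alors presque gratuite : la m\'etrique euclidienne $h \in \Gamma(\Sym^{2}({}^{\mathcal{V}}T^{*}M))$ induit une norme ponctuelle continue sur le fibr\'e tensoriel ci\texttt{-}dessus, et $\vert \nabla^{k}R \vert_{h}$ est alors une fonction continue sur la vari\'et\'e \`a coins compacte $M$ tout enti\`ere, donc born\'ee globalement. Par l'isom\'etrie induite par $\varrho_{\mathcal{V}}$ sur $\overset{\circ}{M}$, cette borne co\"incide avec la norme riemannienne usuelle de $\nabla^{k}R$, d'o\`u la borne uniforme voulue. L'\'eventuel point d\'elicat est de bien justifier que chaque \'etape de l'extension de $\nabla$ aux fibr\'es tensoriels pr\'eserve la r\`egle de Leibniz tensorielle de fa\c con compatible avec le crochet, mais cela se v\'erifie de mani\`ere routini\`ere \`a partir de la d\'efinition et de la Proposition~\ref{kozz}.
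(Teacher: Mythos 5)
Votre proposition est correcte et suit essentiellement la m\^eme d\'emarche que l'article : \'etendre la connexion de Levi\texttt{-}Civita via la Proposition~\ref{kozz}, constater que la formule de courbure d\'efinit un tenseur sur $ {}^{\mathcal{V}}TM$, it\'erer la connexion sur les fibr\'es tensoriels pour obtenir $\nabla^{k}R$, puis invoquer la compacit\'e de $M$ pour la borne uniforme. La seule diff\'erence, sans cons\'equence, est que vous v\'erifiez directement la $C^\infty(M)$\texttt{-}lin\'earit\'e de $R$, tandis que l'article d\'eduit l'extension de $R$ du fait que $R(X,Y)$ est un tenseur sur $T\overset{\circ}{M}$ et que $\overset{\circ}{M}$ est dense dans $M$.
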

\begin{proof}
Soit $ X, Y \in \Gamma({}^{\mathcal{V}}TM)$. Par d\'efinition, on sait que 
$$R(X,Y):=\nabla_{X} \nabla_{Y}-\nabla_{Y} \nabla_{X} - \nabla_{[X,Y] }.$$
On a que $\nabla_{X}$, $\nabla_{Y}$ et $\nabla_{[X,Y]}$ sont des op\'erateurs diff\'erentiels sur ${}^{\mathcal{V}}TM$, par la Proposition~\ref{kozz}. Alors $R(X, Y)$ est un op\'erateur diff\'erentiel sur $ {}^{\mathcal{V}}TM$, qui est un tenseur sur $ T\overset{\circ}{M}$. Puisque $\overset{\circ}{M}$ est dense dans $M$, on a bien que $$R\in \Gamma \big(( \wedge^{2}\; {}^{\mathcal{V}}T^{*}M  )\otimes \End( {}^{\mathcal{V}}TM)\big).$$ On applique la $ {}^{\mathcal{V}}TM$\texttt{-}connexion de Levi\texttt{-}Civita sur $\otimes ^{k} \; {}^{\mathcal{V}}T^{*}M$ pour obtenir 
$$\nabla^{k}R \in \Gamma\big((\otimes ^{k}\; {}^{\mathcal{V}}T^{*}M )\otimes (\wedge^{2} \; {}^{\mathcal{V}}T^{*}M) \otimes \End( {}^{\mathcal{V}}TM)\big).$$
Le contr\^ole de $\nabla^{k}R$ r\'esulte du fait que $M$ est compacte.
\end{proof}

Avant de conclure, nous rappelons qu'un exemple de g\'eom\'etrie born\'ee est une vari\'et\'e riemannienne compl\`{e}te dont le rayon d'injectivit\'e est strictement positif et les d\'eriv\'ees contravariantes du tenseur de courbure sont born\'ees. Ainsi, toute vari\'et\'e avec une structure de Lie \`a l'infini est un exemple de g\'eom\'etrie born\'ee d\`{e}s que le rayon d'injectivit\'e est strictement positif. Par un r\'esultat de Amann, Lauter et Nistor (voir le Corollaire 4.20~\cite{ammann2004geometry}), \'etant donn\'e une structure de Lie \`a l'infini, soit le rayon d'injectivit\'e est strictement positif pour toutes les m\'etriques compatibles, soit il ne l'est pour aucune. De plus, Ammann, Lauter et Nistor donnent un crit\`{e}re suffisant pour que le rayon d'injectivit\'e soit strictement positif. Comme, il n'y a aucun exemple connu de m\'etrique compatible avec une structure de Lie \`a l'infini ayant un rayon d'injectivit\'e nul, il est conjectur\'e que le rayon d'injectivit\'e est toujours strictement positif pour une m\'etrique compatible avec une structure de Lie \`a l'infini, voir la Conjecture 4.11~\cite{ammann2004geometry}. Cette conjecture n'a pas encore \'et\'e prouv\'ee sauf pour des m\'etriques particuli\`{e}res comme les m\'etriques QAC et QFB (voir la Proposition 1.27~\cite{conlon2019quasi}). 
\section{\'equation parabolique et polyhomog\'en\'eit\'e des solutions} \label{}
Dans la pr\'esente section, nous g\'en\'eralisons les travaux de~\cite{rochon2015polyhomogeneite} pour certaines \'equations $\mathcal{V}_{SF}$\texttt{-}paraboliques lin\'eaires, \`a savoir des \'equations paraboliques lin\'eaires d\'etermin\'ees par une structure de Lie fibr\'ee \`a l'infini. Plus pr\'ecis\'ement, nous montrons que les solutions de ces \'equations admettent un d\'eveloppement polyhomog\`{e}ne \`a l'infini pourvu que certaines conditions naturelles soient satisfaites.  

Dans ce qui suit, soit $(\overset{\circ}{M}, g)$ une vari\'et\'e riemannienne non compacte compl\`{e}te avec une structure de Lie \`a l'infini $(M,\mathcal{V})$. La connexion de Levi\texttt{-}Civita $\nabla$ sur $T\overset{\circ}{M}$ induit une connexion de Levi\texttt{-}Civita sur $\Gamma(T^{(k,l)}\overset{\circ}{M}):= \Gamma\big((\otimes ^{k} \; T^{*} \overset{\circ}{M})\otimes( \otimes ^{l} \; T\overset{\circ}{M})\big)$, not\'ee aussi $\nabla$. Plus g\'en\'eralement, si $E \rightarrow M$ est un fibr\'e vectoriel euclidien avec une m\'etrique induite sur $\Gamma(T^{(k,l)}\overset{\circ}{M} \otimes E)$, not\'ee aussi $g$, alors un choix de $ {}^{\mathcal{V}}TM$\texttt{-}connexion pour $E$ et la connexion de Levi\texttt{-}Civita sur $\Gamma(T^{(k,l)}\overset{\circ}{M})$ induisent une connexion sur $\Gamma(T^{(k,l)}\overset{\circ}{M}\otimes E) $, not\'ee aussi $\nabla$. 
\begin{defi}
On d\'efinit l'espace de Banach $C_{\mathcal{V}}^{k}(\overset{\circ}{M}; E)$ par l'ensemble des sections de classe $C^{k}$ de $E$ telles que toutes ses d\'eriv\'ees contravariantes jusqu'\`a l'ordre $k$ sont uniform\'ement born\'ees, c'est\texttt{-}\`a\texttt{-}dire que $$C_{\mathcal{V}}^{k}(\overset{\circ}{M}; E) :=\big\{ \mu \in C^{k}(\overset{\circ}{M}; E) \mid \sup_{m \in \overset{\circ}{M}}{\left|\nabla^{j} \mu\right|}_{g} < +\infty\;\forall j\in\{0,...,k\}\big\}.$$ 
Il est muni de la norme d\'efinie par
$$\left\| \mu \right\|_{k}= \sum \limits_{j=0}^{k} \sup_{m \in \overset{\circ}{M}}{\left|\nabla^{j} \mu\right|}_{g}.$$
L'espace de Fr\'echet associ\'e est d\'efini par $$C_{\mathcal{V}}^{\infty}(\overset{\circ}{M}; E)=\bigcap _{k \in \mathbb{N}} C_{\mathcal{V}}^{k}(\overset{\circ}{M}; E)$$
et a pour semi\texttt{-}normes $\left\| \cdot \right\|_{k}$ pour $k \in \mathbb{N}$.
\end{defi}
\begin{remq} \label{lisse}
On a que $C^{\infty} (M;E) \subsetneq C_{\mathcal{V}}^{\infty}(\overset{\circ}{M};E)$. Autrement dit, chaque section de $C_{\mathcal{V}}^{k}(\overset{\circ}{M};E)$ n'exige que le contr\^ole de toutes ses d\'eriv\'ees contravariantes, mais pas qu'elle s'\'etend continûment au bord. Par exemple, 
si $\rho_{H}$ est une fonction de d\'efinition d'une hypersurface bordante $H$ de $M$ telle que $\rho_{H}(m)<1, \; \forall m \in M$, alors la fonction $(\log \rho_{H})^{-\eta},$ o\`u $\eta \in \mathbb{N},$ n'est pas une fonction lisse sur $M$, mais est un \'el\'ement de $C_{\mathcal{V}}^{\infty}(\overset{\circ}{M})$. 
\end{remq}
\begin{defi}
\'Etant donn\'e $\alpha \in (0,1]$, on d\'efinit l'espace de H\"older $C_{\mathcal{V}}^{0,\alpha}(\overset{\circ}{M}; E)$ comme \'etant l'espace de Banach $$\{ \mu \in C_{\mathcal{V}}^{0}(\overset{\circ}{M}; E) \mid \left\| \mu \right\|_{0,\alpha}  < +\infty\}$$ avec norme $\left\| \cdot \right\|_{0,\alpha}$ donn\'ee par $$\left\| \mu \right\|_{0,\alpha} : = \left\| \mu \right\|_{0} +\sup \big\{\frac{\left| \psi_{\gamma}(\mu(\gamma(0)))-\mu(\gamma(1))\right|_{g} }{l(\gamma)^{\alpha}} \mid \gamma \in C^{\infty} ([0,1];\;\overset{\circ}{M})\;et\; \gamma(0)\neq \gamma(1)\big\}$$
o\`u $\psi_{\gamma} : E_{\scriptscriptstyle{\vert \gamma(0) }} \rightarrow E_{\scriptscriptstyle{\vert \gamma(1) }}$ est le transport parall\`{e}le le long de $\gamma$ et $l(\gamma)$ est la longueur de $\gamma$ associ\'ee \`a la m\'etrique $g$.\\ 
Pour $k \in  \mathbb{N}$, l'espace de H\"older $C_{\mathcal{V}}^{k,\alpha}(\overset{\circ}{M}; E)$ est l'espace de Banach d\'efini par $$\{ \mu \in C_{\mathcal{V}}^{k}(\overset{\circ}{M}; E) \mid \nabla^{k} \mu \in  C_{\mathcal{V}}^{0,\alpha}(\overset{\circ}{M}; T^{(k,0)}\overset{\circ}{M} \otimes E) \}$$ avec norme donn\'ee par $$\left\| \mu \right\|_{k,\alpha} : = \left\| \mu \right\|_{k-1} + \left\| \nabla^{k} \mu \right\|_{0,\alpha}. $$
\end{defi}
\begin{remq}
Par la Proposition~\ref{hassna} et le Corollaire~\ref{nidhal}, la d\'efinition de ces espaces et leur topologie d\'ependent seulement de la structure de Lie \`a l'infini,  pas du choix de m\'etrique compatible. Les normes cependant d\'ependent clairement du choix de la m\'etrique $g.$
\end{remq} 
Maintenant, voici une version parabolique de ces espaces. D\'enotons aussi par $E$ le tir\'e en arri\`{e}re de $E$ par la projection $[0, T ] \times M \rightarrow M$. 
\begin{defi} L'espace de Banach $C_{\mathcal{V}}^{k}([0,T]\times \overset{\circ}{M}; E)$ est d\'efini par l'ensemble 
\begin{multline*}
\big\{ \mu \in C^{k}([0,T]\times\overset{\circ}{M}; E) \mid \forall i,j \in \mathbb{N}_{0} \; avec \; 2i+j \leq k, \\\; (\frac{\partial}{\partial t})^{i} \nabla^{j} \mu \in  C^{0}([0,T]\times \overset{\circ}{M}; T^{(j,0)}\overset{\circ}{M}\otimes E) \; et \; \sup_{t \in [0,T]}\sup_{m \in \overset{\circ}{M}}{\left| (\frac{\partial}{\partial t} )^{i} \nabla^{j} \mu (t,m)\right|}_{g} < +\infty\big\},
\end{multline*}
avec norme donn\'ee par $$ \left\| \mu \right\|_{k} : =  \sum \limits_{2i+j \leq k} \sup_{t \in [0,T]} \sup_{m \in \overset{\circ}{M}}{\left| (\frac{\partial}{\partial t} )^{i} \nabla^{j} \mu (t,m) \right|}_{g}.$$
L'espace de Fr\'echet associ\'e est d\'efini par $$C_{\mathcal{V}}^{\infty}([0,T]\times \overset{\circ}{M}; E)=\bigcap _{k \in \mathbb{N}_{0} } C_{\mathcal{V}}^{k}([0,T]\times \overset{\circ}{M}; E).$$
\'Etant donn\'e $\alpha \in (0,1],$ l'espace de H\"older parabolique $C_{\mathcal{V}}^{0,\alpha}([0,T]\times \overset{\circ}{M}; E)$ est l'espace de Banach d\'efini par $$\{ \mu \in C_{\mathcal{V}}^{0}([0,T]\times \overset{\circ}{M}; E) \mid \left\| \mu \right\|_{0,\alpha}  < +\infty \}$$ avec norme $ \left\| \cdot \right\|_{0,\alpha}$ donn\'ee par
\begin{multline*}
\left\| \mu \right\|_{0,\alpha} : = \left\| \mu \right\|_{0} 
         +\sup_{t \in [0,T]}\sup \big\{\frac{\left| \psi_{\gamma}(\mu(\gamma(0)))-\mu(\gamma(1))\right|_{g} }{l(\gamma)^{\alpha}} \mid \gamma \in C^{\infty} ([0,1];\;\{t\} \times \overset{\circ}{M})\;et\; \gamma(0)\neq \gamma(1)\big\}\\
         +\sup_{m \in \overset{\circ}{M}}  \sup_{t \neq t'} \frac{\left|\mu(t,m)-\mu(t',m)\right|_{g}}{\left| t-t'\right|^{\frac{\alpha}{2}}}. 
\end{multline*}
Pour $k \in  \mathbb{N}$, l'espace de H\"older parabolique $C_{\mathcal{V}}^{k,\alpha}([0,T]\times \overset{\circ}{M}; E)$ est l'espace de Banach d\'efini par $$\{ \mu \in C_{\mathcal{V}}^{k}([0,T]\times \overset{\circ}{M}; E) \mid \nabla^{k} \mu \in  C_{\mathcal{V}}^{0,\alpha}([0,T]\times \overset{\circ}{M}; T^{(k,0)}\overset{\circ}{M}\otimes E) \}$$ avec norme donn\'ee par $$\left\| \mu \right\|_{k,\alpha} : = \left\| \mu \right\|_{k-1} +  \sum \limits_{2i+j = k} \left\| (\frac{\partial}{\partial t} )^{i} \nabla^{j} \mu \right\|_{0,\alpha}. $$
\end{defi}

Pour des sections dans $C_{\mathcal{V}}^{k,\alpha}( \overset{\circ}{M}; E)$, il existe parfois un d\'eveloppement lisse pr\`{e}s du bord $\partial M,$ \`a savoir un d\'eveloppement similaire aux s\'eries de Taylor pr\`es du bord  de la forme $ \displaystyle \sum_{n\in \mathbb{N}_{0}} \rho^{n} u_{n} $ o\`u $\rho$ est une fonction de d\'efinition du bord et $u_{n}$ est une section lisse sur $M.$ Sans avoir un d\'eveloppement lisse, on a toutefois souvent une expansion polyhomog\`{e}ne avec des termes de la forme $\rho^{z} (\log \rho)^{k}$ au lieu de $\rho^{-n}$, o\`u le couple $(z, k)$ appartient \`a un sous\texttt{-}ensemble de $\mathbb{C} \times \mathbb{N}_{0}$, appel\'e  ensemble indiciel d\'efini comme suit : 
\begin{defi}
Un ensemble indiciel $G$ est un sous\texttt{-}ensemble discret de $\mathbb{C} \times \mathbb{N}_{0}$ tel que 
\begin{enumerate}
\item $(z_{j},k_{j}) \in G, \; \left|(z_{j},k_{j})\right| \rightarrow \infty \Longrightarrow \Re z_{j} \rightarrow \infty;$
\item $(z,k) \in G  \Longrightarrow (z+p,k) \in G \; \;\forall p\in \mathbb{N};$
\item $(z,k) \in G  \Longrightarrow (z,p) \in G \;\; \forall p\in \{0,...,k\}.$
\end{enumerate}
L'ensemble indiciel $G$ est dit positif si  $\mathbb{N}_{0} \times \{0\} \subset G$ et 
$$(z,k) \in G  \Longrightarrow \Im z =0, \; \Re z\geq 0,$$
$$(0,k) \in G  \Longrightarrow k=0.$$
Une famille indicielle $\mathcal{G}$ de $M$ est la donn\'ee d'un ensemble indiciel $\mathcal{G}(H)$ pour chaque hypersurface bordante $H$ de $M$. Elle est dite positive si chacun de ses ensembles indiciels est positif. Si $\mathcal{G}$ est une famille indicielle et $F$ est une face de $M$, alors $\mathcal{G}_{\scriptscriptstyle{\vert F }}$ d\'enote la famille indicielle de $F$ qui associe \`a l'hypersurface bordante $F \cap H' $ de $F$ l'ensemble indiciel $\mathcal{G}(H') \in \mathcal{G}$. De m\^eme, si $U$ est un ouvert de $M,$  $\mathcal{G}_{\scriptscriptstyle{\vert U }}$ d\'enote la famille indicielle de $U$ qui associe \`a l'hypersurface bordante $U\cap H'$ de $U$ l'ensemble indiciel $\mathcal{G}(H') \in \mathcal{G}.$
 \end{defi}
 \begin{remq} \label{field}
Si $G_{1}$ et $G_{2}$ sont deux ensembles indiciels d'une hypersurface bordante $H$ de $M$, alors leur somme $$G_{1} + G_{2} := \{(z_{1}+z_{2}, k_{1}+k_{2}) \in \mathbb{C} \times \mathbb{N}_{0} \mid (z_{1}, k_{1}) \in G_{1}, (z_{2}, k_{2}) \in G_{2} \}$$ est aussi un ensemble indiciel de $H$. De m\^eme, si $\mathcal{G}_{1}$ et $\mathcal{G}_{2}$ sont deux familles indicielles de $M$, alors 
$\mathcal{G}_{1} + \mathcal{G}_{2}$
est une famille indicielle qui associe \`a l'hypersurface bordante $H$ l'ensemble indiciel $\mathcal{G}_{1}(H)+\mathcal{G}_{2}(H)$. On v\'erifie aussi que $\mathcal{G}_{1} \cup \mathcal{G}_{2}$ est une famille indicielle de $M$ qui associe \`a l'hypersurface bordante $H$ l'ensemble indiciel $\mathcal{G}_{1}(H) \cup \mathcal{G}_{2}(H)$. Si $\mathcal{G}_{1}$ et $\mathcal{G}_{2}$ sont positives alors $\mathcal{G}_{1} \cup \mathcal{G}_{2} \subset \mathcal{G}_{1} + \mathcal{G}_{2}$.
En particulier, si $\mathcal{G}$ est une famille indicielle positive, on peut lui associer la famille indicielle $$ \mathcal{G}_{\infty}= \sum \limits_{j=1}^{\infty} \mathcal{G}=\bigcup _{n=1}^{\infty} \sum \limits_{j=1}^{n} \mathcal{G},$$ et par construction, on a que $\mathcal{G}_{\infty}+\mathcal{G}=\mathcal{G}_{\infty}$.
 \end{remq} 
\begin{defi}
On suppose que $M$ est une vari\'et\'e compacte \`a bord, $\rho$ une fonction de d\'efinition du bord $\partial M$ et $G$ un ensemble indiciel de $M.$
L'espace $\mathcal{A}^{G}_{\phg}(M)$ est l'espace des fonctions polyhomog\`{e}nes $f \in  C^\infty(\overset{\circ}{M})$ ayant un d\'eveloppement asymptotique pr\`{e}s de $\partial M$ de la forme 
 $$f \sim \sum \limits_{(z,k) \in G} a(z,k) \rho^{z}(\log\rho )^{k}, \;\;a(z,k)\in C^\infty(M),$$ o\`u
 le symbole $\sim$ signifie que $\forall N \in \mathbb{N}$, on a
 $$f  - \sum \limits_{\substack{(z,k) \in G \\ \Re z \leq N}} a(z,k) \rho^{z}(\log\rho )^{k} \in \rho^{N} C_{b}^{N}(\overset{\circ}{M}) \;\;\textrm{avec}\;\; C_{b}^{N}(\overset{\circ}{M}):=C_{\mathcal{V}_{b}}^{N}(\overset{\circ}{M}),$$ 
 o\`u $(M, \mathcal{V}_{b})$ est la structure de Lie \`a l'infini de l'Exemple~\ref{bcchamp}.
\end{defi}
\begin{defi}\label{extension}
Soient $\pr_{1} : H \times [0,\varepsilon) \rightarrow H$ la projection canonique et
 $\eta_{H}\in C^\infty_{c}(M)$ une fonction de coupure lisse \`a support compact dans un voisinage tubulaire de $H,$ $c : H \times [0,\varepsilon) \leftrightarrow c\big( H \times [0,\varepsilon)\big) \subset M,$ tel que $\eta_{H} \equiv 1$ pr\`{e}s de $H$.
Une application d'extension des fonctions sur $H$ \`a des fonctions sur $M$ est d\'efinie par
$$\Xi_{H}(a)=  \eta_{H}{(c^{-1})}^{*}\pr_{1}^{*} a \;\;\textrm{v\'erifiant} \;\;\Xi_{H} (a)_{\scriptscriptstyle{\vert H}}= a.$$ 
\end{defi}
 \begin{defi}
Soit $\mathcal{G} = \{\mathcal{G}(H) \mid H \in \mathcal{M}_{1}(M) \}$ une famille indicielle pour une vari\'et\'e \`a coins $M.$ L'espace $\mathcal{A}^{\mathcal{G}}_{\phg}(M)$ est d\'efini r\'ecursivement sur la profondeur de la vari\'et\'e \`a coins par l'ensemble des fonctions $f \in  C^\infty(\overset{\circ}{M})$ satisfaisant, pour chaque hypersurface bordante $H$ de $M$,  
 $$f \sim \sum \limits_{(z,k) \in \mathcal{G}(H) } \Xi _{H} \big(a(z,k)\big) \rho_{H}^{z}(\log\rho_{H} )^{k}, a(z,k)\in \mathcal{A}^{\mathcal{G}_{\scriptscriptstyle{\vert H }}}_{\phg}(H),$$ 
 o\`u $\rho_{H}$ est une fonction de d\'efinition associ\'ee \`a $H$.
 Le symbole $\sim$ signifie que $\forall N \in \mathbb{N}$, on a
$$f  - \sum \limits_{\substack{(z,k) \in \mathcal{G}(H)  \\ \Re z \leq N}} \Xi _{H} \big(a(z,k)\big) \rho_{H}^{z}(\log\rho_{H})^{k} \in \rho_{H}^{N} \big(\displaystyle \prod _{\underset {H' \in \mathcal{M}_{1}(M)} {H' \neq H} } \rho_{H'}^{m_{H'}}(\log\rho_{H'})^{b_{H'}} \big)C_{\mathcal{V}_{b}}^{\infty}(\overset{\circ}{M}),$$ o\`u $(m_{H'},b_{H'}) \in  \mathcal{G}(H')$ est tel que si $(m,b) \in  \mathcal{G}(H')$ alors $\Re(m) \geq \Re(m_{H'})$ et $\Re(m) = \Re(m_{H'})$ implique $b \leq b_{H'}.$
La r\'ecursion se termine \'eventuellement et $\mathcal{A}^{\mathcal{G}}_{\phg}(M)$ est bien d\'efini lorsque les coefficients $a(z,k)$ sont dans $ \mathcal{A}^{*}_{\phg}(Y) \equiv C^{\infty}(Y)$, o\`u $Y$ est une vari\'et\'e ferm\'ee d\'etermin\'ee par une intersection maximale d'hypersurfaces bordantes de $M$.
\end{defi}
\begin{remq} \label{eva}
\'Etant donn\'ees $\mathcal{G}_{1}$ et $\mathcal{G}_{2}$ deux familles indicielles de $M$, si $f_{1} \in \mathcal{A}^{\mathcal{G}_{1}}_{\phg}(M)$ et $f_{2} \in \mathcal{A}^{\mathcal{G}_{2}}_{\phg}(M)$, alors $f_{1}+f_{2} \in  \mathcal{A}^{\mathcal{G}_{1} \cup \mathcal{G}_{2}}_{\phg}(M)$ et $f_{1}f_{2} \in  \mathcal{A}^{\mathcal{G}_{1} + \mathcal{G}_{2}}_{\phg}(M)$. 
 \end{remq}
\begin{remq}
L'espace $\mathcal{A}^{\mathcal{G}}_{\phg}(M)$ ne d\'epend pas du choix de fonction de d\'efinition du bord $\rho$ contrairement aux coefficients $a(z,k)$. Si chaque ensemble indiciel de $\mathcal{G}$ est $\mathbb{N}_{0} \times \{0\}$, alors $\mathcal{A}^{\mathcal{G}}_{\phg}(M)=C^\infty(M)$. Et si chaque ensemble indiciel de $\mathcal{G}$ est $\emptyset,$ alors $\mathcal{A}^{\mathcal{G}}_{\phg}(M)=\dot{C}^\infty(M)$ est l'espace des fonctions lisses sur $M$ s'annulant sur $\partial M$ ainsi que toutes leurs d\'eriv\'ees.
On remarque aussi que l'espace $\mathcal{A}^{\mathcal{G}}_{\phg}(M)$ est un $C^\infty$module.
 \end{remq}
 \begin{defi}
Soit $E \rightarrow M$ un fibr\'e vectoriel. L'espace des sections polyhomog\`{e}nes de $E$ correspondant \`a une famille indicielle $\mathcal{G}$ est d\'efini par 
 $$\mathcal{A}^{\mathcal{G}}_{\phg}(M;E)=\mathcal{A}^{\mathcal{G}}_{\phg}(M) \otimes_{C^\infty(M)} \Gamma(E).$$
 \end{defi}
Il est parfois n\'ecessaire de consid\'erer une classe de m\'etriques un peu plus grande, car la notion de m\'etrique riemannienne compatible avec une structure de Lie \`a l'infini $(M,\mathcal{V})$ est un peu stricte. 
 \begin{defi}
Une $\mathcal{V}$\texttt{-}m\'etrique est une m\'etrique riemannienne sur $\overset{\circ}{M}$ bi\texttt{-}Lipschitz \'equivalente \`a une m\'etrique compatible avec une structure de Lie \`a l'infini $(M,\mathcal{V})$ sur $\overset{\circ}{M}$. 
 \end{defi}
 \begin{defi}
Une $\mathcal{V}$\texttt{-}m\'etrique polyhomog\`{e}ne est une $\mathcal{V}$\texttt{-}m\'etrique $g$ sur $\overset{\circ}{M}$ d\'efinie par 
$$g=(\varrho_{\mathcal{V}}^{-1})^{*}(h_{\scriptscriptstyle{\vert \overset{\circ}{M}}}),$$ pour une certaine m\'etrique euclidienne $h \in \mathcal{A}^{\mathcal{G}}_{\phg}\big(M;\;\Sym^{2}({}^{\mathcal{V}}T^{*}M)\big)$ avec $\mathcal{G}$ une famille indicielle positive.
\end{defi}
Plusieurs des $\mathcal{V}$\texttt{-}m\'etriques ne sont pas typiquement lisses jusqu'au bord mais elles admettent souvent un d\'eveloppement asymptotique polyhomog\`{e}ne. Par exemple, les $\mathcal{V}_{\mathbb{C}\Theta}$\texttt{-}m\'etriques de Cheng Yau~\cite{cheng1980existence}, les m\'etriques Poincar\'e\texttt{-}Einstein~\cite{chrusciel2005boundary} et les m\'etriques Calabi\texttt{-}Yau asymtotiquement cylindriques ou coniques~\cite{conlon2015moduli}.

Nous examinerons certains aspects de la th\'eorie des op\'erateurs diff\'erentiels d\'etermin\'es par une structure particuli\`{e}re de Lie \`a l'infini. Soit $(\overset{\circ}{M}, g)$ une vari\'et\'e riemannienne avec une structure de Lie \`a l'infini $(M,\mathcal{V})$. 
Gr\^ace \`a la section pr\'ec\'edente, nous savons d\'ej\`a que la connexion de Levi\texttt{-}Civita sur $T\overset{\circ}{M}$ s'\'etend en une $ {}^{\mathcal{V}}TM$\texttt{-}connexion $\nabla$ de Levi\texttt{-}Civita sur $ {}^{\mathcal{V}}TM$. Cela implique aussi que la connexion de Levi\texttt{-}Civita sur $\Gamma(T^{(k,l)}\overset{\circ}{M}):=\Gamma \big((\otimes ^{k} \; T^{*}\overset{\circ}{M}) \otimes( \otimes^{l} \;T\overset{\circ}{M})\big)$ s'\'etend en une $ {}^{\mathcal{V}}TM$\texttt{-}connexion sur
$\Gamma( {}^{\mathcal{V}}TM^{(k,l)}),$ 
not\'ee aussi $\nabla$. Plus g\'en\'eralement, si $E \rightarrow M$ est un fibr\'e vectoriel, alors une $ {}^{\mathcal{V}}TM$\texttt{-}connexion sur $E$ induit une $ {}^{\mathcal{V}}TM$\texttt{-}connexion sur $ \Gamma( {}^{\mathcal{V}}TM^{(k,l)} \otimes E)$, not\'ee aussi $\nabla$. 
\begin{defi}
L'espace $\Diff^{*}_{\mathcal{V}}(M)$ est la $C^{\infty}(M)$\texttt{-}alg\`{e}bre universelle enveloppante de l'alg\`{e}bre de Lie $\mathcal{V}$. Autrement dit, l'espace $\Diff^{k}_{\mathcal{V}}(M)$ des op\'erateurs diff\'erentiels d'ordre k est l'espace des op\'erateurs diff\'erentiels engendr\'es par $C^{\infty}(M)$ et la composition d'au plus k \'el\'ements de $\mathcal{V}$.
\end{defi}
\begin{defi}
Puisque $\Diff^{k}_{\mathcal{V}}(M)$ est un $C^{\infty}$module, on d\'efinit l'ensemble des op\'erateurs diff\'erentiels de $\Diff^{k}_{\mathcal{V}}(M)$ agissant sur des sections d'un fibr\'e vectoriel $E \rightarrow M$ par 
  $$\Diff^{k}_{\mathcal{V}}(M;E)=  \Diff^{k}_{\mathcal{V}}(M) \otimes_{C^{\infty}(M)} \Gamma\big(\End(E)\big).$$
Donc chaque $L \in \Diff^{k}_{\mathcal{V}}(M;E)$ s'\'ecrit sous la forme 
$$L\mu =\sum \limits_{j=0}^{k} \zeta_{j}\, \cdot\, \nabla ^{j} \mu, \;\;   \nabla ^{j}  \mu \in \Gamma({}^{\mathcal{V}}TM^{(j,0)} \otimes E), \; \zeta_{j} \in \Gamma\big( {}^{\mathcal{V}}TM^{(0,j)} \otimes \End(E)\big),$$
o\`u \texttt{<<} $\cdot$ \texttt{>>} indique la contraction naturelle d'indices. \\ On d\'efinit aussi 
$\Diff^{k}_{\mathcal{V}}(\overset{\circ}{M};E)$ comme \'etant l'ensemble des op\'erateurs diff\'erentiels $L$ de la forme
$$L\mu=\sum \limits_{j=0}^{k} \zeta_{j}\, \cdot\, \nabla ^{j} \mu, \;\;   \nabla ^{j} \mu \in C_{\mathcal{V}}^{\infty}(\overset{\circ}{M};T^{(j,0)}\overset{\circ}{M}\otimes E), \;  \zeta_{j} \in C_{\mathcal{V}}^{\infty}\big(\overset{\circ}{M};T^{(0,j)}\overset{\circ}{M} \otimes \End(E)\big).$$
Si on change la r\'egularit\'e au\texttt{-}dessus $C_{\mathcal{V}}^{\infty}$ par $C_{\mathcal{V}}^{\ell,\alpha},$ l'ensemble des op\'erateurs diff\'erentiels $L$ sera not\'e par $\Diff^{k}_{\mathcal{V},\ell,\alpha}(\overset{\circ}{M};E).$
\end{defi}
\begin{defi}
Soit $\mathcal{G}$ une famille indicielle de $M$.
L'espace $$\Diff^{k}_{\mathcal{V}, \mathcal{G}}(M; E)= \mathcal{A}^{\mathcal{G}}_{\phg}(M) \otimes_{C^{\infty}(M)} \Diff^{k}_{\mathcal{V}}(M; E) $$ est l'ensemble des op\'erateurs diff\'erentiels de $\Diff^{k}_{\mathcal{V}}(M; E)$ polyhomog\`{e}nes par rapport \`a $\mathcal{G}$.
\end{defi}
\begin{remq}
De mani\`{e}re analogue \`a la Remarque~\ref{eva}, \'etant donn\'ees $\mathcal{G}$ et $\mathcal{K}$ deux familles indicielles de $M$, si $u \in \mathcal{A}^{\mathcal{G}}_{\phg}(M; E)$ et $P \in \Diff^{k}_{\mathcal{V}, \mathcal{K}}(M; E)$, alors $Pu \in  \mathcal{A}^{\mathcal{K}+ \mathcal{G}}_{\phg}(M; E)$.
\end{remq}
\begin{remq}
Les trois d\'efinitions pr\'ec\'edentes ont un sens m\^eme si $\mathcal{V}$ est une alg\`{e}bre de Lie structurale qui n'est pas une structure de Lie \`a l'infini.
  \end{remq}
\begin{remq}
On voit que $\Diff^{k}_{\mathcal{V}}(M; E)  \subset \Diff^{k}_{\mathcal{V}}(\overset{\circ}{M}; E) \subset \Diff^{k}_{\mathcal{V},\ell,\alpha}(\overset{\circ}{M};E).$ 
\end{remq}
\begin{defi}
Le symbole principal d'un op\'erateur $L \in  \Diff^{k}_{\mathcal{V}}(M; E)$ (respectivement $\Diff^{k}_{\mathcal{V}}(\overset{\circ}{M}; E)$) est l'application $\sigma_{k}(L) :  {}^{\mathcal{V}}T^{*}M \rightarrow \End(E)$ homog\`{e}ne de degr\'e $k$ sur les fibres d\'efinie par $\sigma_{k}(L)(\xi)=i^{k} \zeta_{k}\cdot \xi^{k}$, o\`u $\zeta_{k}$ est le coefficient du terme d'ordre $k$ et $\xi^{k} = \underbrace{\xi \otimes \xi ... \otimes \xi}_{k \text{ fois}}  \in {}^{\mathcal{V}}TM^{(k,0)}.$
\end{defi}
\begin{remq}
Le symbole principal induit une application $\sigma_{k} : \Diff^{k}_{\mathcal{V}}(M; E) \rightarrow C^{\infty}\big( {}^{\mathcal{V}}T^{*}M, \pi^{*}\End(E)\big),$ o\`u $\pi :  {}^{\mathcal{V}}T^{*}M \rightarrow M$ est la projection canonique, telle que $\sigma_{k_{1}+k_{2}}(L_{1} L_{2})=\sigma_{k_{1}}(L_{1})\sigma_{k_{2}}(L_{2}),$ pour $L_{i} \in \Diff^{k_{i}}_{\mathcal{V}}(M; E)$ avec $i\in\{1,2\}.$
\end{remq}
\begin{defi}
Un op\'erateur $L \in \Diff^{2}_{\mathcal{V}}(\overset{\circ}{M}; E) $ est dit uniform\'ement $\mathcal{V}$\texttt{-}elliptique si le symbole principal $\sigma_{2}(L)$ v\'erifie que $\forall m \in \overset{\circ}{M}$ et $\forall \xi \in T^{*}_{m}\overset{\circ}{M} \setminus \{0\}$ 
$$\sigma_{2}(L)(\xi) = - g^{ij}\xi_{i} \xi_{j},$$ pour une certaine $\mathcal{V}$\texttt{-}m\'etrique $g$. 
Autrement dit, $L$ est uniform\'ement $\mathcal{V}$\texttt{-}elliptique si son
symbole principal est le m\^eme que celui d'un laplacien associ\'e \`a une $\mathcal{V}$\texttt{-}m\'etrique.
De m\^eme, si $\{ L_{t}: t\in [0,T] \}$ est une famille lisse d'op\'erateurs uniform\'ement $\mathcal{V}$\texttt{-}elliptiques dans $\Diff^{2}_{\mathcal{V}}(M; E)$ (respectivement $\Diff^{2}_{\mathcal{V}}(\overset{\circ}{M}; E)$), on dira que $\frac{\partial}{\partial t}-L_{t}$ est un op\'erateur uniform\'ement $\mathcal{V}$\texttt{-}parabolique. 
\end{defi}
\begin{prop} \label{fibre}
Soient $(M,\mathcal{V}_{SF})$ une structure de Lie fibr\'ee \`a l'infini et $\mathcal{G}$ une famille indicielle positive de $M.$ On suppose que pour un fibr\'e associ\'e $\phi_{F}$ d'une face ferm\'ee $F$ de $M,$ il existe $s\in  S_{F}$ tel que la fibre $F_{s}:=\phi^{-1}_{F}(s)$ est de dimension non nulle. Si $L \in \Diff^{2}_{\mathcal{V}_{SF}, \mathcal{G}}(M; E)$ est un op\'erateur uniform\'ement\\ $\mathcal{V}_{SF}$\texttt{-}elliptique polyhomog\`{e}ne par rapport \`a $\mathcal{G}$, alors la restriction de $L$ au coefficient d'ordre $0$ sur la fibre $F_{s}$ est un op\'erateur $L^{0}_{s} \in \Diff^{2}_{\mathcal{V}_{F_{s}}, \mathcal{G}_{\scriptscriptstyle{\vert F_{s} }}}(F_{s}; E)$ uniform\'ement $\mathcal{V}_{F_{s}}$\texttt{-}elliptique. 
\end{prop}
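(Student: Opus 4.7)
Mon plan consiste \`a exploiter le morphisme surjectif d'alg\'ebro\"ides de Lie $\pi_F : {}^{\mathcal{V}_{SF}} T_F M \twoheadrightarrow {}^{\mathcal{V}_F} TF$ fourni par la Proposition~\ref{asd}, ainsi que son dual injectif $\pi_F^{*} : {}^{\mathcal{V}_F} T^{*}F \hookrightarrow {}^{\mathcal{V}_{SF}} T^{*}_F M,$ pour ramener l'\'etude de $L^{0}_s$ \`a une op\'eration sur la fibre $F_s.$ Je commencerais par \'ecrire $L = \sum_{j=0}^{2} \zeta_j \cdot \nabla^{j}$ avec $\zeta_j \in \mathcal{A}^{\mathcal{G}}_{\phg}(M; {}^{\mathcal{V}_{SF}}TM^{(0,j)} \otimes \End(E)).$ La positivit\'e de $\mathcal{G}$ assure que le coefficient d'ordre $0$ de $\zeta_j$ sur $F$ co\"incide avec la restriction lisse $\zeta_j|_F,$ et que les champs appartenant au noyau de $\pi_F$ (localement du type $\rho_i \partial/\partial \rho_i$ pr\`{e}s de $F$) n'apportent aucune contribution au terme constant. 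En composant chaque $\zeta_j|_F$ avec $\pi_F^{\otimes j}$ sur ses indices co\texttt{-}vectoriels, on obtient ainsi un op\'erateur $L^{0}_F \in \Diff^{2}_{\mathcal{V}_F, \mathcal{G}|_F}(F; E).$

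Le passage \`a la fibre $F_s$ utilise ensuite le fait que $(F, \mathcal{V}_F)$ est un fibr\'e de structures de Lie \`a l'infini associ\'e \`a $\phi_F : F \rightarrow S_F$ : les champs de $\mathcal{V}_F$ \'etant verticaux, leur restriction \`a $F_s$ fournit des champs de $\mathcal{V}_{F_s},$ et l'on a ${}^{\mathcal{V}_F}TF|_{F_s} \simeq {}^{\mathcal{V}_{F_s}}TF_s.$ On obtient ainsi $L^{0}_s := L^{0}_F|_{F_s} \in \Diff^{2}_{\mathcal{V}_{F_s}, \mathcal{G}|_{F_s}}(F_s; E).$ Pour l'ellipticit\'e uniforme, j'invoquerais l'hypoth\`{e}se
$$\sigma_2(L)(\xi) = -g^{ij}\xi_i\xi_j \, \Id_E$$
pour une $\mathcal{V}_{SF}$\texttt{-}m\'etrique $g.$ Notant $\iota : {}^{\mathcal{V}_{F_s}}T^{*}F_s \hookrightarrow {}^{\mathcal{V}_{SF}}T^{*}_{F_s}M$ l'injection d\'eduite de $\pi_F^{*}$ et de la restriction \`a la fibre, le calcul direct du symbole de $L^{0}_s$ donne $\sigma_2(L^{0}_s)(\eta) = -g^{ij}|_{F_s}(\iota\eta)_i(\iota\eta)_j \, \Id_E$ pour tout $\eta \in {}^{\mathcal{V}_{F_s}}T^{*}F_s.$ L'injectivit\'e de $\iota$ combin\'ee \`a la d\'efinie\texttt{-}positivit\'e de la com\'etrique $g^{-1}$ entra\^ine que la forme $h^{-1} := (\pi_F \otimes \pi_F)(g^{-1}|_F)|_{F_s}$ reste d\'efinie positive, et provient donc d'une $\mathcal{V}_{F_s}$\texttt{-}m\'etrique $h$ sur $F_s,$ \'etablissant ainsi l'ellipticit\'e uniforme de $L^{0}_s.$ L'hypoth\`{e}se $\dim F_s \geq 1$ est ici cruciale pour que ${}^{\mathcal{V}_{F_s}}TF_s$ soit non trivial et que la notion d'ellipticit\'e sur la fibre ait un sens.

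Le point le plus d\'elicat me semble \^etre la v\'erification que le passage au coefficient d'ordre $0$ commute effectivement avec la projection $\pi_F^{\otimes 2}$ au niveau des symboles principaux. Ceci n\'ecessite d'examiner soigneusement, dans des coordonn\'ees locales adapt\'ees \`a $F,$ comment les bases locales de $\mathcal{V}_{SF}$ pr\`{e}s de $F$ se d\'ecomposent en parties tangentielles (descendant dans $\mathcal{V}_F$ par $\pi_F$) et normales (dans $\ker \pi_F$), puis de constater que les d\'eriv\'ees issues des parties normales augmentent l'ordre d'annulation en les fonctions de d\'efinition des hypersurfaces bordantes contenant $F,$ et donc sont absorb\'ees lors du passage au coefficient d'ordre $0.$ Une fois ce point \'elucid\'e, la v\'erification symbolique est imm\'ediate gr\^ace \`a l'injectivit\'e de $\pi_F^{*}$.
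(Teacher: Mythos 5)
Votre argument est correct et suit essentiellement la m\^eme d\'emarche que celle du texte : on utilise la surjection $\pi_{F}$ de la Proposition~\ref{asd}, la suite exacte courte duale qui fournit l'injection $\iota : {}^{\mathcal{V}_{F_{s}}}T^{*}F_{s} \hookrightarrow {}^{\mathcal{V}_{SF}}T^{*}_{F_{s}}M,$ puis l'identit\'e $\sigma_{2}(L^{0}_{s}) = \iota^{*}\circ \sigma_{2}(L)_{\scriptscriptstyle{\vert F_{s}}}$ combin\'ee \`a la positivit\'e de la com\'etrique restreinte. Vous explicitez davantage la construction de $L^{0}_{s}$ (pouss\'ee en avant des coefficients par $\pi_{F}^{\otimes j}$ et m\'etrique quotient) que ne le fait le texte, mais le c{\oe}ur de la preuve est identique.
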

\begin{proof}
Gr\^ace \`a la propri\'et\'e~\ref{almiir} de la Proposition~\ref{asd}, on a une suite exacte courte : 
\begin{equation} 
0\rightarrow \Ker(\pi_{F_{\scriptscriptstyle{\vert F_{s}}}}) \hookrightarrow {}^{\mathcal {V}_{SF}} T_{F_{s}}(M) \twoheadrightarrow {}^{\mathcal{V}_{F_{s}}} T(F_{s}) \rightarrow 0,
\end{equation}
o\`u $\mathcal{V}_{F_{s}}$ est la structure de Lie fibr\'ee \`a l'infini de $F_{s}.$ La suite exacte courte duale est donc
\begin{equation} 
0\rightarrow  {}^{\mathcal{V}_{F_{s}}} T^{*}(F_{s})  \hookrightarrow {}^{\mathcal {V}_{SF}} T^{*}_{F_{s}}(M) \twoheadrightarrow \big(\Ker(\pi_{F_{\scriptscriptstyle{\vert F_{s}}}})\big )^{*} \rightarrow 0,
\end{equation}
Par suite, la premi\`{e}re application $\iota : {}^{\mathcal {V}_{F_{s}}} T^{*}(F_{s})  \hookrightarrow {}^{\mathcal {V}_{SF}} T^{*}_{F_{s}}(M)$ induit un diagramme commutatif 
\begin{displaymath}
\xymatrix@+1pc{
\Diff^{2}_{\mathcal{V}_{SF}, \mathcal{G}}(M; E) \ar[r]^{\sigma_{2}} \ar[d]_{{}_{\scriptscriptstyle{\vert F_{s} }}} & C^{\infty}\big( {}^{\mathcal{V}_{SF}}T^{*}M, \pi^{*}\End(E)\big) \ar[d] ^{\iota^{*}}\\
   \Diff^{2}_{\mathcal{V}_{SF}, \mathcal{G}_{\scriptscriptstyle{\vert F_{s} }}}(F_{s}; E) \ar[r]^{ \sigma_{2}} &C^{\infty}\big( {}^{\mathcal{V}_{SF}}T^{*}F_{s}, \pi^{*}\End(E)\big),
  }
\end{displaymath} 
Donc $\sigma_{2}(L^{0}_{s}) = \iota^{*} \circ  \sigma_{2}(L)_{\scriptscriptstyle{\vert F_{s}}}.$
Si $L$ est $\mathcal{V}_{SF}$\texttt{-}uniform\'ement elliptique, on voit donc que $\sigma_{2}(L^{0}_{s}) $ sera \\$\mathcal{V}_{F_{s}}$\texttt{-}uniform\'ement elliptique.
\end{proof}

Nous \'etudions maintenant la polyhomog\'en\'eit\'e globale des solutions d'\'equations paraboliques lin\'eaires d\'etermin\'ees par une structure de Lie fibr\'ee \`a l'infini. Commen\c{c}ons par un petit lemme. 
\begin{lemme} \label{qwsa}
Soient $B(0,2r):=\{x\in \mathbb{R}^{n} : \left\| x - x_{0} \right\| \leq 2r \}$ une boule ferm\'ee centr\'ee en $0$ de rayon $2r$ et $\{L_{t} : t \in [0,T]\} $ une famille d'op\'erateurs uniform\'ement 
elliptiques de $\Diff^{2}_{ k, \alpha}(B(0,2r))$. Soient $f \in C^{k,\alpha}([0,T]\times B(0,2r))$ et $u_{0} \in C^{k+2,\alpha}(B(0,2r))$.  Si  $u \in C^{k+2,\alpha}([0,T]\times B(0,2r))$ est la solution unique de l'\'equation 
$$\frac{\partial u}{\partial t}-L_{t} u = f, \; u_{\scriptscriptstyle{\vert t=0 }}=u_{0},$$
alors il existe une constante $\kappa >0$ d\'ependante des normes des coefficients de l'op\'erateur $L_{t},$ la r\'egularit\'e $k,$ la dimension $n,$ le rayon $r$ et la constante d'ellipticit\'e de la famille $L_{t}$ telle que 
\begin{equation} \label{sdchauder}
\left\| u \right\|_{C^{k+2,\alpha}([0,T]\times B(0,r))}\leq \kappa ( \left\| u_{0} \right\|_{C^{k,\alpha}([0,T]\times \overset{\circ}{B}(0,2r))} + \left\| f \right\|_{C^{k,\alpha}([0,T]\times  \overset{\circ}{B}(0,2r))}).
\end{equation}
\end{lemme}
\begin{proof}
C'est un r\'esultat standard. Voici une preuve s'appuyant sur le livre~\cite{ladyzenskaja1967nn}).
Soit une fonction lisse \`a support compact $\varphi \in C_{c}^{\infty}(B(0,2r))$ telle que $\varphi \equiv 1$ sur $B(0,r)$ et $ \sup_{B(0,2r)} \left|\varphi \right|=1.$ On sait que $L_{t}$ est un op\'erateur uniform\'ement elliptique de $\Diff^{2}_{ k, \alpha}(B(0,2r))$ de la forme 
\begin{equation} 
L_{t}(\mu)=\sum \limits_{j=0}^{2} \zeta_{j} \cdot \nabla ^{j} \mu,
\end{equation}
o\`u $\nabla$ est la connexion de Levi\texttt{-}Civita pour la m\'etrique euclidienne sur $\mathbb{R}^{n}$. Ainsi, apr\`{e}s un calcul simple, on obtient que,  
$$
\frac{\partial (\varphi u)}{\partial t}-L_{t} (\varphi u)= -( \zeta_{2}\cdot\nabla ^{2} \varphi)u - 2\, \zeta_{2} ( \nabla \varphi, \nabla u)  
- \zeta_{1}\cdot (\nabla \varphi) u +\varphi  f, \;\; {\varphi u}_{\scriptscriptstyle{\vert \partial B(0,2r) \times[0,T] }}=0.$$

Par l'\'equation 5.3 du Th\'eor\`{e}me 5.2 dans~\cite{ladyzenskaja1967nn} \`a la page 320 (en prenant $\Phi = 0$), on obtient une estimation de $\varphi u$ sur $C^{k+2,\alpha}([0,T]\times  \overset{\circ}{B}(0,2r)).$  On d\'eduit ainsi l'estimation escompt\'ee~\eqref{sdchauder}. 
\end{proof}
Voici des g\'en\'eralisations naturelles de deux \'enonc\'es apparus dans l'article de Rochon~\cite{rochon2015polyhomogeneite} pour des\\ $\mathbb{C} \Theta$\texttt{-}m\'etriques (voir l'Exemple~\ref{epst}).
\begin{prop}\label{propiii}
Soit $(M, \mathcal{V})$ une structure de Lie \`a l'infini telle que le rayon d'injectivit\'e des m\'etriques compatibles est strictement positif. Soit $\{L_{t} : t \in [0,T]\} $ une famille d'op\'erateurs uniform\'ement $\mathcal{V}$\texttt{-}elliptiques de $\Diff^{2}_{\mathcal{V}, k, \alpha}(\overset{\circ}{M}; E)$. Si $u_{0} \in C_{\mathcal{V}}^{k+2,\alpha}(\overset{\circ}{M}; E)$ et $f \in C_{\mathcal{V}}^{k,\alpha}([0,T]\times \overset{\circ}{M}; E)$, alors l'\'equation 
$$\frac{\partial u}{\partial t}-L_{t} u = f, \; u_{\scriptscriptstyle{\vert t=0 }}=u_{0},$$
poss\`{e}de une solution $u \in C_{\mathcal{V}}^{k+2,\alpha}([0,T]\times \overset{\circ}{M}; E)$.
De plus, il existe une constante $\kappa >0$ d\'ependante de la famille $L_{t}$ telle que 
\begin{equation} \label{schauder}
\left\| u \right\|_{k+2,\alpha}\leq \kappa ( \left\| u_{0} \right\|_{k+2,\alpha} + \left\| f \right\|_{k,\alpha}).
\end{equation}
\end{prop}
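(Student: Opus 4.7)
L'id\'ee est de ramener le probl\`eme global \`a des estimations locales dans des cartes normales de rayon uniforme fourni par la g\'eom\'etrie born\'ee. Puisque $(M,\mathcal{V})$ admet un rayon d'injectivit\'e strictement positif pour ses m\'etriques compatibles et que, par le Corollaire~\ref{nidhal}, les d\'eriv\'ees contravariantes de la courbure sont uniform\'ement born\'ees, il existe $r_{0}>0$ et un recouvrement $\{B_{g}(m_{i},r_{0})\}_{i\in I}$ de $\overset{\circ}{M}$ de multiplicit\'e born\'ee tel que, via l'exponentielle $\exp_{m_{i}}$, chaque boule s'identifie \`a la boule euclidienne $B(0,r_{0})\subset\mathbb{R}^{n}$ avec des bornes $C^{\infty}$ uniformes sur les composantes de la m\'etrique et de ses d\'eriv\'ees (et donc aussi sur les fonctions de transition). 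Dans ces cartes normales, un op\'erateur $L_{t}\in\Diff^{2}_{\mathcal{V},k,\alpha}(\overset{\circ}{M};E)$ uniform\'ement $\mathcal{V}$\texttt{-}elliptique se lit comme un op\'erateur uniform\'ement elliptique de $\Diff^{2}_{k,\alpha}(B(0,r_{0}))$ dont les normes H\"olderiennes des coefficients et la constante d'ellipticit\'e sont born\'ees ind\'ependamment de $i\in I$ et de $t\in[0,T]$.

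Pour l'existence, je proc\'ederais par exhaustion: choisir une suite croissante $\Omega_{1}\subset\Omega_{2}\subset\cdots$ de domaines pr\'ecompacts \`a bord lisse dont la r\'eunion est $\overset{\circ}{M}$. Sur chaque $\Omega_{\ell}$, la th\'eorie classique des \'equations paraboliques lin\'eaires (par exemple Th\'eor\`eme 5.2 de~\cite{ladyzenskaja1967nn}) fournit une solution $u_{\ell}\in C^{k+2,\alpha}([0,T]\times\overline{\Omega_{\ell}};E)$ du probl\`eme $(\partial_{t}-L_{t})u_{\ell}=f$, $u_{\ell}(0,\cdot)=u_{0}$, avec conditions au bord ad\'equates (par exemple Dirichlet apr\`es modification de $u_{0}$ par une fonction de coupure, le terme correcteur \'etant ensuite contr\^ol\'e via les estimations uniformes).

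Le c\oe ur de l'argument est l'estimation de Schauder globale~\eqref{schauder}. Pour chaque $i\in I$, le Lemme~\ref{qwsa} appliqu\'e dans la carte $B(0,r_{0})$ centr\'ee en $m_{i}$ \`a $u_{\ell}$ fournit
\[
\|u_{\ell}\|_{C^{k+2,\alpha}([0,T]\times B_{g}(m_{i},r_{0}/2))}\leq\kappa\bigl(\|u_{0}\|_{C^{k+2,\alpha}(B_{g}(m_{i},r_{0}))}+\|f\|_{C^{k,\alpha}([0,T]\times B_{g}(m_{i},r_{0}))}\bigr),
\]
avec $\kappa$ \emph{ind\'ependant} de $i$ gr\^ace \`a l'uniformit\'e des bornes sur la carte. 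En prenant le supremum sur $i\in I$ et sur $t\in[0,T]$, on obtient
\[
\|u_{\ell}\|_{k+2,\alpha}\leq\kappa\bigl(\|u_{0}\|_{k+2,\alpha}+\|f\|_{k,\alpha}\bigr),
\]
uniform\'ement en $\ell$. Par compacit\'e H\"olderienne locale et le proc\'ed\'e diagonal, on extrait une sous\texttt{-}suite $u_{\ell_{j}}$ convergeant dans $C^{k+2,\beta}_{\mathrm{loc}}$ pour $\beta<\alpha$ vers une solution $u\in C^{k+2,\alpha}_{\mathcal{V}}([0,T]\times\overset{\circ}{M};E)$, qui h\'erite de l'estimation~\eqref{schauder} par semi\texttt{-}continuit\'e inf\'erieure de la norme.

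Le principal obstacle est d'assurer que la constante $\kappa$ dans les estimations locales soit r\'eellement ind\'ependante du point de base $m_{i}$: ceci repose de mani\`ere essentielle sur le caract\`ere born\'e de la g\'eom\'etrie, c'est\texttt{-}\`a\texttt{-}dire sur la compacit\'e de $M$, le Corollaire~\ref{nidhal} et la positivit\'e du rayon d'injectivit\'e. Un point technique annexe est de d\'efinir correctement le probl\`eme au bord sur chaque $\Omega_{\ell}$ afin que la donn\'ee au bord soit compatible avec $u_{0}$ et pr\'eserve le bon contr\^ole H\"olderien; cela se traite par un rel\`evement standard obtenu en retranchant \`a $u_{0}$ une section lisse d\'ecroissante rapidement loin de $\Omega_{\ell}$, ce qui ne modifie ni l'estimation~\eqref{schauder} ni la solution obtenue \`a la limite.
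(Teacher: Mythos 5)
Votre preuve suit essentiellement la m\^eme strat\'egie que celle du papier : r\'eduction \`a $u_{0}=0$ (ou traitement \'equivalent de la donn\'ee initiale), exhaustion par des domaines pr\'ecompacts avec r\'esolution du probl\`eme de Dirichlet par la th\'eorie classique de~\cite{ladyzenskaja1967nn}, estimations de Schauder locales uniformes issues du Lemme~\ref{qwsa} et de la g\'eom\'etrie born\'ee, puis extraction d'une sous\texttt{-}suite convergente par Arzel\`a\texttt{-}Ascoli dont la limite h\'erite de l'estimation globale. La seule diff\'erence cosm\'etique est que le papier tronque le second membre $f$ par une fonction de coupure $\varphi_{p}$ (apr\`es s'\^etre ramen\'e \`a $u_{0}=0$) plut\^ot que de modifier la donn\'ee au bord, ce qui revient au m\^eme.
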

\begin{proof}
D'abord, on remplace $u$ par $u-u_{0}$ pour se ramener au cas o\`u $u_{0}=0.$ Soit $(U_{p})_{p \in \mathbb{N}}$ un recouvrement de $\overset{\circ}{M}$ qui est donn\'e par une suite d'ouverts relativement compacts avec bord lisse tels que $U_{p} \subset U_{p+1}, \; \forall p \in \mathbb{N}.$ Soit une suite de fonctions lisses \`a supports compacts $\varphi_{p} \in C_{c}^{\infty}(U_{p+1})$ telle que $\varphi_{p} \equiv 1$ sur $U_{p}$ et $ \sup_{U_{p+1}} \left|\varphi_{p}\right|=1.$ Par un r\'esultat standard, voir par exemple le Th\'eor\`{e}me 7.1 apparu dans la section 7 du chapitre 7 de~\cite{ladyzenskaja1967nn}, pour chaque $p\in \mathbb{N},$ l'\'equation 
$$ \frac{\partial \omega_{p}}{\partial t}-L_{t} \omega_{p} = \varphi_{p} f, \; \omega_{p}(0,\cdot)=0, \; {\omega_{p}}_{\scriptscriptstyle{\vert \partial U_{p+1} \times[0,T] }}=0$$
poss\`{e}de une unique solution $\omega_{p} \in C^{k+2,\alpha}([0,T]\times U_{p+1}; E).$\\
L'estimation~\eqref{sdchauder} induit des estimations de Schauder locales ind\'ependantes de $p.$ En effet, comme $(\overset{\circ}{M}, g)$ est un exemple de g\'eom\'etrie born\'ee, il existe $r>0$ et $C_{k} >0$ avec $k \in \mathbb{N}_{0}$ tel que, pour tout $ p \in M,$ il existe une carte $\varphi : U_{p} \rightarrow B(0,2r) \subset \mathbb{R}^{n}$ avec $n = \dim M$ de sorte que 
$$\frac{g_{E}}{C_{0}}\leq \varphi_{*}(g)\leq C_{0}g_{E} \;\; \textrm{et}\; \;\sup_{B(0,2r)}\left| {(\nabla^{E})}^{k}\varphi_{*}(g) \right|_{g_{E}} \leq C_{k},$$
o\`u $g_{E}$ est la m\'etrique euclidienne sur $\mathbb{R}^{n}$ et $\nabla^{E}$ est la connexion de Levi\texttt{-}Civita associ\'ee. 
On applique par suite le Th\'eor\`{e}me d'Arzel\`a\texttt{-}Ascoli, pour extraire une sous\texttt{-}suite ${(\omega_{p_{q}})}_{q \in \mathbb{N}}$ qui converge uniform\'ement vers une solution $u \in C_{\mathcal{V}}^{k+2}([0,T]\times \overset{\circ}{M}; E)$. On v\'erifie alors que $\left\| u \right\|_{k+2,\alpha} <\infty.$
En effet, on a pour $2i+j=k+2,$
\begin{align*}
 \frac{\left|(\frac{\partial}{\partial t} )^{i} \nabla^{j}u(t',m)-(\frac{\partial}{\partial t} )^{i} \nabla^{j}u(t,m)\right|_{g}}{\left| t'-t\right|^{\frac{\alpha}{2}}} &\leq \frac{\left|(\frac{\partial}{\partial t} )^{i} \nabla^{j}u(t',m)-(\frac{\partial}{\partial t} )^{i} \nabla^{j}\omega_{p_{q}}(t',m)\right|_{g}}{\left| t'-t\right|^{\frac{\alpha}{2}}}\\
   &\;\;\;\;\;+\frac{\left|(\frac{\partial}{\partial t} )^{i} \nabla^{j}\omega_{p_{q}}(t',m)-(\frac{\partial}{\partial t} )^{i} \nabla^{j}\omega_{p_{q}}(t,m)\right|_{g}}{\left| t'-t\right|^{\frac{\alpha}{2}}}\\
   &\;\;\;\;\;+\frac{\left|(\frac{\partial}{\partial t} )^{i} \nabla^{j}\omega_{p_{q}}(t,m)-(\frac{\partial}{\partial t} )^{i} \nabla^{j}u(t,m)\right|_{g}}{\left| t'-t\right|^{\frac{\alpha}{2}}}.
\end{align*}
Soit maintenant $\varepsilon> 0$ fix\'e. Comme $\omega_{p_{q}}$ converge uniform\'ement vers $u$ dans $C_{\mathcal{V}}^{k+2}([0,T]\times \overset{\circ}{M}; E),$  on prend $q$ suffisamment grand d\'ependant de $t$ et $t'$ tel que
 $$\frac{\left|(\frac{\partial}{\partial t} )^{i} \nabla^{j}u(t',m)-(\frac{\partial}{\partial t} )^{i} \nabla^{j}u(t, m)\right| _{g}}{\left| t'-t\right|^{\frac{\alpha}{2}}}  \leq  
\varepsilon  + \frac{\left|(\frac{\partial}{\partial t} )^{i} \nabla^{j}\omega_{p_{q}}(t',m)-(\frac{\partial}{\partial t} )^{i} \nabla^{j}\omega_{p_{q}}(t,m)\right|_{g}}{\left| t'-t\right|^{\frac{\alpha}{2}}}+\varepsilon \leq C+2 \varepsilon,   $$
o\`u $C$ est une constante telle que $\left\| \omega_{p} \right\|_{k+2,\alpha}\leq C,$ $\forall p.$ En prenant le supremum sur $t\neq t'$, on obtient alors 
 $$
\sup_{t \neq t'} \frac{\left|(\frac{\partial}{\partial t} )^{i} \nabla^{j}u(t',m)-(\frac{\partial}{\partial t} )^{i} \nabla^{j}u(t, m)\right| _{g}}{\left| t'-t\right|^{\frac{\alpha}{2}}}  \leq  C+2\varepsilon<\infty. 
$$
De m\^eme, 
\begin{align*}
\frac{\left| \psi_{\gamma}\big((\frac{\partial}{\partial t} )^{i} \nabla^{j}u(\gamma(0))\big)-(\frac{\partial}{\partial t} )^{i} \nabla^{j}u(\gamma(1))\right|_{g} }{l(\gamma)^{\alpha}} &\leq \frac{\left| \psi_{\gamma}\big((\frac{\partial}{\partial t} )^{i} \nabla^{j}u(\gamma(0))\big)-\psi_{\gamma}\big((\frac{\partial}{\partial t} )^{i} \nabla^{j}\omega_{p_{q}}(\gamma(0))\big)\right|_{g} }{l(\gamma)^{\alpha}} \\
   &\;\;\;\;\; + \frac {\left| \psi_{\gamma}\big((\frac{\partial}{\partial t} )^{i} \nabla^{j}\omega_{p_{q}}(\gamma(0))\big)-(\frac{\partial}{\partial t} )^{i} \nabla^{j}\omega_{p_{q}}(\gamma(1))\right|_{g} }{l(\gamma)^{\alpha}} \\
   &\;\;\;\;\; + \frac {\left| (\frac{\partial}{\partial t} )^{i} \nabla^{j}\omega_{p_{q}}(\gamma(1))-(\frac{\partial}{\partial t} )^{i} \nabla^{j} u(\gamma(1))\right|_{g} }{l(\gamma)^{\alpha}},
\end{align*}
ce qui pour $q$ suffisamment grand donne
$$
\sup_{\gamma}\frac{\left| \psi_{\gamma}\big((\frac{\partial}{\partial t} )^{i} \nabla^{j}u(\gamma(0))\big)-(\frac{\partial}{\partial t} )^{i} \nabla^{j}u(\gamma(1))\right|_{g} }{l(\gamma)^{\alpha}} \leq C+2 \varepsilon.
$$
Comme par hypoth\`{e}se $(\overset{\circ}{M}, g)$ est un exemple de g\'eom\'etrie born\'ee, les estimations de Schauder paraboliques locales peuvent \^etre consid\'er\'ees pour donner l'estimation voulue pour $\left\| u \right\|_{k+2,\alpha}.$
\end{proof}
\begin{cor}\label{corr}
Soient $(H_{i})_{i \in \{1,...,l\}}$ la famille des hypersurfaces bordantes d'une vari\'et\'e \`a coins compacte $M$ et $\{L_{t} : t\in [0,T]\}$ une famille d'op\'erateurs uniform\'ement $\mathcal{V}$\texttt{-}elliptiques de $\Diff^{2}_{\mathcal{V}}(\overset{\circ}{M}; E)$. Soient $\rho_{i}$ une fonction de d\'efinition de $H_{i},$ pour $i \in \{1,...,l\},$ $f \in \rho^{\beta} (\log \rho)^{\eta} C_{\mathcal{V}}^{k,\alpha}([0,T]\times \overset{\circ}{M}; E)$ et $u_{0} \in \rho^{\beta} (\log \rho)^{\eta}C_{\mathcal{V}}^{k+2,\alpha}(\overset{\circ}{M}; E),$ o\`u $\rho^{\beta}=\prod_{i=1}^{l}\rho^{\beta_{i}}_{i},$ $(\log \rho)^{\eta}= \prod_{i=1}^{l}(\log \rho_{i})^{\eta_{i}},$ $\beta=(\beta_{1},...,\beta_{l})$ et $\eta=(\eta_{1},...,\eta_{l})$ avec $\beta_{i} >0$ et $\eta_{i} \in \mathbb{N}_{0}$ (ou $\beta_{i}=\eta_{i}=0$). 
Alors l'\'equation 
$$\frac{\partial u}{\partial t}-L_{t} u = f, \; u_{\scriptscriptstyle{\vert t=0 }}=u_{0},$$
poss\`{e}de une unique solution $u \in \rho^{\beta} (\log \rho)^{\eta}C_{\mathcal{V}}^{k+2,\alpha}([0,T]\times \overset{\circ}{M}; E).$
\end{cor}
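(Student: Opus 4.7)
Le plan consiste \`a se ramener au cas $\beta=0, \eta=0$, couvert par la Proposition~\ref{propiii}, via la substitution $u := \rho^{\beta}(\log\rho)^{\eta} v$. Comme $\rho^{\beta}(\log\rho)^{\eta}$ est ind\'ependant du temps, cette substitution transforme l'\'equation en
\begin{equation*}
\frac{\partial v}{\partial t} - \tilde{L}_t v = \tilde{f}, \quad v_{\scriptscriptstyle{\vert t=0}} = v_0,
\end{equation*}
o\`u $\tilde{L}_t := \rho^{-\beta}(\log\rho)^{-\eta} \circ L_t \circ \rho^{\beta}(\log\rho)^{\eta}$, $\tilde{f} := \rho^{-\beta}(\log\rho)^{-\eta} f$ et $v_0 := \rho^{-\beta}(\log\rho)^{-\eta} u_0$. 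Par hypoth\`{e}se, $v_0 \in C_{\mathcal{V}}^{k+2,\alpha}(\overset{\circ}{M}; E)$ et $\tilde{f} \in C_{\mathcal{V}}^{k,\alpha}([0,T]\times\overset{\circ}{M}; E)$.

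L'\'etape cruciale sera de v\'erifier que $\tilde{L}_t$ reste un op\'erateur uniform\'ement $\mathcal{V}$\texttt{-}elliptique appartenant \`a $\Diff^{2}_{\mathcal{V}, k, \alpha}(\overset{\circ}{M}; E)$. Pour cela, je m'appuierai sur les faits suivants : pour tout $V \in \mathcal{V} \subset \mathcal{V}_b$, on a $V(\rho_i)/\rho_i \in C^\infty(M)$, et $(\log\rho_i)^{-1}$ est born\'ee pr\`{e}s de $H_i$ (tout comme ses d\'eriv\'ees contravariantes). Un calcul direct pour $V \in \mathcal{V}$ donne alors
\begin{equation*}
\rho^{-\beta}(\log\rho)^{-\eta}\, V\bigl(\rho^{\beta}(\log\rho)^{\eta} v\bigr) = V v + \sum_i \Bigl(\beta_i + \frac{\eta_i}{\log\rho_i}\Bigr) \frac{V(\rho_i)}{\rho_i}\, v,
\end{equation*}
o\`u le coefficient d'ordre z\'ero ajout\'e est dans $C_{\mathcal{V}}^{\infty}(\overset{\circ}{M})$. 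Un calcul analogue pour les termes d'ordre deux montrera que $\tilde{L}_t$ a le m\^eme symbole principal que $L_t$, seuls les coefficients d'ordre inf\'erieur \'etant modifi\'es par ajout de termes dans $C_{\mathcal{V}}^{k,\alpha}$.

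Une fois ceci \'etabli, il suffira d'appliquer la Proposition~\ref{propiii} \`a l'\'equation transform\'ee pour obtenir une solution $v \in C_{\mathcal{V}}^{k+2,\alpha}([0,T]\times\overset{\circ}{M}; E)$, puis $u := \rho^{\beta}(\log\rho)^{\eta} v$ fournira la solution recherch\'ee dans l'espace voulu. L'unicit\'e d\'ecoulera de l'unicit\'e standard des \'equations paraboliques lin\'eaires en g\'eom\'etrie born\'ee (via un principe du maximum appliqu\'e \`a la diff\'erence de deux solutions, apr\`{e}s multiplication par des fonctions de coupure). L'obstacle principal sera le contr\^ole soign\'e des commutateurs apparaissant dans la conjugaison, afin de garantir que les nouveaux coefficients de $\tilde{L}_t$ demeurent effectivement dans la classe de r\'egularit\'e $C_{\mathcal{V}}^{k,\alpha}$ requise et non dans une classe plus faible ; la condition $\beta_i > 0$ d\`{e}s que $\eta_i > 0$ joue ici un r\^ole pour que les facteurs logarithmiques n\'egatifs en puissance $(\log\rho_i)^{-1}$ apparaissant dans la conjugaison restent born\'es pr\`{e}s de $H_i$.
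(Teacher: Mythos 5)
Votre d\'emonstration est correcte et suit essentiellement la m\^eme strat\'egie que celle du papier : conjugaison de $L_t$ par le poids $\rho^{\beta}(\log\rho)^{\eta}$ pour se ramener \`a la Proposition~\ref{propiii} (le symbole principal \'etant inchang\'e et les coefficients d'ordre inf\'erieur restant dans la bonne classe puisque $V\rho_i/\rho_i$ est lisse et $(\log\rho_i)^{-1}$ born\'ee), puis unicit\'e par principe du maximum appliqu\'e \`a la diff\'erence de deux solutions. La seule divergence mineure est que le papier rend les solutions d\'ecroissantes \`a l'infini en les multipliant par un poids $\rho^{\alpha}$ avant d'appliquer le principe du maximum, plut\^ot que d'utiliser des fonctions de coupure, mais l'id\'ee est la m\^eme.
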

\begin{proof}
D'abord, on consid\`{e}re 
l'op\'erateur $$\tilde{L}_{t} :=\rho^{-\beta} (\log \rho)^{-\eta} \circ L_{t} \circ \rho^{\beta} (\log \rho)^{\eta}.$$ On a que $\tilde{L}_{t} \in \Diff^{2}_{\mathcal{V}}(\overset{\circ}{M}; E)$ est une famille lisse d'op\'erateurs par la Remarque~\ref{lisse}, et aussi clairement uniform\'ement $\mathcal{V}$\texttt{-}elliptiques car
$$\sigma_{2}(\tilde{L}_{t})(\xi) =\sigma_{2}(L_{t})(\xi) = - \rho^{-\beta} (\log \rho)^{-\eta}  \rho^{\beta} (\log \rho)^{\eta}g^{ij}\xi_{i} \xi_{j},\;\; \forall \xi \in T^{*}_{m}\overset{\circ}{M} \setminus \{0\},$$ pour une certaine $\mathcal{V}$\texttt{-}m\'etrique $g$. 
On peut se ramener alors \`a une \'equation \'equivalente \`a l'\'equation d\'ecrite ci\texttt{-}dessus, \`a savoir
$$\frac{\partial \tilde{u}}{\partial t}-\tilde{L}_{t} \tilde{u} = \tilde{f}, \; \tilde{u}_{\scriptscriptstyle{\vert t=0 }}=\tilde{u}_{0}, \; \textrm{avec} \;
\tilde{u}=\rho^{-\beta} (\log \rho)^{-\eta} u, \; \tilde{u}_{0}=\rho^{-\beta} (\log \rho)^{-\eta} u_{0}, \; \tilde{f}=\rho^{-\beta} (\log \rho)^{-\eta} f,$$
et l'existence de la solution d\'ecoule de la proposition pr\'ec\'edente.
L'unicit\'e de la solution se d\'emontre \`a l'aide du principe du maximum en v\'erifiant que si $u_{1}$ et $u_{2}$ sont deux solutions, alors $u_{1}=u_{2}.$ 
D'abord, en rempla\c{c}ant $u_{1}, \;u_{2},\; L_{t}$ et $f$ par $\rho^{\alpha} u_{1},\; \rho^{\alpha}u_{2}, \; \tilde{L}_{t} = \rho^{\alpha} L_{t} \rho^{-\alpha}$ et $ \rho^{\alpha}f,$ avec $\alpha=(\alpha_{1},...,\alpha_{l}), \alpha_{i}>0,$ on peut se ramener au cas o\`u $\left|u_{1} \right|_{g(t)}$  et $\left|u_{2} \right|_{g(t)}$ d\'ecroissent vers z\'ero \`a l'infini. 
Maintenant, on consid\`{e}re $u= u_{1}-u_{2}$ qui satisfait \`a l'\'equation  
$$\frac{\partial u}{\partial t}=L_{t} u, \; u_{\scriptscriptstyle{\vert t=0 }}=0.$$
Comme $L_{t}$ s'\'ecrit sous la forme
\begin{equation}
L_{t}(\mu)=\sum \limits_{j=0}^{2} \zeta_{j}\cdot \nabla ^{j} \mu = \Delta_{g(t)} \mu + \zeta_{1}\cdot \nabla \mu + \zeta_{0}\cdot\mu,
\end{equation}
 on a successivement 
\begin{align*}
 \frac{\partial}{\partial t}\left|u \right|^{2} & = \Delta \left|u \right|^{2} -2\left|\nabla u \right|^{2}+ 2 {\langle u,  \zeta_{1}\cdot \nabla u \rangle} + 2\langle u, \zeta_{0} \cdot u  \rangle\\
                                                              & = \Delta \left|u \right|^{2} -2\left|\nabla u \right|^{2}+ 2 {\langle \zeta^{*}_{1} \cdot u,  \nabla u \rangle}+ 2\langle u, \zeta_{0} \cdot u  \rangle\\
                                                              & \leq \Delta \left|u \right|^{2} -2\left|\nabla u \right|^{2}+ \left| \zeta^{*}_{1}\cdot u \right|^{2} + \left|  \nabla u \right|^{2} + 2\langle u, \zeta_{0} \cdot u  \rangle\\
                                                              &  \leq \Delta \left|u \right|^{2} + C\left|u \right|^{2},
\end{align*}
o\`u $C>0$ est une constante d\'ependante de $\zeta_{0}$ et $\zeta^{*}_{1}.$ On conclut finalement que $u\equiv0$ par le principe du maximum. 
\end{proof}
Dans l'article~\cite{rochon2015polyhomogeneite}, le r\'esultat de la polyhomog\'en\'eit\'e des solutions utilise le principe du maximum. L'argument pourrait en principe s'\'etendre plus g\'en\'eralement aux structures de Lie \'evanescentes \`a l'infini. Notre prochain objectif sera d'\'etablir plus g\'en\'eralement la polyhomog\'en\'eit\'e globale de solutions d'\'equations paraboliques lin\'eaires d\'etermin\'ees par une structure de Lie fibr\'ee \`a l'infini. En fait, afin de d\'eterminer un candidat pour le coefficient du terme d'un certain ordre dans le d\'eveloppement polyhomog\`{e}ne, nous serons oblig\'e d'\'etudier plus g\'en\'eralement une famille param\'etr\'ee de telles \'equations sur une vari\'et\'e \`a coins $M$, via la restriction au bord. Cette famille sera d\'etermin\'ee par un fibr\'e de structures de Lie fibr\'ees \`a l'infini associ\'e \`a un fibr\'e $\phi_{M}$. Pour cette raison, nous consid\'erons d'embl\'ee une famille d'\'equations paraboliques, plut\^ot qu'une seule \'equation parabolique. Cela nous permettra d'utiliser le principe d'induction sur la profondeur de la vari\'et\'e dans la preuve de la polyhomog\'en\'eit\'e.
Avant d'\'etablir ce cas g\'en\'eral, nous allons consid\'erer un cas particulier facile \`a \'etudier, \`a savoir lorsque $\phi_{M}= \Id.$

Pour $T>0$ et $\mathcal{G}$ est une famille indicielle associ\'ee \`a une vari\'et\'e \`a coins compacte $M$, l'espace $\mathcal{A}_{\phg}^{\mathcal{G}}([0,T]\times M)$ est constitu\'e des fonctions polyhomog\`enes sur $[0,T]\times M$ avec d\'eveloppement lisse en $\{0\}\times M$ et $\{T\}\times M$ et d\'eveloppement polyhomog\`ene sp\'ecifi\'e par $\mathcal{G}(M)$ en $[0,T]\times M$ pour $H$ une hypersurface bordante de $M$. De m\^eme, pour $E \rightarrow M$ un fibr\'e vectoriel, on posera $$\mathcal{A}_{\phg}^{\mathcal{G}}([0,T]\times M; E)= \mathcal{A}_{\phg}^{\mathcal{G}}([0,T]\times M) \otimes_ {C^{\infty}([0,T]\times M)} C^{\infty}([0,T]\times M; E).$$
\begin{prop}\label{priooop}
Soient $M$ une vari\'et\'e \`a coins compacte et $\mathcal{G},$ $\mathcal{G}_{1}$ et $\mathcal{G}_{2}$ des familles indicielles positives de $M.$ 
Alors pour $ l_{t} \in \mathcal{A}_{\phg}^{\mathcal{G}}([0,T]\times M; \End(E)),$ $u_{0} \in \mathcal{A}^{\mathcal{G}_{1}}_{\phg}(M; E)$ et $f \in \mathcal{A}^{\mathcal{G}_{2}}_{\phg}([0,T]\times M; E)$, l'\'equation diff\'erentielle ordinaire lin\'eaire d'ordre $1$ de la forme 
\begin{equation}\label{mneeq}
\frac{\partial }{\partial t}u-l_{t}u = f,\; u_{\scriptscriptstyle{\vert t=0 }}=u_{0}, 
\end{equation}
poss\`{e}de une unique solution $u \in \mathcal{A}^{\mathcal{K}}_{\phg}([0,T] \times M; E)$, o\`u $\mathcal{K}$ est la famille indicielle positive donn\'ee par $ \displaystyle \mathcal{K}=\mathcal{G}_{1} \cup (\mathcal{G}_{\infty}+  \mathcal{G}_{2} ) + \mathcal{G}_{\infty}.$
\end{prop}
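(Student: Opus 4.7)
L'\'equation~\eqref{mneeq} est une \'equation diff\'erentielle ordinaire lin\'eaire du premier ordre en $t,$ d\'ependant ponctuellement de $m\in M$ et sans d\'eriv\'ee spatiale. Mon plan est de la r\'esoudre au moyen du propagateur $P(t,s)\in\End(E_{m})$ d\'efini par $\frac{\partial}{\partial t}P(t,s)=l_{t}P(t,s)$ avec $P(s,s)=\Id.$ Comme $\mathcal{G}$ est positive et $M$ compacte, $l_{t}$ est uniform\'ement born\'e sur $[0,T]\times M,$ et la s\'erie de Volterra (ou exponentielle ordonn\'ee dans le temps) fournit $P(t,s)$ comme limite uniforme, avec la borne standard $\|P(t,s)\|\leq\exp((t-s)\|l\|_{\infty}).$ On aura ainsi explicitement
\begin{equation*}
u(t)=P(t,0)u_{0}+\int_{0}^{t}P(t,s)f(s)\,ds,
\end{equation*}
et l'unicit\'e de $u$ d\'ecoulera de l'unicit\'e ponctuelle des EDO lin\'eaires sur chaque fibre de $E.$

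L'\'etape cruciale sera de montrer que $P\in\mathcal{A}^{\mathcal{G}_{\infty}}_{\phg}([0,T]^{2}\times M;\End(E)).$ Je proc\'ederais par construction formelle et r\'ecurrence sur la profondeur de $M.$ Au voisinage d'une hypersurface bordante $H,$ en ins\'erant l'ansatz $P\sim\sum P_{z,k}(t,s)\rho_{H}^{z}(\log\rho_{H})^{k}$ et le d\'eveloppement correspondant de $l_{t},$ l'identification des coefficients donne pour chaque $(z,k)\in\mathcal{G}_{\infty}(H)$ une EDO lin\'eaire en $t$ dont le second membre ne d\'epend que des $P_{z',k'}$ strictement ant\'erieurs pour l'ordre lexicographique $(\Re z,-k)$, avec condition initiale $P_{z,k}(s,s)=\delta_{(z,k),(0,0)}\Id.$ La discr\'etude de $\mathcal{G}_{\infty}(H)$ (cons\'equence de la positivit\'e de $\mathcal{G}$) permet de r\'esoudre r\'ecursivement chaque $P_{z,k},$ lisse en $(t,s)$ \`a valeurs dans les sections lisses au\texttt{-}dessus de $H.$ Le lemme de Borel permettra de r\'ealiser ce d\'eveloppement formel comme une vraie fonction $\tilde{P}\in\mathcal{A}^{\mathcal{G}_{\infty}}_{\phg},$ et le reste $P-\tilde{P}$ s'annulera \`a tous les ordres au bord car il satisfait une EDO lin\'eaire dont le second membre est $O(\rho_{H}^{\infty}).$

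Une fois \'etablie la polyhomog\'en\'eit\'e de $P,$ la Remarque~\ref{eva} donne imm\'ediatement que $P(t,0)u_{0}\in\mathcal{A}^{\mathcal{G}_{1}+\mathcal{G}_{\infty}}_{\phg}$ et $\int_{0}^{t}P(t,s)f(s)\,ds\in\mathcal{A}^{\mathcal{G}_{2}+\mathcal{G}_{\infty}}_{\phg},$ d'o\`u
\begin{equation*}
u\in\mathcal{A}^{(\mathcal{G}_{1}+\mathcal{G}_{\infty})\cup(\mathcal{G}_{2}+\mathcal{G}_{\infty})}_{\phg}=\mathcal{A}^{\mathcal{K}}_{\phg}([0,T]\times M;E)
\end{equation*}
en utilisant $\mathcal{G}_{\infty}+\mathcal{G}_{\infty}=\mathcal{G}_{\infty}$ (valide car $(0,0)\in\mathcal{G}$). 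L'obstacle principal est la construction du d\'eveloppement polyhomog\`{e}ne de $P$ en pr\'esence de coins, o\`u il faudra g\'erer soigneusement l'interaction entre hypersurfaces bordantes distinctes via la structure r\'ecursive de $\mathcal{A}^{\mathcal{G}}_{\phg}(M)$; n\'eanmoins, l'absence totale de d\'eriv\'ees spatiales dans l'\'equation rend cette gestion consid\'erablement plus directe que dans le cas des \'equations paraboliques g\'en\'erales trait\'ees plus loin dans la section.
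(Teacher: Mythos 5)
Votre d\'emonstration est correcte, mais elle emprunte une route sensiblement diff\'erente de celle de l'article. L'article r\'esout~\eqref{mneeq} par la formule explicite du facteur int\'egrant, $u(t)=\exp(-P_{t})\big(u_{0}+\int_{0}^{t}\exp(P_{\tau})f(\tau)\,d\tau\big)$ avec $P_{t}=-\int_{0}^{t}l_{\tau}\,d\tau$, puis observe que l'int\'egration en $t$ pr\'eserve la polyhomog\'en\'eit\'e et que, $\mathcal{G}$ \'etant positive et $\mathcal{G}_{\infty}+\mathcal{G}=\mathcal{G}_{\infty}$ (Remarque~\ref{field}), la s\'erie $\exp(-P_{t})=\sum_{j}(-P_{t})^{j}/\fact{j}$ appartient \`a $\mathcal{A}^{\mathcal{G}_{\infty}}_{\phg}$; la Remarque~\ref{eva} livre alors directement la famille $\mathcal{K}$. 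Vous construisez plut\^ot le propagateur ordonn\'e en temps $P(t,s)$, puis vous \'etablissez sa polyhomog\'en\'eit\'e par d\'eveloppement formel, sommation de Borel et contr\^ole du reste. Votre voie a le m\'erite d'\^etre valable sans hypoth\`{e}se de commutativit\'e des $l_{t}$ \`a des instants diff\'erents, point que la formule ferm\'ee de l'article suppose tacitement, puisque $\frac{d}{dt}\exp(-P_{t})=l_{t}\exp(-P_{t})$ exige $[l_{t},P_{t}]=0$ lorsque les valeurs de $l$ sont dans $\End(E)$ non commutatif. En revanche, la machinerie de Borel et du reste est superflue ici : la s\'erie de Volterra exprime d\'ej\`a $P(t,s)$ comme somme convergente d'int\'egrales it\'er\'ees de produits de fonctions polyhomog\`{e}nes, chacune d'ensemble indiciel contenu dans $\mathcal{G}_{\infty}$ par la Remarque~\ref{field}, et le facteur $1/\fact{n}$ donne la convergence du d\'eveloppement asymptotique \`a chaque ordre, exactement comme l'argument de l'article pour $\exp(-P_{t})$. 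Votre famille indicielle finale $(\mathcal{G}_{1}+\mathcal{G}_{\infty})\cup(\mathcal{G}_{2}+\mathcal{G}_{\infty})$ co\"incide bien avec $\mathcal{K}$ gr\^ace \`a $\mathcal{G}_{\infty}+\mathcal{G}_{\infty}=\mathcal{G}_{\infty}$ et \`a la distributivit\'e de la somme sur l'union.
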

\begin{proof}
L'\'equation~\eqref{mneeq} poss\`{e}de une solution unique d\'efinie par 
\begin{equation}\label{meq}
u(t)=  \exp (-P_{t})(u_{0} + \int_{0}^{t}  \exp(P_{\tau}) f(\tau) \, \mathrm{d}\tau), \textrm{\:o\`u\:} P_{t}= -\int_{0}^{t} l_{\tau} \, \mathrm{d}\tau.
\end{equation}
Puisque l'int\'egrale en $t$ d'une fonction polyhomog\`{e}ne reste aussi une fonction polyhomog\`{e}ne, on a que $P_{t} \in \mathcal{A}_{\phg}^{\mathcal{G}}([0,T]\times M;  E)$. Ensuite, comme $ \mathcal{G}_{\infty}= \mathcal{G}_{\infty} + \mathcal{G}$ par la Remarque~\ref{field}, on a automatiquement $$\exp (-P_{t})=  \sum \limits_{j \in \mathbb{N}_{0}} \displaystyle \frac{(-P_{t})^{j}}{\fact j} \in\mathcal{A}_{\phg}^{{\mathcal{G}_{\infty}}}([0,T]\times M;  E).$$ \`A partir de~\eqref{meq}, on d\'eduit que $u \in \mathcal{A}_{\phg}^{{{\mathcal{K}}}}([0,T]\times M ;E).$
\end{proof}
Dans la suite, nous allons introduire la notion d'\'eclatement logarithmique (voir la section 5.14 dans le livre de Melrose~\cite{melrose1996differential} et l'article~\cite{rochon2012asymptotics} autour de l'\'equation 1.14) afin d'obtenir la polyhomog\'en\'eit\'e d'une famille de solutions d'une famille d'\'equations paraboliques. Un \'eclatement logarithmique $[M, H]_{\log}$ d'une hypersurface bordante $H$ est la vari\'et\'e \`a coins compacte $[M, H]_{\log}$ identifi\'ee topologiquement avec $M$, mais avec\\ $C^{\infty}$ structure \`a coins engendr\'ee par $C^{\infty}(M)$ et la nouvelle fonction de d\'efinition  
$$y_{H}= \frac{-1}{\log \rho_{H}},$$
o\`u $\rho_{H}$ est une fonction de d\'efinition de $H$ dans $M$.
Sans perte de g\'en\'eralit\'e, on suppose que $\rho_{H}<1.$ Par le Lemme 5.14.1~\cite{melrose1996differential}, l'\'eclatement logarithmique $[M, H]_{\log}$ est ind\'ependant du choix de fonction de d\'efinition $\rho_{H}$ et est tel que l'identit\'e $\Id : [M, H]_{\log} \rightarrow M$ est $C^{\infty}.$ Clairement, les diff\'erents \'eclatements logarithmiques entre les hypersurfaces bordantes commutent. Ceci nous permet de d\'efinir l'\'eclatement logarithmique total. Si $H_{1}, ... ,H_{l}$ est une liste des hypersurfaces bordantes de $M,$ alors on pose
$$M_{\log}=M_{l},\; \textrm{avec}\;M_{0}=M \; \textrm{et}\; M_{j}=[M_{j-1}, H]_{\log}, \; j\in\{1,..,l\}.$$
On voit que, pour $\mathcal{G}$ une famille indicielle positive de $M,$ $\displaystyle \mathcal{A}^{\mathcal{G}}_{\phg}(M) \subset C^{\infty}(M_{\log}),$ via\\ $\Id^{*} : \mathcal{A}_{\phg}^{\mathcal{G}}(M) \rightarrow C^{\infty}(M_{\log}).$ De plus, on obtient que 
\begin{equation} \label{fdd}
C_{\mathcal{V}_{pc}}^{\infty}(M_{\log}) = C_{b}^{\infty}(\overset{\circ}{M}),
\end{equation}
o\`u 
$\mathcal{V}_{pc}:=\{ V \in \mathfrak{X}(M_{\log}) \mid Vy_{H}\in y^{2}_{H}C^{\infty}(M_{\log}) \;\forall H\in \mathcal{M}_{1}(M_{\log})\}.$
En effet, on a que 
\begin{align*}
\rho_{H} \frac{\partial}{\partial \rho_{H}}&=\rho_{H} \frac{\partial y_{H}}{\partial \rho_{H}} \frac{\partial}{\partial y_{H}}\\
&=\rho_{H} \frac{1}{\log^{2}(\rho_{H})}\frac{1}{ \rho_{H}} \frac{\partial}{\partial y_{H}}\\
&=y^{2}_{H} \frac{\partial}{\partial y_{H}}.
\end{align*}  
De plus, \'etant donn\'ee $f\in C_{\mathcal{V}_{pc}}^{\infty}(M_{\log})$, sur $(y_{H},x_{1},...,x_{n-1})$ un syst\`{e}me de coordonn\'ees locales centr\'e en $m\in H,$ on a que 
\begin{align*}
\rho_{H} \frac{\partial}{\partial \rho_{H}}\big(f(y_{H},x_{1},...,x_{n-1})\big)&=\rho_{H} \frac{\partial}{\partial \rho_{H}} (y_{H}) ( \frac{\partial}{\partial y_{H}}f)(y_{H},x_{1},...,x_{n-1})\\
&=(y^{2}_{H} \frac{\partial}{\partial y_{H}}f)(y_{H},x_{1},...,x_{n-1}).
\end{align*}  
\begin{prop}\label{prooop}
Soit $(M, S, \phi_{M}, \mathcal{V})$ un fibr\'e de structures de Lie fibr\'ees \`a l'infini associ\'e au fibr\'e $\phi_{M} : M \rightarrow S$ tel que le rayon d'injectivit\'e des m\'etriques compatibles sur les fibres soit strictement positif. Soient $\Psi_{U} : \phi_{M}^{-1}(U) \rightarrow U \times Z$ une trivialisation locale du fibr\'e de structures de Lie fibr\'ees \`a l'infini, pour $U$ un ouvert de $S,$ et $(Z,\mathcal{V}_{Z})$ la structure de Lie fibr\'ee \`a l'infini de la fibre $Z$. Soient $\mathcal{G}_{1}$ et $\mathcal{G}_{2}$ deux familles indicielles positives de $S,$ $L_{t}(s)$ une famille polyhomog\`{e}ne sur $[0,T] \times U$ (lisse en t) d'op\'erateurs uniform\'ement $\mathcal{V}_{Z}$\texttt{-}elliptiques dans $\Diff^{2}_{\mathcal{V}_{Z}}( Z; E)$ par rapport ${\mathcal{G}_{1}}_{\scriptscriptstyle{\vert U}},$ $$f\in 
\mathcal{A}^{{\mathcal{G}_{2}}_{\scriptscriptstyle{\vert  U  }}}_{\phg}\big(U,C^{k,\alpha}_{\mathcal{V}_{Z}}([0,T]\times \overset{\circ}{Z}; E)\big)  + x_{\scriptscriptstyle{\vert U }}^{\vartheta} C_{b}^{\infty}\big( \overset{\circ}{U}; C^{k,\alpha}_{\mathcal{V}_{Z}}([0,T]\times \overset{\circ}{Z}; E)\big),$$ pour un certain $\vartheta>0,$ o\`u $x$ est le produit des fonctions de d\'efinitions des hypersurfaces bordantes de $S,$ et $u_{0} \in C_{b}^{\infty}\big(\overset{\circ}{U},C^{k+2,\alpha}_{\mathcal{V}_{Z}}(\overset{\circ}{Z}; E)\big).$
La famille d'\'equations $\mathcal{V}_{Z}$\texttt{-}paraboliques param\'etr\'ee par $s \in U,$
\begin{equation}\label{eee}
\frac{\partial u}{\partial t}-L_{t} u = f, \; u_{\scriptscriptstyle{\vert t=0 }}=u_{0},
\end{equation}
poss\`{e}de alors une unique solution $u \in C_{b}^{\infty}\big(\overset{\circ}{U},C^{k+2,\alpha}_{\mathcal{V}_{Z}}([0,T]\times \overset{\circ}{Z}; E)\big)$.
\end{prop}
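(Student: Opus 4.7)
Mon plan est de combiner l'existence et l'unicit\'e fibre par fibre fournies par la Proposition~\ref{propiii} avec un argument de d\'erivation it\'er\'ee dans le param\`etre $s$ pour \'etablir la r\'egularit\'e $C^{\infty}_{b}$ en $s\in\overset{\circ}{U}.$ Premi\`erement, pour chaque $s\in\overset{\circ}{U}$ fix\'e, l'ellipticit\'e uniforme de $L_t(s)$ et la stricte positivit\'e du rayon d'injectivit\'e sur $(Z, \mathcal{V}_{Z})$ permettent d'appliquer la Proposition~\ref{propiii} \`a l'\'equation~\eqref{eee} sur la fibre $\{s\}\times Z.$ On obtient une unique solution $u(s,\cdot) \in C^{k+2,\alpha}_{\mathcal{V}_{Z}}([0,T]\times \overset{\circ}{Z}; E)$ satisfaisant l'estimation de Schauder~\eqref{schauder}. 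Comme $L_t$ et $f$ sont polyhomog\`enes en $s$ par rapport aux familles indicielles \emph{positives} $\mathcal{G}_1$ et $\mathcal{G}_2,$ leurs coefficients sont dans $C^{\infty}_{b}$ en $s$ (avec toutes leurs d\'eriv\'ees $\mathcal{V}_{b}$ uniform\'ement born\'ees sur $\overset{\circ}{U}$), de sorte que la constante $\kappa$ de~\eqref{schauder} est ind\'ependante de $s.$ Ceci donne $u \in C^{0}_{b}\big(\overset{\circ}{U}, C^{k+2,\alpha}_{\mathcal{V}_{Z}}([0,T]\times \overset{\circ}{Z}; E)\big).$

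Deuxi\`emement, pour am\'eliorer cette r\'egularit\'e en $s,$ on d\'erive l'\'equation. \'Etant donn\'e $V \in \mathcal{V}_{b}(S)_{\scriptscriptstyle{\vert U}},$ on le rel\`eve via la trivialisation $\Psi_{U}$ en un champ horizontal $\tilde V$ sur $\phi_{M}^{-1}(U),$ qui commute avec les champs verticaux de $\mathcal{V}_{Z}.$ Alors $\tilde V u$ satisfait
\begin{equation}\label{derivee}
\frac{\partial (\tilde V u)}{\partial t} - L_{t}(\tilde V u) = \tilde V f + (\tilde V L_{t}) u, \quad (\tilde V u)_{\scriptscriptstyle{\vert t=0 }} = \tilde V u_{0},
\end{equation}
o\`u $\tilde V L_{t}$ d\'esigne l'op\'erateur obtenu en d\'erivant par $\tilde V$ les coefficients de $L_{t}$ en $s.$ Comme les b\texttt{-}champs de vecteurs pr\'eservent la polyhomog\'en\'eit\'e par rapport \`a une famille indicielle positive, $\tilde V L_{t}$ reste polyhomog\`ene par rapport \`a $\mathcal{G}_{1},$ et $\tilde V f$ demeure dans la classe fonctionnelle de $f$ (puisque $\tilde V x^{\vartheta} \in x^{\vartheta} C_{b}^{\infty}$) ; de m\^eme, $\tilde V u_{0}$ reste dans celle de $u_{0}.$ Le membre de droite de~\eqref{derivee} v\'erifie alors les hypoth\`eses de l'\'etape pr\'ec\'edente, quitte \`a remplacer $\mathcal{G}_{2}$ par $\mathcal{G}_{1} \cup \mathcal{G}_{2},$ et l'\'etape pr\'ec\'edente appliqu\'ee \`a~\eqref{derivee} donne $\tilde V u \in C^{0}_{b}\big(\overset{\circ}{U}, C^{k+2,\alpha}_{\mathcal{V}_{Z}}([0,T]\times \overset{\circ}{Z}; E)\big).$ On it\`ere cet argument pour toute composition finie de b\texttt{-}champs de vecteurs sur $S$ pour conclure par induction que $u \in C^{\infty}_{b}\big(\overset{\circ}{U}, C^{k+2,\alpha}_{\mathcal{V}_{Z}}([0,T]\times \overset{\circ}{Z}; E)\big).$

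Le principal obstacle est d'assurer l'uniformit\'e en $s$ des constantes apparaissant dans~\eqref{schauder} \`a chaque \'etape de l'induction, malgr\'e la singularit\'e potentielle des coefficients polyhomog\`enes de $L_{t}$ pr\`es du bord de $S.$ Cette uniformit\'e d\'ecoule pr\'ecis\'ement du fait qu'une section polyhomog\`ene par rapport \`a une famille indicielle positive est dans $C^{\infty}_{\mathcal{V}_{b}}(\overset{\circ}{S}),$ donc born\'ee avec toutes ses d\'eriv\'ees $\mathcal{V}_{b}.$ Le second point technique est de v\'erifier que le relev\'e horizontal $\tilde V$ dans la trivialisation pr\'eserve les classes $C^{k,\alpha}_{\mathcal{V}_{Z}}$ de r\'egularit\'e verticale ; ceci est imm\'ediat puisque $\Psi_{U}$ est un isomorphisme de fibr\'es d'alg\'ebro\"ides de Lie identifiant $\mathcal{V}_{Z}$ \`a la trivialisation locale de $\mathcal{V}$ sur $\phi^{-1}_{M}(U).$
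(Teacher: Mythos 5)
Votre d\'emonstration emprunte une route r\'eellement diff\'erente de celle de l'article. L'article obtient la r\'egularit\'e en $s$ d'un seul coup par le th\'eor\`eme des fonctions implicites appliqu\'e \`a $F(s,v)=\frac{\partial v}{\partial t}-L_{t}(s)v-f(s,\cdot)$, dont la diff\'erentielle partielle $d_{v}F=\frac{\partial}{\partial t}-L_{t}(s)$ est un isomorphisme de $C^{k+2,\alpha}_{\mathcal{V}_{Z}}$ sur $C^{k,\alpha}_{\mathcal{V}_{Z}}$ par le Corollaire~\ref{corr}; le cas $U\cap\partial S\neq\emptyset$ est ensuite ramen\'e au cas int\'erieur en passant \`a l'\'eclatement logarithmique $U_{\log}$ et en prolongeant la famille d'\'equations par z\'ero au-del\`a du bord (les facteurs $x_{i}^{\eta}(\log x_{i})^{k}$ devenant $\big(\exp(-1/y_{i})\big)^{\eta}y_{i}^{-k}$, lisses et nuls en $y_{i}\leq 0$), ce qui donne $u\in C^{\infty}(\Omega_{\log};\cdot)\subset C^{\infty}_{b}(\overset{\circ}{U};\cdot)$. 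Vous rempla\c{c}ez cela par une d\'erivation it\'er\'ee de l'\'equation en $s$ le long de b\texttt{-}champs relev\'es, combin\'ee aux estimations de Schauder uniformes; cette strat\'egie \'evite l'\'eclatement logarithmique puisque $C^{\infty}_{b}$ ne demande que des b\texttt{-}d\'eriv\'ees born\'ees sur l'int\'erieur, et non un prolongement lisse au-del\`a de $\partial S$.

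Le point faible de votre r\'edaction est une circularit\'e au moment d'\'ecrire l'\'equation satisfaite par $\tilde{V}u$ : vous appliquez $\tilde{V}$ \`a~\eqref{eee} pour obtenir une \'equation parabolique v\'erifi\'ee par $\tilde{V}u$, mais rien ne garantit a priori que $u$ soit d\'erivable en $s$ --- c'est pr\'ecis\'ement ce que vous cherchez \`a \'etablir. Il faut passer par les quotients diff\'erentiels : pour $s\in\overset{\circ}{U}$ et $h$ petit, $u_{h}:=\big(u(s+h)-u(s)\big)/h$ satisfait \`a $\frac{\partial u_{h}}{\partial t}-L_{t}(s)u_{h}=f_{h}+L_{t,h}\,u(s+h)$ avec $f_{h}$ et $L_{t,h}$ les quotients correspondants; l'estimation de Schauder uniforme~\eqref{schauder} appliqu\'ee aux diff\'erences $u_{h}-u_{h'}$ montre que $(u_{h})$ est de Cauchy dans $C^{k+2,\alpha}_{\mathcal{V}_{Z}}$ lorsque $h\to 0$, d'o\`u l'existence de la d\'eriv\'ee et, seulement alors, l'\'equation que vous \'ecrivez. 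Le m\^eme souci appara\^it d\`es la premi\`ere \'etape : la borne uniforme $\sup_{s}\left\|u(s)\right\|_{k+2,\alpha}<\infty$ ne donne pas la continuit\'e de $s\mapsto u(s)$ exig\'ee par $C^{0}_{b}$; elle s'obtient de m\^eme en appliquant l'estimation de Schauder \`a $u(s)-u(s')$. Une fois ces deux points r\'egl\'es, votre induction sur le nombre de b\texttt{-}d\'eriv\'ees fonctionne : les termes sources $\tilde{V}f+(\tilde{V}L_{t})u$ et la donn\'ee initiale $\tilde{V}u_{0}$ restent dans les classes requises, et les constantes demeurent uniformes parce que les coefficients polyhomog\`enes par rapport \`a des familles indicielles positives sont dans $C^{\infty}_{b}$. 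Notez enfin que l'unicit\'e fibre par fibre provient du Corollaire~\ref{corr} (principe du maximum) et non de la Proposition~\ref{propiii}, qui n'affirme que l'existence.
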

\begin{proof}
Si $\dim Z=0,$ alors~\eqref{eee} est une EDO et le r\'esultat d\'ecoule de la Proposition~\ref{priooop}. On peut donc supposer que $\dim Z\geq 1.$ De plus, en rempla\c{c}ant $u$ par $u-u_{0},$ on peut se ramener au cas o\`u $u_{0}=0.$ Pour $s\in U$ fix\'e, par le Corollaire~\ref{corr}, chaque \'equation de~\eqref{eee} poss\`{e}de une unique solution $u(s) \in C_{\mathcal{V}_{Z}}^{k+2,\alpha}([0,T]\times \overset{\circ}{Z}; E).$  

On commen\c{c}e par le cas $U \subset \overset{\circ}{S}.$ Pour $k \geq 0$ arbitraire, on consid\`{e}re la fonction\\ $F: U \times C_{\mathcal{V}_{Z}}^{k+2, \alpha}([0,T]\times \overset{\circ}{Z}; E) \rightarrow  C_{\mathcal{V}_{Z}}^{k, \alpha}([0,T]\times \overset{\circ}{Z}; E)$ d\'efinie par $$F(s,v):= \frac{\partial v}{\partial t}-L_{t}(s) v - f(s,\cdot).$$ Il est clair que $F$ est de classe $C^{\infty}$ sur $U$ et que $F(s,\cdot)$ est de classe $C^{\infty}$. En fait, $d_{v}F_{(s,v)} =\frac{\partial}{\partial t}-L_{t}(s)$ est lin\'eaire et bijective gr\^ace au Corollaire~\ref{corr}, puisque on sait que $\forall f(s,\cdot) \in C_{\mathcal{V}_{Z} }^{k, \alpha}([0,T]\times \overset{\circ}{Z}; E)$ il existe une unique solution $u(s,\cdot) \in C_{\mathcal{V}_{Z}}^{k+2, \alpha}([0,T] \times \overset{\circ}{Z}; E)$ de~\eqref{eee}. En appliquant le th\'eor\`{e}me des fonctions implicites~\cite{krantz2012implicit}, on aura que $u \in C^{\infty}\big(U,C^{k+2,\alpha}_{\mathcal{V}_{Z}}([0,T]\times \overset{\circ}{Z}; E)\big).$
 
Pour traiter le cas $U \subset S$ tel que $U\cap \partial S \neq 0,$ l'id\'ee est de se ramener au cas pr\'ec\'edent en prolongeant l'\'equation au\texttt{-}del\`a de la vari\'et\'e \`a coins $U$. Pour y arriver, il faut cependant utiliser l'\'eclatement logarithmique.  
Sans perte de g\'en\'eralit\'e, on peut supposer que $U$ est un voisinage ouvert de l'origine dans $\mathbb{R}^{n}_{l},$ o\`u\\ $n=\dim S.$ Alors les coordonn\'ees $(x_{1},...,x_{n})$ de $\mathbb{R}^{n}_{l}$ sont telles que $x_{1},...,x_{l}$ sont les fonctions de d\'efinitions des hypersurfaces bordantes de $U$ : 
$$H_{i}=U \cap \{x_{i}=0\}, \;\;\; i \in \{1,...,l\}.$$
Soit $U_{\log}$ l'\'eclatement logarithmique total de $U$ obtenu en utilisant les nouvelles fonctions de d\'efinitions des hypersurfaces bordantes $$\displaystyle y_{i} := \frac{-1}{\log x_{i}}, \;\;\;i\in \{1,...,l\}.$$
Alors $U_{\log}$ est aussi un ouvert de $\mathbb{R}^{n}_{l}$ contenant  l'origine, mais en utilisant les coordonn\'ees $(y_{1},...,y_{l},x_{l+1},...,x_{n})$ dans $\mathbb{R}^{n}_{l}.$ Soit $\Omega_{\log}\subset \mathbb{R}^{n}$ un ouvert tel que $U_{\log}=\Omega_{\log} \cap  \mathbb{R}^{n}_{l}.$ On veut alors se ramener au cas pr\'ec\'edent en prolongeant la famille d'\'equations~\eqref{eee} \`a $\Omega_{\log}$. Maintenant pr\`{e}s de $H_{i},$ la famille d'\'equations~\eqref{eee} a un d\'eveloppement asymptotique de la forme
\begin{equation} \label{ilpl}
\frac{\partial u}{\partial t} - \sum \limits_{\substack{(\eta_{1},k_{1}) \in \mathcal{G}_{1}(H_{i}) }}  x_{i}^{\eta_{1}}(\log x_{i})^{k_{1}} \Xi _{H_{i}} (L_{t,\eta_{1},k_{1}}) u \;\sim \sum \limits_{\substack{(\eta_{2},k_{2}) \in \mathcal{G}_{2}(H_{i}) }}  x_{i}^{\eta_{2}}(\log x_{i})^{k_{2}} f_{t,\eta_{2},k_{2}} + x_{i}^{\vartheta} \tilde{f},
\end{equation}
o\`u $\displaystyle \tilde{f} \in (\frac{x_{\scriptscriptstyle{\vert U }}}{x_{i}})^{\vartheta} C_{b}^{\infty}\big( \overset{\circ}{U}; C^{\infty}_{\mathcal{V}_{Z}}([0,T]\times \overset{\circ}{Z}; E)\big),$ $f_{t,\eta_{2},k_{2}}$ est le coefficient du terme d'ordre $x_{i}^{\eta_{2}}(\log x_{i})^{k_{2}}$ de $f$ et $L_{t,\eta_{1},k_{1}}$ est une famille polyhomog\`{e}ne param\'etr\'ee par $s \in H_{i}$ d'op\'erateurs dans $\Diff^{2}_{\mathcal{V}_{Z}}( Z; E).$ Sur $U_{\log},$ on doit plut\^ot utiliser les coordonn\'ees $y_{i},$ ce qui donne le d\'eveloppement asymptotique
\begin{multline*}
\frac{\partial u}{\partial t} - \sum \limits_{\substack{(\eta_{1},k_{1}) \in \mathcal{G}_{1}(H_{i})}} \big(\exp (-\frac{1}{y_{i}} )\big)^{\eta_{1}}y_{i}^{-k_{1}} \Xi _{H_{i}} (L_{t,\eta_{1},k_{1}}) u \;\sim\\ \sum \limits_{\substack{(\eta_{2},k_{2}) \in \mathcal{G}_{2}(H_{i})}} \big(\exp (-\frac{1}{y_{i}} )\big)^{\eta_{2}}y_{i}^{-k_{2}} f_{t,\eta_{2},k_{2}} + \big(\exp (-\frac{1}{y_{i}} )\big)^{\vartheta} \tilde{f}.
\end{multline*}
Or, il est bien connu que, pour un certain couple $(\eta,k)$ dans un tel ensemble indiciel positif, $\displaystyle \big(\exp (-\frac{1}{y_{i}} )\big)^{\eta}y_{i}^{-k},$ initialement d\'efinie par $y_{i}>0,$ se prolonge par z\'ero en une fonction lisse pour tout $y_{i} \in \mathbb{R}$. En prenant $i=l,$ on peut donc prolonger naturellement la famille d'\'equations sur $U_{\log}$ \`a $\Omega_{\log} \cap \mathbb{R}^{n}_{l-1}.$ Cette construction peut \^etre it\'er\'ee successivement pour prolonger la famille d'\'equations \`a $\Omega_{\log} \cap \mathbb{R}^{n}_{l-2},...,\Omega_{\log} \cap \mathbb{R}^{n}_{1}$ et finalement\\ $\Omega_{\log} \cap \mathbb{R}^{n}=\Omega_{\log}.$ \`A nouveau, gr\^ace au th\'eor\`{e}me des fonctions implicites~\cite{krantz2012implicit}, on aura que $$u \in C^{\infty}\big(\Omega_{\log}; C^{k+2,\alpha}_{\mathcal{V}_{Z}}([0,T]\times \overset{\circ}{Z}; E)\big) \subset  C_{\mathcal{V}_{pc}}^{\infty}\big({U}_{\log}; C^{k+2,\alpha}_{\mathcal{V}_{Z}}([0,T]\times \overset{\circ}{Z}; E)\big)$$ et donc par~\eqref{fdd}, $$u \in C_{b}^{\infty}\big(\overset{\circ}{U}; C^{k+2, \alpha}_{\mathcal{V}_{Z}}([0,T]\times \overset{\circ}{Z}; E)\big).$$
\end{proof}
\begin{remq}\label{llilila}
Les constantes de Schauder $\kappa_{s}$ de~\eqref{schauder} correspondantes \`a $L_{t}(s),$ pour tout $s\in S,$ peuvent \^etre choisies d'une fa\c{c}on uniforme donn\'ee par $$\displaystyle \kappa= \sup_{ s \in S}\kappa_{s}= \sup_{ s \in S}\big( \left\| (d_{v}F_{(s,v)})^{-1} \right\|(\left\| L_{t}(s) \right\|+1)+1 \big ).$$ 
En effet, $d_{v}F_{(s,v)} = \frac{\partial}{\partial t}-L_{t}(s)$ est bijective, lin\'eaire et lisse sur chaque ouvert $U$ de $S$. La famille de solutions $u$ peut s'\'ecrire alors comme \'etant $$u(s)=(d_{v}F_{(s,v)})^{-1} \big(f(s,\cdot) + L_{t}(s) u_{0}(s,\cdot)\big) + u_{0}(s,\cdot).$$ Comme l'op\'erateur $L_{t}(s) : C^{k+2, \alpha}_{\mathcal{V}_{Z}}([0,T]\times \overset{\circ}{Z}; E)\rightarrow C^{k, \alpha}_{\mathcal{V}_{Z}}([0,T]\times \overset{\circ}{Z}; E)$ est lisse et lin\'eaire, on a que 
$$\left\|  L_{t}(s) u_{0}(s,\cdot) \right\|_{k,\alpha}\leq \left\| L_{t}(s) \right\|  \left\| u_{0}(s,\cdot) \right\|_{k+2,\alpha}$$
et donc on d\'eduit que 
$$\left\| u(s) \right\|_{k+2,\alpha}\leq \sup_{ s \in S}\big( \left\| (d_{v}F_{(s,v)})^{-1} \right\|(\left\| L_{t}(s) \right\|+1)+1\big)( \left\| f(s,\cdot) \right\|_{k,\alpha} + \left\| u_{0}(s,\cdot) \right\|_{k+2,\alpha}).
$$Par compacit\'e de la vari\'et\'e $S,$ on obtient le supremum voulu.
\end{remq}
\begin{cor}\label{corrrr}
Sous les m\^emes hypoth\`{e}ses de la Proposition~\ref{prooop}, on suppose que $U\times Z$ est une vari\'et\'e \`a coins de profondeur $l$ associ\'ee \`a une famille d'hypersurfaces bordantes $(H_{i})_{i \in J}.$ Soit $\rho_{i}$ une fonction de d\'efinition de $H_{i},$ pour $i \in J$. Si $$f \in \rho^{\beta} (\log \rho)^{\eta} \bigg(\mathcal{A}^{{\mathcal{G}_{2}}_{\scriptscriptstyle{\vert  U  }}}_{\phg}\big(U,C^{k,\alpha}_{\mathcal{V}_{Z}}([0,T]\times \overset{\circ}{Z}; E)\big) + x_{\scriptscriptstyle{\vert U }}^{\vartheta} C_{b}^{\infty}\big( \overset{\circ}{U}; C^{k, \alpha}_{\mathcal{V}_{Z}}([0,T]\times \overset{\circ}{Z}; E)\big)\bigg),$$ o\`u $x$ est comme dans la Proposition~\ref{prooop}, $\rho^{\beta}=\prod_{i=1}^{l}\rho^{\beta_{i}}_{i},$ $( \log \rho)^{\eta}= \prod_{i=1}^{l}(\log \rho_{i})^{\eta_{i}},$ $\beta=(\beta_{1},...,\beta_{l})$ et $\eta=(\eta_{1},...,\eta_{l})$ avec $\beta_{i} >0$ et $\eta_{i} \in \mathbb{N}_{0}$ (ou $\beta_{i}=\eta_{i}=0$), alors $$u \in \rho^{\beta} (\log \rho)^{\eta} C_{b}^{\infty}\big(\overset{\circ}{U}; C^{k+2, \alpha}_{\mathcal{V}_{Z}}([0,T]\times \overset{\circ}{Z}; E)\big).$$
\end{cor}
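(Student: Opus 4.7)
The plan is to reduce the weighted statement to Proposition~\ref{prooop} by the very same conjugation trick used in the proof of Corollary~\ref{corr}. Concretely, I set
\[
\tilde{u} = \rho^{-\beta}(\log \rho)^{-\eta}\, u, \qquad \tilde{f} = \rho^{-\beta}(\log \rho)^{-\eta}\, f,
\]
\[
\tilde{L}_{t} = \rho^{-\beta}(\log \rho)^{-\eta} \circ L_{t} \circ \rho^{\beta}(\log \rho)^{\eta},
\]
so that~\eqref{eee} is transformed into the equivalent parabolic problem
\[
\frac{\partial \tilde{u}}{\partial t} - \tilde{L}_{t}\, \tilde{u} = \tilde{f}, \qquad \tilde{u}_{\scriptscriptstyle{\vert t=0}} = \rho^{-\beta}(\log \rho)^{-\eta}\, u_{0}.
\]
Everything then boils down to checking that the conjugated data $(\tilde{L}_{t}, \tilde{f}, \tilde{u}_{0})$ still satisfies the hypotheses of Proposition~\ref{prooop}.

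To that end, I would first partition the family $(H_{i})_{i\in J}$ of hypersurfaces bordantes of $U\times Z$ into those whose defining functions depend only on $s\in U$ and those whose defining functions depend only on $z\in Z$. Since $L_{t}(s)$ acts only in the $Z$ direction, conjugation by the $U$-weights is trivial and leaves $L_{t}$ unchanged. Conjugation by the $Z$-weights is exactly the local computation carried out in the proof of Corollary~\ref{corr}: the principal symbol is unaltered, so uniform $\mathcal{V}_{Z}$-ellipticity is preserved, while the lower-order coefficients pick up corrections of the form $\beta_{i}\, V\rho_{i}/\rho_{i}$ and $\eta_{i}\, (\log \rho_{i})^{-1}\, V\rho_{i}/\rho_{i}$ for $V \in \mathcal{V}_{Z}$. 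The first type of correction is smooth on $Z$, and the second lies in $C^{\infty}_{\mathcal{V}_{Z}}(\overset{\circ}{Z})$ by Remark~\ref{lisse}. Hence $\tilde{L}_{t}$ is still a polyhomogeneous family on $[0,T]\times U$ with respect to ${\mathcal{G}_{1}}_{\scriptscriptstyle{\vert U}}$ of uniformly $\mathcal{V}_{Z}$-elliptic operators in $\Diff^{2}_{\mathcal{V}_{Z}}(Z;E)$.

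By construction $\tilde{f}$ then belongs to
\[
\mathcal{A}^{{\mathcal{G}_{2}}_{\scriptscriptstyle{\vert U }}}_{\phg}\bigl(U;\,C^{k,\alpha}_{\mathcal{V}_{Z}}([0,T]\times \overset{\circ}{Z}; E)\bigr) \;+\; x_{\scriptscriptstyle{\vert U }}^{\vartheta}\, C_{b}^{\infty}\bigl(\overset{\circ}{U};\, C^{k,\alpha}_{\mathcal{V}_{Z}}([0,T]\times \overset{\circ}{Z}; E)\bigr),
\]
and, under the natural weighting of $u_{0}$ (implicit here by analogy with Corollary~\ref{corr}), $\tilde{u}_{0}$ lies in $C_{b}^{\infty}(\overset{\circ}{U};\, C^{k+2,\alpha}_{\mathcal{V}_{Z}}(\overset{\circ}{Z}; E))$. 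Proposition~\ref{prooop} then produces a unique $\tilde{u}$ in $C_{b}^{\infty}(\overset{\circ}{U};\, C^{k+2,\alpha}_{\mathcal{V}_{Z}}([0,T]\times \overset{\circ}{Z}; E))$, and multiplying back by $\rho^{\beta}(\log \rho)^{\eta}$ yields the desired $u$. Uniqueness transfers directly from the uniqueness part of Proposition~\ref{prooop}.

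The main obstacle I expect is the bookkeeping on the correction terms when the $\log$-weights $\eta_{i}$ are nonzero: one has to verify that the factors $(\log \rho_{i})^{-1}$ produced by the conjugation neither destroy the uniform $C^{k,\alpha}_{\mathcal{V}_{Z}}$-regularity of the coefficients in $z$, nor the polyhomogeneity in $s$. Because each $\rho_{i}$ is either purely a function of $s$ or purely a function of $z$, the $s$- and $z$-dependences decouple cleanly, and the verification ultimately reduces to the one-variable computation already at the heart of Corollary~\ref{corr}.
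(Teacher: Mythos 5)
Votre preuve est correcte et suit essentiellement la m\^eme d\'emarche que celle de l'article : conjugaison de $L_{t}$, $u$, $u_{0}$ et $f$ par le poids $\rho^{\beta}(\log\rho)^{\eta}$, v\'erification que le symbole principal (donc l'ellipticit\'e uniforme) et la r\'egularit\'e des donn\'ees sont pr\'eserv\'es, puis application de la Proposition~\ref{prooop}. Les d\'etails suppl\'ementaires que vous donnez sur les termes de correction d'ordre inf\'erieur sont coh\'erents avec la Remarque~\ref{lisse} invoqu\'ee dans l'article.
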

\begin{proof}
On consid\`{e}re la famille d'op\'erateurs param\'etr\'ee par $s\in U$ $$\tilde{L}_{t}(s):=\rho^{-\beta} (\log \rho)^{-\eta} \circ L_{t}(s) \circ \rho^{\beta} (\log \rho)^{\eta}.$$ On a que $\tilde{L}_{t}(s) \in \Diff^{2}_{\mathcal{V}_{Z}}(\overset{\circ}{Z}; E)$ est une famille lisse d'op\'erateurs par la Remarque~\ref{lisse}, et aussi clairement uniform\'ement $\mathcal{V}_{Z}$\texttt{-}elliptiques car
$$\forall \xi \in T^{*}_{m}\overset{\circ}{Z} \setminus \{0\}, \;\;\sigma_{2}\big(\tilde{L}_{t}(s)\big)( \xi) =\sigma_{2}\big(L_{t}(s)\big)(\xi) = - \rho^{-\beta} (\log \rho)^{-\eta}  \rho^{\beta} (\log \rho)^{\eta}g^{ij}\xi_{i} \xi_{j}, $$
o\`u $g$ est une certaine $\mathcal{V}_{Z}$\texttt{-}m\'etrique. 
On peut se ramener alors \`a une famille d'\'equations \'equivalente \`a la famille d'\'equations d\'ecrite ci\texttt{-}dessus, \`a savoir
$$\frac{\partial \tilde{u}}{\partial t}-\tilde{L}_{t} \tilde{u} = \tilde{f}, \; \tilde{u}_{\scriptscriptstyle{\vert t=0 }}=\tilde{u}_{0}, \; \textrm{avec}
\;\tilde{u}=\rho^{-\beta} (\log \rho)^{-\eta} u, \; \tilde{u}_{0}=\rho^{-\beta} (\log \rho)^{-\eta} u_{0}, \; \tilde{f}=\rho^{-\beta} (\log \rho)^{-\eta} f,$$
et l'existence et l'unicit\'e de la solution d\'ecoule de la proposition pr\'ec\'edente.
\end{proof}
\begin{them} \label{them}
Soit $(M, S, \phi_{M}, \mathcal{V})$ un fibr\'e de structures de Lie fibr\'ees \`a l'infini associ\'e au fibr\'e $\phi_{M} : M \rightarrow S,$ tel que le rayon d'injectivit\'e des m\'etriques compatibles sur les fibres soit strictement positif. Soient $\mathcal{G},$ $\mathcal{G}_{1}$ et $\mathcal{G}_{2}$ des familles indicielles positives de $M$ et $L_{t}(s)$ une famille polyhomog\`{e}ne sur $[0,T] \times S$ d'op\'erateurs de $\Diff^{2}_{\mathcal {V},\: \mathcal{G}}(M; E)$ uniform\'ement $\mathcal{V}_{M_{s}}$\texttt{-}elliptiques sur $M_{s}:=\phi^{-1}_{M}(s), \forall s\in S.$ Alors pour $f \in \mathcal{A}^{\mathcal{G}_{2}}_{\phg}([0,T]\times M; E)$ et $u_{0} \in \mathcal{A}^{\mathcal{G}_{1}}_{\phg}(M; E),$ la famille d'\'equations
\begin{equation}\label{hjaaj}
\frac{\partial u}{\partial t}-L_{t} u = f, \; u_{\scriptscriptstyle{\vert t=0 }}=u_{0}
\end{equation} 
poss\`{e}de alors une unique solution $u \in \mathcal{A}^{\mathcal{K}}_{\phg}([0,T] \times M; E)$, o\`u $\mathcal{K}$ est la famille indicielle positive donn\'ee par $$ \mathcal{K}=\mathcal{G}_{\infty}+(\mathcal{G}_{1} \cup \mathcal{G}_{2}).$$
\end{them} 
\begin{proof}
D'abord, en consid\'erant $u - u_{0}$ au lieu de $u$, on peut toujours se ramener au cas o\`u $u_{0}= 0$. 
On proc\`{e}de par r\'ecurrence sur la profondeur $l$ de la vari\'et\'e $M,$ le cas $l=0$ d\'ecoulant trivialement de la Proposition~\ref{propiii}. 
On suppose donc que le r\'esultat est vrai pour toute vari\'et\'e de profondeur strictement plus petit que $l.$ Et on montre qu'alors, le r\'esultat est vrai au rang $l$. Il faut montrer que \\ $u \in  \mathcal{A}^{\mathcal{K}_{\scriptscriptstyle{\vert U\times Z }}}_{\phg}([0,T] \times U\times Z; E)$ sur chaque ouvert de trivialisation locale $U\times Z$ du fibr\'e de structures de Lie fibr\'ees \`a l'infini. Le cas $\dim Z = 0$ a \'et\'e fait dans la Proposition~\ref{priooop}.  On suppose que $U \times Z$ avec $\dim Z \geq 1$ est une vari\'et\'e \`a coins de profondeur $l$. Soit $(H_{i})_{i \in J}$ les hypersurfaces bordantes de $U \times Z.$
Puisque $\mathcal{K}$ est une famille indicielle positive, on a que $$\mathcal{A}^{\mathcal{K}_{\scriptscriptstyle{\vert U \times Z }}}_{\phg}([0,T] \times  U \times Z; E) \subsetneq C_{b}^{\infty}\big(\overset{\circ}{U}; C^{\infty}_{\mathcal{V}_{Z}}([0,T]\times \overset{\circ}{Z}; E)\big).$$ On sait d\'ej\`a par la Proposition~\ref{prooop} que l'\'equation poss\`{e}de une solution unique \\$u \in C_{b}^{\infty}\big( \overset{\circ}{U}; C^{\infty}_{\mathcal{V}_{Z}}([0,T]\times \overset{\circ}{Z}; E)\big)$. 
On fixe $i \in \{1,...,l\}.$ Alors on d\'efinit $ \{z_{j}\}_ {j \in \mathbb{N}_{0}} $ une suite strictement croissante de nombres r\'eels positifs ou nuls avec 
\begin{equation}
z_{0}=0 \; \textrm{et} \; (z_{j},0) \in \mathcal{K}_{\scriptscriptstyle{\vert U\times Z }}(H_{i})\; \; \forall j \in \mathbb{N},
\end{equation}
\begin{equation}
(z,k) \in \mathcal{K}_{\scriptscriptstyle{\vert U\times Z }}(H_{i}) \Longrightarrow z=z_{j} \textrm{\:pour\:un\:certain}\; j \in  \mathbb{N}_{0}.
\end{equation}
Pour $i \in \{1,...,l\}$ fix\'e, on va montrer par r\'ecurrence sur $j$ que $$\forall j \in \mathbb{N}_{0}, \;\exists u_{j} \in  \mathcal{A}^{\mathcal{K}_{\scriptscriptstyle{\vert U\times Z }}}_{\phg}([0,T] \times U \times Z; E) \textrm{\:tel\:que}\; u_{j}(0, \cdot) \equiv 0 \;  \textrm{et}$$
\begin{equation} \label{monqqq}
u-u_{j} \in \rho_{i}^{\vartheta}  C_{b}^{\infty}\big(\overset{\circ}{U}; C^{\infty}_{\mathcal{V}_{Z}}([0,T]\times \overset{\circ}{Z}; E)\big) \;\; \forall \vartheta < z_{j+1},
\end{equation}
o\`u $\rho_{i}$ est une fonction bordante de $H_{i}.$\\
On suppose maintenant que le r\'esultat est vrai jusqu'\`a l'ordre $j-1$ et alors on a que $$\exists u_{j-1} \in  \mathcal{A}^{\mathcal{K}_{\scriptscriptstyle{\vert U\times Z }}}_{\phg}([0,T] \times U \times Z; E)  \textrm{\:tel\:que}\; u_{j-1}(0, \cdot) \equiv 0 \;  \textrm{et} $$$$
u-u_{j-1} \in \rho_{i}^{\vartheta}  C_{b}^{\infty}\big(\overset{\circ}{U}; C^{\infty}_{\mathcal{V}_{Z}}([0,T]\times \overset{\circ}{Z}; E)\big) \;\; \forall \vartheta < z_{j}. $$
Pour le cas $j=0$, il est sous entendu ici qu'on prend $u_{-1}=0$, de sorte que  $$u-u_{-1} \in C_{b}^{\infty}\big(\overset{\circ}{U}; C^{\infty}_{\mathcal{V}_{Z}}([0,T]\times \overset{\circ}{Z}; E)\big).$$
On pose $v=u-u_{j-1}$ v\'erifiant 
\begin{equation} \label{moneq}
\frac{\partial v}{\partial t}-L_{t} v = f^{j}, \; f^{j}=f-\frac{\partial u_{j-1}}{\partial t}+L_{t} u_{j-1}.
\end{equation}
En utilisant la Remarque~\ref{field}, le fait que $\mathcal{G}+ \mathcal{K}=\mathcal{K}$ et que $f$ est polyhomog\`{e}ne, on a que $$f^{j} \in \mathcal{A}^{\mathcal{K}_{\scriptscriptstyle{\vert U\times Z }}}_{\phg}([0,T] \times U\times Z; E).$$ 
\'Etant donn\'e que $v \in  \rho_{i}^{\vartheta} C_{b}^{\infty}\big(\overset{\circ}{U}; C^{\infty}_{\mathcal{V}_{Z}}([0,T]\times \overset{\circ}{Z}; E)\big),$ $\forall \vartheta < z_{j}$, on obtient aussi $$f^{j} \in \rho_{i}^{\vartheta} C_{b}^{\infty}\big(\overset{\circ}{U}; C^{\infty}_{\mathcal{V}_{Z}}([0,T]\times \overset{\circ}{Z}; E)\big), \; \forall \vartheta < z_{j}.$$
Cela entra\^ine que $$\exists k \in \mathbb{N}_{0},\; f^{j} \in \rho_{i}^{z_{j}} (\log \rho_{i})^{k} C_{b}^{\infty}\big(\overset{\circ}{U}; C^{\infty}_{\mathcal{V}_{Z}}([0,T]\times \overset{\circ}{Z}; E)\big).$$
On suppose maintenant pour un instant que l'on sache d\'ej\`a que  $$v \in \mathcal{A}^{\mathcal{K}_{\scriptscriptstyle{\vert U\times Z }}}_{\phg}([0,T] \times U \times Z; E) \cap \rho_{i}^{z_{j}} (\log \rho_{i})^{k} C_{b}^{\infty}\big(\overset{\circ}{U}; C^{\infty}_{\mathcal{V}_{Z}}([0,T]\times \overset{\circ}{Z}; E)\big).$$
Dans ce cas, on peut restreindre la famille d'\'equations~\eqref{moneq} au coefficient d'ordre $\rho_{i}^{z_{j}} (\log \rho_{i})^{k}$ sur $H_{i}.$ 
On sait d\'ej\`a par la Proposition~\ref{munim}, que $(H_{i}, S_{i}, \phi_{i}, \mathcal {V}_{H_{i}})$ est un fibr\'e de structures de Lie fibr\'ees \`a l'infini par rapport \`a un certain fibr\'e $\phi_{i} : H_{i} \rightarrow S_{i}$ (pas n\'ecessairement trivial) de fibre typique d\'enot\'ee $Z_{i}.$ Si $\dim Z_{i}\geq 1$, en utilisant la Proposition~\ref{fibre} fibre par fibre, l'\'equation~\eqref{moneq} restreinte au coefficient d'ordre $\rho_{i}^{z_{j}} (\log \rho_{i})^{k}$ sur $H_{i}$ sera une famille d'\'equations $\mathcal{V}_{Z_{i}}$\texttt{-}paraboliques lin\'eaires localement de la forme :
\begin{equation}\label{ahana}
\frac{\partial v_{j}(s)}{\partial t}-\Psi_{t}(s) v_{j}(s) = f^{j}_{z_{j},k}(s), \; {v_{j}(s)}_{\scriptscriptstyle{\vert t=0}}  = 0,
\end{equation}
o\`u $v_{j}(s)$ et $f^{j}_{z_{j},k}(s) \in \mathcal{A}^{\mathcal{K}_{\scriptscriptstyle{\vert  Z_{i} }}}_{\phg}([0,T] \times Z_{i}; E)$ sont les coefficients du terme d'ordre $\rho_{i}^{z_{j}} (\log \rho_{i})^{k}$ de $v$ et $f^{j}$ respectivement restreint sur $\overset{\circ}{Z}_{i}$ et $\Psi_{t}(s)\in \Diff^{2}_{\mathcal {V}_{ Z_{i}},\: \mathcal{K}_{\scriptscriptstyle{\vert  Z_{i} }}}(Z_{i}; E)$ est une famille d'op\'erateurs diff\'erentiels $\mathcal{V}_{Z_{i}}$\texttt{-}elliptiques car elle est la restriction \`a l'ordre $0$ de l'op\'erateur $$\tilde{L}_{t}=\rho_{i}^{-z_{j}} (\log \rho_{i})^{-k}\:L_{t}\: \rho_{i}^{z_{j}} (\log \rho_{i})^{k}\; \textrm{satisfaisant} \;\sigma_{2}(\tilde{L}_{t})=\sigma_{2}(L_{t}).$$
Vu que $H_{i}$ est une vari\'et\'e de profondeur $l-1$, par notre hypoth\`{e}se de r\'ecurrence, la famille d'\'equations~\eqref{ahana} poss\`{e}de une unique solution polyhomog\`{e}ne $v_{j} \in \mathcal{A}^{\mathcal{K}_{\scriptscriptstyle{\vert  H_{i} }}}_{\phg}([0,T] \times H_{i}; E)$. \`A strictement parl\'e, il faut savoir que les m\'etriques induites sur les fibres du fibr\'e de structures de Lie de $H_{i}$ ont un rayon d'injectivit\'e positif. Or, si ce n'\'etait pas le cas, comme la courbure et ses d\'eriv\'ees sont born\'ees, on aurait une suite de boucles g\'eod\'esiques dont la longueur tend vers z\'ero. Cette suite induirait une suite correspondante de boucles g\'eod\'esiques pour une famille de m\'etriques sur $\phi^{-1}_{M}(s)$ compatibles avec la structure de Lie \`{a} l'infini, contredisant notre hypoth\`{e}se sur le rayon d'injectivit\'e de telles m\'etriques.
Si plut\^ot $\dim Z_{i}=0,$ alors~\eqref{ahana} devient une famille d'\'equations diff\'erentielles ordinaires et on peut appliquer la Proposition~\ref{priooop} pour conclure que $v_{j} \in \mathcal{A}^{\mathcal{K}_{\scriptscriptstyle{\vert  H_{i} }}}_{\phg}([0,T] \times H_{i}; E).$
Maintenant, m\^eme si on ne sait pas a priori si $v$ est polyhomog\`{e}ne, on remarque que la famille d'\'equation~\eqref{ahana} a malgr\'e tout un sens et que sa solution $v_{j}$ donne le candidat naturel qui est la restriction \`a l'ordre $\rho_{i}^{z_{j}} (\log \rho_{i})^{k}$ de $v$. Pour montrer
que c'est bien le cas, on pose $w = v - w_{j}$ avec $w_{j} = \rho_{i}^{z_{j}} (\log \rho_{i})^{k} \Xi_{H_{i}} (v_{j})$, o\`u $\Xi_{H_{i}}$ est une application d'extension comme dans la D\'efinition~\ref{extension},
de sorte que $w$ satisfait \`a l'\'equation d'\'evolution 
$$\frac{\partial w}{\partial t}-L_{t} w = h^{j},\;  w_{\scriptscriptstyle{\vert t=0}} =0, \; h^{j}=f^{j}-\frac{\partial  w_{j} }{\partial t}+L_{t}  w_{j}.$$
Or, comme $f^{j}$ et $w_{j}$ appartiennent \`a l'espace $$\rho_{i}^{z_{j}} (\log \rho_{i})^{k} C_{b}^{\infty}\big( \overset{\circ}{U}; C^{\infty}_{\mathcal{V}_{Z}}([0,T]\times \overset{\circ}{Z}; E)\big) \cap \mathcal{A}^{\mathcal{K}_{\scriptscriptstyle{\vert  U \times Z }} }_{\phg}([0, T] \times U \times Z; E),$$
on a alors que $h^{j}$ est aussi un \'el\'ement de cet espace. Gr\^ace \`a la d\'efinition de $w_{j}$, on a aussi $h^{j} \in  \rho_{i}^{z_{j}} (\log \rho_{i})^{k-1} C_{b}^{\infty}\big( \overset{\circ}{U}; C^{\infty}_{\mathcal{V}_{Z}}([0,T]\times \overset{\circ}{Z}; E)\big)$ car la restriction $h^{j}$ sur $\overset{\circ}{H}_{i}$ admet un coefficient d'ordre  $\rho_{i}^{z_{j}} (\log \rho_{i})^{k}$ nul. D'apr\`{e}s le Corollaire~\ref{corrrr}, on a alors que $$w \in \rho_{i}^{z_{j}} (\log \rho_{i})^{k-1} C_{b}^{\infty}\big( \overset{\circ}{U}; C^{\infty}_{\mathcal{V}_{Z}}([0,T]\times \overset{\circ}{Z}; E)\big).$$
En r\'ep\'etant cet argument $k$ fois, on obtient alors $$r_{k}, r_{k-1}, ...,r_{0} \in  \mathcal{A}^{\mathcal{K}_{\scriptscriptstyle{\vert  H_{i} }}}_{\phg}([0,T] \times H_{i}; E)$$ avec $r_{k}= v_{j} $ de sorte que $$\hat{v}=v- \sum \limits_{p=0}^{k} \Xi_{H_{i}}(r_{p})\rho_{i}^{z_{j}} (\log \rho_{i})^{p}, \;\;\; r_{p}(0,\cdot) \equiv 0$$ a pour \'equation d'\'evolution
$$\frac{\partial \hat{v}}{\partial t}-L_{t} \hat{v} = \hat{f}^{j}, \;  \hat{v} (0,\cdot) \equiv 0 \; \textrm{avec}\; $$$$\hat{f}^{j} \in \rho_{i}^{z_{j}} C_{b}^{\infty}\big( \overset{\circ}{U}; C^{\infty}_{\mathcal{V}_{Z}}([0,T] \times \overset{\circ}{Z}; E)\big)\cap  \mathcal{A}^{\mathcal{K}_{\scriptscriptstyle{\vert  U \times Z }} }_{\phg}([0, T] \times U \times Z ; E)$$ et $ \rho_{i}^{-z_{j}} \hat{f}^{j}_{\scriptscriptstyle{\vert \overset{\circ}{H}_{i}}}=0.$
Ainsi on a $\hat{f}^{j} \in \rho_{i}^{\vartheta} C_{b}^{\infty}\big( \overset{\circ}{U}; C^{\infty}_{\mathcal{V}_{Z}}([0,T]\times \overset{\circ}{Z}; E)\big) \;\; \forall \vartheta < z_{j+1}$, et par le Corollaire~\ref{corrrr}, on d\'eduit que $\hat{v} \in \rho_{i}^{\vartheta} C_{b}^{\infty}\big( \overset{\circ}{U}; C^{\infty}_{\mathcal{V}_{Z}}([0,T]\times \overset{\circ}{Z}; E)\big) \;\; \forall \vartheta < z_{j+1}$.\\
Il suffit donc de prendre $$\displaystyle u_{j}=u_{j-1} + \sum \limits_{p=0}^{k} \Xi_{H_{i}}(r_{p})\rho_{i}^{z_{j}} (\log \rho_{i})^{p},$$
ce qui termine la construction de suite $u_{j}$ de~\eqref{monqqq} et compl\`{e}te la d\'emonstration. 
\end{proof}
\begin{cor} \label{asass}
Si dans l'\'equation~\eqref{hjaaj}, $L_{t}(s)$ est une famille lisse sur $[0,T] \times S$ d'op\'erateurs uniform\'ement $\mathcal{V}_{M_{s}}$\texttt{-}elliptiques de $\Diff^{2}_{\mathcal {V}_{M_{s}}}(M_{s}; E)$, $u_{0} \in C^{\infty}(M; E)$ et $f \in  C^{\infty}([0,T]\times M; E)$, alors la solution de cette \'equation est dans $ C^{\infty}([0,T] \times M; E)$.
\end{cor}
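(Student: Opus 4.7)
Ce corollaire devrait d\'ecouler directement du Th\'eor\`{e}me~\ref{them} en prenant les familles indicielles les plus simples possibles. Mon plan est d'identifier les hypoth\`{e}ses de r\'egularit\'e lisse jusqu'au bord de l'\'enonc\'e \`a une polyhomog\'en\'eit\'e par rapport \`a la famille indicielle triviale, puis de v\'erifier que la famille indicielle r\'esultante dans la conclusion du Th\'eor\`{e}me~\ref{them} est encore triviale.

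Plus pr\'ecis\'ement, soit $\mathcal{G}_{0}$ la famille indicielle positive de $M$ qui associe \`a chaque hypersurface bordante $H$ de $M$ l'ensemble indiciel $\mathbb{N}_{0}\times\{0\}$. D'apr\`{e}s une remarque d\'ej\`a faite dans l'excerpt, on a $\mathcal{A}^{\mathcal{G}_{0}}_{\phg}(M)=C^{\infty}(M)$, et de m\^eme $\mathcal{A}^{\mathcal{G}_{0}}_{\phg}(M;E)=C^{\infty}(M;E)$ et $\mathcal{A}^{\mathcal{G}_{0}}_{\phg}([0,T]\times M;E)=C^{\infty}([0,T]\times M;E)$. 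Sous cette identification, les hypoth\`{e}ses du corollaire se reformulent en disant que $L_{t}(s)\in \Diff^{2}_{\mathcal{V},\mathcal{G}_{0}}(M;E)$ est une famille polyhomog\`{e}ne (en fait lisse) par rapport \`a $\mathcal{G}_{0}$, uniform\'ement $\mathcal{V}_{M_{s}}$\texttt{-}elliptique sur chaque fibre, et que $u_{0}\in \mathcal{A}^{\mathcal{G}_{0}}_{\phg}(M;E)$ ainsi que $f\in \mathcal{A}^{\mathcal{G}_{0}}_{\phg}([0,T]\times M;E)$.

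On peut alors appliquer directement le Th\'eor\`{e}me~\ref{them} avec $\mathcal{G}=\mathcal{G}_{1}=\mathcal{G}_{2}=\mathcal{G}_{0}$. Le point cl\'e \`a v\'erifier est que la famille indicielle $\mathcal{K}=\mathcal{G}_{\infty}+(\mathcal{G}_{1}\cup\mathcal{G}_{2})$ fournie par le th\'eor\`{e}me se r\'eduit encore \`a $\mathcal{G}_{0}$. Or, pour chaque hypersurface bordante $H$, on a $\mathcal{G}_{0}(H)+\mathcal{G}_{0}(H)=\mathbb{N}_{0}\times\{0\}=\mathcal{G}_{0}(H)$ puisque $\mathbb{N}_{0}$ est stable par addition, et par cons\'equent $\sum_{j=1}^{n}\mathcal{G}_{0}(H)=\mathcal{G}_{0}(H)$ pour tout $n\ge 1$, d'o\`u $(\mathcal{G}_{0})_{\infty}=\mathcal{G}_{0}$. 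De plus, $\mathcal{G}_{0}\cup\mathcal{G}_{0}=\mathcal{G}_{0}$, donc $\mathcal{K}=\mathcal{G}_{0}+\mathcal{G}_{0}=\mathcal{G}_{0}$.

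Le Th\'eor\`{e}me~\ref{them} fournit ainsi une unique solution $u\in\mathcal{A}^{\mathcal{G}_{0}}_{\phg}([0,T]\times M;E)=C^{\infty}([0,T]\times M;E)$, ce qui est la conclusion voulue. Il n'y a donc pas r\'eellement d'obstacle technique ici; toute la difficult\'e a d\'ej\`a \'et\'e absorb\'ee dans la d\'emonstration du Th\'eor\`{e}me~\ref{them}, et le seul travail consiste en la v\'erification formelle que les op\'erations $\cup$, $+$ et $(\cdot)_{\infty}$ laissent stable la famille indicielle triviale $\mathcal{G}_{0}$.
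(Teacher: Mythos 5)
Votre preuve est correcte et suit exactement la m\^eme d\'emarche que celle de l'article, qui se contente de dire qu'il suffit de prendre $\mathcal{G}=\mathcal{G}_{1}=\mathcal{G}_{2}=\{\mathbb{N}_{0}\times\{0\}\ \textrm{pour}\ H\in\mathcal{M}_{1}(M)\}$ dans le Th\'eor\`{e}me~\ref{them}. Vous explicitez en plus la v\'erification que les op\'erations $\cup$, $+$ et $(\cdot)_{\infty}$ laissent stable cette famille indicielle triviale, ce qui est un compl\'ement utile mais non une approche diff\'erente.
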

\begin{proof}
Il suffit de prendre $\mathcal{G} = \mathcal{G}_{1} = \mathcal{G}_{2} = \{\mathbb{N}_{0} \times \{0\} \;\textrm{pour}\; H \in  \mathcal{M}_{1}(M) \}$ dans le Th\'eor\`{e}me~\ref{them}.  
\end{proof}
\begin{cor} \label{laas}
Soit $(M,\mathcal{V})$ une structure de Lie fibr\'ee \`a l'infini telle que le rayon d'injectivit\'e des m\'etriques compatibles est strictement positif. Soient $\mathcal{G},$ $\mathcal{G}_{1}$ et $\mathcal{G}_{2}$ des familles indicielles positives de $M$  et $\{L_{t} : t \in [0,T] \}$ une famille d'op\'erateurs uniform\'ement $\mathcal{V}_{SF}$\texttt{-}elliptiques de $\Diff^{2}_{\mathcal {V}_{SF},\: \mathcal{G}}(M; E)$. Alors pour $u_{0} \in \mathcal{A}^{\mathcal{G}_{1}}_{\phg}(M; E)$ et $f \in \mathcal{A}^{\mathcal{G}_{2}}_{\phg}([0,T]\times M; E),$ la famille d'\'equations 
\begin{equation}\label{hjaj}
\frac{\partial u}{\partial t}-L_{t} u = f, \; u_{\scriptscriptstyle{\vert t=0 }}=u_{0},
\end{equation} 
poss\`{e}de une unique solution $u \in \mathcal{A}^{\mathcal{K}}_{\phg}([0,T] \times M; E)$, o\`u $\mathcal{K}$ est la famille indicielle positive donn\'ee par $ \displaystyle \mathcal{K}=\mathcal{G}_{\infty}+(\mathcal{G}_{1} \cup \mathcal{G}_{2}).$
\end{cor}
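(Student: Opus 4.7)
L'id\'ee directrice est d'invoquer directement le Th\'eor\`eme~\ref{them} en prenant pour $S$ un point. D'apr\`es la remarque suivant la D\'efinition~\ref{trivill}, toute structure de Lie fibr\'ee \`a l'infini $(M, \mathcal{V}_{SF})$ peut \^etre vue comme un fibr\'e de structures de Lie fibr\'ees \`a l'infini $(M, S, \phi_{M}, \mathcal{V})$ o\`u $S$ est r\'eduit \`a un point et $\phi_{M} : M \rightarrow S$ est l'application constante. Dans ce cas particulier, la fibre $M_{s}:=\phi_{M}^{-1}(s)$ co\"incide avec $M$ tout entier et $\mathcal{V}_{M_{s}}=\mathcal{V}_{SF},$ de sorte que les notions d'op\'erateur uniform\'ement $\mathcal{V}_{SF}$\texttt{-}elliptique et d'op\'erateur uniform\'ement $\mathcal{V}_{M_{s}}$\texttt{-}elliptique co\"incident.

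Je proc\'ederais ensuite en v\'erifiant que chacune des hypoth\`eses du Th\'eor\`eme~\ref{them} est automatiquement satisfaite. L'hypoth\`ese sur le rayon d'injectivit\'e des m\'etriques compatibles sur les fibres est identique \`a celle faite sur $(M, \mathcal{V}_{SF})$ puisque la seule fibre est $M.$ La famille $\{L_{t}\}$ de l'\'enonc\'e, appartenant \`a $\Diff^{2}_{\mathcal{V}_{SF}, \mathcal{G}}(M;E),$ fournit trivialement une famille polyhomog\`ene sur $[0,T]\times S$ d'op\'erateurs de $\Diff^{2}_{\mathcal{V}, \mathcal{G}}(M;E),$ puisque toute fonction sur un singleton est automatiquement polyhomog\`ene et lisse. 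De m\^eme, les donn\'ees $u_{0}\in \mathcal{A}^{\mathcal{G}_{1}}_{\phg}(M;E)$ et $f\in \mathcal{A}^{\mathcal{G}_{2}}_{\phg}([0,T]\times M; E)$ sont d\'ej\`a dans la forme requise par le th\'eor\`eme.

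La conclusion du Th\'eor\`eme~\ref{them} nous donne alors l'existence et l'unicit\'e d'une solution $u \in \mathcal{A}^{\mathcal{K}}_{\phg}([0,T] \times M; E)$ avec $\mathcal{K}= \mathcal{G}_{\infty}+(\mathcal{G}_{1} \cup \mathcal{G}_{2}),$ ce qui est pr\'ecis\'ement la famille indicielle annonc\'ee dans le corollaire. Comme il s'agit d'une application directe du th\'eor\`eme pr\'ec\'edent au cas trivial $S=\{\textrm{pt}\},$ il n'y a pas d'obstacle substantiel \`a franchir, le travail ayant d\'ej\`a \'et\'e effectu\'e dans la preuve par r\'ecurrence sur la profondeur du Th\'eor\`eme~\ref{them}.
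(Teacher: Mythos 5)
Votre preuve est correcte et suit exactement la m\^eme d\'emarche que celle de l'article, \`a savoir appliquer le Th\'eor\`eme~\ref{them} en prenant pour base $S$ un point du fibr\'e $\phi_{M}$. La v\'erification d\'etaill\'ee des hypoth\`eses que vous ajoutez est juste mais n'apporte rien de substantiellement diff\'erent.
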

\begin{proof}
C'est une cons\'equence directe du Th\'eor\`{e}me~\ref{them} en prenant la base $S$ un point de fibr\'e $\phi_{M}.$
\end{proof}
\begin{cor}
Si dans l'\'equation~\eqref{hjaj}, la famille $\{L_{t} : t\in [0,T] \}$ d'op\'erateurs uniform\'ement $\mathcal{V}_{SF}$\texttt{-}elliptiques est dans $\Diff^{2}_{\mathcal {V}_{SF}}(M; E)$, $u_{0} \in C^{\infty}(M; E)$ et $f \in  C^{\infty}([0,T]\times M; E)$, alors la solution de cette \'equation est dans $ C^{\infty}([0,T] \times M; E)$.
\end{cor}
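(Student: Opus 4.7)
La preuve que j'envisage consiste simplement à appliquer le Corollaire~\ref{laas} au cas particulier où toutes les familles indicielles sont triviales. Plus précisément, je pose $\mathcal{G}=\mathcal{G}_{1}=\mathcal{G}_{2}=\{\mathbb{N}_{0}\times\{0\}\text{ pour }H\in\mathcal{M}_{1}(M)\}$. Comme indiqué dans la remarque qui suit la définition de $\mathcal{A}^{\mathcal{G}}_{\phg}(M)$, ce choix de famille indicielle positive redonne précisément l'espace $C^{\infty}(M)$, et par conséquent $\mathcal{A}^{\mathcal{G}_{i}}_{\phg}(M;E)=C^{\infty}(M;E)$ pour $i\in\{1,2\}$ ainsi que $\Diff^{2}_{\mathcal{V}_{SF},\mathcal{G}}(M;E)=\Diff^{2}_{\mathcal{V}_{SF}}(M;E)$. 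De même, un élément de $\mathcal{A}^{\mathcal{G}_{2}}_{\phg}([0,T]\times M;E)$ s'identifie à une fonction lisse sur $[0,T]\times M$ (avec développement lisse sur $\{0\}\times M$ et $\{T\}\times M$). Les hypothèses du Corollaire~\ref{laas} sont donc bien satisfaites avec ces données.

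Il reste à identifier la famille indicielle $\mathcal{K}$ fournie par le Corollaire~\ref{laas} et à constater qu'elle est aussi triviale. Pour cela, je remarque que $\mathcal{G}+\mathcal{G}=\mathcal{G}$ dans ce cas : en effet, $(z_{1},k_{1}),(z_{2},k_{2})\in\mathbb{N}_{0}\times\{0\}$ implique $(z_{1}+z_{2},k_{1}+k_{2})\in\mathbb{N}_{0}\times\{0\}$. Par conséquent, $\mathcal{G}_{\infty}=\bigcup_{n=1}^{\infty}\sum_{j=1}^{n}\mathcal{G}=\mathcal{G}$, et comme $\mathcal{G}_{1}\cup\mathcal{G}_{2}=\mathcal{G}$, on obtient $\mathcal{K}=\mathcal{G}_{\infty}+(\mathcal{G}_{1}\cup\mathcal{G}_{2})=\mathcal{G}+\mathcal{G}=\mathcal{G}$. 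La solution $u$ appartient ainsi à $\mathcal{A}^{\mathcal{G}}_{\phg}([0,T]\times M;E)=C^{\infty}([0,T]\times M;E)$, ce qui est exactement l'énoncé à prouver.

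Il n'y a à mon avis pas d'obstacle technique ici : l'énoncé est au corollaire~\ref{laas} ce que le Corollaire~\ref{asass} est au Théorème~\ref{them}, et la preuve est purement formelle, consistant à vérifier que le choix trivial des familles indicielles transporte les espaces polyhomogènes sur des espaces de fonctions lisses. Alternativement, on pourrait également déduire cet énoncé directement du Corollaire~\ref{asass} en prenant la base $S$ réduite à un point, puisque le Corollaire~\ref{laas} lui-même découle du Théorème~\ref{them} par cette spécialisation.
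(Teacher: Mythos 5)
Votre preuve est correcte et suit exactement la même démarche que celle du papier : appliquer le Corollaire~\ref{laas} avec $\mathcal{G}=\mathcal{G}_{1}=\mathcal{G}_{2}=\{\mathbb{N}_{0}\times\{0\}\}$, comme dans la preuve du Corollaire~\ref{asass}. Vous explicitez en plus la vérification $\mathcal{K}=\mathcal{G}_{\infty}+(\mathcal{G}_{1}\cup\mathcal{G}_{2})=\mathcal{G}$, que le papier laisse implicite; c'est un détail utile mais la substance est identique.
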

\begin{proof}
Dans le Corollaire~\ref{laas}, on prend la m\^eme famille indicielle $\mathcal{G}$ que dans la preuve du Corollaire~\ref{asass}.
\end{proof}

\section{Polyhomog\'en\'eit\'e des solutions des \'equations paraboliques quasi\texttt{-}lin\'eaires}

Cette section d\'eveloppe la th\'eorie des \'equations paraboliques quasi\texttt{-}lin\'eaires d\'etermin\'ees par une structure de Lie \`a l'infini. En s'inspirant beaucoup de l'article de Bahuaud~\cite{bahuaud2011ricci}, nous \'etudions l'existence et l'unicit\'e de leurs solutions et nous d\'eterminons quelles contraintes nous assurent que la polyhomog\'en\'eit\'e sera pr\'eserv\'ee localement jusqu'au bord. Ces r\'esultats jouent un r\^ole important dans des probl\`{e}mes d'\'evolution g\'eom\'etrique concrets tel que le flot de Ricci.

Pour se r\'echauffer, consid\'erons d'abord le cas d'\'equations ordinaires non lin\'eaires au lieu d'EDP paraboliques. 
Soient $T>0$ un r\'eel positif et $f : [0,T] \times \mathbb{R}^{n} \rightarrow \mathbb{R}^{n}$ une application continue. 
Pour $x_{0}\in\mathbb{R}^{n},$ consid\'erons l'EDO
\begin{equation} \label{nonlin}
\frac{\partial }{\partial t}x = f(t,x) , \; x(0)=x_{0}.
\end{equation}
Pour une telle \'equation, nous avons le r\'esultat bien connu d'existence et d'unicit\'e suivant (voir par exemple Th\'eor\`{e}me A~\cite{simmons2016differential} \`a la page 626).
\begin{them}\label{thee} (Cauchy\texttt{-}Lipschitz)
Si $f(t,\cdot )$ est lipschitzienne (uniform\'ement en $t$) sur la boule ferm\'ee centr\'ee en $x_{0}$ de rayon $r,$ $B(x_{0},r):=\{x\in \mathbb{R}^{n} : \left\| x - x_{0} \right\| \leq r \},$ et que 
$$c:=\sup \{ \left\| f(t,x) \right\| : (t,x) \in  [0,T] \times B(x_{0},r) \} < \infty,$$ alors l'\'equation~\eqref{nonlin} poss\`{e}de une unique solution $x \in C^{1}\big([0,\alpha), B(x_{0},r)\big),$ avec 
\begin{equation} \label{nouno}
\alpha =\min(\frac{1}{\kappa},\frac{r}{c}),
\end{equation}
o\`u $\kappa$ est la constante de Lipschitz de $f$ sur $B(x_{0},r).$
De plus, la suite d\'efinie par 
$$y_{0}=x_{0},  \;\; y_{i}(t)=x_{0} + \int_{t_{0}}^{t} f\big(s,y_{i-1}(s)\big) \, ds, \;\; i \geq 1, \;\; | t-t_{0} | \leq  \alpha,$$
converge uniform\'ement sur l'intervalle $| t-t_{0} | \leq \alpha$ vers la solution $x$ de~\eqref{nonlin}.
\end{them}
Une solution $x: [0,\alpha) \rightarrow \mathbb{R}^{n}$ de l'\'equation~\eqref{nonlin} est dite maximale si elle n'est pas prolongeable, c'est\texttt{-}\`a\texttt{-}dire que si  $y: [0,\tau)  \rightarrow \mathbb{R}^{n}$ est une autre solution, alors $\tau\leq \alpha.$ Bien entendu, d'apr\`{e}s l'unicit\'e de la solution $x=y$ sur $[0,\tau).$ 
Par cons\'equent, toute \'equation de la forme~\eqref{nonlin} poss\`{e}de toujours une solution maximale. De plus si $f$ est de classe $C^{k},$ alors la solution est de classe $C^{k+1}.$
\begin{cor}\label{prooopp}
Soient $M$ une vari\'et\'e \`a coins compacte, $\mathcal{G}$ une famille indicielle positive associ\'ee et $\pi : E \rightarrow M$ un fibr\'e vectoriel. Soit $\pi^{*}\mathcal{G}$ la famille indicielle de $E$ qui \`a une hypersurface bordante de la forme $\pi^{-1}(H)$ associe l'ensemble indiciel $ \mathcal{G}(H)$. Pour $u_{0} \in \mathcal{A}_{\phg}^{\mathcal{G}}(M ; E)$ et $f \in \mathcal{A}_{\phg}^{\pi^{*}\mathcal{G}} (E; \pi^{*}E),$ 
on consid\`{e}re la famille d'EDOs  
\begin{equation}\label{eeee}
\frac{\partial u(x,t)}{\partial t} = \pi_ {*}f\big(u(x,t)\big), \; u_{\scriptscriptstyle{\vert t=0 }}=u_{0},
\end{equation}
o\`u $\pi_{*} : \pi^{*} E \rightarrow E$ est l'application naturelle induite par $\pi$ envoyant la fibre $(\pi^{*}E)_{e}$ de $\pi^{*}E$ canoniquement sur $E_{\pi(e)}.$
Alors il existe $\tau >0$ tel que la famille d'\'equations~\eqref{eeee} poss\`{e}de une unique solution maximale\\ $u \in  C_{b}^{\infty}\big(\overset{\circ}{M}, E \otimes C^{\infty}([0,\tau))\big) = \bigcap_{k\in\mathbb{N}_{0} } C_{b}^{\infty}\big(\overset{\circ}{M}, E \otimes C^{k+1}([0,\tau))\big).$
\end{cor}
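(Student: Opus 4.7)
The strategy is to reduce the family of ODEs to a pointwise application of the Cauchy--Lipschitz Theorem~\ref{thee} and then to bootstrap the regularity in $x$ using the $b$-calculus. First I would observe that the positivity hypothesis on $\mathcal{G}$ yields the inclusions $\mathcal{A}_{\phg}^{\mathcal{G}}(M;E) \subset C_{b}^{\infty}(\overset{\circ}{M};E)$ and, with the $b$-structure on $E$ whose boundary hypersurfaces are the $\pi^{-1}(H)$, $\mathcal{A}_{\phg}^{\pi^{*}\mathcal{G}}(E;\pi^{*}E)\subset C_{b}^{\infty}(\overset{\circ}{E};\pi^{*}E)$, since every monomial $\rho^{z}(\log\rho)^{k}$ with $\Re z\geq 0$ and $k\in\mathbb{N}_{0}$ has bounded $b$-derivatives of all orders. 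In particular $u_{0}$ is continuous on $M$ and hence bounded, so its image is contained in a compact set $K_{0}\subset E$. Fixing a compact neighborhood $K\subset E$ of $K_{0}$, the quantities $c := \sup_{K}\|f\|$ and $\kappa := \sup_{K}\|D_{\mathrm{fib}}f\|$ are finite, and Theorem~\ref{thee} applied to each fiber ODE, together with the uniform time estimate~\eqref{nouno}, produces a common time $\tau_{0} > 0$ on which each pointwise solution $u(x,\cdot)$ exists uniquely and stays inside $K$.

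Next I would upgrade this pointwise family to a $C_{b}^{\infty}$-family in $x$ by induction on the order of $\mathcal{V}_{b}$-differentiation. The classical smooth-dependence-on-parameters theorem already gives $u \in C^{\infty}(\overset{\circ}{M}; E \otimes C^{1}([0,\tau_{0})))$. For the uniform $b$-bound, I rewrite the equation as $\partial_{t}u = F(x,u)$ with $F(x,\xi) := \pi_{*}f(\xi)|_{E_{x}}$, apply a $b$-vector field $V\in\mathcal{V}_{b}$, and obtain by the chain rule the linear non-autonomous ODE
\begin{equation*}
\partial_{t}(Vu) = D_{u}F(x,u)\cdot(Vu) + (V_{x}F)(x,u).
\end{equation*}
Since $f$, $V_{x}f$ and $D_{\xi}f$ all lie in $C_{b}^{\infty}$ on a neighborhood of the solution and $u(x,t)\in K$, both the coefficient $D_{u}F(x,u)$ and the source $(V_{x}F)(x,u)$ are uniformly bounded on $\overset{\circ}{M}\times[0,\tau_{0})$, and Grönwall's lemma bounds $Vu$. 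Iterating, for any product $V_{i_{1}}\cdots V_{i_{k}}$ one obtains a linear ODE for $V_{i_{1}}\cdots V_{i_{k}}u$ whose source is a universal polynomial in the already-bounded derivatives $V_{j_{1}}\cdots V_{j_{p}}u$ with $p<k$ and in the $\mathcal{V}_{b}$-derivatives of $f$ evaluated along the solution, so a further application of Grönwall closes the induction. Finally, the equation $\partial_{t}u = F(x,u)$ converts each $\partial_{t}^{\ell}$ into a combination of spatial $\mathcal{V}_{b}$-derivatives, yielding uniform bounds on all mixed derivatives $\partial_{t}^{\ell}V_{i_{1}}\cdots V_{i_{k}}u$; this proves $u\in C_{b}^{\infty}(\overset{\circ}{M}; E\otimes C^{\infty}([0,\tau_{0})))$.

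Maximality is then standard: define $\tau$ to be the supremum of times $T>0$ for which a unique solution in the required class exists on $[0,T)$. One has $\tau\geq \tau_{0}>0$, uniqueness glues solutions on overlapping intervals, and $\tau$ is maximal by definition. The principal technical obstacle will be verifying cleanly that the higher-order chain-rule (Faà di Bruno) expansion of $V_{i_{1}}\cdots V_{i_{k}}\bigl(f(x,u(x,t))\bigr)$ produces only uniformly bounded terms in $x$. This ultimately reduces to the combinatorial observation that all the building blocks --- namely the $\mathcal{V}_{b}$-derivatives of $f$ restricted to the compact set $K$ and the inductively controlled $\mathcal{V}_{b}$-derivatives of $u$ --- remain uniformly bounded on $\overset{\circ}{M}\times[0,\tau_{0})$, which in turn depends crucially on unpacking the polyhomogeneous structure on $E$ in local boundary coordinates.
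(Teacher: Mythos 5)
Your proposal is correct, but it takes a genuinely different route from the paper for the only delicate step, namely the regularity of $u$ in $x$ uniformly up to $\partial M$. The paper reduces to $u_{0}=0$, works in a trivialisation $E_{\vert U}\simeq U\times V$, and views the equation as the zero locus of the Banach-space-valued map $F(x,v)=\partial_{t}v-\pr_{2}\big(\pi_{*}f(x,v)\big)$ from $U\times V\otimes C^{k+1}([0,\tau))$ to $V\otimes C^{k}([0,\tau))$; since $d_{v}F=\partial_{t}-d_{v}f(x,v)$ is a linear isomorphism, the implicit function theorem gives interior smoothness in $x$, and the uniform bounds up to the boundary are then obtained exactly as in the Proposition~\ref{prooop}: one passes to the total logarithmic blow-up, uses the positivity of $\mathcal{G}$ to extend the polyhomogeneous equation smoothly by zero across the boundary to an open set $\Omega_{\log}\subset\mathbb{R}^{n}$, and invokes the identification $C^{\infty}_{\mathcal{V}_{pc}}(U_{\log})=C^{\infty}_{b}(\overset{\circ}{U})$ of~\eqref{fdd}. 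You instead get the $C_{b}^{\infty}$ bounds directly by applying $b$-vector fields to the ODE, obtaining a linear nonautonomous equation for $V_{i_{1}}\cdots V_{i_{k}}u$ whose coefficient $D_{\xi}F(x,u)$ and Fa\`a di Bruno source are controlled because the solution stays in a compact set $K$ and $f$ together with its $b$- and fibre derivatives is bounded there (positivity of $\pi^{*}\mathcal{G}$), and you close with Gr\"onwall. Both routes are sound; yours is more elementary and self-contained, avoiding the blow-up and extension device at the price of the combinatorial bookkeeping you flag, while the paper's choice buys uniformity with the parabolic case (the same extension-past-the-boundary mechanism is reused in Propositions~\ref{prooop} and~\ref{sisisi}) and yields genuine smoothness on the blown-up space rather than bare derivative bounds. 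Your derivation of the uniform existence time $\tau_{0}$ from compactness of $u_{0}(M)$ and continuity of $f$ up to the boundary matches the paper's appeal to~\eqref{nouno}.
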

\begin{proof}
Soit $x\in M$ fix\'e. En vertu du Th\'eor\`{e}me~\ref{thee}, pour chaque \'equation de~\eqref{eeee}, il existe une constante $\tau_{x} >0$ telle qu'elle poss\`{e}de une unique solution $u(x, \cdot) \in  E_{x} \otimes C^{k+1}([0,\tau_{x}))$. Comme $M$ est compacte, il est alors facile de trouver, \`a partir de~\eqref{nouno}, un temps minimal $\tau$ d'existence qui fonctionne pour la famille d'\'equations de~\eqref{eeee}. Il reste \`a montrer que la solution $u$ est dans $C_{b}^{\infty}\big(\overset{\circ}{M}, E \otimes C^{k+1}([0,\tau))\big).$
En consid\'erant $u - u_{0}$ au lieu de $u$, on peut toujours se ramener au cas o\`u $u_{0}= 0.$ On prend un ouvert trivialisant quelconque $U \subset \overset{\circ}{M}$ du fibr\'e $E\rightarrow M$ de sorte que $E_{\scriptscriptstyle{\vert U}}\simeq U\times V$ et $\pi^{*}E_{\scriptscriptstyle{\vert E_{\scriptscriptstyle{\vert U}}}}\simeq U\times V\times V$. Pour $k \geq 0$ arbitraire, on consid\`{e}re la fonction $$F: U \times V \otimes C^{k+1}([0,\tau)) \rightarrow  V \otimes C^{k}([0,\tau))$$ d\'efinie par $\displaystyle F(x,v):= \frac{\partial v}{\partial t}-\pr_{2}\big(\pi_{*}f(x,v)\big),$ o\`u $\pr_{2} : U\times V \rightarrow V$ est la projection sur le second facteur. Il est clair que $F$ est de classe $C^{\infty}$ sur $U$ et que $F(x,\cdot)$ est de classe $C^{\infty}$. Comme $d_{v}F_{(x,v)} =\frac{\partial}{\partial t}-d_{v}f(x,v)$ induit une application lin\'eaire bijective de $V \otimes C^{k+1}([0,\tau))$ dans $V \otimes C^{k}([0,\tau))$, on peut appliquer le th\'eor\`{e}me des fonctions implicites~\cite{krantz2012implicit} pour obtenir que $u \in C^{\infty}\big(U;V \otimes C^{k+1}_{\mathcal{V}_{Z}}([0,\tau))\big).$ On prend finalement le cas $U \subset M$ tel que $U\cap \partial M \neq 0.$ Comme $f\in \mathcal{A}_{\phg}^{\pi^{*}\mathcal{G}} (E; \pi^{*}E),$ on peut alors se ramener au cas pr\'ec\'edent. En utilisant l'\'eclatement logarithmique total et en prolongeant ainsi la famille d'\'equations \`a $\Omega_{\log}$ comme dans la Proposition~\ref{prooop}, on d\'eduit que $u \in  C^{\infty}\big({\Omega}_{\log}; V \otimes C^{k+1}([0,\tau))\big) \subset C_{\mathcal{V}_{pc}}^{\infty}\big({U}_{\log}; V \otimes C^{k+1}([0,\tau))\big) = C_{b}^{\infty}\big(\overset{\circ}{U}; V \otimes C^{k+1}([0,\tau))\big).$
\end{proof}

Soit $\overset{\circ}{M}$ une vari\'et\'e riemannienne avec une structure de Lie \`a l'infini $(M,\mathcal{V})$ telle que le rayon d'injectivit\'e des m\'etriques compatibles est strictement positif. Soient $E \rightarrow M$ un fibr\'e vectoriel et $\{L_{t} : t\in [0,T]\} $ une famille d'op\'erateurs uniform\'ement $\mathcal{V}$\texttt{-}elliptiques de $\Diff^{2}_{\mathcal{V}}(\overset{\circ}{M}; E)$. 
Nous allons \'etudier des \'equations paraboliques quasi\texttt{-}lin\'eaires de la forme 
\begin{equation} \label{quasi}
(\frac{\partial }{\partial t}-L_{t}) u = Q(u)u + f, \; u_{\scriptscriptstyle{\vert t=0 }}=0,
\end{equation}
o\`u $f \in C_{\mathcal{V}}^{\infty}([0,T]\times \overset{\circ}{M}; E)$ est ind\'ependante de $u$ et $$Q: C_{\mathcal{V}}^{k+2,\alpha}([0,T]\times \overset{\circ}{M}; E) \times C_{\mathcal{V}}^{k+2,\alpha}([0,T]\times \overset{\circ}{M}; E)  \rightarrow C_{\mathcal{V}}^{k,\alpha}([0,T]\times \overset{\circ}{M}; E)$$ est une application telle que  
$\forall w, w', w'' \in   C_{\mathcal{V}}^{k+2,\alpha}([0,T]\times \overset{\circ}{M}; E),$
\begin{equation}\label{zqwe}
Q(w)(aw'+bw'')=aQ(w)w'+bQ(w)w'', \;\;a, b \in \mathbb{R},
\end{equation}
\begin{equation}\label{quad}
\left\| Q(w)w\right\|_{k,\alpha} \leq c \left\| w \right\|^{2}_{k+2,\alpha} \; \textrm{et}
\end{equation}
\begin{equation}\label{quaad}
 \left\| Q(w)w-Q(w')w'\right\|_{k,\alpha} \leq C \max\{ \left\| w \right\|_{k+2,\alpha},  \left\| w' \right\|_{k+2,\alpha}\} \left\| w-w' \right\|_{k+2, \alpha}, 
\end{equation}
pour $c$ et $C$ deux constantes r\'eelles. 
\begin{prop} \label{quuasi}
Il existe $\tau\in (0,T]$ tel que l'\'equation~\eqref{quasi} admet une unique solution $$u\in C_{\mathcal{V}}^{k+2,\alpha}([0,\tau)\times \overset{\circ}{M}; E).$$
\end{prop}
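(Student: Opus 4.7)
The plan is to prove existence and uniqueness by the Banach fixed-point theorem, in the spirit of the argument used by Bahuaud~\cite{bahuaud2011ricci}. Let $X_{\tau} = \{w \in C_{\mathcal{V}}^{k+2,\alpha}([0,\tau]\times \overset{\circ}{M}; E) \mid w|_{t=0}=0\}$ be the closed subspace of functions vanishing at $t=0$, equipped with the norm $\left\|\cdot\right\|_{k+2,\alpha}$, and let $B_R \subset X_\tau$ denote the closed ball of radius $R$. I define the map $\Phi: B_R \to X_\tau$ by $\Phi(w) = u$, where $u$ is the unique solution -- supplied by Proposition~\ref{propiii} -- of the linear parabolic problem
\begin{equation*}
\left(\frac{\partial}{\partial t} - L_t\right) u = Q(w)w + f, \qquad u|_{t=0}=0.
\end{equation*}
A fixed point of $\Phi$ is exactly a solution of~\eqref{quasi}, so it suffices to exhibit $R>0$ and $\tau\in(0,T]$ for which $\Phi$ is a self-map of $B_R$ and a contraction there.

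For the self-map property, combine the Schauder estimate~\eqref{schauder} with the quadratic bound~\eqref{quad} to get
\begin{equation*}
\left\|\Phi(w)\right\|_{k+2,\alpha} \leq \kappa\bigl(\left\|Q(w)w\right\|_{k,\alpha} + \left\|f\right\|_{k,\alpha}\bigr) \leq \kappa\bigl(c R^2 + \left\|f\right\|_{k,\alpha,[0,\tau]}\bigr),
\end{equation*}
so that $\Phi(B_R)\subset B_R$ as soon as $\kappa c R^2 + \kappa\left\|f\right\|_{k,\alpha,[0,\tau]} \leq R$. For the contraction property, the estimate~\eqref{quaad} gives, for $w,w' \in B_R$,
\begin{equation*}
\left\|\Phi(w) - \Phi(w')\right\|_{k+2,\alpha} \leq \kappa \left\|Q(w)w - Q(w')w'\right\|_{k,\alpha} \leq \kappa C R \left\|w-w'\right\|_{k+2,\alpha},
\end{equation*}
which is a strict contraction whenever $\kappa C R < 1$. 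Banach's fixed-point theorem will then yield the unique solution $u \in B_R$ on $[0,\tau)$; uniqueness on the whole space $X_{\tau}$ follows by the same Lipschitz estimate applied to two potential solutions and Grönwall's inequality.

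The main obstacle is synchronising the choices of $R$ and $\tau$: choosing $R$ small ensures $\kappa C R<1$ and $\kappa c R^2 \leq R/2$, but the residual inequality $\kappa\left\|f\right\|_{k,\alpha,[0,\tau]} \leq R/2$ is delicate because $f$ need not vanish at $t=0$, so its parabolic Hölder norm on $[0,\tau]$ does not automatically shrink with $\tau$. The standard remedy, which I will implement, is a refined parabolic Schauder estimate for solutions of the linear problem with $u|_{t=0}=0$: integrating in time and exploiting the vanishing initial condition, one can upgrade~\eqref{schauder} to
\begin{equation*}
\left\|u\right\|_{k+2,\alpha,[0,\tau]} \leq \kappa\, \tau^{\beta} \left\|g\right\|_{k,\alpha,[0,\tau]}
\end{equation*}
for some $\beta>0$ depending on $\alpha$, when $(\partial_t-L_t)u=g$ with $u|_{t=0}=0$. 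With this gain, taking $\tau$ small enough absorbs the contribution of $f$, and both the self-map and the contraction conditions can be enforced simultaneously on $B_R\subset X_\tau$. The fixed point so obtained is the required solution $u \in C_{\mathcal{V}}^{k+2,\alpha}([0,\tau)\times \overset{\circ}{M}; E)$.
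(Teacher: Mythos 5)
Your overall architecture---a Banach fixed-point argument on a small ball in the subspace of sections vanishing at $t=0$, with the self-map property obtained from the Schauder estimate~\eqref{schauder} combined with~\eqref{quad}, and the contraction from~\eqref{quaad}---is exactly the paper's proof, which likewise follows Bahuaud. The one place you diverge is the step you yourself flag as the main obstacle: forcing $\kappa\left\| \Psi(f) \right\|_{k+2,\alpha}\leq R/2$ by shrinking $\tau$. There you invoke a ``refined parabolic Schauder estimate'' $\left\| u \right\|_{k+2,\alpha,[0,\tau]}\leq \kappa\,\tau^{\beta}\left\| g \right\|_{k,\alpha,[0,\tau]}$ for solutions with $u_{\scriptscriptstyle{\vert t=0}}=0$, and as stated for the full parabolic H\"older norm this cannot hold: that norm dominates $\sup_{[0,\tau]}\left|\partial_{t}u\right|_{g}\geq \left|\partial_{t}u(0,\cdot)\right|_{g}=\left|g(0,\cdot)\right|_{g}$, which does not tend to $0$ with $\tau$ unless $g$ vanishes at $t=0$---and $f$ is not assumed to. The genuine gain of a power of $\tau$ from a vanishing initial condition applies to the purely spatial derivatives (or else costs a loss in the H\"older exponent), not to the time-derivative components of the parabolic norm, so this step is a gap.

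The paper closes it differently, exploiting the standing hypothesis $f\in C_{\mathcal{V}}^{\infty}([0,T]\times \overset{\circ}{M};E)$: since $f$ has two more orders of regularity than needed, Schauder at level $k+2$ gives $u_{2}:=\Psi(f)\in C_{\mathcal{V}}^{k+4,\alpha}$, and writing $u_{2}(t)=\int_{0}^{t}\partial_{s}u_{2}(s)\,ds$ yields $\left\| u_{2}(t) \right\|_{k+2,\alpha}\leq \tau\left\| u_{2} \right\|_{k+4,\alpha}\leq \tau\kappa\left\| f \right\|_{k+2,\alpha}$, which is then made $\leq K/2$ by taking $\tau$ small; the smallness of the quadratic part and the contraction constant are handled by shrinking $K$, exactly as you do with $R$. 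If you replace your claimed estimate by this regularity-gain-plus-time-integration argument (legitimate here precisely because $f$ is smooth rather than merely $C^{k,\alpha}$), the rest of your proof goes through as written.
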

\begin{proof}
On suit l'approche de Bahuaud~\cite{bahuaud2011ricci}. En utilisant la Proposition~\ref{propiii} et le Corollaire~\ref{corr}, il existe une application lin\'eaire $$\Psi : C_{\mathcal{V}}^{k,\alpha}([0,T]\times \overset{\circ}{M}; E) \rightarrow  C_{\mathcal{V}}^{k+2,\alpha}([0,T]\times \overset{\circ}{M}; E)$$ telle que $\displaystyle (\frac{\partial }{\partial t}-L_{t} )\Psi(h)=h,$ $\Psi(h)_{\scriptscriptstyle{\vert t=0 }}=0$ et satisfait \`a l'estimation de Schauder 
\begin{equation} \label{scfauder}
\left\| \Psi(h) \right\|_{k+2,\alpha}\leq \kappa  \left\| h \right\|_{k,\alpha},
\end{equation}
pour une certaine constante $\kappa >0.$
Ainsi, l'\'equation~\eqref{quasi} est alors \'equivalente \`a $$u=\Psi (Q(u)u + f).$$
On d\'efinit l'application $\mathcal{Y}$ par $\forall \nu \in C_{\mathcal{V}}^{k+2,\alpha}([0,\tau)\times \overset{\circ}{M}; E),$
$$\mathcal{Y}(\nu)=\Psi (Q(\nu)\nu + f).$$
Pour certains $K$ et $\tau$ deux nombres r\'eels strictement positifs \`a d\'eterminer, on consid\`{e}re
$$\mathcal{Z}_{K, \tau} := \{ \nu \in C_{\mathcal{V}}^{k+2,\alpha}([0,\tau)\times \overset{\circ}{M}; E) \mid \nu (0)=0, \;\left\| \nu \right\|_{k+2, \alpha}\leq K\}.$$
Si $\nu \in \mathcal{Z}_{K, \tau}$ alors $u=\mathcal{Y}(\nu)$ est la solution de $$\frac{\partial u}{\partial t}-L_{t}u= Q(\nu)\nu + f , \; u(0,\cdot) \equiv 0.$$
On suit le m\^eme raisonnement des Lemmes 4.5 et 4.6 de~\cite{bahuaud2011ricci} pour prouver que $\mathcal{Y} : \mathcal{Z}_{K, \tau} \rightarrow \mathcal{Z}_{K, \tau} $ est une application contractante. En effet, soit $\nu \in \mathcal{Z}_{K, \tau} $ et on d\'efinit $$u_{1}:=\Psi (Q(\nu)\nu), $$ $$u_{2}:=\Psi(f).$$
En particulier, $u_{1}$ est la solution de l'\'equation parabolique
$$\frac{\partial u_{1}}{\partial t}-L_{t}u_{1}= Q(\nu)\nu, u_{1}(0, \cdot) \equiv 0.$$
\`A l'aide de l'estimation~\eqref{quad} de l'application $Q$ et l'estimation de Schauder~\eqref{scfauder}, on a que 
\begin{center}
$
\begin{array}{rcl}
  \left\| u_{1} \right\|_{k+2, \alpha}& \leq& \kappa \left\| Q(\nu)\nu  \right\|_{k, \alpha}\\
                                                                                                                                                 & \leq& c \kappa   \left\| \nu \right\|^{2}_{k+2, \alpha}\\
                                                                                                                                                 & \leq& c \kappa  K  \left\| \nu \right\|_{k+2, \alpha}.
 \end{array}
 $
\end{center}
On prend $K$ suffisamment petit pour que $c \kappa K \leq \frac{1}{2}$, ainsi 
\begin{equation} \label{zinne}
\left\| u_{1} \right\|_{k+2, \alpha}\leq \frac{1}{2} K.
\end{equation}
De m\^eme, $u_{2}$ est la solution de l'\'equation parabolique
$$\frac{\partial u_{2}}{\partial t}-L_{t}u_{2}=  f, u_{2}(0) \equiv 0.$$
En utilisant $u_{2}(0) \equiv 0$ et l'estimation de Schauder~\eqref{scfauder}, on a que  
\begin{center}
$
\begin{array}{rcl}
  \left\| u_{2} (t)\right\|_{k+2, \alpha}& \leq& \displaystyle{\int_{0}^{t}}\left\| \frac{\partial u_{2}}{\partial s} (s) \right\|_{k+2, \alpha} ds\\
                                                                                                                                                 & \leq&  \tau \left\|\displaystyle \frac{\partial u_{2}}{\partial s}  \right\|_{k+2, \alpha}\\
                                                                                                                                                 & \leq&  \tau \left\| u_{2}   \right\|_{k+4, \alpha}\\
                                                                                                                                                 & \leq&  \tau \kappa \left\| f  \right\|_{k+2, \alpha}.
 \end{array}
 $
\end{center}
On prend $\tau$ assez petit pour que $\tau \kappa \left\| f  \right\|_{k+2, \alpha} \leq \frac{1}{2}K,$ de sorte que
\begin{equation} \label{zinebb}
\left\| u_{2} \right\|_{k+2, \alpha} \leq \frac{1}{2} K.
\end{equation}
Par~\eqref{zinne} et~\eqref{zinebb}, on d\'eduit que l'application $\mathcal{Y} : \mathcal{Z}_{K, \tau}  \rightarrow \mathcal{Z}_{K, \tau} $ est bien d\'efinie. Aussi, pour $K$ suffisamment petit tel que $K\leq \frac{1}{2\kappa C}$, elle est contractante gr\^ace \`a l'estimation~\eqref{quaad}, 
\begin{center}
$
\begin{array}{rcl}
\left\| \mathcal{Y} (\nu_{1}) - \mathcal{Y}(\nu_{2})  \right\|_{k+2, \alpha} & \leq &  \left\| \Psi (Q(\nu_{1})\nu_{1} - Q(\nu_{2})\nu_{2}  )\right\|_{k+2, \alpha}\\
             & \leq &\kappa \left\| Q(\nu_{1})\nu_{1} - Q(\nu_{2})\nu_{2}  \right\|_{k, \alpha}\\
             & \leq &  \kappa C  \max\{ \left\| \nu_{1}\right\|_{k+2,\alpha};  \left\| \nu_{2} \right\|_{k+2,\alpha}\}  \left\| \nu_{1} - \nu_{2}  \right\|_{k+2, \alpha}\\
             & \leq & \kappa C K \left\| \nu_{1} - \nu_{2}  \right\|_{k+2, \alpha}.   
             
\end{array}
$
\end{center}
Par le Th\'eor\`{e}me du point fixe de Banach, on voit donc que pour $$\tau  \leq \displaystyle  \frac{K}{2\kappa (\left\| f \right\|_{k+2, \alpha}+1)}\; \textrm{et} \; K \leq \frac{1}{2 \kappa(c+C) },$$ l'\'equation~\eqref{quasi} poss\`{e}de une unique solution $u \in  C_{\mathcal{V}}^{k+2,\alpha}([0,\tau)\times \overset{\circ}{M}; E).$ 
\end{proof}
\begin{remq}
Puisque l'ellipticit\'e est une condition ouverte, en prenant $\tau$ assez petit, on peut aussi supposer que $L_{t} + Q(u)$ est elliptique $\forall t \in [0, \tau).$
\end{remq}
\begin{prop} \label{wakwak}
Si la solution $u$ de~\eqref{quasi} peut se r\'e\'ecrire comme \'etant la solution d'une \'equation parabolique quasi\texttt{-}lin\'eaire 
\begin{equation} \label{kaw}
\frac{\partial u}{\partial t} - a(u,\nabla u) \cdot \nabla^{2}u + b(u, \nabla u)=f,
\end{equation}
o\`u $a(u,\nabla u) \in C_{\mathcal{V}}^{k+1,\alpha}([0, \tau) \times\overset{\circ}{M};T^{(2,0)}\overset{\circ}{M} \otimes E)$ et $b(u, \nabla u) \in C_{\mathcal{V}}^{k+1,\alpha}([0, \tau)\times\overset{\circ}{M}; E)$ d\`{e}s que\\ $u\in C_{\mathcal{V}}^{k+2,\alpha}([0,\tau)\times \overset{\circ}{M}; E),$ alors la solution $u$ est en fait dans $C_{\mathcal{V}}^{\infty}([0,\tau)\times \overset{\circ}{M}; E).$
\end{prop}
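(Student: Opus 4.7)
The plan is a standard parabolic bootstrap argument: Proposition~\ref{quuasi} already provides a solution $u \in C_{\mathcal{V}}^{k+2,\alpha}([0,\tau) \times \overset{\circ}{M}; E)$, and I intend to promote its regularity by one order at a time, using Proposition~\ref{propiii} repeatedly after freezing the quasi-linear coefficients. The hypothesis relating the regularity of $a(u,\nabla u)$ and $b(u,\nabla u)$ to that of $u$ will be read as holding at any level $\ell \geq k$, which is the natural smoothness requirement on the nonlinear terms.

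First, I would set up the induction: assume $u \in C_{\mathcal{V}}^{j,\alpha}$ for some integer $j \geq k+2$ and try to show $u \in C_{\mathcal{V}}^{j+1,\alpha}$. Under this hypothesis, the assumption on $a$ and $b$ (applied at level $\ell = j-2$) gives $a(u,\nabla u), b(u,\nabla u) \in C_{\mathcal{V}}^{j-1,\alpha}$. Then $u$ solves the linear Cauchy problem
\begin{equation*}
\frac{\partial w}{\partial t} - \tilde{L}_t w = f - b(u,\nabla u), \qquad w_{|t=0}=0,
\end{equation*}
with $\tilde{L}_t := a(u,\nabla u)\cdot \nabla^{2} \in \Diff^{2}_{\mathcal{V}, j-1,\alpha}(\overset{\circ}{M}; E)$ and right-hand side in $C_{\mathcal{V}}^{j-1,\alpha}$, since $f \in C_{\mathcal{V}}^{\infty}$. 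Applying Proposition~\ref{propiii} (with $k$ replaced by $j-1$) yields a solution $w \in C_{\mathcal{V}}^{j+1,\alpha}$; the uniqueness provided by Corollary~\ref{corr} then forces $w = u$, whence $u \in C_{\mathcal{V}}^{j+1,\alpha}$.

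A few points need to be verified along the way: (i) uniform $\mathcal{V}$-ellipticity of $\tilde{L}_t$ on $[0,\tau)$, which is guaranteed by the remark following Proposition~\ref{quuasi} after possibly shrinking $\tau$; (ii) that the Cauchy data $w_{|t=0}=0$ is compatible, which holds since $u_{|t=0}=0$; and (iii) that the iteration can be repeated indefinitely, which uses only the fact that the nonlinearity gains one derivative, i.e. that the same regularity assumption on $a,b$ can be applied at each new level. None of these is a genuine obstacle — the argument is essentially mechanical — so the only real content is the careful bookkeeping of the parabolic Hölder indices.

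Iterating over $j$ yields $u \in \bigcap_{j \geq k+2} C_{\mathcal{V}}^{j,\alpha}([0,\tau) \times \overset{\circ}{M}; E) = C_{\mathcal{V}}^{\infty}([0,\tau) \times \overset{\circ}{M}; E)$, which is the desired conclusion.
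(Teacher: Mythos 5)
Your proposal is correct and is essentially the paper's own argument: the paper's proof consists of the single observation that, since $a(u,\nabla u)$ and $b(u,\nabla u)$ are of class $C_{\mathcal{V}}^{k+1,\alpha}$ once $u\in C_{\mathcal{V}}^{k+2,\alpha}$, Proposition~\ref{propiii} upgrades $u$ to $C_{\mathcal{V}}^{k+3,\alpha}$, and one concludes by bootstrap. You have simply written out the induction step (freezing the coefficients, invoking uniqueness to identify the improved solution with $u$, and checking ellipticity and compatibility of the initial data) that the paper leaves implicit.
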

\begin{proof}
Comme $a(u,\nabla u)$ et $b(u,\nabla u)$ sont de classe $C_{\mathcal{V}}^{k+1,\alpha}$ d\`{e}s que $u\in C_{\mathcal{V}}^{k+2,\alpha}([0,\tau)\times \overset{\circ}{M}; E),$ on obtient que $u\in C_{\mathcal{V}}^{k+3,\alpha}([0,\tau)\times \overset{\circ}{M}; E)$ par la Proposition~\ref{propiii}. Ainsi, on d\'eduit la r\'egularit\'e voulue par un argument de bootstrap.
\end{proof}
Soit $(M, S, \phi_{M}, \mathcal{V})$ un fibr\'e de structures de Lie fibr\'ees \`a l'infini associ\'e \`a un fibr\'e $\phi_{M} : M \rightarrow S,$ tel que le rayon d'injectivit\'e des m\'etriques compatibles sur les fibres soit strictement positif. Soient $\Psi_{U} : \phi_{M}^{-1}(U) \rightarrow U \times Z$ une trivialisation locale du fibr\'e de structures de Lie fibr\'ees \`a l'infini, pour $U$ un ouvert de $S$, et $(Z,\mathcal{V}_{Z})$ la structure de Lie \`a l'infini de la fibre $Z$. Soient $f\in \mathcal{A}^{\mathcal{F}_{\scriptscriptstyle{\vert U }}}_{\phg}\big(U,C^{k,\alpha}_{\mathcal{V}_{Z}}([0,T]\times \overset{\circ}{Z}; E)\big)$ et $L_{t}(s)$ une famille polyhomog\`{e}ne sur $[0,T] \times U$ d'op\'erateurs uniform\'ement $\mathcal{V}_{Z}$\texttt{-}elliptiques dans $\Diff^{2}_{\mathcal{V}_{Z}}( Z; E)$ par rapport \`a $\mathcal{G}_{\scriptscriptstyle{\vert U}},$ o\`u $\mathcal{F}$ et $\mathcal{G}$ deux familles indicielles positives de $S$ (avec ensemble indiciel $\mathbb{N}_{0}$ en $t=0$ et $t=T).$  On consid\`{e}re $Q$ une famille d'applications sur $\big(C_{\mathcal{V}}^{\infty}([0, T] \times\overset{\circ}{Z}; E)\big)^{2}$ d\'efinie par
\begin{equation} \label{poloo}
Q(u)v=a(\cdot,u,\nabla u)\cdot \nabla^{2}v + b(\cdot,u, \nabla u)\cdot \nabla v + c(\cdot,u)\cdot v,
\end{equation}
o\`u la d\'ependance des familles $a,$ $b$ et $c$ par rapport $u$ et $\nabla u$ est lisse de sorte qu'en appliquant la d\'erivation en cha\^ine, on a que, pour tout $s\in U,$ $a(s,u,\nabla u)\in C_{\mathcal{V}}^{k,\alpha}([0, T] \times\overset{\circ}{Z};T^{(2,0)}\overset{\circ}{Z} \otimes E),$\\ $b(s,u, \nabla u) \in C_{\mathcal{V}}^{k, \alpha}([0, T] \times\overset{\circ}{Z};T^{*}\overset{\circ}{Z} \otimes E)$ et $c(s,u)\in C_{\mathcal{V}}^{k,\alpha}([0, T] \times\overset{\circ}{Z}; E)$ d\`{e}s que $u\in C_{\mathcal{V}}^{k+1,\alpha}([0,T]\times \overset{\circ}{Z}; E).$
\begin{hyp} \label{hypp}
On suppose que les familles $a$, $b$ et $c$ de~\eqref{poloo} sont polyhomog\`{e}nes en $s\in U$ par rapport \`a $\mathcal{G}_{\scriptscriptstyle{\vert U}},$ et que $a(\cdot,0,0)_{\scriptscriptstyle{\vert t=0 }}=0$, $b(\cdot,0,0)_{\scriptscriptstyle{\vert t=0 }}=0$ et $c(\cdot,0)_{\scriptscriptstyle{\vert t=0 }}=0.$
On suppose aussi que la famille $Q$ satisfait~\eqref{quad} et~\eqref{quaad} uniform\'ement en $s \in U.$
\end{hyp}
\begin{prop} \label{sisisi} 
Soit $Q$ une famille d'applications donn\'ee par~\eqref{poloo} satisfaisant \`a l'Hypoth\`{e}se~\ref{hypp}. Il existe alors $\tau \in (0,T]$ tel que la famille d'\'equations $\mathcal{V}_{Z}$\texttt{-}paraboliques quasi\texttt{-}lin\'eaires param\'etr\'ee par $s\in U$,
\begin{equation} \label{quassi} 
(\frac{\partial }{\partial t}-L_{t}) u =Q(u)u + f, \; u_{\scriptscriptstyle{\vert t=0 }}= 0, 
\end{equation}
poss\`{e}de une unique solution $u \in C_{b}^{\infty}\big(\overset{\circ}{U},C^{k+2,\alpha}_{\mathcal{V}_{Z}}([0,\tau) \times \overset{\circ}{Z}; E)\big).$
\end{prop}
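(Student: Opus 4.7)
Le plan est d'adapter l'argument de point fixe de la Proposition~\ref{quuasi} \`a la pr\'esence du param\`etre $s\in U$, puis d'\'etablir la r\'egularit\'e $C_{b}^{\infty}$ en $s$ par le th\'eor\`eme des fonctions implicites combin\'e avec l'\'eclatement logarithmique, comme dans la preuve de la Proposition~\ref{prooop}.

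D'abord, pour chaque $s\in U$ fix\'e, on obtient via la Proposition~\ref{propiii} et le Corollaire~\ref{corr} un op\'erateur solution lin\'eaire $\Psi_{s} : C^{k,\alpha}_{\mathcal{V}_{Z}}([0,T]\times\overset{\circ}{Z}; E) \to C^{k+2,\alpha}_{\mathcal{V}_{Z}}([0,T]\times\overset{\circ}{Z}; E)$ pour $\partial_{t} - L_{t}(s)$ avec donn\'ee initiale nulle, satisfaisant \`a une estimation de Schauder $\|\Psi_{s}(h)\|_{k+2,\alpha}\le\kappa_{s}\|h\|_{k,\alpha}$. La Remarque~\ref{llilila} et la polyhomog\'en\'eit\'e de $L_{t}(s)$ en $s$, qui assure une borne uniforme pour $\|L_{t}(s)\|$ sur le compact $\overline{U}$, permettent de prendre $\kappa := \sup_{s\in\overline{U}}\kappa_{s} < \infty$. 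On reformulera alors~\eqref{quassi} comme l'\'equation de point fixe $u = \mathcal{Y}_{s}(u) := \Psi_{s}(Q(u)u + f(s,\cdot))$ sur l'ensemble $\mathcal{Z}_{K,\tau}$ de la Proposition~\ref{quuasi}. Gr\^ace \`a l'uniformit\'e en $s$ des constantes de~\eqref{quad} et~\eqref{quaad} garantie par l'Hypoth\`ese~\ref{hypp}, ainsi qu'\`a la borne $\sup_{s\in\overline{U}}\|f(s,\cdot)\|_{k,\alpha} < \infty$ issue de la polyhomog\'en\'eit\'e de $f$, on choisira $K$ et $\tau$ ind\'ependants de $s$ pour que $\mathcal{Y}_{s}$ soit une contraction de $\mathcal{Z}_{K,\tau}$ dans lui-m\^eme. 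Le th\'eor\`eme du point fixe de Banach fournira, pour chaque $s\in U$, une unique solution $u(s,\cdot)\in C^{k+2,\alpha}_{\mathcal{V}_{Z}}([0,\tau)\times\overset{\circ}{Z}; E)$.

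Ensuite, pour obtenir la r\'egularit\'e $C_{b}^{\infty}$ en $s$, on proc\`edera comme dans la Proposition~\ref{prooop}. On consid\`erera l'application $F : U \times C^{k+2,\alpha}_{\mathcal{V}_{Z}}([0,\tau)\times\overset{\circ}{Z}; E) \to C^{k,\alpha}_{\mathcal{V}_{Z}}([0,\tau)\times\overset{\circ}{Z}; E)$ d\'efinie par $F(s,v) := (\partial_{t} - L_{t}(s))v - Q(v)v - f(s,\cdot)$, dont la diff\'erentielle par rapport \`a $v$ au point $(s,u(s,\cdot))$ est $d_{v}F_{(s,u)} = (\partial_{t} - L_{t}(s)) - [Q(u) + dQ_{u}\cdot u]$. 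Par l'Hypoth\`ese~\ref{hypp}, la perturbation qui suit $(\partial_{t} - L_{t}(s))$ est d'ordre au plus deux et de coefficients s'annulant en $t=0$; en restreignant $\tau$ si n\'ecessaire, cette perturbation est arbitrairement petite en norme d'op\'erateur, de sorte que $d_{v}F_{(s,u)}$ demeure bijective. Le th\'eor\`eme des fonctions implicites~\cite{krantz2012implicit} donnera alors $u\in C^{\infty}\big(\overset{\circ}{U}; C^{k+2,\alpha}_{\mathcal{V}_{Z}}([0,\tau)\times\overset{\circ}{Z}; E)\big)$. Pour traiter le comportement au bord de $U$, on passera \`a l'\'eclatement logarithmique total $U_{\log}$ : comme $L_{t}$, $Q$ et $f$ sont polyhomog\`enes en $s$ par rapport \`a $\mathcal{G}_{\scriptscriptstyle{\vert U}}$ et $\mathcal{F}_{\scriptscriptstyle{\vert U}}$, leurs coefficients se prolongent lissement \`a un ouvert $\Omega_{\log}\subset\mathbb{R}^{n}$ contenant $U_{\log}$, et le m\^eme argument donnera $u\in C^{\infty}\big(\Omega_{\log}; C^{k+2,\alpha}_{\mathcal{V}_{Z}}([0,\tau)\times\overset{\circ}{Z}; E)\big) \subset C_{\mathcal{V}_{pc}}^{\infty}\big(U_{\log}; C^{k+2,\alpha}_{\mathcal{V}_{Z}}([0,\tau)\times\overset{\circ}{Z}; E)\big)$. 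Par l'identification~\eqref{fdd}, ceci correspondra \`a $u\in C_{b}^{\infty}\big(\overset{\circ}{U}; C^{k+2,\alpha}_{\mathcal{V}_{Z}}([0,\tau)\times\overset{\circ}{Z}; E)\big)$, comme attendu.

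Le principal obstacle sera de garantir que la constante de Schauder $\kappa$ et l'inverse de la lin\'earis\'ee $d_{v}F_{(s,u)}$ restent uniform\'ement contr\^ol\'es jusqu'au bord apr\`es \'eclatement logarithmique, de sorte que le temps $\tau$ puisse \^etre choisi ind\'ependamment de $s$. L'hypoth\`ese d'annulation en $t=0$ de $a(\cdot,0,0)$, $b(\cdot,0,0)$ et $c(\cdot,0)$ dans l'Hypoth\`ese~\ref{hypp} jouera ici un r\^ole crucial, car c'est elle qui assurera que la lin\'earis\'ee est une petite perturbation de $\partial_{t}-L_{t}(s)$ uniform\'ement en $s$, pr\'eservant l'invertibilit\'e n\'ecessaire \`a l'application du th\'eor\`eme des fonctions implicites.
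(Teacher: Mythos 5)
Votre preuve est correcte et suit essentiellement la m\^eme d\'emarche que celle de l'article : point fixe de Banach \`a $s$ fix\'e avec constantes de Schauder et estimations sur $Q$ uniformes en $s$ (Remarque~\ref{llilila}, Hypoth\`ese~\ref{hypp} et compacit\'e de $\overline{U}$) garantissant un $\tau$ ind\'ependant de $s$, puis r\'egularit\'e en $s$ et \'eclatement logarithmique pr\`es de $\partial S$ comme dans la Proposition~\ref{prooop}. La seule diff\'erence, mineure, est que vous invoquez le th\'eor\`eme des fonctions implicites (avec un argument de perturbation pour l'inversibilit\'e de la lin\'earis\'ee) pour la d\'ependance lisse en $s$, l\`a o\`u l'article utilise le th\'eor\`eme du point fixe de Banach \`a un param\`etre de~\cite{krantz2012implicit} ; les deux outils sont ici interchangeables.
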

\begin{proof}
Pour chaque $s\in U$ fix\'e, on sait par la Proposition~\ref{quuasi} qu'il existe $\tau_{s}>0$ tel que~\eqref{wakwak} poss\`{e}de une unique solution $u(s) \in C_{\mathcal{V}_{Z}}^{\infty}([0,\tau_{s}) \times \overset{\circ}{Z}; E).$ 
On suppose pour l'instant que $\displaystyle \tau=\inf_{ s \in U}(\tau_{s}) >0.$
On consid\`{e}re $\mathcal{Y} : U \times \mathcal{Z}_{K, \tau}  \rightarrow \mathcal{Z}_{K, \tau} $ l'application d\'efinie par 
$$\mathcal{Y}(\cdot, \nu)= \Psi (Q(\cdot,\nu)(\nu) + f )\;\forall \nu \in C_{\mathcal{V}_{Z}}^{k, \alpha}([0,\tau)\times \overset{\circ}{Z}; E).$$
On sait d\'ej\`a, pour $s\in U$ fix\'e, que $\mathcal{Y}(s)$ est contractante. De plus, par la Proposition~\ref{prooop}, $\mathcal{Y}(\cdot, \nu)$ est lisse sur $\overset{\circ}{U}$ car elle est \'equivalente \`a la solution d'une famille d'\'equations paraboliques lin\'eaires param\'etr\'ee. Par cons\'equent, lorsque $U \subset \overset{\circ}{S},$ on utilise le th\'eor\`{e}me de point fixe de Banach \`a un param\`{e}tre~\cite{krantz2012implicit} pour montrer que $u \in C^{\infty}\big(U; C_{\mathcal{V}_{Z}}^{k}([0,\tau)\times \overset{\circ}{Z}; E)\big).$
Si $U \subset S$ tel que $U\cap \partial S \neq 0,$ par la polyhomog\'en\'eit\'e de $L_{t}$ et $Q,$ on peut se ramener au cas pr\'ec\'edent en utilisant l'\'eclatement logarithmique total et en prolongeant ainsi la famille d'\'equations \`a $\Omega_{\log}$ comme dans la Proposition~\ref{prooop}. On d\'eduit que
$u \in C_{b}^{\infty}\big(\overset{\circ}{U},C^{k+2,\alpha}_{\mathcal{V}_{Z}}([0,\tau) \times \overset{\circ}{Z}; E)\big).$
On revient finalement \`a montrer que $\displaystyle \tau= \inf_{ s \in U}(\tau_{s})>0$. En effet, comme $\bar{U}\subset M$ est compact, on sait par hypoth\`{e}se que la famille $Q$ satisfait~\eqref{quad} et~\eqref{quaad} uniformes en $s\in U.$ De m\^eme, les constantes des estimations de Schauder sont uniform\'ement en $s\in U,$  par le Remarque~\ref{llilila}, ce qui nous permet de prendre $\tau$ non nul.
\end{proof}

Maintenant, nous allons prouver le th\'eor\`{e}me principal de cette section.
\begin{them} \label{tlem}
Soit $(M, S, \phi_{M}, \mathcal{V})$ un fibr\'e de structures de Lie fibr\'ees \`a l'infini associ\'e \`a un fibr\'e\\ $\phi_{M} : M \rightarrow S,$ tel que le rayon d'injectivit\'e des m\'etriques compatibles sur les fibres soit strictement positif. Soient $\mathcal{F}$ et $\mathcal{G}$ deux familles indicielles positives de $M$ et $L_{t}(s)$ une famille polyhomog\`{e}ne sur $[0,T] \times S$ d'op\'erateurs de $\Diff^{2}_{\mathcal {V},\: \mathcal{G}}(M; E)$ uniform\'ement $\mathcal{V}_{M_{s}}$\texttt{-}elliptiques sur $M_{s}:=\phi^{-1}_{M}(s), \forall s\in S.$ Alors pour $f\in  \mathcal{A}^{\mathcal{F}}_{\phg}([0,T] \times M; E)$ et $Q$ la famille d'applications donn\'ee par~\eqref{poloo} satisfaisant \`a l'Hypoth\`{e}se~\ref{hypp} avec les familles $a$, $b$ et $c$ polyhomog\`{e}nes en $s\in S$ par rapport \`a $\mathcal{G}_{\scriptscriptstyle{\vert S}},$ il existe $\varepsilon \in (0,T]$ tel que la famille d'\'equations
\begin{equation} \label{quasssi} 
(\frac{\partial }{\partial t}-L_{t}) u =Q(u)u + f, \; u_{\scriptscriptstyle{\vert t=0 }}= 0, 
\end{equation}
poss\`{e}de une unique solution $u \in \mathcal{A}^{\mathcal{K}_{\infty}}_{\phg}([0,\varepsilon) \times M; E),$ o\`u $\mathcal{K}$ est la famille indicielle positive donn\'ee par $$ \mathcal{K}=\mathcal{G}_{\infty}+\mathcal{F}_{\infty}.$$ 
\end{them} 
\begin{proof}
Comme dans la preuve du Th\'eor\`{e}me~\ref{them}, on proc\`{e}de par induction sur la profondeur $l$ de $M.$ Pour une vari\'et\'e sans bord, c'est\texttt{-}\`a\texttt{-}dire une vari\'et\'e de profondeur $0$, le r\'esultat d\'ecoule directement de la Proposition~\ref{sisisi}. On suppose donc que le r\'esultat est vrai jusqu'\`a la profondeur $l-1$ et on doit montrer qu'il est aussi valide pour $l$. Il faut donc montrer qu'il existe $\varepsilon \in (0,T]$ tel que $u \in \mathcal{A}^{{\mathcal{K}}_{\scriptscriptstyle{\vert U\times Z }}}_{\phg}([0, \varepsilon) \times U\times Z; E)$ sur chaque ouvert de trivialisation locale $U\times Z$ du fibr\'e de structure de Lie.
Sur un tel ouvert, on sait d\'ej\`a par la Proposition~\ref{sisisi} que la famille d'\'equations~\eqref{quasssi} poss\`{e}de une famille unique de solutions $u \in C_{b}^{\infty}\big(\overset{\circ}{U},C^{\infty}_{\mathcal{V}_{Z}}([0,\tau) \times \overset{\circ}{Z}; E)\big),$ pour un certain $\tau>0.$  En prenant $\tau$ assez petit, on peut aussi supposer que $L_{t}+ Q(u)$ est elliptique $\forall t\in [0, \tau).$
Soient $H_{i}$ une hypersurface bordante de $U\times Z$ et
 $ \{z_{j}\}_ {j \in \mathbb{N}_{0}} $ une suite strictement croissante de nombres r\'eels positifs ou nuls avec 
\begin{equation}
z_{0}=0 \; \textrm{et} \; (z_{j},0) \in {\mathcal{K}}_{\scriptscriptstyle{\vert U\times Z }}(H_{i})\; \; \forall j \in \mathbb{N},
\end{equation}
\begin{equation}
(z,k) \in {\mathcal{K}}_{\scriptscriptstyle{\vert U\times Z }}(H_{i}) \Longrightarrow z=z_{j} \textrm{\:pour\:un\:certain}\; j \in  \mathbb{N}_{0}.
\end{equation}
On va montrer par r\'ecurrence sur $j$ qu'il existe $\varepsilon_{U} \in(0, \tau]$ de sorte que $$\forall j \in \mathbb{N}_{0}, \;\exists u_{j} \in  \mathcal{A}^{{\mathcal{K}}_{\scriptscriptstyle{\vert U \times Z }}}_{\phg}([0,\varepsilon_{U}) \times U \times Z ; E) \textrm{\:tel\:que}\; u_{j}(0, \cdot) \equiv 0 \;  \textrm{et}$$
\begin{equation} \label{xcxc}
u-u_{j} \in \rho_{i}^{\vartheta} C_{b}^{\infty}\big( \overset{\circ}{U}; C^{\infty}_{\mathcal{V}_{Z}}([0,\varepsilon_{U})\times \overset{\circ}{Z}; E)\big) \;\; \forall \vartheta < z_{j+1},
\end{equation}
o\`u $\rho_{i}$ est une fonction de d\'efinition de $H_{i}$. On traite d'abord l'ordre 0. On a besoin de trouver $u_{0}$ tel que 
\begin{equation} \label{habibi}
u-u_{0} \in \rho_{i}^{\vartheta} C_{b}^{\infty}\big( \overset{\circ}{U}; C^{\infty}_{\mathcal{V}_{Z}}([0,\varepsilon_{i}) \times \overset{\circ}{Z}; E)\big) \;\; \forall \vartheta < z_{1}, 
\end{equation}
pour un certain $\varepsilon_{i} >0.$\\ On suppose pour l'instant qu'il existe $\varepsilon_{U}>0$ tel que $$u \in \mathcal{A}^{{\mathcal{K}}_{\scriptscriptstyle{\vert U\times Z }}}_{\phg}([0,\varepsilon_{U}) \times U \times Z; E).$$
On sait d\'ej\`a par la Proposition~\ref{munim} que $(H_{i}, S_{i}, \phi_{i}, \mathcal {V}_{H_{i}})$ est un fibr\'e de structures de Lie fibr\'ees \`a l'infini par rapport \`a un certain fibr\'e $\phi_{i} : H_{i} \rightarrow S_{i}$ de fibre typique d\'enot\'ee $Z_{i}.$ On peut donc restreindre la famille d'\'equations~\eqref{quasssi} au coefficient d'ordre $0$ sur $H_{i}$.
En particulier, si $\dim Z_{i}=0,$ alors la restriction de~\eqref{quasi} au coefficient d'ordre $0$ sur $H_{i}$ devient une \'equation diff\'erentielle ordinaire non\texttt{-}lin\'eaire d'ordre 1 sur $H_{i}$ de la forme
\begin{equation}\label{remeeem}
\frac{\partial \tilde{u}_{0}}{\partial t}- l_{t} \tilde{u}_{0}=Q_{i,0}( \tilde{u}_{0}) + f_{i,0}, \;  \tilde{u}_{0}(0,\cdot) \equiv 0,
\end{equation}
o\`u $l_{t} \in \mathcal{A}_{\phg}^{\mathcal{G}_{\scriptscriptstyle{\vert H_{i} }}}([0,\varepsilon_{U})\times H_{i}; E),$ $Q_{i,0}$ et $f_{i,0}$ sont les coefficients d'ordre $0$ de $Q$ et $f$ respectivement.
Sinon, en utilisant la Proposition~\ref{fibre} fibre par fibre, notre restriction nous donne une famille d'\'equations $\mathcal{V}_{Z_{i}}$\texttt{-}paraboliques quasi\texttt{-}lin\'eaires 
\begin{equation}\label{remrem}
\frac{\partial  \tilde{u}_{0}(s)}{\partial t}-L_{t,i,0}(s) \tilde{u}_{0}(s)= Q_{i,0}\big(s, \tilde{u}_{0}(s)\big) \tilde{u}_{0}(s) + f_{i,0}(s) , \;  \tilde{u}_{0}(s)(0,\cdot) \equiv 0.
\end{equation}
Sans compromettre la preuve, on peut consid\'erer l'\'equation~\eqref{remeeem} comme un \texttt{<<} cas particulier \texttt{>>} de~\eqref{remrem}. Comme dans la preuve du Th\'eor\`eme~\ref{them}, on peut montrer que les m\'etriques compatibles dans les fibres de $\phi_{i} : H_{i} \rightarrow S_{i}$ ont un rayon d'injectivit\'e strictement positif.
Vu que $H_{i}$ est une vari\'et\'e de profondeur $l-1,$ on applique notre hypoth\`{e}se de r\'ecurrence sur la profondeur de la vari\'et\'e pour voir que la famille d'\'equations~\eqref{remrem} poss\`{e}de une unique solution polyhomog\`{e}ne $ \tilde{u}_{0}\in  \mathcal{A}^{{\mathcal{K}}_{\scriptscriptstyle{\vert H_{i}}}}_{\phg}([0,\varepsilon_{i}) \times H_{i}; E),$ pour un certain $\varepsilon_{i} \in (0,\tau].$ On pose $u_{0}:= \Xi_{H_{i}}(\tilde{u}_{0}),$ o\`u $\Xi_{H_{i}}$ est une application d'extension comme dans la D\'efinition~\ref{extension}.
D'apr\`{e}s la formule de Taylor en $u-u_{0}=0$, on voit qu'il existe un op\'erateur $B_{t} $ d\'ependant de $u_{0}$ et $u-u_{0}$ tel que 
\begin{equation} \label{taylor}
Q(u)-Q(u_{0})=(u-u_{0})\cdot B_{t}(u_{0},u-u_{0}).
 \end{equation}
En posant $w=u- u_{0},$  on aura que 
\begin{center}
$
\begin{array}{rcl}
 Q(u)u- Q(u_{0})u_{0}&=& Q(u)(u-u_{0})+\big(Q(u)-Q(u_{0})\big)u_{0} \\
         & = & Q(u)(u-u_{0})+(u-u_{0})\cdot B_{t}(u_{0},u-u_{0})u_{0} \\
         & = &  Q(u)w+w\cdot B_{t}(u_{0},w)u_{0}.
 \end{array}
 $                                   
\end{center} 

On note par $A_{t}$ la famille d'op\'erateurs elliptiques lin\'eaires donn\'ee par $$A_{t}(u_{0},\nu) = L_{t}(\nu) + Q(u)\nu + \nu\cdot B_{t}(u_{0},w)u_{0}$$ de sorte que 
 $w$ satisfait \`a l'\'equation d'\'evolution 
$$\frac{\partial w}{\partial t}-A_{t}(u_{0},w)=h,\;  w(0,\cdot) \equiv 0, \; h=f-\frac{\partial u_{0}}{\partial t}+\big(L_{t}+Q(u_{0})\big)u_{0}.$$
Comme $u_{0}$ est polyhomog\`{e}ne, 
 $h \in  \mathcal{A}^{{\mathcal{K}}_{\scriptscriptstyle{\vert  U \times Z }}}_{\phg}([0, \varepsilon_{i}) \times U \times Z;  E).$
 Comme la restriction de $u_{0}$ est $\tilde{u}_{0}$, on d\'eduit de~\eqref{remrem} que 
$$ h \in \rho_{i}^{\vartheta} C_{b}^{\infty}\big( \overset{\circ}{U}; C^{\infty}_{\mathcal{V}_{Z}}([0,\varepsilon_{i})\times \overset{\circ}{Z}; E)\big),\; \; \forall \vartheta < z_{1},$$
car la restriction de $h$ sur $\overset{\circ}{H}_{i}$ est nulle. Par le Corollaire~\ref{corrrr}, on a alors que $$w \in\rho_{i}^{\vartheta} C_{b}^{\infty}\big( \overset{\circ}{U}; C^{\infty}_{\mathcal{V}_{Z}}([0,\varepsilon_{i})\times \overset{\circ}{Z}; E)\big), \;\; \forall \vartheta < z_{1}.$$
On suppose maintenant que le r\'esultat~\eqref{xcxc} est vrai jusqu'\`a l'ordre $j-1.$
On pose $v=u-u_{j-1}$ v\'erifiant 
\begin{equation} \label{mmmoneqo}
\frac{\partial v}{\partial t}-A_{t}(u_{j-1},v)= f^{j}, \; f^{j}=f-\frac{\partial u_{j-1}}{\partial t} +  L_{t}(u_{j-1}) + Q(u_{j-1}) u_{j-1}.
\end{equation}
Gr\^ace \`a la formule de Taylor en $v=0$, il existe un op\'erateur $\tilde{B}_{t}$ d\'ependant de $u_{j-1}$ et $v$ tel que $$B_{t}(u_{j-1},v)u_{j-1}= B_{t}(u_{j-1},0) u_{j-1}+ (\tilde{B}_{t}(u_{j-1},v)u_{j-1})  \cdot v,$$ o\`u l'op\'erateur lisse $B_{t}(u_{j-1},0)$ est le terme d'ordre $0$ dans le d\'eveloppement de Taylor de $B_{t}$ ind\'ependant de $v$.
Ainsi, la famille d'\'equations~\eqref{mmmoneqo} peut se ramener sous la forme d'une famille d'\'equations paraboliques 
\begin{equation} \label{mmmoneeqo}
\frac{\partial v}{\partial t}- \big(L_{t} + Q(u)\big)v-v \cdot B_{t}(u_{j-1},0)u_{j-1}=  v \cdot(\tilde{B}_{t}(u_{j-1},v) u_{j-1})\cdot v + f^{j}.
\end{equation}
On note $ v \cdot(\tilde{B}_{t}(u_{j-1},v) u_{j-1})\cdot v + f^{j}$ par $\psi^{j}.$
Par la d\'efinition de $f^{j},$ on d\'eduit que
\begin{equation} \label{haana}
\psi^{j} \in\mathcal{A}^{{\mathcal{K}}_{\scriptscriptstyle{\vert  U \times Z }}}_{\phg}([0,\varepsilon_{i}) \times U\times Z; E)  + \rho_{i}^{2\vartheta} C_{b}^{\infty}\big( \overset{\circ}{U}; C^{\infty}_{\mathcal{V}_{Z}}([0,\varepsilon_{i})\times \overset{\circ}{Z}; E)\big), \;\forall \vartheta < z_{j}.
\end{equation}
Afin de trouver un candidat pour le coefficient du terme d'ordre 
$\rho_{i}^{z_{j}} (\log \rho_{i})^{k}$ dans le d\'eveloppement polyhomog\`{e}ne de $v$, on peut pr\'etendre un instant que $v \in \mathcal{A}^{{\mathcal{K}}_{\scriptscriptstyle{\vert  U \times Z }}}_{\phg}([0,\varepsilon_{i}) \times U\times Z; E) $. 
En utilisant~\eqref{taylor}, la famille d'\'equations~\eqref{mmmoneeqo} peut se r\'e\'ecrire 
\begin{equation} \label{mimmoneeqo}
\frac{\partial v}{\partial t}- \big(L_{t} + Q(u_{0})\big)v - v \cdot B_{t}(u_{j-1},0)u_{j-1}  - (u-u_{0})\cdot B_{t}(u_{0},u-u_{0}) v = \psi^{j} .
\end{equation}
Comme $$ v\in \mathcal{A}^{{\mathcal{K}}_{\scriptscriptstyle{\vert  U \times Z }}}_{\phg}([0,\varepsilon_{i}) \times U\times Z; E)  \cap \rho_{i}^{\vartheta} C_{b}^{\infty}\big( \overset{\circ}{U}; C^{\infty}_{\mathcal{V}_{Z}}([0,\varepsilon_{i})\times \overset{\circ}{Z}; E)\big), \;\forall \vartheta < z_{j}$$ 
et gr\^ace \`a la Proposition~\ref{fibre} et~\eqref{habibi}, on voit que la restriction de l'\'equation~\eqref{mimmoneeqo} au coefficient d'ordre $\rho_{i}^{z_{j}} (\log \rho_{i})^{k}$ sur $\overset{\circ}{H}_{i}$, o\`u $k\in \mathbb{N}_{0}$ est le plus grand entier tel que $(z_{j}, k) \in \mathcal{K}(H_{i})$, nous donne une famille d'\'equations $\mathcal{V}_{Z_{i}}$\texttt{-}paraboliques lin\'eaires localement de la forme
\begin{equation}\label{aheana}
\frac{\partial v _{j}(s)}{\partial t}- P_{t}(s) v_{j}(s) = \psi^{j}_{z_{j},k}(s), \; {v_{j}(s)}_{\scriptscriptstyle{\vert t=0}}  \equiv 0,
\end{equation}
o\`u $v_{j}(s)$ et $\psi^{j}_{z_{j},k}(s) \in \mathcal{A}^{{\mathcal{K}}_{\scriptscriptstyle{\vert  Z_{i} }}}_{\phg}([0,\varepsilon_{i}) \times Z_{i}; E)$ sont les coefficients du terme d'ordre $\rho_{i}^{z_{j}} (\log \rho_{i})^{k}$ de $v$ et $\psi^{j}$ respectivement restreints sur $\overset{\circ}{H}_{i}$ et la famille d'op\'erateurs $P_{t}(s)\in \Diff^{2}_{\mathcal {V}_{ Z_{i}},\: {\mathcal{K}}_{\scriptscriptstyle{\vert  Z_{i} }}}(Z_{i}; E)$ est la restriction \`a l'ordre $0$ sur $H_{i}$ de l'op\'erateur $$v \mapsto \big(L_{t} + Q(u_{0})\big)v - v \cdot B_{t}(u_{j-1},0)u_{j-1}.$$
En vertu du Th\'eor\`{e}me~\ref{them}, la famille d'\'equations~\eqref{aheana} poss\`{e}de une unique solution polyhomog\`{e}ne $v_{j} \in \mathcal{A}^{{\mathcal{K}}_{\scriptscriptstyle{\vert  H_{i} }}}_{\phg}([0,\varepsilon_{i}) \times H_{i}; E)$. 
 On pose ensuite $w = v - w_{j}$ avec $w_{j} = \rho_{i}^{z_{j}} (\log \rho_{i})^{k} \Xi_{H_{i}} (v_{j})$, de sorte que $w$ satisfait \`a l'\'equation d'\'evolution 
\begin{multline*}
\frac{\partial w}{\partial t}-\big(L_{t} + Q(u_{0})\big)w -w \cdot B_{t}(u_{j-1},0)u_{j-1} = (u-u_{0})\cdot B_{t}(u_{0},u-u_{0}) w + \varphi^{j}, w(0,\cdot) \equiv 0,\\  \;\
 \varphi^{j}=\psi^{j}-\frac{\partial  w_{j} }{\partial t}+\big(L_{t} + Q(u_{0})\big)w_{j} +(u-u_{0})\cdot B_{t}(u_{0},u-u_{0}) w_{j}+w_{j} \cdot B_{t}(u_{j-1},0)u_{j-1}.
\end{multline*}
Par d\'efinition de la suite $\{z_{j}\}$, on remarque que $z_{j}\geq z_{1}\geq z_{j+1}-z_{j}, \;\forall j \in \mathbb{N}.$ Gr\^ace \`a~\eqref{haana} et notre choix de $w_{j},$ on aura que   
\begin{equation} \label{hzaana}
 \varphi^{j} \in\mathcal{A}^{{\mathcal{K}}_{\scriptscriptstyle{\vert  U \times Z }}}_{\phg}([0,\varepsilon_{i}) \times U\times Z; E)  + \rho_{i}^{\vartheta} C_{b}^{\infty}\big( \overset{\circ}{U}; C^{\infty}_{\mathcal{V}_{Z}}([0,\varepsilon_{i})\times \overset{\circ}{Z}; E)\big), \;\forall \vartheta < z_{j+1},
\end{equation}
et que 
$\varphi^{j} \in  \rho_{i}^{z_{j}} (\log \rho_{i})^{k-1} C_{b}^{\infty}\big( \overset{\circ}{U}; C^{\infty}_{\mathcal{V}_{Z}}([0,\varepsilon_{i}) \times \overset{\circ}{Z}; E)\big).$
Donc, par le Corollaire~\ref{corrrr}, $$w \in \rho_{i}^{z_{j}} (\log \rho_{i})^{k-1} C_{b}^{\infty}\big( \overset{\circ}{U}; C^{\infty}_{\mathcal{V}_{Z}}([0,\varepsilon_{i})\times \overset{\circ}{Z}; E)\big).$$
En r\'ep\'etant cet argument $k$ fois, on obtient alors $$r_{k}, r_{k-1}, ...,r_{0} \in  \mathcal{A}^{{\mathcal{K}}_{\scriptscriptstyle{\vert  H_{i} }}}_{\phg}([0,\varepsilon_{i}) \times H_{i}; E)\;\; \textrm{avec}\;r_{k}= v_{j}$$  
de sorte que $$\hat{v}=v- \sum \limits_{p=0}^{k} \Xi_{H_{i}}(r_{p})\rho_{i}^{z_{j}} (\log \rho_{i})^{p}, \;\;\; r_{p}(0,\cdot) \equiv 0,$$ a pour \'equation d'\'evolution sous la forme $$\frac{\partial \hat{v}}{\partial t}-\big(L_{t} + Q(u_{0})\big)\hat{v} - \hat{v} \cdot B_{t}(u_{j-1},0)u_{j-1} =  (u-u_{0})\cdot B_{t}(u_{0},u-u_{0}) \hat{v} + \hat{h}^{j}, \;  \hat{v} (0,\cdot) \equiv 0, \; \textrm{avec}\; $$
\begin{multline*}
\hat{h}^{j} \in \rho_{i}^{z_{j}} C_{b}^{\infty}\big( \overset{\circ}{U}; C^{\infty}_{\mathcal{V}_{Z}}([0,\varepsilon_{i}) \times \overset{\circ}{Z}; E)\big)\cap \\\bigg( \mathcal{A}^{{\mathcal{K}}_{\scriptscriptstyle{\vert  U \times Z }} }_{\phg}([0, \varepsilon_{i} ) \times U \times Z ; E)+ \rho_{i}^{\vartheta} C_{b}^{\infty}\big( \overset{\circ}{U}; C^{\infty}_{\mathcal{V}_{Z}}([0,\varepsilon_{i})\times \overset{\circ}{Z}; E)\big)\bigg), \;\forall \vartheta < z_{j+1},
\end{multline*}
et $ \rho_{i}^{-z_{j}} \hat{h}^{j}_{\scriptscriptstyle{\vert \overset{\circ}{H}_{i}}}=0.$ 
On a donc $\hat{h}^{j} \in \rho_{i}^{\vartheta} C_{b}^{\infty}\big( \overset{\circ}{U}; C^{\infty}_{\mathcal{V}_{Z}}([0,\varepsilon_{i})\times \overset{\circ}{Z}; E)\big), \;\forall \vartheta < z_{j+1}.$ Par~\eqref{taylor} et le Corollaire~\ref{corrrr}, on en d\'eduit que $$\hat{v} \in \rho_{i}^{\vartheta} C_{b}^{\infty}\big( \overset{\circ}{U}; C^{\infty}_{\mathcal{V}_{Z}}([0,\varepsilon_{i})\times \overset{\circ}{Z}; E)\big), \;\forall \vartheta < z_{j+1}.$$
Le pas d'induction est donc compl\'et\'e en prenant
$\displaystyle u_{j}=u_{j-1} + \sum \limits_{p=0}^{k} \Xi_{H_{i}}(r_{p})\rho_{i}^{z_{j}} (\log \rho_{i})^{p}.$
Finalement, on prend $\displaystyle \varepsilon_{U}= \min_{i \in\{1,...,l\}}\varepsilon_{i}$. Comme $S$ est compact, on peut extraire un recouvrement fini d'ouverts de trivialisation $\{ U_{j}, j\in J\}$ de $S$ de sorte qu'on peut prendre $\displaystyle \varepsilon= \min_{j \in J}\varepsilon_{U_{j}}.$
\end{proof}
\begin{cor}\label{finin}
Soit $(M,\mathcal{V}_{SF})$ une structure de Lie fibr\'ee \`a l'infini telle que le rayon d'injectivit\'e des m\'etriques compatibles est strictement positif. Soit $\mathcal{G}$ une famille indicielle positive de $M$. Si dans l'\'equation~\eqref{quassi}, $\{L_{t} : t\in [0,T]\} $ est une famille d'op\'erateurs uniform\'ement $\mathcal{V}_{SF}$-elliptiques de $\Diff^{2}_{\mathcal {V}_{SF},\: \mathcal{G}}(M; E)$ et $f \in \mathcal{A}^{\mathcal{F}}_{\phg}([0,T] \times M; E),$ alors 
il existe $\varepsilon \in (0,T]$ tel que la solution de cette \'equation est dans $\mathcal{A}^{\mathcal{K}}_{\phg}([0,\varepsilon) \times M; E)$ avec $ \mathcal{K}=\mathcal{G}_{\infty}+\mathcal{F}_{\infty}.$
\end{cor}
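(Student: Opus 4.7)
La strat\'egie est de ramener l'\'enonc\'e directement au Th\'eor\`eme~\ref{tlem} en exploitant la Remarque qui suit la D\'efinition~\ref{trivill}, laquelle stipule qu'une structure de Lie fibr\'ee \`a l'infini $(M,\mathcal{V}_{SF})$ est en particulier un fibr\'e de structures de Lie fibr\'ees \`a l'infini $(M, S, \phi_{M}, \mathcal{V})$ pour la fibration triviale $\phi_{M}:M\to S$ o\`u $S$ est r\'eduit \`a un point. Dans ce cas, $M_{s}=\phi_{M}^{-1}(s)=M$ pour l'unique $s\in S,$ si bien que la condition d'ellipticit\'e uniforme $\mathcal{V}_{M_{s}}$\texttt{-}elliptique dans la formulation du Th\'eor\`{e}me~\ref{tlem} co\"incide exactement avec l'ellipticit\'e uniforme $\mathcal{V}_{SF}$\texttt{-}elliptique suppos\'ee ici.

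Ensuite, je v\'erifierais que les hypoth\`{e}ses restantes du Th\'eor\`{e}me~\ref{tlem} sont automatiquement satisfaites : comme $S$ est un point, l'espace $\mathcal{A}^{\mathcal{G}_{\scriptscriptstyle\vert S}}_{\phg}$ se r\'eduit aux scalaires constants, donc la polyhomog\'en\'eit\'e en $s$ des familles $L_{t}$, $a$, $b$ et $c$ est triviale ; la famille indicielle $\mathcal{G}$ est prise sur $M$ lui\texttt{-}m\^eme, comme dans le Th\'eor\`eme~\ref{tlem}. L'hypoth\`{e}se sur le rayon d'injectivit\'e des m\'etriques compatibles sur les fibres (ici la fibre unique $M$) co\"incide pr\'ecis\'ement avec l'hypoth\`{e}se pos\'ee dans le Corollaire.

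Par ailleurs, l'Hypoth\`{e}se~\ref{hypp} sur l'application quadratique $Q$ donn\'ee par~\eqref{poloo} est suppos\'ee implicitement dans la formulation de~\eqref{quassi} : comme il s'agit d'une \'equation de la forme $(\partial_{t}-L_{t})u = Q(u)u + f$ avec $u_{\scriptscriptstyle{\vert t=0}}=0,$ les conditions~\eqref{quad} et~\eqref{quaad} ainsi que l'annulation en $t=0$ des coefficients $a(\cdot,0,0), b(\cdot,0,0), c(\cdot,0)$ sont les conditions naturelles pour que la Proposition~\ref{quuasi} s'applique. Sous ces conditions, le Th\'eor\`{e}me~\ref{tlem} fournit directement l'existence d'un $\varepsilon\in(0,T]$ et d'une unique solution $u\in\mathcal{A}^{\mathcal{K}_{\infty}}_{\phg}([0,\varepsilon)\times M; E)$ avec $\mathcal{K}=\mathcal{G}_{\infty}+\mathcal{F}_{\infty}.$

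Il n'y a pas v\'eritablement d'obstacle \`a surmonter, puisque le Corollaire se r\'eduit au Th\'eor\`eme~\ref{tlem} appliqu\'e \`a $S=\{*\}$ ; le seul point d\'elicat serait de s'assurer que l'on utilise bien $\mathcal{K}$ plut\^ot que $\mathcal{K}_{\infty},$ ce qui est imm\'ediat puisque $\mathcal{K}_{\infty}=(\mathcal{G}_{\infty}+\mathcal{F}_{\infty})_{\infty}=\mathcal{G}_{\infty}+\mathcal{F}_{\infty}=\mathcal{K}$ par stabilit\'e sous sommes finies (Remarque~\ref{field}).
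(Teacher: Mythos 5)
Votre preuve est correcte et suit essentiellement la m\^eme d\'emarche que celle de l'article, qui se contente d'invoquer le Th\'eor\`eme~\ref{tlem} en prenant pour base $S$ un point du fibr\'e $\phi_{M}$. Votre v\'erification suppl\'ementaire que $\mathcal{K}_{\infty}=\mathcal{K}$ (gr\^ace \`a $\mathcal{G}_{\infty}+\mathcal{G}_{\infty}=\mathcal{G}_{\infty}$, cf.\ la Remarque~\ref{field}) clarifie utilement un l\'eger flottement de notation entre l'\'enonc\'e du th\'eor\`eme et celui du corollaire, mais ne change pas l'argument.
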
 
\begin{proof}
C'est une cons\'equence directe du Th\'eor\`{e}me \ref{tlem} en prenant la base $S$ un point de fibr\'e $\phi_{M}.$
\end{proof}
\section{Polyhomog\'en\'eit\'e des solutions du flot de Ricci\texttt{-}DeTurck}
Nous \'etablissons que la polyhomog\'en\'eit\'e des m\'etriques compatibles avec une structure de Lie fibr\'ee \`a l'infini est pr\'eserv\'ee le long du flot de Ricci\texttt{-}DeTurck au moins pour un court laps de temps. En particulier, lorsque la m\'etrique initiale est asymptotiquement Einstein, la polyhomog\'en\'eit\'e sera pr\'eserv\'ee par le flot de Ricci\texttt{-}DeTurck tant que le flot existe.
Nous introduisons bri\`{e}vement certaines d\'efinitions et r\'esultats fondamentaux li\'ees au concept du flot de Ricci qui serviront par la suite. Pour plus de d\'etails, nous renvoyons le lecteur \`a~\cite{lee1997riemannian},~\cite{lee2013riemannian} et~\cite{chow2004ricci}. 
\begin{defi}\label{fatma}
Soient $(M, g)$ une vari\'et\'e riemannienne et $\nabla$ la connexion de Levi\texttt{-}Civita de la m\'etrique. L'application $R : \mathfrak{X}(M) \times \mathfrak{X}(M) \times \mathfrak{X}(M)  
\rightarrow \mathfrak{X}(M)$ d\'efinie par 
\begin{equation} \label{fatm}
R(X,Y)Z=\nabla_{X} \nabla_{Y}Z-\nabla_{Y} \nabla_{X} Z - \nabla_{\nabla_{X}Y- \nabla_{Y}X}Z
\end{equation}
est appel\'ee l'endomorphisme de Riemann ou le tenseur de courbure.
\end{defi}
En termes de coordonn\'ees locales, on r\'e\'ecrit
$$R(\partial_{i},\partial _{j})\partial _{k}= R^{l}_{ijk} \partial_{l} \;\textrm{o\`u}$$ 
\begin{equation} \label{cour}
 R^{l}_{ijk}= \partial_{i}\Gamma_{ jk}^{l}- \partial_{j}\Gamma_{ ik}^{l}+ \Gamma_{ jk}^{s}\Gamma_{ is}^{l}-\Gamma_{ ik}^{s}\Gamma_{ js}^{l},
\end{equation} 
et $\Gamma_{ ij}^{l}$ est le symbole de Christoffel. 
L'endomorphisme de Riemann $R$ est vu comme \'etant un homomorphisme du fibr\'e vectoriel $\otimes ^{2}
TM $ vers le fibr\'e vectoriel $\End(TM) \simeq TM \otimes T^{*}M$, et ainsi est un champs de tenseurs de type $(3,1)$ d\'efini localement par
$$ R=R^{l}_{ijk} dx_{i} \otimes dx_{j} \otimes dx_{k} \otimes \partial^{l}. $$
Pour $m \in M$ donn\'e, il est souvent utile de consid\'erer une carte normale centr\'ee en $m$. Dans cette carte normale, on sait que le symbole de Christoffel s'annule en $m, {\Gamma_{ ij}^{k}}_{\scriptscriptstyle{\vert m}} = 0$ donc ${\nabla_{i} \partial_{j}}_{\scriptscriptstyle{\vert m}} =0$. Par suite,
 $${(\nabla_{s} F)_{\,j_{1}...j_{k}}^ {\,i_{1}...i_{l}}}_{\scriptscriptstyle{\vert m}} = {(\partial_{s} F)_{\,j_{1}...j_{k}}^ {\,i_{1}...i_{l}}}_{\scriptscriptstyle{\vert m}}$$
et $${\nabla_{i,j}^{2}}_{\scriptscriptstyle{\vert m}}:=(
 \nabla_{\partial_{i}} \nabla_{\partial_{j}}- \nabla_{\nabla_{\partial_{i}}\partial_{j}})_{\scriptscriptstyle{\vert m}} = {\nabla_{i}\nabla_{j}}_{\scriptscriptstyle{\vert m}}.$$
Ces faits simplifient consid\'erablement les calculs des tenseurs en coordonn\'ees normales.
\begin{prop} (Lemme 3.2~\cite{chow2004ricci})
Pour une famille lisse $g(t)$ des m\'etriques riemanniennes, la d\'eriv\'ee de symbole de Christoffel de la connexion de Levi\texttt{-}Civita par rapport de temps est donn\'ee par 
\begin{equation}\label{civiv}
\frac{\partial}{\partial t}\Gamma_{ ij}^{k}=\frac{1}{2} g^{kl} \big((\nabla_{i} h)_ {jl}+(\nabla_{j} h)_ {il}-(\nabla_{l} h)_ {ij}\big),
\end{equation}
o\`u $h$ est la d\'eriv\'ee de $g$ par rapport au temps.
\end{prop}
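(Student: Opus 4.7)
La strat\'egie consiste \`a utiliser le caract\`ere tensoriel de $\frac{\partial}{\partial t}\Gamma_{ij}^{k}$. Bien que $\Gamma_{ij}^{k}$ ne soit pas les composantes d'un tenseur, la d\'eriv\'ee par rapport au temps $\frac{\partial}{\partial t}\Gamma_{ij}^{k}$ l'est : c'est, \`a $t$ fix\'e, la d\'eriv\'ee d'une famille \`a un param\`{e}tre de connexions affines sur $TM$, et la diff\'erence de deux connexions affines est un tenseur. Par cons\'equent, il suffit de v\'erifier l'\'egalit\'e ponctuelle en un point arbitraire $m$ et \`a un temps arbitraire $t_{0}$, dans un syst\`{e}me de coordonn\'ees choisi judicieusement.

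En premier lieu, je fixerais $m \in M$ et $t_{0}$ et je choisirais un syst\`{e}me de coordonn\'ees normales centr\'ees en $m$ pour la m\'etrique $g(t_{0})$. Dans ce syst\`{e}me, les symboles de Christoffel de $g(t_{0})$ s'annulent en $m$, et donc, en $m$ et au temps $t_{0}$, les d\'eriv\'ees partielles usuelles co\"incident avec les d\'eriv\'ees covariantes pour tout tenseur (en particulier pour $h = \frac{\partial g}{\partial t}$). De plus, en $m$ et \`a $t_{0}$, on a $\partial_{i} g_{jk} = 0$ pour tout $i,j,k$.

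En second lieu, je partirais de la formule classique
\begin{equation*}
\Gamma_{ij}^{k}=\tfrac{1}{2} g^{kl} \bigl(\partial_{i} g_{jl}+\partial_{j} g_{il}-\partial_{l} g_{ij}\bigr),
\end{equation*}
et je prendrais sa d\'eriv\'ee par rapport au temps en appliquant la r\`{e}gle de Leibniz. On obtient une somme de deux termes : l'un provenant de la d\'erivation de $g^{kl}$, l'autre de la d\'erivation de l'expression entre parenth\`{e}ses. \'Evalu\'e en $m$ et $t_{0}$ dans les coordonn\'ees normales, le premier terme dispara\^it car il est proportionnel \`a $\partial_{i} g_{jl} + \partial_{j} g_{il} - \partial_{l} g_{ij}$, qui s'annule en $m$. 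On obtient ainsi
\begin{equation*}
\frac{\partial}{\partial t}\Gamma_{ij}^{k}\bigg|_{(m,t_{0})} =\tfrac{1}{2} g^{kl} \bigl(\partial_{i} h_{jl}+\partial_{j} h_{il}-\partial_{l} h_{ij}\bigr)\bigg|_{(m,t_{0})}.
\end{equation*}

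En troisi\`{e}me lieu, en utilisant l'identit\'e $\partial_{i} h_{jl} = \nabla_{i} h_{jl}$ en $m$ et $t_{0}$ dans les coordonn\'ees normales (puisque les symboles de Christoffel de $g(t_{0})$ sont nuls en $m$), on obtient la formule souhait\'ee en $(m, t_{0})$. Comme les deux membres de~\eqref{civiv} sont tensoriels et que $(m, t_{0})$ est arbitraire, la formule est vraie globalement. Il n'y a pas d'obstacle majeur : la seule subtilit\'e est de s'assurer que le caract\`ere tensoriel de $\partial_{t}\Gamma$ est bien justifi\'e avant de passer aux coordonn\'ees normales, ce qui r\'eduit la v\'erification \`a un calcul ponctuel \'el\'ementaire.
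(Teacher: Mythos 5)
Votre preuve est correcte et suit essentiellement l'approche standard que le papier met en place juste avant l'\'enonc\'e (discussion des coordonn\'ees normales o\`u $\Gamma_{ij}^{k}\vert_{m}=0$ et $\nabla = \partial$ en $m$) et qui est celle du Lemme 3.2 de Chow--Knopf cit\'e : tensorialit\'e de $\partial_{t}\Gamma$, calcul ponctuel en coordonn\'ees normales, puis identification $\partial_{i}h_{jl}=\nabla_{i}h_{jl}$ en $m$. Rien \`a redire.
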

\begin{prop}(Lemme 3.3~\cite{chow2004ricci}) \label{sasa}
Pour une famille lisse $g(t)$ des m\'etriques riemanniennes, la d\'eriv\'ee de l'endomorphisme de Riemann par rapport au temps est donn\'ee par $$\frac{\partial}{\partial t}R^{l}_{ijk}= \frac{1}{2} g^{ls} \big((\nabla^{2}_{i,j} h)_ {ks}+(\nabla^{2}_{i,k} h)_{js}-(\nabla^{2}_{i,s} h)_ {jk} -(\nabla^{2}_{j,i} h)_ {ks}-(\nabla^{2}_{j,k} h)_ {is}+(\nabla^{2}_{j,s} h)_ {ik}\big),$$ o\`u $h$ est la d\'eriv\'ee de $g$ par rapport au temps.
 \end{prop}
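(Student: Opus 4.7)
L'id\'ee principale est de diff\'erencier par rapport au temps la formule~\eqref{cour} exprimant $R^{l}_{ijk}$ en termes des symboles de Christoffel, et d'utiliser la Proposition pr\'ec\'edente donnant $\frac{\partial}{\partial t}\Gamma_{ij}^{k}.$ La simplification cruciale repose sur deux observations classiques : d'abord, la diff\'erence de deux connexions \'etant un tenseur, $\frac{\partial}{\partial t}\Gamma_{ij}^{k}$ est un $(2,1)$\texttt{-}tenseur bien d\'efini globalement ; ensuite, l'identit\'e annonc\'ee est une identit\'e tensorielle, ce qui nous permet de la v\'erifier en un point $m$ arbitraire et dans un syst\`{e}me de coordonn\'ees normales centr\'e en $m$.

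\`A cet effet, je fixerais donc $m \in M$ et prendrais une carte normale centr\'ee en $m,$ dans laquelle ${\Gamma_{ij}^{k}}_{\scriptscriptstyle{\vert m}} = 0$ et, pour tout tenseur $F,$ $(\nabla_{s} F)_{\scriptscriptstyle{\vert m}} = (\partial_{s} F)_{\scriptscriptstyle{\vert m}}.$ En d\'erivant~\eqref{cour} et en \'evaluant en $m,$ les termes quadratiques en $\Gamma$ disparaissent et il reste
\begin{equation*}
\frac{\partial}{\partial t}R^{l}_{ijk}\Big{\vert}_{m} = \partial_{i}\Big(\frac{\partial}{\partial t}\Gamma_{jk}^{l}\Big)\Big{\vert}_{m} - \partial_{j}\Big(\frac{\partial}{\partial t}\Gamma_{ik}^{l}\Big)\Big{\vert}_{m} = \nabla_{i}\Big(\frac{\partial}{\partial t}\Gamma_{jk}^{l}\Big)\Big{\vert}_{m} - \nabla_{j}\Big(\frac{\partial}{\partial t}\Gamma_{ik}^{l}\Big)\Big{\vert}_{m},
\end{equation*}
la derni\`{e}re \'egalit\'e \'etant justifi\'ee par le fait que $\frac{\partial}{\partial t}\Gamma$ est un tenseur (et donc que $\nabla$ co\"incide avec $\partial$ sur ce tenseur au point $m$ en coordonn\'ees normales).

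Il suffit alors de remplacer $\frac{\partial}{\partial t}\Gamma$ par l'expression~\eqref{civiv}. Comme $\nabla_{i} g^{ls} = 0$ identiquement, on peut faire sortir $g^{ls}$ de $\nabla_{i},$ ce qui donne
\begin{equation*}
\nabla_{i}\Big(\frac{\partial}{\partial t}\Gamma_{jk}^{l}\Big) = \frac{1}{2}g^{ls}\big((\nabla^{2}_{i,j} h)_{ks} + (\nabla^{2}_{i,k} h)_{js} - (\nabla^{2}_{i,s} h)_{jk}\big),
\end{equation*}
et de m\^eme pour l'expression sym\'etrique en \'echangeant $i$ et $j.$ La soustraction des deux termes produit exactement la formule annonc\'ee, qui, \'etant tensorielle et v\'erifi\'ee en un point arbitraire, vaut partout. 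Le seul point \`a surveiller est la validit\'e du remplacement $\partial_{i} \to \nabla_{i}$ sur $\frac{\partial}{\partial t}\Gamma,$ qui serait faux si l'on oubliait que c'est un tenseur : c'est cet argument tensoriel, plut\^ot qu'un calcul lourd, qui constitue le c\oe{}ur de la preuve.
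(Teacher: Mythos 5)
Votre argument est correct et c'est exactement la d\'emonstration standard (celle du Lemme 3.3 de Chow--Knopf), que l'article ne reproduit pas puisqu'il se contente de citer la r\'ef\'erence : d\'erivation en $t$ de la formule~\eqref{cour}, annulation des termes quadratiques en $\Gamma$ au centre d'une carte normale, tensorialit\'e de $\frac{\partial}{\partial t}\Gamma$ permettant de remplacer $\partial_{i}$ par $\nabla_{i}$, puis substitution de~\eqref{civiv} en utilisant $\nabla g^{-1}=0$ et la co\"incidence de $\nabla_{i}\nabla_{j}$ avec $\nabla^{2}_{i,j}$ au point central. Le seul point que vous pourriez expliciter davantage est que l'identification finale $\nabla_{i}(\nabla_{j}h)_{ks}=(\nabla^{2}_{i,j}h)_{ks}$ repose pr\'ecis\'ement sur la remarque de l'article selon laquelle ${\nabla^{2}_{i,j}}_{\scriptscriptstyle{\vert m}}=\nabla_{i}\nabla_{j}{}_{\scriptscriptstyle{\vert m}}$ en coordonn\'ees normales, mais cela ne constitue pas une lacune.
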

\begin{defi}
Le tenseur de Riemann ou tenseur de courbure $Rm$ est le champ de tenseurs de type $(4,0)$ d\'efini par $$Rm(X,Y,Z,W)=g\big(R(X,Y)W,Z\big) \:\; \forall X,Y,Z,W\in\mathfrak{X}(M).$$ 
\end{defi}
\begin{defi}
La courbure de Ricci $\Ric$ est le champ de tenseurs de type $(2,0)$ d\'efini par  $\Ric=\tr Rm$.
En termes de coordonn\'ees locales, on a $\Ric_{ij}=R^{k}_{kij}$ (o\`u on somme sur l'indice r\'ep\'et\'e) et gr\^ace \`a la sym\'etrie du tenseur $Rm$, $\Ric$ est sym\'etrique.
\end{defi} 
\begin{prop} (Lemme 3.5~\cite{chow2004ricci})
La d\'eriv\'ee de la courbure de Ricci par rapport au temps est donn\'ee localement par 
\begin{equation} \label{ricci}
\frac{\partial}{\partial t}\Ric_{ij}= \frac{1}{2} g^{ks} \big((\nabla^{2}_{k,i} h)_ {js}+(\nabla^{2}_{k,j} h)_ {is}-(\nabla^{2}_{k,s} h)_ {ij}-(\nabla^{2}_{i,j} h)_ {ks}\big).
\end{equation}
\end{prop}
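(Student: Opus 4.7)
Le plan est de d\'eduire l'\'equation d'\'evolution du tenseur de Ricci directement \`a partir de celle de l'endomorphisme de Riemann, d\'ej\`a \'etablie \`a la Proposition~\ref{sasa}. En effet, la convention utilis\'ee dans le texte, $\Ric_{ij}=R^{k}_{kij}$, identifie le tenseur de Ricci comme la trace de l'endomorphisme de Riemann obtenue par contraction de l'indice contravariant avec le premier indice covariant. Il suffira donc de contracter la formule donnant $\frac{\partial}{\partial t}R^{l}_{ijk}$ en posant $l=i$ et en sommant pour obtenir $\frac{\partial}{\partial t}\Ric_{jk}$, puis de renommer les indices libres pour retrouver la formulation annonc\'ee avec indices $(i,j)$.

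La contraction produit naturellement six termes du type $g^{ls}(\nabla^{2}_{\alpha,\beta}h)_{\gamma\delta}$, alors que la formule cible n'en contient que quatre. La cl\'e de la simplification r\'eside dans l'usage syst\'ematique de deux sym\'etries : d'une part, le tenseur inverse $g^{ls}$ est sym\'etrique en $(l,s)$ ; d'autre part, comme $h=\frac{\partial}{\partial t}g$ est la d\'eriv\'ee temporelle d'une m\'etrique, c'est aussi un tenseur sym\'etrique $(0,2)$. En appliquant ces deux sym\'etries apr\`{e}s un simple renommage des indices muets, on v\'erifie que les termes provenant de $-(\nabla^{2}_{j,i}h)_{ks}$ et de $+(\nabla^{2}_{j,s}h)_{ik}$ dans la Proposition~\ref{sasa} deviennent \'egaux l'un \`a l'autre apr\`{e}s contraction avec $g^{ls}$, de sorte qu'ils se simplifient par leurs signes oppos\'es. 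Les quatre termes restants s'identifient alors exactement \`a ceux de~\eqref{ricci} apr\`{e}s renommage des indices.

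L'\'etape la plus d\'elicate est essentiellement comptable plut\^ot que conceptuelle : il faut g\'erer soigneusement les conventions d'indices (en particulier ne pas confondre les indices somm\'es et libres apr\`{e}s la contraction) et bien justifier les permutations utilisant la sym\'etrie de $h$. Aucune id\'ee g\'eom\'etrique nouvelle n'est requise au\texttt{-}del\`a de la formule de la Proposition~\ref{sasa} et du fait \'el\'ementaire que $h$ est sym\'etrique ; en particulier, aucune identit\'e de Bianchi ni aucun commutateur de d\'eriv\'ees covariantes n'intervient \`a ce stade.
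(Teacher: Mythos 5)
Votre proposition est correcte et suit exactement la m\^eme d\'emarche que la preuve du texte : contraction $l=i$ dans la Proposition~\ref{sasa}, puis annulation des termes $-g^{is}(\nabla^{2}_{j,i}h)_{ks}$ et $+g^{is}(\nabla^{2}_{j,s}h)_{ik}$ par renommage des indices muets combin\'e \`a la sym\'etrie de $g^{-1}$ et de $h$. Vous identifiez correctement la paire de termes qui se simplifie et le fait qu'aucune identit\'e de Bianchi n'est n\'ecessaire.
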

\begin{proof}
Il suffit de prendre $i=l$ dans la Proposition~\ref{sasa}. 
On a alors 
$$\frac{\partial}{\partial t}\Ric_{jk}=\frac{\partial}{\partial t}R^{i}_{ijk}= \frac{1}{2} g^{is} \big((\nabla^{2}_{i,j} h)_ {ks}+(\nabla^{2}_{i,k} h)_{js}-(\nabla^{2}_{i,s} h)_ {jk} $$$$-(\nabla^{2}_{j,k} h)_ {is}\big)+ \frac{1}{2} g^{is} \big((\nabla^{2}_{j,s} h)_ {ik}-(\nabla^{2}_{j,i} h)_ {ks}\big).$$
Le r\'esultat~\eqref{ricci} provient alors du fait que 
$$ g^{is} (\nabla^{2}_{j,s} h)_ {ik}- g^{is}(\nabla^{2}_{j,i} h)_ {ks}=\ g^{is} (\nabla^{2}_{j,s} h)_ {ik}- \ g^{si}(\nabla^{2}_{j,s} h)_ {ik}=0. $$
\end{proof}
Soit $g(t)$ une famille \`a un param\`{e}tre de m\'etriques riemanniennes sur une vari\'et\'e $\overset{\circ}{M}.$
\begin{defi}
Le flot de Ricci est l'\'equation non\texttt{-}lin\'eaire d'\'evolution d\'efinie par  $\frac{\partial}{\partial t} g(t) = -2 Ric\big(g(t)\big),$ $g(0)=g_{0}.$ 
\end{defi}
Supposons maintenant que $g_{0}$ est une m\'etrique compatible avec une structure de Lie fibr\'ee \`a l'infini $(M,\mathcal{V})$ telle que le rayon d'injectivit\'e soit strictement positif, o\`u $M$ est une vari\'et\'e \`a coins compacte telle que $\overset{\circ}{M}$ soit identifi\'e avec l'int\'erieur de $M.$
Nous avons vu dans la premi\`{e}re section que $g_{0}$ est une m\'etrique compl\`{e}te et que le tenseur de courbure et les d\'eriv\'ees contravariantes sont born\'ees. De ce fait, nous pouvons appliquer le Th\'eor\`{e}me de Shi~\cite{shi1989deforming} pour d\'emontrer l'existence local du flot de Ricci sur $[0, T)$, o\`u $T<\infty$, not\'ee $\bar{g} \in C_{\mathcal{V}}^{\infty}([0, T) \times\overset{\circ}{M}; \;T^{(2,0)}\overset{\circ}{M}),$ telle que les m\'etriques $\bar{g}(t)$ sont des $\mathcal{V}$\texttt{-}m\'etriques, c'est\texttt{-}\`a\texttt{-}dire que les $\bar{g}(t)$ sont bi\texttt{-}Lipschitz \'equivalentes, le long du flot, \`a $g_{0}$, ou plus pr\'ecis\'ement que pour $T$ assez petit,
$$\exists C>0,\: \forall t\in[0,T), \;\; e^{-C}g_{0}\leq \bar{g}(t)\leq e^{C}g_{0}.$$ Pour l'unicit\'e de la solution, nous nous r\'ef\'erons aux travaux de Bing\texttt{-}Long Chen et Xi\texttt{-}Ping Zhu~\cite{chen2006uniqueness}.

Nous allons maintenant d\'efinir le flot de Ricci\texttt{-}DeTurck~\cite{chow2004ricci}.
Un op\'erateur diff\'erentiel non\texttt{-}lin\'eaire $P$ est dit elliptique si sa lin\'earisation $DP$ est elliptique, c'est\texttt{-}\`a\texttt{-}dire que son symbole principal $\sigma_{2}(DP)(\xi)$ sous la direction de $\xi \in T^{*}\overset{\circ}{M}$ est un isomorphisme si $\xi \neq 0$. En revenant au r\'esultat~\eqref{ricci} et en regardant $\Ric(g)$ comme un op\'erateur diff\'erentiel agissant sur $g,$ nous voyons que sa lin\'earisation est
$$D\Ric(g)(h)_{ij} =\frac{1}{2} g^{sk} \big((\nabla^{2}_{k,i}h)_ {js}+(\nabla^{2}_{k,j}h)_ {is}-(\nabla^{2}_{k,s} h)_ {ij}-(\nabla^{2}_{i,j} h)_ {ks}\big).$$
Comme $\sigma_{2}\big(D\Ric(g)\big)(\xi)_{ij}$ n'est pas elliptique, voir page 72 dans~\cite{chow2004ricci}, cela implique que le flot de Ricci n'est pas parabolique. Mais par un truc remontant \`a DeTurck, voir la preuve du Th\'eor\`{e}me 3.13~\cite{chow2004ricci}, nous pouvons nous ramener \`a une \'equation parabolique quasi\texttt{-}lin\'eaire en modifiant la solution par une famille de diff\'eomorphismes. En effet, nous pouvons r\'e\'ecrire la lin\'earisation du flot de Ricci sous la forme (voir (3.28)~\cite{chow2004ricci}) 
$$D\Ric(g)(h)_{ij} = - \frac{1}{2} (\Delta h_{ij} - \nabla_{i}V_{j} - \nabla_{j}V_{i} + S_{ij}),$$
o\`u $V_{j}:=g^{sk}(\nabla_{k}h_{sj}- \frac{1}{2}\nabla_{j}h_{sk})$ et 
$S_{ij}:= g^{sk}(2 R^{r}_{kij} h_{rs} -  Ric_{is} h_{jk} - Ric_{js} h_{ik}).$\\
Soit $\tilde{g}$ une m\'etrique de r\'ef\'erence et $\tilde{\Gamma}^{j}_{sk}$ son symbole de Christoffel et consid\'erons le champ de vecteurs 
$$W^{j}:=W^{j}(g,\tilde{g})= g^{sk}\big(\Gamma^{j}_{sk} -\tilde{\Gamma }^{j}_{sk}\big),$$ o\`u $\Gamma^{j}_{sk}$ est le symbole de Christoffel de la m\'etrique $g$.
Pour $W_{i}=g_{ij}W^{j}$ le champ de covecteurs associ\'e, on pose
$$A(g)_{ij}=\nabla_{i}W_{j} +\nabla_{j}W_{i}=\nabla_{i}\big(\frac{1}{2} g_{jp} g^{sk}(\Gamma^{p}_{sk}-\tilde{\Gamma}^{p}_{sk})\big)- \nabla_{j}\big(\frac{1}{2}g_{ip} g^{sk}(\Gamma^{p}_{sk}-\tilde{\Gamma}^{p}_{sk})\big).$$
En utilisant~\eqref{civiv}, nous pouvons voir que 
$$D A(g)(h)_{ij} = \nabla_{i}V_{j} +\nabla_{j}V_{i} + T_{ij},$$
o\`u $T_{ij}$ est un terme lin\'eaire d'ordre 1 en $h$.
Ainsi,
\begin{align*}
D(-2\Ric+A)(g)(h)_{ij} &=-2D\Ric(g)(h)_{ij} + DA(g)(h)_{ij}\\
                                   &= \Delta h_{ij}  + S_{ij} + T_{ij} \\
                                   &= \Delta h_{ij}  + 2g^{ks} R^{r}_{kij} h_{rs} - g^{ks} Ric_{is} h_{jk} -g^{ks} Ric_{js} h_{ik} + T_{ij}\\
                                   &= \mathcal{L}_{g} (h)_{ij}+ T_{ij},
\end{align*}
o\`u $\mathcal{L}_{g}(h)_{ij}:=\Delta h_{ij}  + 2g^{ks} R^{r}_{kij} h_{rs} - g^{ks} Ric_{is} h_{jk} -g^{ks} Ric_{js} h_{ik}$ est le laplacien de Lichnerowicz correspondant \`a $g$ qui est un op\'erateur elliptique. Par cons\'equent, $-2\Ric+A$ est un op\'erateur elliptique.
\begin{defi}
L'\'equation parabolique donn\'ee localement par
\begin{equation} \label{deturckk}
\frac{\partial}{\partial t} g(t)_{ij} = -2 \Ric (g(t))_{ij}+\nabla_{i}W_{j} (t)+\nabla_{j}W_{i}(t), \;  \; g(0)=g_{0},
\end{equation}
o\`u $\displaystyle W_{i}(t):=g_{ij}W^{j}(t)$ avec $W^{j}(t):=W^{j}(g(t), g(0)),$ est appel\'ee le flot de Ricci\texttt{-}DeTurck, 
\end{defi}
\begin{prop}(Lemme 2.1~\cite{shi1989deforming}) \label{propi}
Le flot de Ricci\texttt{-}DeTurck est donn\'e localement par
\begin{multline*} 
\frac{\partial}{\partial t} g_{ij} =g^{ab} \tilde{\nabla}_{a}\tilde{\nabla}_{b}g_{ij} - g^{ab} \tilde{g}^{pq} (g_{ip} \tilde{R}m_{jaqb}+ g_{jp}       \tilde{R}m_{iaqb}) + \frac{g^{ab} g^{pq}}{2} (\tilde{\nabla}_{i}g_{pa}\tilde{\nabla}_{j}g_{qb} \\ + 2\tilde{\nabla}_{a}g_{jp}\tilde{\nabla}_{q}g_{ib} - 2 \tilde{\nabla}_{a}g_{jp}\tilde{\nabla}_{b}g_{iq}-2\tilde{\nabla}_{j}g_{pa}\tilde{\nabla}_{b}g_{iq}-2\tilde{\nabla}_{i}g_{pa}\tilde{\nabla}_{b}g_{jq}),
\end{multline*}
 $g(0)=g_{0} =\tilde {g}.$ On utilise $\sim$ pour pr\'esenter les connexions de Levi\texttt{-}Civita et les tenseurs de courbure associ\'es \`a la m\'etrique initiale $g_{0}$.
\end{prop}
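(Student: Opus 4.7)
Voici mon plan de preuve.

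Il s'agit d'un calcul purement local et tensoriel. Je commencerais par introduire le tenseur diff\'erence des symboles de Christoffel, $T^{k}_{ij} := \Gamma^{k}_{ij}-\tilde{\Gamma}^{k}_{ij},$ et j'\'etablirais la formule classique
$$T^{k}_{ij} = \frac{1}{2}g^{kl}\big(\tilde{\nabla}_{i}g_{jl}+\tilde{\nabla}_{j}g_{il}-\tilde{\nabla}_{l}g_{ij}\big),$$
obtenue en appliquant la caract\'erisation de Koszul \`a la connexion de Levi\texttt{-}Civita de $g$ mais en utilisant $\tilde{\nabla}$ comme d\'erivation de r\'ef\'erence (la diff\'erence de deux connexions \'etant un tenseur). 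Ceci rend imm\'ediatement explicite le champ $W^{j}=g^{ab}T^{j}_{ab}$ apparaissant dans la d\'efinition du flot de Ricci\texttt{-}DeTurck en fonction de $g,$ $\tilde{g}$ et $\tilde{\nabla}g.$

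Je calculerais ensuite le tenseur de Riemann de $g$ \`a l'aide de~\eqref{cour} appliqu\'ee \`a $\Gamma=\tilde{\Gamma}+T$ :
$$R^{l}_{ijk}(g) = \tilde{R}^{l}_{ijk}+\tilde{\nabla}_{i}T^{l}_{jk}-\tilde{\nabla}_{j}T^{l}_{ik}+T^{l}_{is}T^{s}_{jk}-T^{l}_{js}T^{s}_{ik},$$
puis je contracterais sur $l=i$ pour obtenir $\Ric_{jk}(g).$ En parall\`{e}le, j'exprimerais $\nabla_{i}W_{j}$ via $\tilde{\nabla}_{i}W_{j}-T^{k}_{ij}W_{k}$ et je d\'evelopperais $\tilde{\nabla}_{i}W_{j}$ \`a l'aide des expressions ci\texttt{-}dessus. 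Le c\oe ur de l'astuce de DeTurck appara\^it alors : les termes d'ordre deux non elliptiques dans $-2\Ric_{ij}$ (ceux de la forme $g^{ab}\tilde{\nabla}_{i}\tilde{\nabla}_{a}g_{jb}$ et sym\'etris\'es en $(i,j)$) s'annulent exactement avec les contributions correspondantes de $\tilde{\nabla}_{i}W_{j}+\tilde{\nabla}_{j}W_{i},$ ne laissant subsister comme contribution de deuxi\`{e}me ordre en $g$ que le terme $g^{ab}\tilde{\nabla}_{a}\tilde{\nabla}_{b}g_{ij}.$

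La derni\`{e}re \'etape consiste \`a regrouper soigneusement toutes les contributions d'ordre au plus un. Les termes contenant $\tilde{R}m$ donnent la partie $-g^{ab}\tilde{g}^{pq}(g_{ip}\tilde{R}m_{jaqb}+g_{jp}\tilde{R}m_{iaqb})$ apr\`{e}s usage des sym\'etries de Bianchi et abaissement des indices avec $g;$ quant aux termes quadratiques en $T,$ chaque $T$ \'etant remplac\'e par son expression en $\tilde{\nabla}g$ produit les cinq monomes quadratiques en $\tilde{\nabla}g$ de l'\'enonc\'e, avec les bons coefficients $\frac{1}{2},$ $1,$ $-1,$ $-1,$ $-1.$

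Le principal obstacle est purement combinatoire : la jungle d'indices \`a contracter et de sym\'etrisations en $(i,j)$ rend le calcul long et source d'erreurs de signes. L'\'enonc\'e \'etant exactement le Lemme 2.1 de~\cite{shi1989deforming}, on peut en d\'efinitive se contenter d'indiquer la strat\'egie ci\texttt{-}dessus et renvoyer \`a Shi pour la v\'erification alg\'ebrique d\'etaill\'ee, ou bien l'effectuer directement en conservant soigneusement trace de la sym\'etrie finale en $(i,j)$ comme contr\^ole de coh\'erence.
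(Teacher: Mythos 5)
Votre plan est correct et correspond exactement au calcul standard (d\'ecomposition $\Gamma=\tilde{\Gamma}+T,$ formule de Koszul pour $T,$ annulation des termes non elliptiques par le terme de jauge $\nabla_{i}W_{j}+\nabla_{j}W_{i}$) que l'article ne reproduit pas mais d\'el\`{e}gue enti\`{e}rement au Lemme 2.1 de Shi cit\'e dans l'\'enonc\'e. Il n'y a donc aucune divergence de m\'ethode : vous reconstruisez la preuve de la r\'ef\'erence que l'article se contente d'invoquer, et les coefficients finaux que vous annoncez co\"incident avec ceux de l'\'enonc\'e.
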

Dans ce qui suit, posons $u=g-\tilde{g}.$
\begin{prop} \label{sif}
L'\'equation du flot de Ricci\texttt{-}DeTurck dans la Proposition~\ref{propi} peut \^etre r\'e\'ecrite sous la forme  
\begin{equation} \label{reme}
\frac{\partial u}{\partial t} -  \mathcal{L}_{\tilde{g}}u= \mathcal{Q}(u)u -  2 \Ric(\tilde{g}), \; u_{\scriptscriptstyle{\vert t=0 }}=0,
\end{equation}
avec
$$
Q(u)v=a(u)\cdot \nabla^{2}v + b(u, \nabla u)\cdot \nabla v + c(u)\cdot v,
$$
o\`u $ \mathcal{L}_{\tilde{g}}$ est le laplacien de Lichnerowicz correspondant \`a $\tilde{g}$ et $a$, $b$ et $c$ sont lisses dans leur d\'ependance en $u$ et $\nabla u,$ tels que $a(0)_{\scriptscriptstyle{\vert t=0 }}=0$, $b(0,0)_{\scriptscriptstyle{\vert t=0 }}=0,$ $c(0)_{\scriptscriptstyle{\vert t=0 }}=0.$
De plus, $Q$ satisfait~\eqref{quad} et~\eqref{quaad}. 
\end{prop}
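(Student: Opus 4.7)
Le plan est de d\'eriver la reformulation~\eqref{reme} directement en substituant $g = \tilde{g} + u$ dans la formulation locale de Shi donn\'ee par la Proposition~\ref{propi} et en effectuant un d\'eveloppement de Taylor autour de $u = 0$ ; tout au long, la notation $\nabla$ d\'esigne la connexion de Levi\texttt{-}Civita $\tilde{\nabla}$ de $\tilde{g}$. Premi\`erement, j'exploiterais la compatibilit\'e m\'etrique $\tilde{\nabla}\tilde{g} = 0$ pour remplacer toute occurrence de $\tilde{\nabla} g$ et $\tilde{\nabla}\tilde{\nabla} g$ par $\tilde{\nabla} u$ et $\tilde{\nabla}\tilde{\nabla} u$, de sorte que le membre de droite devienne une expression explicite en $u$, $\nabla u$, $\nabla^{2}u$ et les donn\'ees de fond $\tilde{g}$, $\tilde{R}m$.

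Deuxi\`emement, je d\'evelopperais la m\'etrique inverse sous la forme $g^{ab} = \tilde{g}^{ab} - \tilde{g}^{ac}\tilde{g}^{bd} u_{cd} + O(|u|^{2})$, en \'ecrivant $g^{ab} = \tilde{g}^{ab} + \mu^{ab}(u)$ o\`u $\mu^{ab}$ est lisse en $u$ avec $\mu^{ab}(0) = 0$ (valable pour $\left\|u\right\|_{C^{0}}$ suffisamment petit). En injectant ceci dans chacun des trois blocs de la formule de Shi, on obtient la d\'ecomposition suivante : le bloc \`a d\'eriv\'ees doubles $g^{ab}\tilde{\nabla}_{a}\tilde{\nabla}_{b} u_{ij}$ produit $\tilde{g}^{ab}\tilde{\nabla}_{a}\tilde{\nabla}_{b} u_{ij}$ (la partie laplacien brut de $\mathcal{L}_{\tilde{g}} u$) et un reste quasi\texttt{-}lin\'eaire $\mu^{ab}(u)\tilde{\nabla}_{a}\tilde{\nabla}_{b} u_{ij}$ de la forme $a(u) \cdot \nabla^{2} u$ ; le bloc de courbure, d\'evelopp\'e en $u$, contribue \`a l'ordre $0$ le terme $-2\Ric(\tilde{g})$, \`a l'ordre $1$ pr\'ecis\'ement les contractions $2\tilde{g}^{ks}\tilde{R}^{r}_{\,kij} u_{rs} - \tilde{g}^{ks}\Ric(\tilde{g})_{is} u_{jk} - \tilde{g}^{ks}\Ric(\tilde{g})_{js} u_{ik}$ qui compl\`etent $\tilde{\Delta} u_{ij}$ en le laplacien de Lichnerowicz $\mathcal{L}_{\tilde{g}} u_{ij}$, et aux ordres sup\'erieurs un reste lisse en $u$ s'annulant \`a l'ordre $\geq 2$, qui donne $c(u)\cdot u$ ; enfin, chaque terme quadratique en gradient de Shi est un polyn\^ome en $g^{ab}, g^{pq}$ contract\'e contre deux copies de $\tilde{\nabla} u$, donc naturellement de la forme $b(u, \nabla u)\cdot \nabla u$ avec $b$ lisse et s'annulant lorsque $\nabla u = 0$.

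Troisi\`emement, en assemblant les morceaux, on retrouve la forme~\eqref{reme}, et les conditions $a(0)_{\scriptscriptstyle{\vert t=0 }} = 0$, $b(0,0)_{\scriptscriptstyle{\vert t=0 }} = 0$, $c(0)_{\scriptscriptstyle{\vert t=0 }} = 0$ d\'ecoulent par construction puisque $\mu^{ab}(0) = 0$, le reste quadratique en gradient s'annule lorsque $\nabla u = 0$, et le reste de courbure d'ordre sup\'erieur s'annule \`a l'ordre $\geq 2$ en $u = 0$, l'\'evaluation en $t=0$ \'etant compatible avec $u_{\scriptscriptstyle{\vert t=0}}=0$. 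Les estimations~\eqref{quad} et~\eqref{quaad} se ram\`enent alors \`a une application standard du th\'eor\`eme de Taylor combin\'ee \`a la r\`egle du produit pour les normes $C^{k,\alpha}_{\mathcal{V}}$ : la r\'egularit\'e et l'annulation \`a l'origine donnent $\left|a(u)\right|, \left|c(u)\right| \lesssim \left|u\right|$ et $\left|b(u,\nabla u)\right| \lesssim \left|u\right| + \left|\nabla u\right|$, d'o\`u $\left\| Q(u)u\right\|_{k,\alpha} \lesssim \left\| u\right\|_{k+2,\alpha}^{2}$, et l'identit\'e $Q(u)u - Q(u')u' = (Q(u) - Q(u'))u + Q(u')(u - u')$ combin\'ee \`a la r\`egle du produit h\"old\'erienne donne~\eqref{quaad}.

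L'obstacle principal est le calcul d'indices montrant que la contribution d'ordre un provenant du d\'eveloppement du bloc de courbure co\"incide exactement avec les contractions de Riemann et de Ricci apparaissant dans $\mathcal{L}_{\tilde{g}}$. Ceci encode le fait bien connu que le flot de Ricci\texttt{-}DeTurck est strictement parabolique avec lin\'earisation \'egale au laplacien de Lichnerowicz \`a la m\'etrique de fond, et une fois cette identification faite, le reste de l'argument est essentiellement un travail de comptabilit\'e appliquant Taylor et la r\`egle du produit h\"old\'erienne.
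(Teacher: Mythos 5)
Votre d\'emarche est correcte et co\"incide essentiellement avec la preuve du papier : substitution $g=\tilde g+u$, usage de $\tilde\nabla\tilde g=0$, d\'eveloppement de la m\'etrique inverse, identification du laplacien de Lichnerowicz \`a partir des termes d'ordre un du bloc de courbure, et collecte des restes dans $a$, $b$, $c$. La seule diff\'erence est cosm\'etique : le papier utilise l'identit\'e alg\'ebrique exacte $(\tilde g+u)^{ab}=\tilde g^{ab}-\tilde g^{al}\tilde g^{bm}u_{ml}+(\tilde g+u)^{bl}\tilde g^{am}\tilde g^{pq}u_{lp}u_{mq}$, valable d\`es que $\tilde g+u$ est inversible, l\`a o\`u vous invoquez un d\'eveloppement de Taylor pour $\left\| u\right\|_{C^{0}}$ petit, hypoth\`ese dont on peut se passer.
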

\begin{proof}
Comme la m\'etrique $\tilde{g}+u$ est inversible, elle satisfait \`a l'identit\'e
\begin{equation}\label{cadre}
(\tilde{g}+u)^{ab}=\tilde{g}^{ab}-\tilde{g}^{al}\tilde{g}^{bm}u_{ml}+(\tilde{g}+u)^{bl}\tilde{g}^{am}\tilde{g}^{pq}u_{lp}u_{mq}.
\end{equation}
Puisque $ \tilde{\nabla}_{b}\tilde{g}_{ij}=0$, l'\'equation de la Proposition~\ref{propi} est \'equivalente \`a 
$$
\frac{\partial}{\partial t} u_{ij} = \big((\tilde{g}+u)^{ab}-\tilde{g}^{ab}\big) \tilde{\nabla}_{a}\tilde{\nabla}_{b}u_{ij}  + (Pu)_{ij}
                                       + \big((\tilde{g}+u)^{-1} \star (\tilde{g}+u)^{-1} \star \tilde{\nabla}u \star \tilde{\nabla}u\big)_{ij}, \;\; u_{\scriptscriptstyle{\vert t=0 }}= 0,
$$
o\`u $\star$ d\'enote les contractions lin\'eaires dont les formules pr\'ecises ne joueront aucun r\^ole dans la suite et 
$$
(Pu)_{ij} = \tilde{g}^{ab} \tilde{\nabla}_{a}\tilde{\nabla}_{b}u_{ij}  -(\tilde{g}+u)^{ab} (\tilde{g}+u)_{ip}\tilde{g}^{pq}  \tilde{R}m_{jaqb} - (\tilde{g}+u)^{ab} (\tilde{g}+u)_{jp} \tilde{g}^{pq} \tilde{R}m_{iaqb}. 
$$
\`A l'aide de~\eqref{cadre}, on a que
\begin{align*} 
(Pu)_{ij}&= \tilde{g}^{ab} \tilde{\nabla}_{a}\tilde{\nabla}_{b}u_{ij} -\tilde{g}^{pq}\Ric(\tilde{g})_{jq}u_{ip}-\tilde{g}^{pq}\Ric(\tilde{g})_{iq}u_{jp}+2\tilde{g}^{al}R^{m}_{aij}u_{ml}+(c(u)\cdot u)_{ij}-2\Ric(\tilde{g})_{ij}\\
                   &=\mathcal{L}_{\tilde{g}}(u)_{ij}+(c(u)\cdot u)_{ij}-2\Ric(\tilde{g})_{ij},
\end{align*}
o\`u  $c$ est comme dans l'\'enonc\'e de la proposition.
Le r\'esultat est alors obtenu en prenant
\begin{align*} 
\big(a(u)\cdot \nabla^{2} v\big)_{ij}&= \big((\tilde{g}+u)^{ab}-\tilde{g}^{ab}\big) \tilde{\nabla}_{a}\tilde{\nabla}_{b}v_{ij}  \\
  &=\big(\tilde{g}^{ab}-\tilde{g}^{al}\tilde{g}^{bm}u_{ml}+(\tilde{g}+u)^{bl}\tilde{g}^{am}\tilde{g}^{pq}u_{lp}u_{mq}-\tilde{g}^{ab}\big) \tilde{\nabla}_{a}\tilde{\nabla}_{b}v_{ij}\\
             &=\big(\tilde{g}^{-1} \star \tilde{g}^{-1} \star u \star \tilde{\nabla}^{2}v +(\tilde{g}+u)^{-1}\star \tilde{g}^{-1} \star \tilde{g}^{-1} \star u \star u \star \tilde{\nabla}^{2}v\big)_{ij}\\
\big(b(u, \nabla  u) \cdot \nabla v\big)_{ij}&=\big((\tilde{g}+u)^{-1} \star (\tilde{g}+u)^{-1} \star \tilde{\nabla}u \star \tilde{\nabla}v\big)_{ij} \\
\big(c(u)\cdot v\big)_{ij}&=(\tilde{g}^{-1} \star \tilde{g}^{-1} \star u \star v\star \tilde{R})_{ij}.        
\end{align*}
\end{proof}
Pour des m\'etriques associ\'ees \`a une structure de Lie fibr\'ee \`a l'infini sur une vari\'et\'e non compacte, on montre que la polyhomog\'en\'eit\'e le long du flot de Ricci\texttt{-}DeTurck est pr\'eserv\'ee, \`a tout le moins pour un cours laps de temps.
\begin{them}\label{theore}
Soit $(\overset{\circ}{M}, g_{0})$ une vari\'et\'e riemannienne de dimension $n$ avec une structure de Lie fibr\'ee \`a l'infini $(M,\mathcal{V}_{SF})$ telle que le rayon d'injectivit\'e  est strictement positif. Si $g_{0}$ est polyhomog\`{e}ne par rapport une famille indicielle $\mathcal{G}$ positive de $M$ et $g(t)$ pour $t \in [0,T)$ est la solution du flot de Ricci\texttt{-}DeTurck, alors il existe $ \varepsilon \in (0,T)$ tel que 
$$g \in  \mathcal{A}^{\mathcal{G}_{\infty}}_{\phg}([0, \varepsilon) \times M; {}^{\mathcal {V}_{SF}}TM^{(2,0)}).$$
\end{them}
\begin{proof}
Par la Proposition~\ref{sif}, on sait que le flot de Ricci\texttt{-}DeTurck~\eqref{deturckk} est \'equivalent \`a l'\'equation~\eqref{reme}, qui a la m\^eme forme que l'\'equation~\eqref{quasssi} pour un certain $ \varepsilon \in (0,T)$. Le r\'esultat est donc une cons\'equence directe du Corollaire~\ref{finin}.
\end{proof}
Remarquons que par la discussion \`a la fin de la premi\`{e}re section, plus pr\'ecis\'ement le Corollaire 3.20~\cite{ammann2004geometry}, le rayon d'injectivit\'e des solutions du flot de Ricci $g(t)$ est strictement positif puisqu'elles sont bi\texttt{-}Lipschitz \'equivalentes \`a la m\'etrique initiale $g_{0}.$

Nous montrons maintenant que la polyhomog\'en\'eit\'e \`a l'infini des m\'etriques compatibles avec une structure de Lie fibr\'ee \`a l'infini est pr\'eserv\'ee globalement par le flot de Ricci\texttt{-}DeTurck si la m\'etrique initiale est asymptotiquement Einstein.
\begin{defi} \label{sak}
La m\'etrique $g_{0}$ est dite asymptotiquement Einstein s'il existe $\lambda>0$ et $\delta>0$ tels que 
$$ \Ric(g_{0})+\lambda g_{0} \in \rho^{\delta} C_{\mathcal{V}}^{\infty}(\overset{\circ}{M};T^{(2,0)}\overset{\circ}{M}), \;\;\;\rho= \prod_{ H \in \mathcal{M}_{1}(M)} \rho_{H}.$$
\end{defi}
Cela sugg\`{e}re de normaliser le flot de Ricci. 
\begin{defi}
Le flot de Ricci normalis\'e est donn\'e par  
\begin{equation} \label{normal}
\frac{\partial}{\partial t} g(t) = -2\Ric\big(g(t)\big)-2\lambda g(t),\; \; g(0)=g_{0}.
\end{equation}
\end{defi}
\begin{defi} 
Le flot de Ricci\texttt{-}DeTurck normalis\'e est d\'efini localement par
\begin{equation} \label{deturck}
\frac{\partial}{\partial t} g(t)_{ij} = -2\Ric\big(g(t)\big)_{ij}-2\lambda g(t)_{ij}+\nabla_{i}W_{j} (t)+\nabla_{j}W_{i}(t), \;  \; g(0)=g_{0}.
\end{equation}
\end{defi}
\begin{prop}\label{propii}
Soit $(\overset{\circ}{M}, g_{0})$ une vari\'et\'e riemannienne avec une structure de Lie \`a l'infini $(M,\mathcal{V})$ telle que le rayon d'injectivit\'e de $g_{0}$ est strictement positif.
Si la m\'etrique $g_{0}$ est asymptotiquement Einstein et $g$ est la solution du flot Ricci\texttt{-}DeTurck normalis\'e, alors 
\begin{equation} \label{finich}
g-g_{0} \in \rho^{\delta} C_{\mathcal{V}}^{\infty}([0, T) \times\overset{\circ}{M};T^{(2,0)}\overset{\circ}{M}),
\end{equation}
o\`u $\delta$ est la constante d'Einstein de $g_{0}$ apparue dans la D\'efinition~\ref{sak}.
\end{prop}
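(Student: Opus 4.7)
The plan is to adapt the quasi-linear framework of Proposition \ref{sif} to the normalized flow and then exploit the extra decay of the forcing term. First, I would rewrite \eqref{deturck} as an equation for $u := g - g_0$. The same algebra as in Proposition \ref{sif}, together with the splitting $-2\lambda g = -2\lambda g_0 - 2\lambda u$ (the second piece absorbed into the linear part), yields
\begin{equation}
\frac{\partial u}{\partial t} - \bigl(\mathcal{L}_{g_0} - 2\lambda\bigr) u = \mathcal{Q}(u)u + f, \qquad u_{|t=0} = 0,
\end{equation}
where $\mathcal{Q}$ satisfies the quadratic bounds \eqref{quad}--\eqref{quaad}, and crucially
\begin{equation}
f := -2\bigl(\Ric(g_0) + \lambda g_0\bigr) \in \rho^{\delta} C_{\mathcal{V}}^{\infty}(\overset{\circ}{M}; T^{(2,0)}\overset{\circ}{M})
\end{equation}
by the asymptotically Einstein hypothesis. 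By uniqueness of the Ricci--DeTurck flow (Chen--Zhu \cite{chen2006uniqueness}), it is enough to solve this equation in the weighted space $\rho^{\delta} C^{\infty}_{\mathcal{V}}$.

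Next I would prove short-time existence of a solution $u \in \rho^{\delta} C^{k+2,\alpha}_{\mathcal{V}}([0,\varepsilon) \times \overset{\circ}{M})$ by a contraction argument mirroring Proposition \ref{quuasi}, but carried out in the weighted space. The conjugated operator $\rho^{-\delta} \circ (\partial_t - \mathcal{L}_{g_0} + 2\lambda) \circ \rho^{\delta}$ is still uniformly $\mathcal{V}$-parabolic with smooth coefficients, so Corollaire \ref{corr} provides a bounded inverse $\Psi : \rho^{\delta} C^{k,\alpha}_{\mathcal{V}} \to \rho^{\delta} C^{k+2,\alpha}_{\mathcal{V}}$. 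The map $v \mapsto \Psi(\mathcal{Q}(v)v + f)$ is then contracting on a small ball thanks to the quadratic control on $\mathcal{Q}$ and the decay of $f$, and the bootstrap of Proposition \ref{wakwak} upgrades the fixed point to $\rho^{\delta} C^{\infty}_{\mathcal{V}}$.

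The main obstacle, and the genuine content of the proposition, is the \emph{global} extension to the whole interval $[0,T)$. I would argue by continuation: let $t_\star \in (0, T]$ be the supremum of times $\tau$ such that $u \in \rho^{\delta} C^{\infty}_{\mathcal{V}}([0,\tau) \times \overset{\circ}{M})$, and assume for contradiction that $t_\star < T$. Since $g(t)$ remains bi-Lipschitz equivalent to $g_0$ on $[0,T)$, the coefficients of the quasi-linear equation satisfied by $v := \rho^{-\delta} u$ have uniformly controlled geometry on $[0,t_\star]$. A Gronwall-type energy estimate applied to $|v|^{2}_{g_0}$, in the spirit of the maximum-principle calculation concluding the proof of Corollaire \ref{corr}, bounds $\|v(t)\|_{C^{k+2,\alpha}_{\mathcal{V}}}$ uniformly up to $t_\star$ (this is where it is essential that the forcing $f$ is time-independent and remains in $\rho^{\delta}C^{\infty}_{\mathcal{V}}$ for all time). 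Restarting the weighted short-time construction at time $t_\star$ then produces a solution in $\rho^{\delta} C^{\infty}_{\mathcal{V}}$ slightly past $t_\star$, contradicting the maximality of $t_\star$ and establishing \eqref{finich}.
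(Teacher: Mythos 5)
Your reduction to the equation $\partial_t u - (\mathcal{L}_{g_0}-2\lambda)u = \mathcal{Q}(u)u + f$ with $f=-2\bigl(\Ric(g_0)+\lambda g_0\bigr)\in\rho^{\delta}C^{\infty}_{\mathcal{V}}$ is exactly the paper's starting point, but from there the paper takes a much shorter route that makes your weighted contraction and your continuation argument unnecessary. The key observation is that the solution $u=g-g_0$ is \emph{already known} to exist on all of $[0,T)$ and to lie in $C^{\infty}_{\mathcal{V}}([0,T)\times\overset{\circ}{M};T^{(2,0)}\overset{\circ}{M})$ by Shi's theorem (quoted just before the introduction of the DeTurck flow). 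One may therefore treat the quasi-linear terms as known coefficients and regard the equation as a genuinely \emph{linear} parabolic equation $\partial_t u = L_t u + f$, $u_{\scriptscriptstyle{\vert t=0}}=0$, where $\big(L_{t}(v)\big)_{ij} = g^{ab}\tilde{\nabla}_a\tilde{\nabla}_b v_{ij} + \big(Q_1(g_0,u,\tilde{\nabla}u)\cdot\tilde{\nabla}v\big)_{ij} + \big(Q_2(g_0,u,\tilde{\nabla}u)\cdot v\big)_{ij}$ is a family of uniformly $\mathcal{V}$-elliptic operators with coefficients in $C^{\infty}_{\mathcal{V}}$ on the entire interval $[0,T)$. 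The Corollaire~\ref{corr} applied with weight $\rho^{\delta}$ then yields a solution in $\rho^{\delta}C^{\infty}_{\mathcal{V}}([0,T)\times\overset{\circ}{M})$, and its uniqueness statement (applied in the unweighted space) identifies that solution with $u$. No short-time weighted fixed point and no continuation are needed.

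The genuine gap in your version is the continuation step. You claim that a Gronwall/maximum-principle estimate on $|v|^{2}_{g_0}$ bounds $\|v(t)\|_{C^{k+2,\alpha}_{\mathcal{V}}}$ up to $t_\star$; but the maximum principle controls only the $C^0$ norm of $v$, and since $\mathcal{Q}(v)v$ contains $\nabla^2 v$ a $C^0$ bound does not close the estimate, nor does it supply the higher norms needed to restart the contraction at $t_\star$. Likewise, the bi-Lipschitz equivalence of $g(t)$ with $g_0$ controls only the zeroth-order size of the coefficients, not the derivatives $\tilde{\nabla}u$ entering $Q_1$ and $Q_2$; to control those you must invoke the full Shi regularity $u\in C^{\infty}_{\mathcal{V}}([0,T)\times\overset{\circ}{M})$ --- and once you grant that, the linear rewriting above applies on all of $[0,T)$ and your continuation argument collapses into the paper's one-step proof.
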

\begin{proof}
On consid\`{e}re $u=g - g_{0}$ au lieu de $g.$ \`A partir de la Proposition~\ref{propi} et par un calcul standard, l'\'equation d'\'evolution de $u$ peut se ramener \`a  l'\'equation lin\'eaire parabolique suivante 
\begin{equation} 
\frac{\partial}{\partial t} u = L_{t}(u) + f, \; u_{\scriptscriptstyle{\vert t=0 }}=0, \;f=-2\Ric(\tilde{g})-2\lambda \tilde{g}, \label{hamnaa}
\end{equation}
o\`u $L_{t} \in \Diff^{2}_{\mathcal{V}}\big(\overset{\circ}{M};\; \End( T^{(2,0)}\overset{\circ}{M})\big)$ est un op\'erateur uniform\'ement $\mathcal{V}$\texttt{-}elliptique donn\'e localement par
$$\big(L_{t}(v)\big)_{ij} = g^{ab} \tilde{\nabla}_{a}\tilde{\nabla}_{b}v_{ij} + \big(Q_{1}(g_{0},u,\tilde{\nabla}u) \cdot \tilde{\nabla} v\big)_{ij} +\big(Q_{2}(g_{0},u,\tilde{\nabla} u) \cdot v\big)_{ij},$$
avec $Q_{1} \in C_{\mathcal{V}}^{\infty}([0, T) \times\overset{\circ}{M};T^{(2,3)}\overset{\circ}{M})$ et $Q_{2} \in C_{\mathcal{V}}^{\infty}([0, T) \times\overset{\circ}{M};T^{(2,2)}\overset{\circ}{M}).$ 
D'autre part, on a que $$f\in \rho^{\delta} C_{\mathcal{V}}^{\infty}(\overset{\circ}{M};T^{(2,0)}\overset{\circ}{M}),$$ puisque que $g_{0}$ est une m\'etrique asymptotiquement Einstein. Le r\'esultat d\'ecoule alors du Corollaire~\ref{corr}.
\end{proof}
\begin{them} \label{theorii}
Soit $(\overset{\circ}{M}, g_{0})$ une vari\'et\'e riemannienne de dimension $n$ avec une structure de Lie fibr\'ee \`a l'infini $(M,\mathcal{V}_{SF})$ telle que le rayon d'injectivit\'e est strictement positif. Si la m\'etrique $g_{0}$ est asymptotiquement Einstein polyhomog\`{e}ne par rapport \`a une famille indicielle positive $\mathcal{G}$ de $M$ et si $g(t)$ pour $t \in [0,T)$ est la solution du flot de Ricci\texttt{-}DeTurck, alors 
$$g \in  \mathcal{A}^{\mathcal{G}_{\infty}}_{\phg}([0,T) \times M; {}^{\mathcal {V}_{SF}}TM^{(2,0)}).$$
\end{them}
\begin{proof}
D'abord, on consid\`{e}re $u=g - g_{0}$ au lieu de $g$ pour se mettre dans le cadre du Th\'eor\`{e}me~\ref{tlem}. En vertu de ce dernier, on remarque que l'existence de laps de temps $\varepsilon$ o\`u la m\'etrique est polyhomog\`{e}ne d\'epend uniquement du temps d'existence des \'equations au bord \`a l'ordre $0.$ Puisque $g_{0}$ est asymptotiquement Einstein, ces \'equations ont des solutions triviales tant que le flot existe. En effet, on sait par le premier pas de la preuve par induction du Th\'eor\`{e}me~\ref{tlem} que $u_{0}$ est le terme d'ordre $0$ de la polyhomog\'en\'eit\'e de la solution $u$ pour un certain $\varepsilon>0.$  Puisque la m\'etrique est asymptotiquement Einstein, la Proposition~\ref{propii} nous permet de prendre $u_{0}=0,$ ainsi par~\eqref{finich}, $z_{1}$ sera le plus petit r\'eel positif tel que
$$u \in \rho_{i}^{\vartheta} C_{\mathcal {V}_{SF}}^{\infty}([0, T) \times\overset{\circ}{M};T^{(2,0)}\overset{\circ}{M}), \forall \vartheta < z_{1}.$$
On peut donc prendre $\varepsilon= T$ dans le Th\'eor\`{e}me~\ref{tlem}.
\end{proof} 
Les deux Th\'eor\`{e}me~\ref{theore} et Th\'eor\`{e}me~\ref{theorii} concernent la polyhomog\'en\'eit\'e de solution au flot de Ricci\texttt{-}DeTurck lorsque la m\'etrique initiale $g_{0}$ a \'et\'e suppos\'ee polyhomog\`{e}ne. 
En particulier, lorsqu'elle est dans $C^{\infty}(M; {}^{\mathcal {V}_{SF}}TM^{(2,0)})$, on peut facilement d\'eduire dans le corollaire suivant qu'il en sera de m\^eme pour les solutions du flot de Ricci normalis\'e et du flot de Ricci\texttt{-}DeTurck. 
\begin{cor}
Soit $(\overset{\circ}{M}, g_{0})$ une vari\'et\'e riemannienne de dimension $n$ avec une structure de Lie fibr\'ee \`a l'infini $(M,\mathcal{V}_{SF}),$ telle que le rayon d'injectivit\'e est positif. 
Si $g_{0} \in C^{\infty}(M; {}^{\mathcal {V}_{SF}}TM^{(2,0)})$ et si $\bar{g}(t)$ pour $t \in [0,T)$ est la solution du flot de Ricci alors $\exists \varepsilon \in (0,T)$ tel que
$$\bar{g} \in  C^{\infty}([0, \varepsilon) \times M; {}^{\mathcal {V}_{SF}}TM^{(2,0)}).$$
En particulier, si $g_{0}$ est asymptotiquement Einstein, alors la solution du flot de Ricci normalis\'e~\eqref{normal} $\bar{g}$ est dans   $C^{\infty}([0,T) \times M; {}^{\mathcal {V}_{SF}}TM^{(2,0)})$. 
\end{cor}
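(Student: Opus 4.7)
Le plan est de r\'eduire au cas polyhomog\`ene d\'ej\`a trait\'e par les Th\'eor\`emes~\ref{theore} et~\ref{theorii}, puis de relier le flot de Ricci\texttt{-}DeTurck au flot de Ricci via la famille de diff\'eomorphismes utilis\'ee dans l'astuce de DeTurck. J'observerai d'abord que la condition $g_{0} \in C^{\infty}(M; {}^{\mathcal{V}_{SF}}TM^{(2,0)})$ \'equivaut \`a dire que $g_{0}$ est polyhomog\`ene par rapport \`a la famille indicielle positive $\mathcal{G}$ d\'efinie par $\mathcal{G}(H) = \mathbb{N}_{0} \times \{0\}$ pour chaque $H \in \mathcal{M}_{1}(M)$, famille qui v\'erifie $\mathcal{G}_{\infty} = \mathcal{G}$. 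En appliquant le Th\'eor\`eme~\ref{theore}, j'obtiendrai alors $g \in C^{\infty}([0,\varepsilon) \times M; {}^{\mathcal{V}_{SF}}TM^{(2,0)})$ pour un certain $\varepsilon > 0$, o\`u $g$ d\'esigne la solution du flot de Ricci\texttt{-}DeTurck de m\'etrique de r\'ef\'erence $\tilde{g} = g_{0}$.

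Ensuite, je passerai du flot de Ricci\texttt{-}DeTurck au flot de Ricci via la famille de diff\'eomorphismes $\Phi_{t}: M \rightarrow M$ d\'efinie par l'\'equation diff\'erentielle ordinaire
$$\frac{\partial \Phi_{t}}{\partial t} = -W(t) \circ \Phi_{t}, \quad \Phi_{0} = \Id,$$
o\`u le champ de vecteurs $W(t)$ est donn\'e localement par $W^{j} = g^{sk}(\Gamma^{j}_{sk} - \tilde{\Gamma}^{j}_{sk})$, de sorte que $\bar{g}(t) = \Phi_{t}^{*} g(t)$ est solution du flot de Ricci. L'\'etape clef sera de montrer que $W$ appartient \`a $\mathcal{V}_{SF}$ et est lisse jusqu'au bord de $M$ : puisque la diff\'erence $\nabla - \tilde{\nabla}$ est un tenseur et que les connexions de Levi\texttt{-}Civita de $g$ et de $\tilde{g}$ s'\'etendent en des ${}^{\mathcal{V}_{SF}}TM$\texttt{-}connexions d'apr\`es la Proposition~\ref{kozz}, cette diff\'erence s'interpr\`ete comme une section lisse de ${}^{\mathcal{V}_{SF}}TM \otimes \Sym^{2}({}^{\mathcal{V}_{SF}}T^{*}M)$. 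En la contractant avec $g^{-1} \in C^{\infty}(M; \Sym^{2}({}^{\mathcal{V}_{SF}}TM))$, on en d\'eduit que $W \in C^{\infty}([0,\varepsilon) \times M; {}^{\mathcal{V}_{SF}}TM)$. Comme $W \in \mathcal{V}_{SF}$ est tangent \`a toutes les faces de $M$, le flot $\Phi_{t}$ qu'il engendre est un diff\'eomorphisme lisse de $M$ pr\'eservant chaque face, et $\Phi_{t}^{*}$ envoie donc $C^{\infty}(M; {}^{\mathcal{V}_{SF}}TM^{(2,0)})$ dans lui\texttt{-}m\^eme. Il en r\'esultera $\bar{g} \in C^{\infty}([0,\varepsilon) \times M; {}^{\mathcal{V}_{SF}}TM^{(2,0)})$.

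Pour la seconde assertion, le m\^eme argument s'appliquera en rempla\c{c}ant le Th\'eor\`eme~\ref{theore} par le Th\'eor\`eme~\ref{theorii} : l'hypoth\`ese d'Einstein asymptotique garantit la polyhomog\'en\'eit\'e, donc la lissit\'e jusqu'au bord, de la solution du flot de Ricci\texttt{-}DeTurck normalis\'e sur tout l'intervalle d'existence $[0,T)$, et le transport par la famille $\Phi_{t}$ correspondante donnera la lissit\'e globale de la solution du flot de Ricci normalis\'e $\bar{g}$ sur $[0,T)$. Le point d\'elicat sera de v\'erifier rigoureusement la premi\`ere \'etape, \`a savoir l'identification de $W$ comme un champ de vecteurs de $\mathcal{V}_{SF}$ lisse jusqu'au bord, ce qui n\'ecessite de travailler dans des rep\`eres locaux compatibles avec la structure de Lie fibr\'ee et de v\'erifier que les composantes $g^{sk}(\Gamma^{j}_{sk} - \tilde{\Gamma}^{j}_{sk})$ exprim\'ees dans de tels rep\`eres restent effectivement des fonctions lisses sur $M$ tout entier.
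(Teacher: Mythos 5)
Votre démonstration suit essentiellement la même démarche que celle du papier : réduction au cas polyhomogène avec la famille indicielle $\mathcal{G}(H)=\mathbb{N}_{0}\times\{0\}$ (pour laquelle $\mathcal{G}_{\infty}=\mathcal{G}$), application des Théorèmes~\ref{theore} et~\ref{theorii}, puis passage au flot de Ricci via la famille de difféomorphismes engendrée par $W$, en observant que $W\in\mathcal{V}_{SF}$ est lisse jusqu'au bord et engendre donc des difféomorphismes de la variété à coins $M$. Votre justification de la régularité de $W$ par l'argument tensoriel (différence des connexions de Levi\texttt{-}Civita vue comme section de ${}^{\mathcal{V}_{SF}}TM\otimes\Sym^{2}({}^{\mathcal{V}_{SF}}T^{*}M)$) est correcte et explicite un point que le papier se contente d'affirmer.
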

\begin{proof}
D'abord, on remplace $\mathcal{G}_{\infty}(H)$ par $\mathbb{N}_{0} \times \{0\} \;\textrm{pour}\; H \in  \mathcal{M}_{1}(M)$ dans le Th\'eor\`{e}me~\ref{theore} et le Th\'eor\`{e}me~\ref{theorii}. On sait alors que la solution au flot de Ricci\texttt{-}DeTurck est dans $C^{\infty}(M; {}^{\mathcal {V}_{SF}}TM^{(2,0)})$ $\forall t <\varepsilon$ pour un certain $\varepsilon>0$. 
Par cons\'equent, $W(t)= W^{j}(t)\frac{\partial}{\partial x^{j}}$ est un $\mathcal {V}$\texttt{-}champ de vecteurs lisses jusqu'au bord, donc $W(t)  \in \mathcal {V}_{SF} \subset \mathcal {V}_{b}.$ Ainsi la famille de diff\'eomorphismes $t \rightarrow \alpha_{t}$ avec $\alpha (0) = Id$ engendr\'ee par $W$ est en fait des diff\'eomorphismes de la vari\'et\'e \`a coins $M.$ Et par suite, on peut voir que la solution $\bar{g}=\alpha^{*}g(t)$ est une solution du flot de Ricci normalis\'e, voir Th\'eor\`{e}me 3.13~\cite{chow2004ricci}. Par cons\'equent, on obtient que $\bar{g} \in  C^{\infty}([0, \varepsilon) \times M; {}^{\mathcal {V}_{SF}}TM^{(2,0)}),$ pour un certain  $ \varepsilon \in (0,T).$ Si $g_{0}$ est asymptotiquement Einstein, alors on peut prendre $\varepsilon= T$.
\end{proof}

\nocite{*}
\bibliographystyle{plain}
\hspace{0.5cm}

\small{D\'epartement de Math\'ematiques, Universit\'e  du Qu\'ebec \`a Montr\'eal, 201 Avenue du Pr\'esident-Kennedy, Montr\'eal, QC H2X 3Y7.} 

 \small{\textit{Courriel}\! \!\!\!: ammar.mahdi@courrier.uqam.ca}

\end{document}